\documentclass{article}
\usepackage{dcolumn}
\usepackage{verbatim}       

\usepackage[pdftex]{graphicx}
\usepackage{amsthm}
\usepackage{amssymb}
\usepackage{bm}
\usepackage{amstext}
\usepackage{amsmath}
\usepackage{amsfonts}
\usepackage{mathrsfs}
\usepackage{cite}
\usepackage{hyperref}
\usepackage[toc]{appendix}

\newfont{\bb}{msbm10 at 12pt}

\newcommand{\bd}{\begin{definition}}                
\newcommand{\ed}{\end{definition}}                  
\newcommand{\bc}{\begin{corollary}}                 
\newcommand{\ec}{\end{corollary}}                   
\newcommand{\bl}{\begin{lemma}}                     
\newcommand{\el}{\end{lemma}}                       
\newcommand{\bp}{\begin{proposition}}            
\newcommand{\ep}{\end{proposition}}                
\newcommand{\bere}{\begin{remark}}                  
\newcommand{\ere}{\end{remark}}                     

\newcommand{\bt}{\begin{theorem}}
\newcommand{\et}{\end{theorem}}

\newcommand{\be}{\begin{equation}}
\newcommand{\ee}{\end{equation}}

\newcommand{\bit}{\begin{itemize}}
\newcommand{\eit}{\end{itemize}}
\newtheorem{theorem}{Theorem}[section]
\newtheorem{corollary}[theorem]{Corollary}

\newtheorem{lemma}[theorem]{Lemma}
\newtheorem{proposition}[theorem]{Proposition}
\theoremstyle{definition}
\newtheorem{definition}[theorem]{Definition}
\theoremstyle{remark}
\newtheorem{remark}[theorem]{Remark}
\newtheorem{example}[theorem]{Example}

\hyphenation{Lo-ren-tzian}
\hyphenation{Min-kow-ski}
\hyphenation{achro-nal}
\hyphenation{para-met-riz-a-tion}
\hyphenation{para-met-riz-a-tions}

\begin{document}
%

\title{Lorentzian metric spaces and GH-convergence: the unbounded case}

\author{A. Bykov,\footnote{Dipartimento di Matematica e Informatica ``U. Dini'', Universit\`a degli Studi di Firenze,  Via
S. Marta 3,  I-50139 Firenze, Italy. E-mail: aleksei.bykov@unifi.it} \ \ E. Minguzzi\footnote{Dipartimento di Matematica, Universit\`a degli Studi di Pisa,  Via
B. Pontecorvo 5,  I-56127 Pisa, Italy. E-mail:
ettore.minguzzi@unipi.it} \ \ and   \ S. Suhr\footnote{Fakult\"at f\"ur Mathematik, Ruhr-Universit\"at Bochum, Universit\"atsstr. 150, 44780 Bochum, Germany. E-mail: Stefan.Suhr@ruhr-uni-bochum.de}}

\date{}

\maketitle

\begin{abstract}
\noindent We introduce a notion of  Lorentzian metric space which drops the  boundedness condition from our previous work
and argue that the properties defining our spaces  are  minimal.
In fact, they are defined by three conditions given by (a) the reverse triangle inequality for chronologically related events, (b) Lorentzian distance continuity and  relative compactness of chronological diamonds, and (c) a distinguishing condition via the Lorentzian distance function. By adding a countably generating condition we confirm the validity of desirable properties for our spaces including the Polish  property.
The definition of (pre)length space given in our previous work on the bounded case is  generalised to this setting. We also define a notion of Gromov-Hausdorff convergence for Lorentzian metric spaces and prove that  (pre)length  spaces  are GH-stable. It is also shown that our (sequenced) Lorentzian metric spaces bring a natural quasi-uniformity (resp.\ quasi-metric).
 Finally, an explicit comparison with other recent constructions based on  our previous work on bounded Lorentzian metric spaces is presented.
\end{abstract}


\setcounter{secnumdepth}{2}
\setcounter{tocdepth}{2}
\tableofcontents

\section{Introduction}

The General Theory of Relativity is  formulated through the mathematics of smooth Lorentzian manifolds but, in order to prepare the ground for a  possible quantization of spacetime or gravity, it seems worthwhile to develop a mathematics capable of describing more general objects and possibly more abstract frameworks \cite{minguzzi17e}. This direction of investigation  is also motivated by the need of exploring some technical aspects of the standard theory, from the study of gravitational shock waves \cite{podolsky22}, to issues on spacetime inextendibility related to the problem of cosmic censorship \cite{galloway17b,grant18,galloway18b,sbierski20,minguzzi19b}.
Thus, it is important to find  a general concept of  spacetime that could retain at least the  geometrical notions that have proved of interest in the smooth theory.

Looking for a guide in this type of research, it is natural to consider  the example of metric geometry, which might be seen as a generalization of smooth Riemannian geometry \cite{burago01}.
A remarkable feature of metric geometry is its minimalism: the only structure introduced is a two-point function satisfying a few intuitively clear requirements. Other structures, such as the topology, are deduced from the metric. Often, this setting proves too general to establish any interesting result, so one usually restricts the attention to compact (or, say, boundedly compact) metric spaces. A virtue of the theory of compact metric spaces is the existence of a natural notion of distance between such spaces, namely the Gromov-Hausdorff distance and the associated concept of  Gromov-Hausdorff limit. From this perspective, the most natural properties of metric spaces are those  preserved by  Gromov-Hausdorff limits. Notable examples are provided by (pre-)length spaces and Alexandrov spaces (the latter generalize the Riemannian manifolds of bounded sectional curvature).
While it makes sense to speak of Gromov-Hausdorff distance  between compact metric spaces only,  the notion of Gromov-Hausdorff convergence makes sense also for sequences of boundedly compact spaces.

%

Synthetic Lorentzian geometry has a long history, in fact it is arguably as old as relativity itself. Before the discovery of general relativity many researchers approached special relativity with a mindset that took as reference the axiomatic development of Euclidean geometry \cite{robb14}. This same way of looking at relativity persisted over the years.

A more modern axiomatic framework was introduced in Kronheimer and Penrose's 1967 work \cite{kronheimer67}, which focused on the causal (and chronological) structure of spacetime. Later, the idea of characterizing spacetime using both causal structure and a spacetime measure was proposed \cite{myrheim78}. Further refinements of these concepts, particularly in the context of finite sets,  led to one of the  approaches to quantum gravity: Causal Set Theory \cite{surya19}.

Interestingly, the idea of using Lorentzian distance, i.e. (maximal) proper time, as a Lorentzian analog of the distance function in metric geometry appeared in Busemann's work \cite{busemann67}---published the same year as Kronheimer and Penrose's paper. However, Busemann's Lorentzian distance-based approach received little attention until the relatively recent work of Kunzinger and S\"amann \cite{kunzinger18}, who introduced the concept of Lorentzian length spaces in analogy to the theory of length spaces in metric geometry. Their work, along with subsequent studies, introduced many key notions and established powerful results for this class of spaces, mirroring the success of Alexandrov's synthetic geometry of length spaces, as well as some results of smooth Lorentzian geometry.

A  drawback of these approaches, from our perspective, is the technical complexity of their foundational definitions, which introduce auxiliary structures—such as a predefined topology (in Busemann's case) or an auxiliary metric (in Kunzinger and S\"amann's framework)-lacking clear Lorentzian geometric significance. As a result, these theories cannot be regarded as fully minimalist Lorentzian counterparts to metric geometry. Moreover, since Lorentzian geometry is not only a mathematical field but also a potential foundation for future physical theories, it is preferable to avoid mathematical objects without clear operational meaning. We argue, therefore, that despite these advances, the determination of the correct definition  for the Lorentzian analogue of metric space remained an open problem.

 Coming to our previous work \cite{minguzzi22}, a notion of  \emph{bounded} Lorentzian metric space was there introduced  and some properties of this class of objects were studied. The main virtue of the approach  in \cite{minguzzi22} is that in it, similarly to the ordinary metric space theory,  the only structure introduced is a two-point function $d$ satisfying certain conditions. Later,
the recent work by Braun and McCann \cite{braun23b} also motivated by optimal transport in an abstract Lorentzian length space setting appeared. Their framework was more closely connected to our previous work \cite{minguzzi22}, rather than Kunzinger and S\"amann's \cite{kunzinger18}, so we shall discuss that contribution more closely in a following section.

The studies of Gromov-Hausdorff convergence in the Lorentzian case also have some history.
The first ideas that should be mentioned is that  by Noldus \cite{noldus04,noldus04b} on Gromov-Hausdorff distance and convergence specifically for compact globally hyperbolic manifolds with spacelike boundaries    and Noldus and Bombelli \cite{bombelli04}, where the analysis was extended to the non-manifold case. Generally considered as inspiring, they contained useful ideas such as Noldus' strong metric. This concept  was later shown to be equivalent to the distinction metric \cite{minguzzi22}, a tool proved useful in our previous work \cite{minguzzi22} (though not essential, as we shall confirm in this paper).

More recently, M\"uller revisited some of Noldus' constructions, including his strong metric, within a broad  analysis of the Gromov-Hausdorff convergence problem \cite{muller19,muller22,muller22c,muller24}. In the first version of \cite{muller22c}, M\"uller explored the Gromov-Hausdorff metric applied to compact Lorentzian prelength spaces and partially ordered measure spaces, (independently) obtaining results which in the literature are the most similar to our own, at least for what concerns Gromov-Hausdorff convergence and precompactness. Later he refined the work including more material.

Within the framework of Kunzinger and S\"amann \cite{kunzinger18}, Cavaletti and Mondino \cite{cavalletti20,cavalletti22} introduced a concept of measured Gromov-Hausdorff convergence for measured Lorentzian geodesic spaces. They also proved a stability result concerning the timelike curva\-tu\-re--di\-men\-sion condition, see also Braun \cite{braun22}.

A distinct approach was  taken by Sormani and collaborators on the basis of Sormani and Vega's \cite{sormani16} null distance, see for instance later contributions with Sakovich \cite{sakovich22} including the recent work \cite{sakovich24} on definite distances on spaces of Lorentzian spacetimes,  including their new timed-Hausdorff distance.

 This null distance is well-suited to express convergence in cosmological spacetimes, a perspective expanded by Allen and Burtscher \cite{allen19,allen23} (see also \cite{burtscher22} for causality-related results). The idea was taken up by other groups, see e.g.\ the work by Kunzinger and Steinbauer \cite{kunzinger22}. Ultimately, these approaches require a time function to build a  metric and from there, they typically employ ideas from the classical positive signature version of Gromov-Hausdorff convergence.


 Returning to our previous paper \cite{minguzzi22}, the notion of bounded Lorentzian metric spaces allowed for a natural generalization of  Gromov-Hausdorff convergence to the Lorentzian case.
The treatment  was still limited to spaces with bounded timelike diameter, the general case being left for subsequent research.\footnote{For this introduction purposes, the reader can think of the bounded Lorentzian metric spaces as a form of compact  Lorentzian metric spaces though they are more general than that.}



The goal of the present paper is to remove this restriction, while preserving the minimalism of the approach as much as possible.  As we shall see, we shall also clarify the physical meaning of some of the assumptions introduced previously. In metric geometry the theory of unbounded metric spaces builds upon that of compact metric spaces. Similarly here, the theory for unbounded Lorentzian metric spaces makes use of compactness assumptions that will ultimately be understood as the well-known condition of global hyperbolicity.

In our previous work we emphasized that our spaces were meant to describe  rough globally hyperbolic bounded spaces. Having removed the boundedness condition it is not entirely surprising, but still reassuring, that what we get are rough globally hyperbolic spaces. What is non-trivial, and even remarkable, is the fact that, up to some differences at the chronological boundary (introduced mostly for compatibility with the bounded case), our new unbounded spaces are essentially characterized by three  desirable properties  roughly identified as follows: the reverse triangle inequality, the precompactness of chronological diamonds (and continuity of $d$), and a distinction property. 
Still, the so determined family of spaces  is sufficiently large to comprise within it  causets and  smooth globally hyperbolic spacetimes.


The paper is organized as follows. We end this section explaining some notations and conventions used throughout the paper. In the second section we introduce our notion of  Lorentzian metric space and explain the motivation behind it. We also discuss  the inclusion of the bounded Lorentzian metric spaces into this class. The third section is devoted to the topological properties of Lorentzian metric spaces. In the fourth section we introduce the (maximal) causal relation and prove the  existence of  time functions (Lemma \ref{lem:time-func-seq}).  We show that the requirements we impose on the Lorentzian metric space  are essentially equivalent to global hyperbolicity (as there is a time function  and the causally convex hull of compact sets is compact, cf.\ Thm.\ \ref{thm:cnqg}).

 The fifth section introduces the Lorentzian (pre)length spaces and the limit curve theorem (Thm.\ \ref{nner}).  We also explain, for comparison with other works in the literature, how the theory would be expressed using a time separation function $l$ with the allowed value $-\infty$.
   For instance, we provide a comparison of our approach with that used by Braun and McCann in \cite{braun23b}, which, though not concerned with problems of GH-convergence, still develops a non-compact framework based on our previous work on bounded Lorentzian metric spaces \cite{minguzzi22}.

The sixth section is devoted to the notion of Gromov-Hausdorff convergence. Here one needs to take into account that, unlike the metric case in which every point is within a ball of any other point, in the Lorentzian setting one needs several chronological diamonds to cover the whole spacetime and one cannot rely on just  one point (the center) and one number (the radius). This naturally leads to the notion of {\em sequenced Lorentzian metric space}, which is the analog of {\em pointed metric space}. The sequence will  be rather sparse in some cases, depending on the space under study and the structure of its boundary. Gromov-Hausdorff convergence is then defined using these sequenced spaces, the limit being dependent on the sequence chosen for each space. Examples show that changing sequences changes the limit space, much as it happens on metric space theory for GH-convergence of pointed spaces (Example \ref{exmp:bad-renum}). We shall also study to what extent sequences can be rearranged without spoiling convergence (Thm.\ \ref{thm:change-gensec-X}), and how GH-convergence in the sense of bounded Lorentzian metric spaces (developend in our previous work) can be recast as GH-convergence of their chronological interiors (Prop.\ \ref{nner}). Our main result in the section is the stability of the (pre)length condition under GH-limits (Thm.\ \ref{thm:prelength-prserved}).

In the seventh section we start obtaining the generalization of the Kuratowski embedding, which was already introduced in our previous work (Thm.\ \ref{cmqnn}).  In the subsequent subsections we prove that every Lorentzian metric space induces a closed ordered space which is quasi-uniformizable. Remarkably, the quasi-uniformity is canonical and easily constructed from the Lorentzian distance (Thm.\ \ref{thm:qu-lms}). The induced topology and order are those that we canonically associated to a Lorentzian metric space. This structure appears to be new already for the smooth case and allows one to easily prove the limit curve theorem and an equivalence between pointwise and uniform convergence of isocausal curves (Lemma \ref{lem:curves-from-dense}, Thm.\ \ref{thm:d-uni}, Cor.\ \ref{cor:isocaus-uni}).

We also show that in the sequenced case every Lorentzian metric space has a canonically associated quasi-metric (Thm.\ \ref{vnqjp}). The quasi-metric brings with it  the information on both the topology (upon symmetrization) and order (the zero locus). These facts not only show that the definition of Lorentzian metric space has a certain elegance and simplicity, they also pave the way for further developments.

\subsection{Notation and conventions}\label{ssec:notation}
As this paper is a continuation of \cite{minguzzi22}, we adopt most of its notation and terminology. In particular, in most of the paper we work with a set $X$ endowed with a function $d: X\times X \to [0,+\infty)$. We say that $d$ satisfies the reverse triangle inequality if $d(x,z)\geq d(x,y)+d(y,z)$ whenever $d(x,y)$, $d(y,z)>0$. Only in Subsection \ref{ssec:braun} we sometimes refer to this property as the \emph{restricted} reverse triangle inequality (in order to distinguish it from the \emph{extended} inequality which holds for any $x,y,z\in X$ when $d$ has a different codomain including $-\infty$). For each $x\in X$ we introduce the functions $d_x,d^x: X\to [0,+\infty)$ as follows
\[
d_x(y):=d^y(x):=d(x,y),\,\forall x,y\in X.
\]

Given such an $X$ and $d$ we define the chronological  relation $\ll$ through
\[
x\ll y \Leftrightarrow d(x,y)>0.
\]
As shown in \cite{minguzzi22}, the reverse triangle inequality implies that $\ll$ is transitive and antisymmetric. It is convenient to have a symbol for the relation $\ll$ understood as a subset of $X\times X$, so we shall write
\[
I:=\{(x,y)\in X\times X| x\ll y\}.
\]
For $x\in X$ we set
\[
I^+(x):=\{y\in X| x\ll y\},\quad I^-(x):=\{y\in X| y\ll x\},
\]
while for any subset $A\subset X$ we set $I^\pm(A)=\cup_{p\in A} I^\pm(p)$ for the chronological future/past and
\[
I(A):=\cup_{p,q\in A} I^+(p) \cap I^-(q)=I^+(A)\cap I^{-}(A),
\]
for the chronological convex hull.
For $A$ finite, we drop the curly brackets,
\[
I(p^1,\ldots, p^k):=\cup_{\{1\le i,j\le k\}} I^+(p^i) \cap I^-(p^j).
\]

In Sections \ref{sec:causal} and \ref{sec:GH} we use analogues of $I^{\pm}(x)$ and $I(A)$ with $I$ replaced by other relations.

We also define the closed relation
\[
I_{\epsilon}=\{(x,y)\in X\times X| d(x,y)\geq \epsilon\}.
\]
Note that if $d$ satisfies the reverse triangle inequality, then
\[
(x,y)\in I_{\epsilon}, \, (y,z)\in I_{\epsilon'} \Rightarrow (x,z)\in I_{\epsilon+\epsilon'},\,\forall x,y,z\in X,\,\forall \epsilon,\epsilon'>0.
\]
Taking into account the obvious property $I_{\epsilon}(x,y)\subset I_{\epsilon'}(x,y)$ for every $x,y\in X$ and $\epsilon>\epsilon'>0$, we conclude that $I_{\epsilon}$ is a transitive relation.

We also adopt the following conventions. The subset symbol $\subset$ is reflexive. The natural numbers $\mathbb{N}$ do not contain 0. A {\em preorder} is a reflexive and transitive relation. An {\em order} is an antisymmetric preorder.
\section{Definition and interpretation}
In this section we give the formal definition of the object we deal with, discuss its meaning and the alternative versions.
We start from the following natural generalization of Definition 1.1 in \cite{minguzzi22}.
\begin{definition}\label{cg-lms}
    A \emph{Lorentzian metric space} $(X,d)$ is a set $X$ endowed with a function $d: X\times X\to [0,\infty)$,  called {\em Lorentzian distance}, with the following properties:
    \begin{enumerate}
        \item[(i)] $d$ satisfies the reverse triangle inequality;
        \item[(ii)] There is a topology $T$ on $X$ such that $d$ is continuous in the product topology and, for every $x,y\in X$ and every $\epsilon>0$, the set $I_{\epsilon}\cap \left(\overline{I(x,y)} \times \overline{I(x,y)}\right)$ is compact;
        \item[(iii)] $d$ distinguishes points.
        \label{countGen}
    \end{enumerate}
\end{definition}
From now on by property (i), (ii) or (iii), we always mean the corresponding property of Definition \ref{cg-lms}.

The property (ii) may look a bit artificial due to the presence of the sets $I_{\epsilon}$. To clarify its motivation, let us first consider the case of bounded Lorentzian metric spaces, where (ii) is replaced by continuity of $d$ and  compactness of $I_\epsilon$
\begin{example}\label{exmp:blms}
    Any bounded Lorentzian metric space is a Lorentzian metric space, since $(\overline{I(x,y)}\times \overline{I(x,y)}) \cap I_{\epsilon}\subset I_{\epsilon}$ is compact as a closed subset of a compact set.
\end{example}
The goal of the remainder of this section is to understand the meaning of property (ii) beyond the bounded case. We provide the following sufficient condition.
\begin{lemma}\label{lem:chron-diamonds-imply-ii}
    Let $(X,d)$ satisfy property (i), and let $T$ be some topology of $X$ such that $d$ is continuous. Suppose that for every $x,y\in X$ the chronological diamond $I(x,y)$ is relatively compact. Then for every $x,y\in X$ and $\epsilon>0$ the set
    \[(\overline{I(x,y)}\times \overline{I(x,y)})\cap I_{\epsilon}\]
    is compact. If, in addition to that, $(X,d)$ satisfies property (iii), then it is a Lorentzian metric space.
\end{lemma}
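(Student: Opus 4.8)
The plan is to reduce the compactness assertion to the elementary fact that a closed subset of a compact space is compact. The three ingredients are the hypothesis that each chronological diamond is relatively compact, the stability of compactness under finite products, and the closedness of $I_\epsilon$ coming from the continuity of $d$.

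First I would fix $x,y\in X$ and $\epsilon>0$. By the relative compactness hypothesis, $\overline{I(x,y)}$ is compact in $(X,T)$, and since a finite product of compact spaces is compact, $\overline{I(x,y)}\times\overline{I(x,y)}$ is compact in the product topology on $X\times X$. Next I would observe that $I_\epsilon$ is closed: indeed $I_\epsilon = d^{-1}([\epsilon,\infty))$, the set $[\epsilon,\infty)$ is closed in $[0,\infty)$, and $d$ is continuous in the product topology by hypothesis, so $I_\epsilon$ is closed in $X\times X$. Consequently $(\overline{I(x,y)}\times\overline{I(x,y)})\cap I_\epsilon$ is a closed subset of the compact set $\overline{I(x,y)}\times\overline{I(x,y)}$, hence compact. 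This establishes the first assertion.

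For the second assertion I would simply collect the three defining properties of Definition \ref{cg-lms}. Property (i) holds by hypothesis, property (iii) holds by the additional hypothesis, and property (ii) holds because $d$ is continuous with respect to $T$ (given) while the required compactness of $(\overline{I(x,y)}\times\overline{I(x,y)})\cap I_\epsilon$ has just been verified for all $x,y\in X$ and all $\epsilon>0$. Therefore $(X,d)$ is a Lorentzian metric space.

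I do not anticipate a genuine obstacle in this argument; it is essentially a one-line topological observation packaged with a verification of the axioms. The only point deserving a moment's care is that \emph{continuity} of the two-variable function $d$ is meant with respect to the product topology on $X\times X$, which is precisely the sense needed to conclude that $I_\epsilon$ is closed there and hence that the intersection in question is a closed subset of a compact set.
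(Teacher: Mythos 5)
Your proof is correct and follows essentially the same route as the paper's: compactness of $\overline{I(x,y)}\times\overline{I(x,y)}$ as a product of compact sets, closedness of $I_\epsilon=d^{-1}([\epsilon,\infty))$ from continuity of $d$, hence compactness of the intersection, and then a direct verification of the axioms of Definition \ref{cg-lms}. Your write-up is if anything slightly more explicit than the paper's about why $I_\epsilon$ is closed.
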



\begin{proof}
    If $\overline{I(x,y)}$ is compact, then the same is true for $\overline{I(x,y)}\times \overline{I(x,y)}$. Therefore, $(\overline{I(x,y)}\times \overline{I(x,y)})\cap I_{\epsilon}$ is a closed subset of a compact set, so it is compact. The last statement follows directly from Definition \ref{cg-lms}.
\end{proof}
The lemma above leads to another important class of examples of Lorentzian metric spaces.
\begin{proposition}\label{prop:globhyp-is-LMS}
    Let $(M,g)$ be a globally hyperbolic (smooth) spacetime  and let $d: M\times M \to [0,+\infty)$ be the associated Lorentzian distance. Then  $(M,d)$ is a Lorentzian metric space. In particular, the manifold topology of $M$ satisfies property (ii).
\end{proposition}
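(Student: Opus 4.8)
The plan is to verify the hypotheses of Lemma~\ref{lem:chron-diamonds-imply-ii}, since that lemma reduces the task of checking property (ii) to checking that $d$ is continuous in the manifold topology and that every chronological diamond $I(x,y)$ is relatively compact; property (i) and property (iii) must then be checked separately to conclude via Definition~\ref{cg-lms}. The first step is to recall that on a smooth Lorentzian manifold the Lorentzian distance (time separation) function $d(x,y)$ equals the supremum of the Lorentzian lengths of future-directed causal curves from $x$ to $y$ (and $0$ if no such curve exists). On a \emph{globally hyperbolic} spacetime this function is known to be finite and continuous; this is a standard result (Avez--Seifert), so I would simply cite it to establish continuity of $d$ in the manifold topology $T$.

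Next I would establish property (i): the reverse triangle inequality $d(x,z)\ge d(x,y)+d(y,z)$ whenever $d(x,y),d(y,z)>0$. Here $d(x,y)>0$ means there is a future-directed timelike curve from $x$ to $y$, and concatenating a near-maximal causal curve from $x$ to $y$ with one from $y$ to $z$ produces a causal curve from $x$ to $z$ whose length is at least $d(x,y)+d(y,z)-\varepsilon$ for any $\varepsilon>0$; taking the supremum gives the inequality. This is the classical reverse triangle inequality for the Lorentzian distance and again can largely be cited.

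The central step, and the place I expect the only real work, is relative compactness of the chronological diamonds $I(x,y)=I^+(x)\cap I^-(y)$. This is precisely where global hyperbolicity enters in its most familiar form: in a globally hyperbolic spacetime the \emph{causal} diamonds $J^+(p)\cap J^-(q)$ are compact for all $p,q$. I would first observe that $\overline{I(x,y)}$ (closure in the manifold topology) is contained in the causal diamond $J^+(x)\cap J^-(y)$, because $I^{\pm}$ are open with $\overline{I^{\pm}(x)}\subset J^{\pm}(x)$ under global hyperbolicity (indeed $\overline{I^{+}(x)}=J^{+}(x)$ when $J^{\pm}$ are closed, as holds here). Thus $\overline{I(x,y)}$ is a closed subset of the compact set $J^+(x)\cap J^-(y)$, hence compact, so $I(x,y)$ is relatively compact. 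The possible subtlety is the interaction between the two notions of closure --- closure with respect to $T$ versus the causal-theoretic closures --- and the fact that $I(x,y)$ as defined in the excerpt is the union $\bigcup I^+(p)\cap I^-(q)$ over $p,q$ in the two-point set, which for a pair reduces to $I^+(x)\cap I^-(y)$ together with the possibly nonempty pieces $I^+(x)\cap I^-(x)$ etc.; I would note these extra pieces are empty or likewise contained in a finite union of compact causal diamonds, so relative compactness is unaffected.

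Finally, property (iii), that $d$ distinguishes points, follows from the fact that a globally hyperbolic spacetime is strongly causal, hence distinguishing: distinct points $x\ne y$ have distinct chronological pasts or futures, and since $d_x$ and $d^x$ encode $I^+(x)$ and $I^-(x)$ through their positivity sets, distinct points are separated by some $d(\cdot,\cdot)$ value. With (i), (ii) via Lemma~\ref{lem:chron-diamonds-imply-ii}, and (iii) all in hand, Definition~\ref{cg-lms} yields that $(M,d)$ is a Lorentzian metric space, and since $T$ here is the manifold topology, the manifold topology witnesses property (ii), giving the final assertion.
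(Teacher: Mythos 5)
Your proposal is correct and follows essentially the same route as the paper: establish continuity, finiteness, and the reverse triangle inequality of the Lorentzian distance on a globally hyperbolic spacetime as standard facts, note that chronological diamonds are relatively compact (via compactness of causal diamonds) and that the distinguishing property of the causal structure yields property (iii), and then conclude by Lemma~\ref{lem:chron-diamonds-imply-ii}. The paper's proof is only a sketch citing its predecessor, so your added details (the closure argument $\overline{I(x,y)}\subset J^+(x)\cap J^-(y)$ and the handling of the extra pieces in the union defining $I(x,y)$) are consistent refinements rather than a different approach.
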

\begin{proof}
    This fact essentially follows from the proof of \cite[Theorem 2.5]{minguzzi22}, so we only sketch the main ideas.
    It is known that the Lorentzian distance function on globally hyperbolic spacetimes is finite, continuous in the manifold topology and satisfies the reverse triangle inequality. In globally hyperbolic spacetimes chronological diamonds are relatively compact and the points distinguished by the chronological order $\ll$ (and thus by the distance function), so by Lemma \ref{lem:chron-diamonds-imply-ii} $(X,d)$ is a Lorentzian metric space.
\end{proof}
The converse of Lemma \ref{lem:chron-diamonds-imply-ii} does not hold in general. There are bounded Lorentzian metric spaces with non-relatively compact chronological diamonds (e.g.\ sets of the form $J^+(p)\cap J^-(q)$ in Minkowski spacetime with the rim $E^+(p)\cap E^-(q)$ removed). Still, as we shall see in a moment, this can happen only if the space has a non-empty chronological boundary.


Let us recall the following notions introduced in \cite{minguzzi22}:

\begin{definition}\label{def:chron-bound}
     Let $(X,d)$ be a set endowed with a function $d: X\times X \rightarrow [0,+\infty)$. Its \emph{future boundary} is the set
    \[
    X^{+}=\{x\in X| d(x,y)=0,\, \forall y\in X\},
    \]
    and  its \emph{past boundary} is the set
    \[
    X^{-}=\{x\in X| d(y,x)=0,\, \forall y\in X\}.
    \]
    We refer to the union of these sets as the \emph{chronological boundary} of $X$.
    We say that $X$ \emph{has no chronological boundary} if $X^{+}=X^{-}=\varnothing$. Finally, we note that $I(X)=X\setminus (X^+\cup X^-)$. It is convenient to refer to the set $I(X)$ as the \emph{chronological interior} of $X$.
    \end{definition}
    The following remarks are in order.
    \begin{remark}
    Since $d$ distinguishes points, $X^{+}\cap X^{-}$ can contain at most one point (as in the bounded case, it might be called the {\em  spacelike boundary}).
    \end{remark}
    \begin{remark}\label{rmk:IntIsOpen}
         Assume that $X$ is a topological space, and $d$ is continuous. Then $X^+$ and $X^-$ are closed sets, and $I(X)$ is open.
    \end{remark}

    Now we are ready to give an alternative formulation of property (ii) for a large class of spaces.
    \begin{proposition}\label{prop:chdiamonds-compact}
    Let $X$ be a set and $d:X\times X\to [0,+\infty)$ a function satisfying the reverse triangle inequality. Suppose that $X^{+}=\varnothing$ or $X^{-}=\varnothing$. Let $T$ be a topology of $X$ such that $d$ is continuous in the product topology. Then the following statements are equivalent:
    \begin{enumerate}
        \item For every $p,q\in X$ and $\epsilon>0$ the set $\left(\overline{I(p,q)}\times \overline{I(p,q)}\right)\cap I_{\epsilon}$ is compact;
        \item All chronological diamonds are relatively compact.
    \end{enumerate}
    \end{proposition}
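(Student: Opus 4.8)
Since the implication (2) $\Rightarrow$ (1) is precisely the content of Lemma~\ref{lem:chron-diamonds-imply-ii}, the plan is to concentrate on (1) $\Rightarrow$ (2). By replacing $d(x,y)$ with $d(y,x)$ -- a substitution that interchanges $X^+$ with $X^-$, replaces each diamond $I(p,q)$ by $I(q,p)$, preserves continuity of $d$, and leaves both statements (1) and (2) invariant -- I may assume without loss of generality that $X^- = \varnothing$, i.e.\ that every point has a nonempty chronological past. Fix $p,q \in X$; if $I(p,q)=\varnothing$ there is nothing to prove, so I assume $p \ll q$.

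The core idea is to manufacture, from the hypothesis $X^-=\varnothing$, a uniform lower bound for the distance from a fixed auxiliary point to every point of the diamond, and then read off relative compactness from the compact sets supplied by (1). Concretely, applying $X^-=\varnothing$ twice I would pick $a_2 \ll a_1 \ll p$ and set $\epsilon_0 := d(a_1,p) > 0$. The reverse triangle inequality applied to $a_1 \ll p \ll z$ then gives $d(a_1,z) \geq d(a_1,p)+d(p,z) \geq \epsilon_0$ for every $z \in I(p,q)$. Passing to the enlarged diamond $D := I(a_2,q)$, transitivity yields $I(p,q) \subseteq D$, while the extra step down ensures $a_1 \in D$ (indeed $a_2 \ll a_1$ and $a_1 \ll q$). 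Hence, writing $\pi_2$ for the projection onto the second factor, each $z \in I(p,q)$ gives a pair $(a_1,z) \in K := (\overline{D} \times \overline{D}) \cap I_{\epsilon_0}$, which is compact by (1), so that $I(p,q) \subseteq \pi_2(K)$.

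It then remains to promote ``$I(p,q)\subseteq \pi_2(K)$'' to ``$\overline{I(p,q)}$ is compact''. Here I would introduce the set $C := \overline{D} \cap d_{a_1}^{-1}([\epsilon_0,+\infty))$, which is closed because $d_{a_1}$ is continuous. The estimate above shows $I(p,q) \subseteq C$, and conversely every $z \in C$ produces $(a_1,z) \in K$, so $C \subseteq \pi_2(K)$. As $C$ is closed this gives $\overline{I(p,q)} \subseteq C \subseteq \pi_2(K)$; and since $\pi_2(K)$ is compact (a continuous image of a compact set) while $\overline{I(p,q)}$ is closed in $X$, the latter is a closed subset of the compact space $\pi_2(K)$, hence compact -- with no separation axiom required. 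The case $X^+=\varnothing$ is handled by the time-dual argument, choosing $q \ll b_1 \ll b_2$, bounding $d^{b_1}(z)=d(z,b_1)\geq d(q,b_1)=:\epsilon_0$ on $I(p,q)$, using $D:=I(p,b_2)$, and projecting onto the first factor.

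The main obstacle, and the place where the boundary hypothesis is genuinely used, is forcing \emph{both} entries of the distance pair into a single diamond closure: the compactness granted by (1) only controls pairs inside $\overline{D}\times\overline{D}$, so the auxiliary point witnessing the lower bound must itself lie in $\overline{D}$. This is exactly why one descends two steps below $p$ -- a single step would leave the witness on the past rim of the diamond, outside $D$ -- and it is precisely the emptiness of $X^-$ (or $X^+$) that guarantees such a chain exists; the ``diamond with the rim removed'' example shows the equivalence fails when both chronological boundaries are nonempty. A secondary technical point, resolved by the closed set $C$, is transferring compactness from $I(p,q)$ to its closure without assuming $X$ to be Hausdorff.
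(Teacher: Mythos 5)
Your proof is correct and follows the same core strategy as the paper's: a two-step chronological chain (you descend twice below $p$; the paper works dually, ascending twice above $q$), an auxiliary witness point whose Lorentzian distance to every point of $I(p,q)$ is bounded below by the length $\epsilon_0$ of the first step, an enlarged diamond $D$ containing both the witness and $I(p,q)$, and projection of the compact set supplied by statement (1). The one place you genuinely diverge is the last step. The paper proves that the projected set is itself closed by observing that the compact set sits inside $X\times B$ with $B$ compact, so the projection is proper and hence a closed map (citing Bourbaki), and then takes the closure of $I(p,q)$ inside that closed compact set. You instead sandwich $\overline{I(p,q)}$ between the manifestly closed set $C=\overline{D}\cap d_{a_1}^{-1}([\epsilon_0,+\infty))$ and the compact set $\pi_2(K)$, and conclude with the elementary fact that a closed subset of $X$ contained in a compact subset is compact. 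Your variant dispenses with the properness machinery and, as you correctly note, needs no separation axiom; both arguments are sound. Your preliminary reduction to the single case $X^-=\varnothing$ via the time reversal $d(x,y)\mapsto d(y,x)$ is also legitimate (one checks that it swaps $X^{\pm}$, fixes the diamonds, and transposes $I_\epsilon$, so both statements are invariant); the paper simply writes out the second case explicitly instead.
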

    \begin{proof}
The second statement implies the first one by Lemma \ref{lem:chron-diamonds-imply-ii}, so we concentrate on proving the converse.

        Assume that the first statement holds and consider $p,q\in X$. We intend to show that $\overline{I(p,q)}$ is compact in $T$. Note that if neither $p\ll q$ nor $q\ll p$, then $I(p,q)=\varnothing$ and there is nothing to prove. So, without  loss of generality we can assume $p\ll q$.

        Let us assume for definiteness that $X^{+}$ is empty. Then we can find $q',q'' \in X$ such that $q\ll q'\ll q''$. Define $\epsilon=d(q,q')>0$. Let $\pi_1$ and $\pi_2$ be the canonical projections $X\times X\to X$ on the first and the second factors respectively. We claim that
        \begin{equation}\label{eqn:prop:chdiamonds-compact-Adef}
            I(p,q)\subset A:=\pi_1\left(\left(\overline{I(p,q'')}\times \overline{I(p,q'')}\right)\cap I_{\epsilon}\right).
        \end{equation}

        For that take $x\in I(p,q)$. We have
        \[
        d(x,q')\geq d(x,q)+d(q,q')> \epsilon,
        \]
        so $(x,q')\in I_{\epsilon}$. Taking into account that $x,q'\in I(p,q'')$ we get that $x\in A$. We also note that $A$ is compact, because it is the image under a continuous map of a compact set in $X\times X$. For the same reason the set
        \[
         B=\pi_2\left(\left(\overline{I(p,q'')}\times \overline{I(p,q'')}\right)\cap I_{\epsilon}\right)
        \]
        is compact. Since
        \[
        \left(\overline{I(p,q'')}\times \overline{I(p,q'')}\right)\cap I_{\epsilon}\subset X\times B,
        \]
         $\pi_1$ in (\ref{eqn:prop:chdiamonds-compact-Adef}) can be considered as the projection $X\times B\to X$, so compactness of $B$ implies that $\pi_1$ is  proper (in the terminology of \cite{bourbaki66}), and thus a closed map \cite[10.2, Cor.\ 5 and 10.1, Prop.\ 1]{bourbaki66}. So, $A$ is closed, $\overline{I(p,q)}\subset A$  and thus $\overline{I(p,q)}$ is compact.

        Similarly, if $X^{-}=\varnothing$, there are $p',p''\in X$ such that $p''\ll p'\ll p$. Setting $\epsilon=d(p',p)$ and reproducing the argument above, we get that
        \[
            I(p,q)\subset A:=\pi_2\left(\left(\overline{I(p'',q)}\times \overline{I(p'',q)}\right)\cap I_{\epsilon}\right).
        \]
        Arguing along the same lines, one can show that $A$ is a closed compact set, so any of its subsets is relatively compact.
    \end{proof}

    \begin{remark}
        In the next section (see Lemma \ref{lem:Hsdf}) we shall see that the topology of a Lorentzian metric spaces is always Hausdorff, so any compact set is automatically closed and the implication in the proof of Proposition \ref{prop:chdiamonds-compact} can be simplified. The proof of Lemma \ref{lem:Hsdf} relies on property (iii). Although in this paper we do not consider spaces with indistinguishable points, it is still worth avoiding usage of this property if not needed. In particular, by examining the proofs one may verify that Corollary \ref{crl:prop-iip} and Theorem \ref{thm:propii-via-chdiamonds}, as well as Theorem \ref{thm:lms-via-emeralds} can be straightforwardly generalised to a more general setting without imposing property (iii).
    \end{remark}

    For completeness, we provide a generalisation of Proposition \ref{prop:chdiamonds-compact} that applies also to the cases when neither of the chronological boundaries is empty.
    \begin{proposition}
        Let $(X,d)$ be a Lorentzian metric space. Suppose that $A\subset I(I(X))$ is a finite set. Then $\overline{I(A)}$ is compact in any topology, satisfying the requirements of property (ii).
    \end{proposition}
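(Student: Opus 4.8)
The plan is to reduce the claim to the compactness of a single chronological diamond and then recycle the argument already used in the proof of Proposition \ref{prop:chdiamonds-compact}. Writing $A=\{p^1,\ldots,p^k\}$, by definition $I(A)=\bigcup_{1\le i,j\le k}\big(I^+(p^i)\cap I^-(p^j)\big)=\bigcup_{i,j} I(p^i,p^j)$ is a finite union of chronological diamonds; since the closure of a finite union is the union of the closures, $\overline{I(A)}=\bigcup_{i,j}\overline{I(p^i,p^j)}$, and a finite union of compact sets is compact. So I would first observe that it suffices to show $\overline{I(p,q)}$ is compact for each pair $p,q\in A$.

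Fixing such a pair, if $I(p,q)=\varnothing$ (which occurs whenever $p\not\ll q$, e.g.\ for $p=q$ by irreflexivity of $\ll$) there is nothing to prove, so I assume $p\ll q$. The key move, and the only genuinely new input, is to notice that the hypothesis $A\subset I(I(X))$ supplies exactly the ``padding'' points that the proof of Proposition \ref{prop:chdiamonds-compact} extracted from the global emptiness of $X^+$. Indeed $q\in I(I(X))\subset I^-(I(X))$ furnishes some $b\in I(X)$ with $q\ll b$, and since $b\in I(X)\subset I^-(X)$ there is a further $b'$ with $b\ll b'$; setting $q':=b$ and $q'':=b'$ yields $q\ll q'\ll q''$.

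With this padding in hand I would repeat the $\pi_1$-argument essentially verbatim. Put $\epsilon:=d(q,q')>0$ and
\[
A':=\pi_1\Big(\big(\overline{I(p,q'')}\times\overline{I(p,q'')}\big)\cap I_\epsilon\Big).
\]
For any $x\in I(p,q)$ one has $x\ll q$, so the reverse triangle inequality gives $d(x,q')\ge d(x,q)+d(q,q')>\epsilon$, i.e.\ $(x,q')\in I_\epsilon$; moreover $p\ll x\ll q\ll q''$ and $p\ll q'\ll q''$ place both $x$ and $q'$ in $I(p,q'')$, whence $(x,q')\in\big(\overline{I(p,q'')}\times\overline{I(p,q'')}\big)\cap I_\epsilon$ and $x=\pi_1(x,q')\in A'$, so $I(p,q)\subset A'$. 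By property (ii) the set $\big(\overline{I(p,q'')}\times\overline{I(p,q'')}\big)\cap I_\epsilon$ is compact, hence its continuous image $A'$ is compact. To close the argument I would set $B':=\pi_2\big((\overline{I(p,q'')}\times\overline{I(p,q'')})\cap I_\epsilon\big)$, which is compact for the same reason, use the inclusion $(\overline{I(p,q'')}\times\overline{I(p,q'')})\cap I_\epsilon\subset X\times B'$ to view $\pi_1$ as the projection $X\times B'\to X$, and invoke properness of that projection to conclude it is a closed map and hence that $A'$ is closed. Then $\overline{I(p,q)}\subset A'$ is a closed subset of a compact set, so it is compact.

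I expect the extraction of the padding points $q',q''$ from $A\subset I(I(X))$ to be the main (indeed essentially the only) obstacle: once one recognises that this inclusion is the precise local substitute for the global condition $X^+=\varnothing$, the remainder is a line-by-line transcription of the earlier proof. I would also note that only the future padding of $q$ is used, so $A\subset I^-(I(X))$ would already suffice, the symmetric past padding being available but unnecessary; and that the closedness of $A'$ may alternatively be deduced from the Hausdorffness of $T$ (Lemma \ref{lem:Hsdf}) instead of from properness, should that be preferred.
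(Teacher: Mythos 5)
Your proof is correct and follows essentially the same route as the paper's: reduce to a single diamond $\overline{I(p,q)}$, use $q\in I(I(X))$ to extract $q',q''$ with $q\ll q'\ll q''$, and then rerun the projection argument from Proposition \ref{prop:chdiamonds-compact} verbatim. Your side remarks (that $A\subset I^-(I(X))$ already suffices, and that closedness of $A'$ also follows from Hausdorffness) are accurate but not needed.
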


    \begin{proof}
        It is enough to show that if $p,q\in I(I(X))$, then $\overline{I(p,q)}$ is relatively compact. Since $q\in I(I(X))$, there are $q',q''\in X$ such that $q\ll q'\ll q''$. The rest of the proof reproduces the one of Proposition \ref{prop:chdiamonds-compact}.
    \end{proof}

    Let us return to the case of at least one chronological boundary being empty. Combining Lemma \ref{lem:chron-diamonds-imply-ii} with Proposition \ref{prop:chdiamonds-compact} one can conclude that for such spaces relative compactness of chronological diamonds is equivalent to property (ii). For future use we state this a bit more generally.

    \begin{corollary}\label{crl:prop-iip}
    Let $(X,d)$ satisfy the property (i) of Definition \ref{cg-lms}. Assume that $X^{+}=\varnothing$ or $X^{-}=\varnothing$. Let $\mathscr{F}$ be a family of subsets of $X$ such that:
    \begin{enumerate}
        \item For each $F\in \mathscr{F}$ there is a finite set $A \in X$ such that
        $F\subset I(A)$;
        \item For every $p,q\in X$ there is $F\in\mathscr{F}$ such that $I(p,q)\subset F$.
    \end{enumerate}
    Then the condition (ii) of Definition \ref{cg-lms} is equivalent to the following:
    \begin{enumerate}
        \item[(ii')]  There is a topology $T$ of $X$ such that $d$ is continuous w.r.t.\ the product topology of $T$ and all members of $\mathscr{F}$ are relatively compact.
    \end{enumerate}

\end{corollary}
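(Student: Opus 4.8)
The plan is to reduce the whole statement to Proposition \ref{prop:chdiamonds-compact}. That proposition already shows, for a fixed topology $T$ making $d$ continuous and under the hypothesis $X^{+}=\varnothing$ or $X^{-}=\varnothing$, that the compactness condition appearing in (ii) is equivalent to relative compactness of all chronological diamonds $I(p,q)$. Since both (ii) and (ii$'$) are existentially quantified over $T$, it suffices to fix an arbitrary topology $T$ on $X$ in which $d$ is continuous and to prove that all chronological diamonds are relatively compact in $T$ if and only if all members of $\mathscr{F}$ are relatively compact in $T$. Quantifying over $T$ and invoking Proposition \ref{prop:chdiamonds-compact} then yields the equivalence of (ii) and (ii$'$). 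Throughout I would use two elementary facts requiring no separation axiom: a subset of a relatively compact set is relatively compact (its closure is a closed subset of the compact closure of the ambient set), and a finite union of relatively compact sets is relatively compact (the closure distributes over finite unions, and a finite union of compact sets is compact).

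For the implication ``diamonds relatively compact $\Rightarrow$ members of $\mathscr{F}$ relatively compact'' I would argue as follows. Let $F\in\mathscr{F}$ and use condition (1) to pick a finite set $A=\{p^1,\dots,p^k\}\subset X$ with $F\subset I(A)$. By the defining convention $I(A)=\bigcup_{1\le i,j\le k} I^+(p^i)\cap I^-(p^j)$, and each summand satisfies $I^+(p^i)\cap I^-(p^j)\subset I(p^i,p^j)$, a chronological diamond. Hence $I(A)$ is contained in the finite union $\bigcup_{1\le i,j\le k} I(p^i,p^j)$ of relatively compact sets, so $I(A)$ is relatively compact and therefore so is its subset $F$. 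The reverse implication is even more direct: given $p,q\in X$, condition (2) produces $F\in\mathscr{F}$ with $I(p,q)\subset F$; as $F$ is relatively compact, so is the diamond $I(p,q)$.

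I do not expect a genuine obstacle here: the corollary is essentially a repackaging of Proposition \ref{prop:chdiamonds-compact}, with conditions (1) and (2) serving precisely to sandwich the class of chronological diamonds between the members of $\mathscr{F}$ and their finite unions. The only points needing a little care are keeping the witnessing topology $T$ fixed while passing through the equivalence (so that the final existential quantification over $T$ is legitimate) and noting that, as emphasised in the remark preceding the statement, neither elementary compactness fact invokes property (iii), so the argument stays valid without assuming that $d$ distinguishes points.
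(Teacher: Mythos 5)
Your proposal is correct and follows essentially the same route as the paper's proof: both reduce the equivalence to Proposition \ref{prop:chdiamonds-compact} by showing, for a fixed topology $T$ in which $d$ is continuous, that relative compactness of all chronological diamonds is equivalent to relative compactness of all members of $\mathscr{F}$, using that subsets and finite unions of relatively compact sets are relatively compact. Your additional remarks on keeping $T$ fixed through the equivalence and on avoiding property (iii) are accurate but do not change the substance of the argument.
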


The trivial example is the family $\mathscr{F}$ consisting of all chronological diamonds.

\begin{proof}
    In every topological space a subset of a relatively compact set is relatively compact. If the elements of $\mathscr{F}$ are relatively compact then the chronological diamonds are relatively compact. Conversely, if the chronological diamonds are relatively compact then all sets of the form $I(A)$, where $A\subset X$ is finite, are relatively compact (because $I(A)$ is the union of  finitely many chronological diamonds), which implies that all sets of $\mathscr{F}$  are relatively compact.

    Thus (ii') is equivalent to the chronological diamonds being relatively compact which, by  Prop.\ \ref{prop:chdiamonds-compact}, is equivalent to (ii).
\end{proof}

    More useful families can be defined with the help of the following concept which will play an important role throughout the paper.
    \begin{definition}\label{def:gen-set}
        Let $X$ be a set and $d: X\times X \to [0,+\infty)$  a function satisfying the reverse triangle inequality. We say that $\mathscr{G}\subset X$ \emph{generates} $X$ (or is a \emph{generating set} for $X$) if $X=I(\mathscr{G})$.
    \end{definition}
    In other words, $\mathscr{G}$ is a generating set if for any $x\in X$ there are $p,q\in\mathscr{G}$ such that $p\ll x\ll q$.
    Note that no assumption on the cardinality of $\mathscr{G}$ was made, although it is natural to take it as small as possible. We also note that the existence of a generating set should not be taken for granted.
    \begin{remark}\label{rmk:genset=no-bnd}
       The existence of a generating set implies  emptiness  of the chronological boundary. Indeed, if $(X,d)$ admits a generating set $\mathscr{G}$, then $X=I(\mathscr{G})\subset I(X)\subset X$, so $X=I(X)$, i.e. $X^+\cup X^-=\varnothing$. Conversely, if the chronological boundary of $X$ is empty, then $X=I(X)$, and $X$ is itself is a generating set.
    \end{remark}

    Then we can summarize the main results of this section into the following theorem.
    \begin{theorem}\label{thm:propii-via-chdiamonds}
        Let $X$ be a set endowed with a function $d: X\times X\rightarrow [0,+\infty)$ satisfying properties (i) and (iii) of Definition \ref{cg-lms}. Suppose that $X^+=\varnothing$ or $X^{-}=\varnothing$. Then the following statements are equivalent:
        \begin{enumerate}
            \item $(X,d)$ is a Lorentzian metric space;
            \item There is a topology on $X$ such that $d$ is continuous and for each $p,q\in X$ the set $\overline{I(p,q)}$ is compact.
        \end{enumerate}
        Moreover, if $X$ admits a generating set $\mathscr{G}\subset X$, then either of the statements above is equivalent to the following:
                \begin{enumerate}
            \item[3.] There is a topology on $X$ such that $d$ is continuous and for each $p,q\in \mathscr{G}$ the set $\overline{I(p,q)}$ is compact.
        \end{enumerate}
    \end{theorem}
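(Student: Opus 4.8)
The plan is to reduce everything to Proposition \ref{prop:chdiamonds-compact} together with some bookkeeping about the generating set. For the equivalence $1 \Leftrightarrow 2$, I would first note that properties (i) and (iii) are assumed throughout, so statement~1 (that $(X,d)$ is a Lorentzian metric space) amounts precisely to property (ii), which is itself the existential assertion that there is a topology $T$ making $d$ continuous and the sets $I_\epsilon\cap(\overline{I(x,y)}\times\overline{I(x,y)})$ compact. Now I would fix an arbitrary topology $T$ for which $d$ is continuous. The standing hypothesis $X^+=\varnothing$ or $X^-=\varnothing$ is exactly what is needed to apply Proposition \ref{prop:chdiamonds-compact}, which tells me that, for this particular $T$, the $I_\epsilon$-compactness condition is equivalent to relative compactness of all chronological diamonds, i.e.\ to $\overline{I(p,q)}$ being compact for all $p,q\in X$. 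Passing to the existential quantifier over topologies, the existence of a $T$ realizing property (ii) is then equivalent to the existence of a $T$ realizing statement~2, which yields $1\Leftrightarrow 2$.

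For the generating-set part, the implication $2\Rightarrow 3$ is immediate, since $\mathscr{G}\subset X$ forces compactness of $\overline{I(p,q)}$ for $p,q\in\mathscr{G}$ as a special case. The content lies in $3\Rightarrow 2$, and here I would exploit the defining property $X=I(\mathscr{G})=I^+(\mathscr{G})\cap I^-(\mathscr{G})$: every $x\in X$ admits some $p\in\mathscr{G}$ with $p\ll x$, and every $y\in X$ admits some $q\in\mathscr{G}$ with $y\ll q$. Given arbitrary $x,y\in X$, I choose such $p,q\in\mathscr{G}$. If $z\in I(x,y)$, then $x\ll z\ll y$, so by transitivity of $\ll$ (a consequence of (i)) one has $p\ll x\ll z$ and $z\ll y\ll q$, whence $z\in I(p,q)$. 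This gives the inclusion $I(x,y)\subset I(p,q)\subset\overline{I(p,q)}$, and taking closures $\overline{I(x,y)}\subset\overline{I(p,q)}$. Since $\overline{I(p,q)}$ is compact by statement~3 and $\overline{I(x,y)}$ is a closed subset of it, $\overline{I(x,y)}$ is compact; as the topology is unchanged, this is exactly statement~2.

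The argument is essentially mechanical once the containment $I(x,y)\subset I(p,q)$ is spotted, and I do not expect a genuine obstacle. The one point requiring care is the handling of the existential quantifier over topologies in $1\Leftrightarrow 2$: Proposition \ref{prop:chdiamonds-compact} is stated for a \emph{fixed} $T$, so it must be applied topology-by-topology rather than to the bare statement ``there exists $T$''. It is also worth emphasizing that the step ``a closed subset of a compact set is compact'' requires no separation axiom, so the Hausdorffness of the topology (established only later via (iii)) is not needed for this theorem, consistent with the remark following Proposition \ref{prop:chdiamonds-compact}.
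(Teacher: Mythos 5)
Your proposal is correct and follows essentially the same route as the paper: the equivalence $1\Leftrightarrow 2$ is obtained by applying Proposition \ref{prop:chdiamonds-compact} topology-by-topology, and the generating-set part rests on the containment $I(x,y)\subset I(p,q)$ with $p,q\in\mathscr{G}$, which is exactly what the paper packages into Corollary \ref{crl:prop-iip} via the family $\mathscr{F}=\{I(p,q)\mid p,q\in\mathscr{G}\}$. The only cosmetic point is that ``$z\in I(x,y)$ implies $x\ll z\ll y$'' requires the harmless reduction to the case $x\ll y$ (otherwise $I(x,y)$ is empty or the roles of $x$ and $y$ must be swapped), a without-loss-of-generality step the paper makes explicitly in the proof of Proposition \ref{prop:chdiamonds-compact}.
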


    \begin{proof}
    The equivalence of the first two statements follows from Prop.\  \ref{prop:chdiamonds-compact}.


    If $\mathscr{G}\subset X$ is a generating set, then for any $p,q\in X$ such that $p\ll q$ there are $p',q'\in\mathscr{G}$ such that $p'\ll p\ll q\ll q'$, thus $I(p,q)\subset I(p',q')$. So,
     \[
     \mathscr{F}=\{I(p,q)|p,q\in \mathscr{G}\}
     \]
     satisfies requirements of Cor.\ \ref{crl:prop-iip}, therefore the first and the last statements are equivalent.
    \end{proof}

     We stress that the cases considered above, namely the case of bounded Lorentzian metric spaces and the case of metric spaces without at least one of the chronological boundaries are the most interesting for possible physical applications. They cover, for example, the local problems, when one is interested in a compact sub-region of the spacetime, the case of infinitely extendable spacetime without any boundary, the Big Bang scenarios and formation of a black hole where the singularity could, in principle, be included in the space. Since the bounded case was thoroughly considered in \cite{minguzzi22}, it is instructive to concentrate in this paper on the case of spacetimes with one or both chronological boundaries being empty.

     Before finishing this preliminary general considerations, let us discuss the issue of removing the chronological boundaries. We recall that in the bounded case, sometimes it was more convenient to work with spaces with a spacelike boundary, and sometimes with spaces without the spacelike boundary. Fortunately, it was possible to add or remove the spacelike boundary without affecting most of the space properties. This is not always so for the chronological boundary.

    \begin{remark}\label{InterirorMayHaveBoundary}
    In general, it is not true that $(I(X) ,d\vert_{I(X) \times I(X) })$ is a Lorentzian metric space without a boundary. Removing the boundary may spoil the distinguishing property (one can amend this problem by taking the distance quotient\cite{minguzzi22}) and also the resulting set can still have non-empty chronological boundary, because in general $I(I(X))\neq I(X)$. Examples are provided by causets (i.e.\ finite bounded Lorentzian metric spaces, see Subsection 2.2 of \cite{minguzzi22}).
    \end{remark}
     In some important practical cases the chronological boundary can be removed.

    \begin{lemma}\label{lem:InteriorHasNoBoundary}
    Let $(X,d)$ be a Lorentzian metric space and assume that $I(X)$ is dense in $X$ in some topology satisfying the requirements of Definition \ref{cg-lms}. Then $I(X)$ is a Lorentzian metric space with no chronological boundary.
    \end{lemma}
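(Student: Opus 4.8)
The plan is to equip $Y:=I(X)=X\setminus(X^+\cup X^-)$ with the restriction $d_Y:=d\vert_{Y\times Y}$ and with the subspace topology $T_Y$ induced by a fixed topology $T$ that witnesses property (ii) and in which $Y$ is dense, and then to verify properties (i)--(iii) of Definition \ref{cg-lms} together with $Y^+=Y^-=\varnothing$. Property (i) and the continuity of $d_Y$ in $T_Y$ are inherited by restriction. Two structural observations come first. If $p\ll z\ll q$ then $d(p,z)>0$ and $d(z,q)>0$, so $z\notin X^+\cup X^-$; hence $I(p,q)\subset Y$ for \emph{all} $p,q\in X$. Consequently, for $p,q\in Y$ the chronological relation and the diamond $I(p,q)$ computed in $Y$ coincide with those computed in $X$, so there is no ambiguity in writing $I(p,q)$.

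Next I would dispose of the boundary and of (iii), both via density. For $x\in Y$ the set $I^+(x)=d_x^{-1}\big((0,\infty)\big)$ is open (as $d_x$ is continuous) and nonempty (as $x\notin X^+$); density of $Y$ produces a point of $Y$ inside it, so $x\notin Y^+$, and symmetrically $x\notin Y^-$. Thus $Y^+=Y^-=\varnothing$. For property (iii), let $x\neq y$ in $Y$. Since $d$ distinguishes points on $X$, there is $z_0\in X$ with, say, $d_x(z_0)\neq d_y(z_0)$; the continuous function $d_x-d_y$ is then nonzero on a neighbourhood of $z_0$, which meets $Y$ by density, yielding $z\in Y$ with $d(x,z)\neq d(y,z)$. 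Hence $d_Y$ distinguishes points.

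The main obstacle is property (ii) for $Y$, which by Lemma \ref{lem:chron-diamonds-imply-ii} reduces to showing that every chronological diamond of $Y$ is relatively compact in $T_Y$. Fix $p,q\in Y$. Since $q\notin X^+$, density lets me pick $q'\in Y\cap I^+(q)$, and since $q'\in Y$ is again not in $X^+$ I may pick $q''\in I^+(q')$, so that $q\ll q'\ll q''$; symmetrically I obtain $p''\ll p'\ll p$ with $p'\in Y$. I then reproduce the projection argument from the proof of Proposition \ref{prop:chdiamonds-compact}. With $\epsilon:=d(q,q')>0$, every $x\in I(p,q)$ satisfies $d(x,q')\geq d(x,q)+d(q,q')>\epsilon$ and $x,q'\in I(p,q'')$, so $I(p,q)\subset A:=\pi_1\big((\overline{I(p,q'')}\times\overline{I(p,q'')})\cap I_{\epsilon}\big)$, which is compact by property (ii) for $X$. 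As every pair in $I_{\epsilon}$ has positive distance, its first factor avoids $X^+$, whence $A\cap X^+=\varnothing$; the symmetric past argument (using $\epsilon'=d(p',p)$ and $\pi_2$) gives a compact $A'$ with $I(p,q)\subset A'$ and $A'\cap X^-=\varnothing$. Taking $A$ closed---either because $T$ is Hausdorff (Lemma \ref{lem:Hsdf}) or via the properness argument of Proposition \ref{prop:chdiamonds-compact}---shows that $\overline{I(p,q)}$ (closure in $T$) is compact, and combining both sides it is disjoint from $X^+\cup X^-$, hence contained in $Y$.

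Since $\overline{I(p,q)}\subset Y$, the closure of $I(p,q)$ in the subspace topology equals $\overline{I(p,q)}\cap Y=\overline{I(p,q)}$, which is compact; thus every chronological diamond of $Y$ is relatively compact in $T_Y$. Lemma \ref{lem:chron-diamonds-imply-ii}, applied to $(Y,d_Y,T_Y)$, now delivers property (ii), and together with (i) and (iii) this shows $(Y,d_Y)$ is a Lorentzian metric space; by the second paragraph its chronological boundary is empty. The crux of the whole argument is the two-sided extension enabled by density: it is precisely what keeps the closure of a diamond away from the chronological boundary of $X$, the phenomenon responsible for the non-relatively-compact diamonds discussed after Lemma \ref{lem:chron-diamonds-imply-ii}.
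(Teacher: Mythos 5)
Your proof is correct and follows essentially the same route as the paper's: properties (i) and (iii) and the emptiness of the chronological boundary are obtained from density exactly as in the paper, and property (ii) is reduced to relative compactness of diamonds in the subspace topology by producing a two-sided chronological extension $p'\ll p\ll q\ll q'$ with $p',q'\in I(X)$. The only (cosmetic) difference is in how you certify $\overline{I(p,q)}\subset I(X)$ — you track that the compact projections $A$, $A'$ avoid $X^{+}$ and $X^{-}$, whereas the paper traps $I(p,q)$ inside the closed set $I_\epsilon(p',q')\subset I(p',q')\subset I(X)$ — but both hinge on the same density-enabled extension and yield the same conclusion.
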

    That is, the points in $I(X)$ are still distinguished by $d$ and so there is no need to pass to the distance quotient. Also, the proof shows that if $T$ is a topology that satisfies (ii) of Definition \ref{cg-lms} for $X$ then, the induced topology satisfies the same property for $I(X)$.

    \begin{proof}

    Clearly $I(X)$ endowed with  $d':=d\vert_{I(X) \times I(X) }$ satisfies (i). Moreover, let us show that $d'$ distinguishes points and that the chronological boundary is empty.

     For the distinguishing property, i.e.\ property (iii), let $x,y\in I(X)$. There is $z\in X$ such that either $d(x,z)\neq d(y,z)$ or $d(z,x)\neq d(z,y)$. As usual, we consider only the first possibility. Moreover, by symmetry between $x$ and $y$ we can always assume $d(x,z)<d(y,z)$. Take $m$ such that $d(x,z)<m< d(y,z)$ Consider the set
    \[
    U=\{u\in X | d(x,u) <m \,\mathrm{and}\, d(y,u)>m\}.
    \]
    Clearly, $U$ is open and $z\in U$.  Since $I(X) $ is dense in $X$, there is some $z' \in U \cap I(X)$. By construction $z'$ distinguishes $x$ from $y$.

    In order to show that $I(X)$ has no chronological boundary, for every $x\in I(X)$ we have to find $p,q\in I(X) $ such that $p\ll x \ll q$. Note that $I^{+}(x)$ and $I^{-}(x)$ are non-empty (as $x\in I(X)$) open sets, thus they contain some points of the dense set $I(X)$.

    Let $T$ be a topology that satisfies (ii) for $X$. The function $d'$ is continuous in the induced topology $T'$.  By Thm.\ \ref{thm:propii-via-chdiamonds}, in order to prove that $I(X)$ is a Lorentzian metric space, it is sufficient to prove that for every $p,q\in I(X)$, $I(p,q)$ is relatively compact in the induced topology.
    Observe that there are $p',q'\in I(X)$ such that $p'\ll p \ll q \ll q'$, thus there is $\epsilon>0$ sufficiently small such that
    \[
    I(p,q)\subset \{d_{p'}\ge \epsilon \}\cap \{d^{q'}\ge \epsilon \}=I_\epsilon(p',q')\subset I(p',q')\subset I(X),
    \]
     where the second set is closed. This proves that
    $\textrm{Closure}_{T}(I(p,q))\subset I(X)$ and hence $\textrm{Closure}_{T'}(I(p,q))=\textrm{Closure}_{T}(I(p,q))\cap I(X)=\textrm{Closure}_{T}(I(p,q))$. Every induced topology open covering of $\textrm{Closure}_{T'}(I(p,q))$ comes thus from a  topology open covering of $\textrm{Closure}_{T}(I(p,q))$, and since the latter set is compact, we conclude that $I(p,q)$ is relatively compact in the induced topology.
    \end{proof}

    \begin{remark}\label{rmk:InteriorDistinguishes}
        For further use we note that by the same argument one can show that if $(X,d)$ is a Lorentzian metric space, and $I(X)$ is dense in $X$, then for every $x,y\in X$ (not necessarily in $I(X)$) such that $x\neq y$, there is $z\in I(X)$ such that $d(x,z)\neq d(y,z)$ or $d(z,x)\neq d(z,y)$.
    \end{remark}

\section{Topology}
\subsection{General results}
Definition \ref{cg-lms} mentions a topology $T$ on the Lorentzian metric space $(X,d)$. So far it is not clear how many such topologies  exist, and if there are any common topological properties of Lorentzian metric spaces. The goal of this section is to clarify these points.

Not much can be stated without conditions on the boundary.
\begin{lemma}\label{lem:Hsdf}
    Let $(X,d)$ be a Lorentzian metric space and let  $T$ be a topology satisfying the requirements of property (ii) of Definition \ref{cg-lms}. Then $T$ is Hausdorff.
\end{lemma}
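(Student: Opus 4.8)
The plan is to separate two distinct points by pulling back disjoint neighbourhoods along a single one-variable slice of $d$. The only tools I would use are the continuity of $d$ in the product topology and the distinguishing property (iii); in particular, the compactness clause of property (ii) is not needed, which is consistent with the earlier remark that the proof of this lemma relies on (iii).

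Given $x,y\in X$ with $x\neq y$, I would first invoke property (iii) to obtain a point $z\in X$ distinguishing them, meaning that $d(x,z)\neq d(y,z)$ or $d(z,x)\neq d(z,y)$. These two cases are symmetric under exchanging the two arguments of $d$, so I would treat only the first, say $d(x,z)\neq d(y,z)$.

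Next I would consider the one-variable slice $f\colon X\to[0,+\infty)$ given by $f(w)=d(w,z)$. Fixing the second argument yields a continuous map $w\mapsto(w,z)$ from $X$ into $X\times X$, and since $d$ is jointly continuous for the product topology, the composition $f$ is continuous for $T$. Now $f(x)=d(x,z)\neq d(y,z)=f(y)$, and as $[0,+\infty)$ is a metric, hence Hausdorff, space, I would choose disjoint open sets $V\ni f(x)$ and $W\ni f(y)$ in $[0,+\infty)$. Then $f^{-1}(V)$ and $f^{-1}(W)$ are open in $T$, disjoint, and contain $x$ and $y$ respectively, which establishes the Hausdorff property since $x,y$ were arbitrary distinct points.

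There is essentially no serious obstacle here: the only step meriting any care is the observation that the joint continuity of $d$ descends to continuity of each one-variable slice $d(\,\cdot\,,z)$ (and symmetrically $d(z,\,\cdot\,)$), after which the Hausdorffness of the target $[0,+\infty)$ closes the argument immediately. I would emphasise in the write-up that the compactness part of (ii) is entirely unused, so the statement in fact holds for any topology making $d$ continuous together with (iii).
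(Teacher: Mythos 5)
Your argument is correct and is essentially the paper's own proof: both take a point $z$ distinguishing $x$ from $y$ via (iii) and separate them by preimages of disjoint open intervals under the continuous one-variable slice $d_z$ (the paper uses the midpoint $m=(a+b)/2$ explicitly, you appeal to Hausdorffness of $[0,+\infty)$, which is the same thing). Your closing observation that the compactness clause of (ii) is unused also matches the paper's remark following Proposition \ref{prop:chdiamonds-compact}.
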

\begin{proof}
    The proof goes as in \cite[Prop.\ 1.4.]{minguzzi22}. We use the notation and the terminology there introduced.
We already know that for every $p, r\in X$ the functions $d_p$ and $d^r$ are continuous. Let $x\ne y$. Since $d$ distinguishes points we can find $z$ such that $d(z,x)=:a\ne b:= d(z,y)$ (the other case being analogous).
Set $m=(a+b)/2$. Then only one among $x$ and $y$ belongs to the open set $\{r: d(z,r)< m\}=d_z^{-1}((-\infty,m))$, the other belonging to $\{r: d(z,r)> m\}=d_z^{-1}((m,\infty))$, thus $x$ and $y$
are separated by open sets.
\end{proof}
Much more can be said if we assume that the chronological boundary is empty.
We are going to use the following fact which follows from the proof of \cite[Prop. 1.6]{minguzzi22} (so we omit it here).

\begin{lemma}\label{lem:subbasis}
    Let $X$ be a topological space, $x\in X$ and let $\mathscr{A}$ be a family of open subsets of $X$ such that (a) for any $y\in X$ not coinciding with $x$ there are $A,B\in \mathscr{A}$ satisfying $x\in A$, $y\in B$, $A\cap B=\varnothing$  and (b) there is $U\in \mathscr{A}$ such that $x\in U\subset C$, where $C$ is compact. Then $\mathscr{A}$ is a subbasis for the  neighborhood system of $x$.
\end{lemma}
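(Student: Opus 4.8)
The plan is to first make the conclusion explicit and then run a standard compactness argument. Saying that $\mathscr{A}$ is a subbasis for the neighborhood system of $x$ means that the finite intersections of those members of $\mathscr{A}$ which contain $x$ form a neighborhood basis at $x$; equivalently, for every open neighborhood $V$ of $x$ there exist finitely many $A_1,\dots,A_n\in\mathscr{A}$ with $x\in A_1\cap\cdots\cap A_n\subset V$. Each such finite intersection is automatically an open neighborhood of $x$, so the only nontrivial point is the \emph{smallness} direction: every $V$ can be squeezed from inside by a finite intersection of members of $\mathscr{A}$. Accordingly I would fix an arbitrary open $V\ni x$ and construct such an intersection.

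First I would invoke condition (b) to fix $U\in\mathscr{A}$ with $x\in U\subset C$ and $C$ compact. The set $C\setminus V$ is closed in $C$ (as $V$ is open), hence compact, and it omits $x$ since $x\in V$. For each $y\in C\setminus V$ condition (a) supplies $A_y,B_y\in\mathscr{A}$ with $x\in A_y$, $y\in B_y$, and $A_y\cap B_y=\varnothing$. The family $\{B_y\}_{y\in C\setminus V}$ is then an open cover of the compact set $C\setminus V$, so I would extract a finite subcover, obtaining points $y_1,\dots,y_k$ with $C\setminus V\subset B_{y_1}\cup\cdots\cup B_{y_k}$. The candidate is $W:=U\cap A_{y_1}\cap\cdots\cap A_{y_k}$, a finite intersection of members of $\mathscr{A}$ all containing $x$, whence $x\in W$.

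The remaining step is to verify $W\subset V$. Since $W\subset A_{y_i}$ and $A_{y_i}\cap B_{y_i}=\varnothing$, the set $W$ is disjoint from each $B_{y_i}$, hence from $B_{y_1}\cup\cdots\cup B_{y_k}$ and therefore from $C\setminus V$; combined with $W\subset U\subset C$ this gives $W=W\cap C\subset V$. This exhibits the required finite intersection and finishes the proof. The argument is essentially routine bookkeeping of set inclusions, and I expect no genuine obstacle; the one point deserving emphasis is the complementary roles of the two hypotheses. The sets $A_y$ from (a) are used \emph{inside}, to build the neighborhood $W$ about $x$, while the $B_y$ cover the compact \emph{exterior} $C\setminus V$. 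It is precisely the compactness provided by (b) that reduces the a priori infinite intersection $\bigcap_y A_y$ (which need not be open) to an admissible finite one; without a relatively compact neighborhood from $\mathscr{A}$ there would be no reason for $\mathscr{A}$ to be a neighborhood subbasis at all.
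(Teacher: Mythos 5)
Your proof is correct, and it is essentially the standard separation-plus-compactness argument that the paper itself relies on (the paper omits the proof, citing the proof of Prop.\ 1.6 of the earlier work, which proceeds in exactly this way: cover the compact complement $C\setminus V$ by the sets $B_y$, extract a finite subcover, and intersect $U$ with the corresponding finitely many $A_{y_i}$). The degenerate case $C\setminus V=\varnothing$ is handled correctly by your construction with $k=0$, so there is nothing to add.
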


\begin{proposition}\label{prop:lms-hdf-lc-unique}
    If $(X,d)$ is a Lorentzian metric space with no chronological boundary, then a topology $T$, satisfying the property (ii) is  Hausdorff, locally compact and  unique.

Moreover,
the sets  of the form
\begin{equation}  \label{don}
\{q: a<d(p,q)<b\}\cap  \{q: c<d(q,r)<e\}
\end{equation}
with $p,r\in X$ and $a,b,c,e\in \mathbb{R}\cup \{-\infty,\infty\}$, form a subbasis for $T$.

\end{proposition}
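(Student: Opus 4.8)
The plan is to prove the three assertions — Hausdorffness, local compactness, and uniqueness — in an order that lets me reuse the machinery established earlier, particularly Lemma \ref{lem:subbasis}. Hausdorffness is already handed to me by Lemma \ref{lem:Hsdf}, which holds for any Lorentzian metric space, so that part requires only a citation. The real content is local compactness, uniqueness, and the explicit subbasis, and my strategy is to establish all three simultaneously by showing that the sets in \eqref{don} form a subbasis for the neighborhood system of \emph{every} point $x$, via Lemma \ref{lem:subbasis}.

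\medskip

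First I would fix $x \in X$. Since $X$ has no chronological boundary, $X = I(X)$, so by Remark \ref{rmk:genset=no-bnd} (taking $\mathscr{G} = X$) there exist $p, q \in X$ with $p \ll x \ll q$. The plan is to take $\mathscr{A}$ to be the collection of all sets of the form \eqref{don}; these are open because $d_p = d(p,\cdot)$ and $d^r = d(\cdot,r)$ are continuous, so each is a finite intersection of preimages of open intervals. To apply Lemma \ref{lem:subbasis} I must verify hypotheses (a) and (b). For (a), given $y \neq x$, I invoke property (iii): $d$ distinguishes points, so there is some $z$ with (say) $d(z,x) \neq d(z,y)$; choosing a separating value $m$ strictly between them, the sets $\{r : d(z,r) < m\}$ and $\{r : d(z,r) > m\}$ are disjoint and contain $x$ and $y$ respectively — and both are of the form \eqref{don} (one factor trivial, i.e.\ with bounds $-\infty, \infty$). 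The symmetric case $d(x,z) \neq d(y,z)$ is handled identically using the other pair of inequalities in \eqref{don}. For (b) I use the relatively compact diamond: with $p \ll x \ll q$ fixed above, choose $\epsilon > 0$ small enough that $0 < \epsilon < \min\{d(p,x), d(x,q)\}$, and set $U = \{r : d(p,r) > \epsilon\} \cap \{r : d(r,q) > \epsilon\}$. Then $x \in U$, $U$ is of the form \eqref{don}, and $U \subset I(p,q)$, whose closure $\overline{I(p,q)}$ is compact by Theorem \ref{thm:propii-via-chdiamonds} (since $(X,d)$ is a Lorentzian metric space with empty boundary). Thus $x \in U \subset \overline{I(p,q)} =: C$ with $C$ compact, giving hypothesis (b).

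\medskip

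Having verified (a) and (b), Lemma \ref{lem:subbasis} yields that $\mathscr{A}$ is a subbasis for the neighborhood system of $x$. Since this holds for every $x$, the sets \eqref{don} form a subbasis for the whole topology $T$, proving the ``Moreover'' claim. Local compactness then follows immediately: the set $U$ constructed for hypothesis (b) is an open neighborhood of $x$ contained in the compact set $\overline{I(p,q)}$, so every point has a neighborhood with compact closure. Uniqueness is the key payoff of the subbasis description: the subbasis in \eqref{don} is defined purely in terms of the function $d$, with no reference to the topology $T$ itself. Therefore any topology satisfying property (ii) must have exactly this family as a subbasis for each point's neighborhood system, and two topologies with the same neighborhood subbasis at every point coincide.

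\medskip

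The main obstacle I anticipate is hypothesis (b) of Lemma \ref{lem:subbasis}, where the empty-boundary assumption and the compactness half of property (ii) both enter essentially: one needs a \emph{single} basic open set of the form \eqref{don} squeezed between $x$ and a compact set, and this is exactly where $x$ must lie in the chronological interior (so that bounding points $p, q$ exist) and where Theorem \ref{thm:propii-via-chdiamonds} is needed to upgrade property (ii) into compactness of $\overline{I(p,q)}$. A secondary subtlety is bookkeeping in hypothesis (a): I must confirm that the one-sided level sets genuinely arise as instances of \eqref{don} by allowing the degenerate bounds $-\infty$ and $+\infty$ and a trivial (whole-space) second factor, and that both the $d(z,\cdot)$ and $d(\cdot,z)$ cases are covered by the two factors appearing in \eqref{don}.
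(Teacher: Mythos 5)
Your proposal is correct and follows essentially the same route as the paper: Hausdorffness via Lemma \ref{lem:Hsdf}, local compactness from the compact closures of chronological diamonds, and uniqueness by showing the sets \eqref{don} form a subbasis through Lemma \ref{lem:subbasis}, a subbasis which depends only on $d$. The only cosmetic difference is that you spell out hypotheses (a) and (b) of Lemma \ref{lem:subbasis} explicitly (and cite Theorem \ref{thm:propii-via-chdiamonds} where Proposition \ref{prop:chdiamonds-compact} is the more direct reference for compactness of $\overline{I(p,q)}$ in the given topology $T$), whereas the paper leaves that verification implicit.
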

\begin{proof}
 We start from local compactness. Since $X$ has no chronological boundary, for any $x\in X$ there are $p,q\in X$ such that $x\in I(p,q)$. The chronological relation is open because $d$ is continuous, thus the chronological diamonds are open. Compactness of $\overline{I(p,q)}$ (Proposition \ref{prop:chdiamonds-compact}) proves that every point has a compact neighborhood.

Hausdorffness follows from Lemma \ref{lem:Hsdf}.
Let us come to  uniqueness. By the properties just proved, the family $\mathscr{A}$ given by the $T$-open sets of the form (\ref{don}) satisfies the assumptions  (a) and (b)   of  \cite[Prop. 1.6]{minguzzi22} (or Lemma \ref{lem:subbasis}), thus it provides a subbasis for the topology.

Any two topologies that share the same subbasis coincide, thus $T$ is unique.
\end{proof}

From now on, if a Lorentzian metric space $(X,d)$ has no chronological boundary, we  endow it with the unique topology satisfying  property (ii), and refer to it as \emph{the topology of the Lorentzian metric space} $(X,d)$ denoting it with $\mathscr{T}$.

\begin{corollary}\label{crl:isometry-isomorphism}
    Let $(X,d)$ and $(Y,d)$ be two Lorentzian metric spaces without  chronological boundary and let $\phi:X\to Y$ be a bijective distance-preserving map. Then $\phi$ is a topological homeomorphism.
\end{corollary}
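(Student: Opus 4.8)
The plan is to exploit the explicit subbasis for the topology of a Lorentzian metric space without chronological boundary furnished by Proposition \ref{prop:lms-hdf-lc-unique}, and to show that a distance-preserving bijection carries this subbasis on $X$ precisely onto the corresponding subbasis on $Y$. Since both spaces have no chronological boundary, that proposition applies to each and tells us that its topology is the unique one satisfying (ii) and is generated by subbasic sets of the form (\ref{don}). Hence, once we know that $\phi$ matches the two subbases, it will follow at once that $\phi$ is both open and continuous, i.e.\ a homeomorphism.

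First I would record that, since $\phi$ is a distance-preserving bijection, its inverse $\phi^{-1}$ is also distance-preserving: from $d(\phi(x),\phi(y))=d(x,y)$ one recovers $d(u,v)=d(\phi^{-1}(u),\phi^{-1}(v))$ by the substitution $u=\phi(x)$, $v=\phi(y)$ together with surjectivity. It therefore suffices to prove that every distance-preserving bijection between two such spaces is an open map: applying this to $\phi$ gives openness of $\phi$, and applying it to $\phi^{-1}$ (which is again a distance-preserving bijection of the same type) shows $\phi^{-1}$ is open, that is, $\phi$ is continuous.

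Next I would take a generic subbasic open set of $X$, namely
\[
S=\{q: a<d(p,q)<b\}\cap\{q: c<d(q,r)<e\},
\]
and compute its image. For any $q\in X$ we have $d(p,q)=d(\phi(p),\phi(q))$ and $d(q,r)=d(\phi(q),\phi(r))$, so $q\in S$ if and only if $a<d(\phi(p),\phi(q))<b$ and $c<d(\phi(q),\phi(r))<e$. Because $\phi$ is a bijection, as $q$ ranges over $X$ the point $\phi(q)$ ranges over all of $Y$, whence
\[
\phi(S)=\{q': a<d(\phi(p),q')<b\}\cap\{q': c<d(q',\phi(r))<e\},
\]
which is exactly a subbasic set for the topology of $Y$ with base points $\phi(p)$ and $\phi(r)$. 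Using that a bijection preserves finite intersections and arbitrary unions, and that every open set is a union of finite intersections of subbasic sets, I conclude that $\phi$ sends open sets of $X$ to open sets of $Y$, so $\phi$ is open; the argument applied to $\phi^{-1}$ then yields continuity.

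I do not expect a genuine obstacle: the whole content is carried by the subbasis description of the topology from Proposition \ref{prop:lms-hdf-lc-unique}, combined with the elementary fact that a distance-preserving bijection and its inverse transport these distance-defined subbasic sets onto one another. The only point needing a little care is the bookkeeping ensuring that $\phi(S)$ is the \emph{full} subbasic set in $Y$ rather than a proper subset of it, which is precisely where surjectivity of $\phi$ enters.
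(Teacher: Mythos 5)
Your proposal is correct and follows essentially the same route as the paper: the paper's proof likewise observes that images and preimages of subbasic sets of the form (\ref{don}) under a distance-preserving map are again of that form, and then invokes Proposition \ref{prop:lms-hdf-lc-unique}. You have merely spelled out the details (inverse is distance-preserving, surjectivity gives the full subbasic set) that the paper leaves implicit.
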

\begin{proof}
    It is clear that the preimage and image of a set of the form (\ref{don}) under a distance-preserving map is again a set of the form (\ref{don}). So, by Proposition \ref{prop:lms-hdf-lc-unique} any such $\phi$ is both continuous and open, thus it is a homeomorphism.
\end{proof}
It is natural  to refer to bijective distance-preserving maps as \emph{isometries}.

In the general case only the neighborhood basis for points in $I(X)$ is determined by $d$.

\begin{proposition}\label{prop:IntNbhdSBs}
        Let $(X,d)$ be a Lorentzian metric space, and let $x\in I(X)$. The sets of the form (\ref{don}) provide a subbasis of neighborhoods of $x$. If $I(X)$ is dense in $X$ (in any topology of $X$ satisfying requirements of the property (ii)), it is enough to consider the points $p,r\in  I(X)$ in (\ref{don}).
\end{proposition}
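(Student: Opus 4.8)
The plan is to read the statement off the neighbourhood-subbasis criterion of Lemma \ref{lem:subbasis}, applied to the family $\mathscr{A}$ of all sets of the form (\ref{don}) that contain $x$. Each such set is automatically $T$-open, being a finite intersection of preimages of open intervals under the continuous functions $d_p$ and $d^r$, so it suffices to verify the two hypotheses of that lemma: (a) the separation property and (b) the existence of a member with compact closure. Once both are checked, the conclusion that $\mathscr{A}$ is a subbasis of neighbourhoods of $x$ is immediate. For the refinement in the second sentence I would then re-run the construction keeping every auxiliary point inside $I(X)$.

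First I would establish (a). Given $y\neq x$, property (iii) yields $z\in X$ with, say, $d(z,x)\neq d(z,y)$ (the case $d(x,z)\neq d(y,z)$ being symmetric, handled through the second factor of (\ref{don})). Choosing $m$ strictly between these values, the disjoint sets $\{w: d(z,w)<m\}$ and $\{w: d(z,w)>m\}$ separate $x$ from $y$, and each is of the form (\ref{don}): take the first factor with $p=z$ and the appropriate one-sided bound, and the second factor trivial with $c=-\infty$, $e=+\infty$. This is exactly the computation already carried out in Lemma \ref{lem:Hsdf}. Under the density hypothesis, Remark \ref{rmk:InteriorDistinguishes} lets one take the separating point $z$ inside $I(X)$, so the separating sets involve only points of $I(X)$.

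The main work, and the principal obstacle, is hypothesis (b): producing a single $U\in\mathscr{A}$ with $x\in U\subset C$ for some compact $C$. Since $x\in I(X)$ we may choose $p,q\in X$ with $p\ll x\ll q$; then the chronological diamond $I(p,q)=\{w: d(p,w)>0\}\cap\{w: d(w,q)>0\}$ is open, contains $x$, and is itself of the form (\ref{don}). What must be argued is that $\overline{I(p,q)}$ is compact, and this is precisely where the compactness results of this section enter: Proposition \ref{prop:chdiamonds-compact} provides relative compactness of chronological diamonds in the relevant setting, and more generally $\overline{I(p,q)}$ is compact as soon as $p,q\in I(I(X))$. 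Taking $C=\overline{I(p,q)}$ and $U=I(p,q)$ then verifies (b), and Lemma \ref{lem:subbasis} yields the subbasis. I expect the delicate point to be exactly this relative compactness, i.e.\ local compactness of $T$ at $x$, since outside the interior the diamonds need not be relatively compact.

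Finally, for the density refinement I would force $p,q\in I(X)$. Because $x\in I(X)$, the sets $I^{-}(x)$ and $I^{+}(x)$ are nonempty and open (the chronological relation being open as $d$ is continuous), so by density they meet $I(X)$; one chooses $p\in I^{-}(x)\cap I(X)$ and $q\in I^{+}(x)\cap I(X)$, exactly as in the proof of Lemma \ref{lem:InteriorHasNoBoundary}. Applying the same openness-plus-density argument once more to $p$ and to $q$ shows $p,q\in I(I(X))$, whence $\overline{I(p,q)}$ is compact and the neighbourhood $U=I(p,q)$ used in (b) is built from points of $I(X)$. Combined with the interior choice of $z$ in (a), this shows that letting $p,r$ range over $I(X)$ in (\ref{don}) already suffices.
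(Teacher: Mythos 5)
Your proof follows the paper's argument essentially verbatim: both apply Lemma \ref{lem:subbasis} to the family of sets of the form (\ref{don}), reuse the separation argument of Lemma \ref{lem:Hsdf} (resp.\ Remark \ref{rmk:InteriorDistinguishes} in the dense case), and obtain the required compact neighbourhood from a chronological diamond $I(p,q)$ with $p\ll x\ll q$, invoking Proposition \ref{prop:chdiamonds-compact} and choosing $p,q\in I(X)$ under the density hypothesis exactly as in Lemma \ref{lem:InteriorHasNoBoundary}. Your additional caution that one should really place $p,q$ in $I(I(X))$ when both chronological boundaries may be nonempty mirrors the paper's stated generalisation of Proposition \ref{prop:chdiamonds-compact} and does not alter the route.
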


\begin{proof}
    We intend to use Lemma \ref{lem:subbasis}, so we have to show that the family consisting of the open sets of the form (\ref{don}) satisfies the two properties. We  have already seen that these sets separate all points in Lemma \ref{lem:Hsdf}, so we need only to show that every $x\in I(X)$ has a neighborhood among them contained in a compact set. Take $p,q\in X$ such that $x\in I(p,q)$. Then $I(p,q)$ can be written in the form (\ref{don}) and its closure is compact by Proposition \ref{prop:chdiamonds-compact}, as needed.

    Now, assume that  $I(X)$ is dense in $X$. We have to show that the sets of neighborhoods of the form (\ref{don}) with the restriction  $p,r\in I(X)$ is still large enough to apply Lemma \ref{lem:subbasis}. By Lemma \ref{lem:InteriorHasNoBoundary} for $x \in I(X)$ we can choose $p,r\in I(X)$ so that  $p\ll x \ll r$. Also, using the same argument as in proof of Lemma \ref{lem:InteriorHasNoBoundary}, we conclude that for any $y\in X$ not coinciding with $x$ there is $z\in  I(X)$ such that $d(x,z)\neq d(y,z)$ or $d(z,x)\neq d(z,y)$. So, in the proof of Lemma \ref{lem:Hsdf} we can also use only the points of  $I(X)$.
\end{proof}

The following result follows from Lemma \ref{lem:InteriorHasNoBoundary} and comments given there, taking into account that $I(X)$, being without chronological boundary, has a unique topology that satisfies (ii), which then necessarily has to be the induced topology.
\begin{corollary}
    Let $(X,d)$ be a Lorentzian metric space and assume that $I(X)$ is dense in $X$ in some topology satisfying the requirements of Definition \ref{cg-lms}. Then the tautological inclusion $I(X)\to X$ is a topological embedding (assuming that $I(X)$ is endowed with its unique, by Lemma \ref{lem:InteriorHasNoBoundary}  and Proposition \ref{prop:lms-hdf-lc-unique}, topology).
\end{corollary}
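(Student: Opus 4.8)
The plan is to reduce the statement that the inclusion is a topological embedding to the single assertion that two topologies on the underlying set $I(X)$ coincide, and then to extract that equality from the uniqueness clause of Proposition \ref{prop:lms-hdf-lc-unique}.

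First I would fix notation: write $T$ for the given topology on $X$ satisfying property (ii), $T'$ for the subspace topology it induces on $I(X)$, and $\mathscr{T}$ for the intrinsic topology of the Lorentzian metric space $(I(X),d')$, where $d':=d\vert_{I(X)\times I(X)}$. The inclusion $\iota\colon I(X)\to X$ is injective, and by definition it is a topological embedding precisely when it restricts to a homeomorphism $(I(X),\mathscr{T})\to (I(X),T')$ onto its image; since $\iota$ is the identity on the underlying set, this happens if and only if $\mathscr{T}=T'$. Thus the whole proof comes down to showing $\mathscr{T}=T'$.

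Next I would invoke Lemma \ref{lem:InteriorHasNoBoundary}. Under the density hypothesis it guarantees that $(I(X),d')$ is a Lorentzian metric space with empty chronological boundary, so by Proposition \ref{prop:lms-hdf-lc-unique} it carries a \emph{unique} topology satisfying property (ii), namely $\mathscr{T}$. The key additional input, recorded in the comment following Lemma \ref{lem:InteriorHasNoBoundary}, is that the induced topology $T'$ itself satisfies property (ii) for $(I(X),d')$: indeed, the proof of that lemma shows that for $p,q\in I(X)$ the closure of $I(p,q)$ taken in $T$ already lies inside $I(X)$ (being trapped in a closed set of the form $I_\epsilon(p',q')$), so the closures computed in $T'$ and in $T$ agree and remain compact, while continuity of $d'$ in $T'$ is automatic.

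Finally, combining these two facts closes the argument: $T'$ is a topology on $I(X)$ satisfying property (ii) for the boundaryless Lorentzian metric space $(I(X),d')$, and by the uniqueness part of Proposition \ref{prop:lms-hdf-lc-unique} there is only one such topology, namely $\mathscr{T}$. Hence $T'=\mathscr{T}$, and by the reduction of the first step the inclusion is a topological embedding. I do not expect a genuine obstacle here, since the substantive work — verifying that the interior has empty chronological boundary and that the induced topology satisfies (ii) — has already been carried out in Lemma \ref{lem:InteriorHasNoBoundary}; the only point requiring care is the bookkeeping that it is the subspace topology, and not merely some abstract topology, that has been shown to satisfy (ii), which is exactly what lets uniqueness identify it with $\mathscr{T}$.
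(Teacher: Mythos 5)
Your proposal is correct and follows essentially the same route as the paper: the paper likewise derives the corollary by observing (via Lemma \ref{lem:InteriorHasNoBoundary} and the comment after it) that the subspace topology on $I(X)$ satisfies property (ii) for $(I(X),d\vert_{I(X)\times I(X)})$, and then invokes the uniqueness of such a topology from Proposition \ref{prop:lms-hdf-lc-unique} to identify it with the intrinsic one. Your write-up simply makes explicit the reduction to the equality of the two topologies, which the paper leaves implicit.
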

%
%
%

\begin{lemma}\label{lem:chron-convex}
    Let $(X,d)$ be a Lorentzian metric space. Consider $x,p,q,r\in X$ and $a,b\in [-\infty,\infty]$. Assume that $p\ll x \ll q$. Then the following holds:
    \begin{enumerate}
        \item if $p,q\in {(d_r)}^{-1}((a,b))$, then $x\in  {(d_r)}^{-1}((a,b))$;
        \item if $p,q\in {(d^r)}^{-1}((a,b))$, then $x\in  {(d^r)}^{-1}((a,b))$.
    \end{enumerate}
\end{lemma}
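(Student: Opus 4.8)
The plan is to deduce both statements from only two ingredients: the reverse triangle inequality (property (i)) and the non-negativity of $d$. The two items are dual: replacing $d$ by the time-reversed distance $\tilde d(u,v):=d(v,u)$ (which again satisfies property (i)) turns $d_r$ into $d^r$ and reverses all chronological relations, so item 2 follows from item 1 after swapping the roles of $p$ and $q$. I would therefore prove item 1 in detail and only indicate the modifications for item 2. Spelling out $y\in (d_r)^{-1}((a,b))$ as the two inequalities $a<d(r,y)<b$, the goal is to derive $a<d(r,x)<b$ from $a<d(r,p)<b$, $a<d(r,q)<b$ and $p\ll x\ll q$, i.e.\ $d(p,x)>0$ and $d(x,q)>0$; I would treat the upper and the lower bound separately.

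For the upper bound $d(r,x)<b$ I would split on whether $r\ll x$. If $d(r,x)>0$, then $d(x,q)>0$ lets me apply the reverse triangle inequality to obtain $d(r,q)\ge d(r,x)+d(x,q)>d(r,x)$, so that $d(r,x)<d(r,q)<b$. If instead $d(r,x)=0$, then $b>d(r,q)\ge 0$ already forces $b>0$, whence $d(r,x)=0<b$.

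For the lower bound $d(r,x)>a$ I would split on whether $r\ll p$. If $d(r,p)>0$, then $d(p,x)>0$ gives $d(r,x)\ge d(r,p)+d(p,x)>d(r,p)>a$ by the reverse triangle inequality. If instead $d(r,p)=0$, then the hypothesis $a<d(r,p)=0$ forces $a<0$, and non-negativity of $d$ yields $d(r,x)\ge 0>a$. The only point that requires care is that the reverse triangle inequality, as defined in the paper, is available only when both intermediate distances are strictly positive; so in the degenerate cases, where $x$ or $p$ fails to be chronologically related to $r$, one cannot invoke it and must instead close the argument using the non-negativity of $d$ together with the sign of $a$ or $b$ forced by the interval hypotheses. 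For item 2 the same dichotomy works, using $x\ll q$ together with $d(q,r)>0$ for the lower bound $d(x,r)>a$, and $p\ll x$ together with $d(x,r)>0$ for the upper bound $d(x,r)<b$.
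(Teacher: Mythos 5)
Your proposal is correct and takes essentially the same route as the paper: both bounds are obtained from the reverse triangle inequality when the relevant intermediate distances are positive, with non-negativity of $d$ (forcing $a<0$ or using $b>0$) handling the degenerate cases; the paper merely phrases the case split as $a\ge 0$ versus $a<0$ where you split on $d(r,p)>0$ versus $d(r,p)=0$, which is the same dichotomy. Your explicit time-reversal reduction for item 2 is a slightly cleaner way of saying what the paper dismisses as "analogous."
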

\begin{proof}
We prove the former statement, the proof of the latter being analogous.
The case $b\le  0$ is trivially satisfied as the assumption in the implication is void, ${(d_r)}^{-1}((a,b))$ being empty.  Thus, let us assume $b>0$.


    Assume that $d(r,p),d(r,q)\in (a,b)$. We have to show that $d(r,x)\in (a,b)$.

     Let us begin with proving $d(r,x)>a$. If $a<0$, this inequality follows from non-negativity of d. If $a\geq 0$ we have $d(r,p)>a\geq 0$ and $x\gg p$, so
    \[
        d(r,x)\geq d(r,p)+d(p,x)>a.
    \]

    Now, let us prove $d(r,x)<b$. If $d(r,x)=0$, then $d(r,x)<b$ by positivity of $b$. Otherwise, if $d(r,x)> 0$, then, by $x\ll q$,
    \[
        d(r,q)\geq d(r,x)+d(x,q).
    \]
    Recall that $d(r,q)<b$. Therefore, by non-negativity of the Lorentzian distance, $d(r,x)<b$.

\end{proof}
    \begin{corollary}\label{cor:chron-convex}
    Let $(X,d)$ be a Lorentzian metric space, and let $V\subset X$ be a finite intersection of sets of the form (\ref{don}). If $p,q\in V$, then $I(p,q)\subset V$.
\end{corollary}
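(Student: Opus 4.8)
The plan is to reduce the statement to a factor-by-factor application of Lemma \ref{lem:chron-convex}. First I would observe that each set of the form (\ref{don}) is itself the intersection of two pieces, one of the ``future'' type $(d_s)^{-1}((\alpha,\beta))=\{q:\alpha<d(s,q)<\beta\}$ and one of the ``past'' type $(d^t)^{-1}((\gamma,\delta))=\{q:\gamma<d(q,t)<\delta\}$. Consequently a finite intersection $V$ of sets of the form (\ref{don}) can be rewritten as a finite intersection
\[
V=\bigcap_i (d_{s_i})^{-1}((\alpha_i,\beta_i))\ \cap\ \bigcap_j (d^{t_j})^{-1}((\gamma_j,\delta_j)),
\]
where the indices range over finite sets, $s_i,t_j\in X$ and $\alpha_i,\beta_i,\gamma_j,\delta_j\in \mathbb{R}\cup\{-\infty,\infty\}$.

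Next, I would fix $p,q\in V$ and take an arbitrary $x\in I(p,q)$, so that $p\ll x\ll q$. Membership of $x$ in $V$ is then checked on each factor separately. For a future-type factor $(d_{s_i})^{-1}((\alpha_i,\beta_i))$, the points $p,q$ lie in it because $p,q\in V$; applying Lemma \ref{lem:chron-convex}(1) with the lemma's base point $r$ set equal to $s_i$ and with $(a,b)=(\alpha_i,\beta_i)$ yields $x\in (d_{s_i})^{-1}((\alpha_i,\beta_i))$. Symmetrically, for a past-type factor $(d^{t_j})^{-1}((\gamma_j,\delta_j))$, Lemma \ref{lem:chron-convex}(2) (with base point $t_j$) gives $x\in (d^{t_j})^{-1}((\gamma_j,\delta_j))$. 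Since $x$ lies in every factor, $x\in V$, and as $x\in I(p,q)$ was arbitrary we conclude $I(p,q)\subset V$.

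The argument is essentially bookkeeping once Lemma \ref{lem:chron-convex} is available: the content of that lemma is exactly that each subbasic ``slab'' $(d_r)^{-1}((a,b))$ and $(d^r)^{-1}((a,b))$ is chronologically convex, and chronological convexity is trivially inherited by finite intersections. There is no genuine analytic obstacle here; the only point requiring a little care is the notational overlap between the generic endpoints $p,q\in V$ of the statement and the base points $p,r$ appearing inside the definition (\ref{don}). One must keep these roles distinct, invoking the lemma with the base point of each factor in the role of the lemma's $r$, while $p$ and $q$ play the role of the hypothesis $p\ll x\ll q$.
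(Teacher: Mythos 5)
Your proof is correct and follows exactly the paper's argument: the paper likewise reduces the claim to a factor-by-factor application of Lemma \ref{lem:chron-convex} together with the elementary fact that chronological convexity is preserved under finite intersections. You have merely written out the bookkeeping that the paper leaves implicit.
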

\begin{proof}
    $V$ is an intersection of sets considered in Lemma \ref{lem:chron-convex}. Applying the lemma to each set and using elementary properties of intersections, we get the statement.
\end{proof}
\begin{theorem}\label{thm:aleksandrov}
    Let $(X,d)$ be a Lorentzian metric space. Assume that for every $x\in X$ we have 
    $x \in \overline{I^\pm (x)}$. Then the set of all chronological diamonds of $X$ is a basis for the topology of $X$. In other words, under these assumptions, the Lorentzian metric space topology coincides with the Alexandrov topology.
\end{theorem}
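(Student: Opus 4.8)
The plan is to first observe that the hypothesis $x\in\overline{I^\pm(x)}$ already forces the chronological boundary to be empty. Indeed, were $I^+(x)$ empty we would have $\overline{I^+(x)}=\varnothing$, which cannot contain $x$; hence $I^+(x)\neq\varnothing$, and symmetrically $I^-(x)\neq\varnothing$, for every $x$. This means $X^+=X^-=\varnothing$ and $I(X)=X$. Consequently Proposition \ref{prop:lms-hdf-lc-unique} applies: the topology $\mathscr{T}$ is the unique one satisfying (ii), and the sets of the form (\ref{don}) form a subbasis for it, so their finite intersections form a basis of $\mathscr{T}$.

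Next I would record two elementary facts about the chronological diamonds. They are open: writing $I^+(p)=d_p^{-1}((0,\infty))$ and $I^-(q)=(d^q)^{-1}((0,\infty))$, continuity of $d$ shows $I(p,q)=I^+(p)\cap I^-(q)$ is open. They also cover $X$: for any $x$ one may pick $p\in I^-(x)$ and $q\in I^+(x)$ (both nonempty), and then $x\in I(p,q)$.

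The heart of the argument is the refinement step. Given an open set $U$ and a point $x\in U$, use the subbasis to select a basic neighborhood $V$, that is a finite intersection of sets of the form (\ref{don}), with $x\in V\subset U$. Since $V$ is a neighborhood of $x$ and $x\in\overline{I^-(x)}$, the set $V$ must meet $I^-(x)$; choose $p\in V\cap I^-(x)$, so that $p\ll x$. Symmetrically, from $x\in\overline{I^+(x)}$ choose $q\in V\cap I^+(x)$, so that $x\ll q$. Then $x\in I^+(p)\cap I^-(q)=I(p,q)$, and since $p,q\in V$ with $V$ a finite intersection of sets of the form (\ref{don}), Corollary \ref{cor:chron-convex} yields $I(p,q)\subset V\subset U$.

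This shows that every point of every open set has a chronological-diamond neighborhood contained in that open set, which, together with the openness of the diamonds, is precisely the assertion that the chronological diamonds form a basis for $\mathscr{T}$, i.e.\ that $\mathscr{T}$ coincides with the Alexandrov topology. I expect the only nontrivial step to be the conversion of the density hypothesis $x\in\overline{I^\pm(x)}$ into the existence of endpoints $p,q$ lying inside the prescribed neighborhood $V$; the remainder is bookkeeping with the subbasis and a direct appeal to the chronological-convexity Corollary \ref{cor:chron-convex}.
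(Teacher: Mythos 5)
Your proposal is correct and follows essentially the same route as the paper's proof: deduce emptiness of the chronological boundary from the hypothesis, invoke Proposition \ref{prop:lms-hdf-lc-unique} to obtain a finite-intersection subbasic neighborhood $V$ of $x$ inside $U$, use $x\in\overline{I^\pm(x)}$ to pick $p\in V\cap I^-(x)$ and $q\in V\cap I^+(x)$, and conclude $I(p,q)\subset V$ via Corollary \ref{cor:chron-convex}. The only difference is that you spell out explicitly the openness and covering properties of the diamonds and the density argument for locating $p,q$ in $V$, which the paper leaves implicit.
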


Note that the assumption implies $X=I(X)$, thus by Prop.\ \ref{prop:lms-hdf-lc-unique} the topology of $X$ is uniquely determined.
\begin{proof}
    Fix an open set $U\subset X$ and $x\in U$. The assumptions imply the absence of chronological boundary, thus by Proposition \ref{prop:lms-hdf-lc-unique}, there is $V\subset U$ such that $x\in V$, and $V$ is a finite intersection of sets of the form (\ref{don}). By assumption, there are $p\in I^-(x)\cap V$ and $q\in I^+(x)\cap V$. Clearly, $x\in I(p,q)$.
    By Corollary \ref{cor:chron-convex}, $I(p,q)\subset V$. We conclude that starting from a point $x$ and its open neighborhood $U$ we can always find a subneighborhood of the form $I(p,q)$ with some $p,q\in X$, i.e.\ chronological diamonds form a base of the topology of $X$.
\end{proof}

Further results on topology will be obtained in Prop.\ \ref{prop:cg-Polish} by adding a countability condition.

\subsection{Bounded regions of Lorentzian metric spaces}\label{ssec:breg}
The local properties of a locally compact metric space can  be studied by looking at its bounded regions, and thus essentially reducing everything to the compact case. Up to the issues with the chronological boundaries, the Lorentzian metric spaces are locally compact (see Proposition \ref{prop:lms-hdf-lc-unique}). So, it is instructive to use a similar strategy for the Lorentzian metric spaces, with the bounded Lorentzian metric spaces playing the roles of the compact regions. The main change in comparison with the standard metric theory is that Definition \ref{cg-lms} included the distinguishing property (iii) which is non-local. So, for a general space, not only we have to restrict our attention to a compact region of the space, but also to take a distance quotient \cite{minguzzi22}, losing more information about the properties of the space. Still, this approach can be powerful if used appropriately.  In this subsection we develop this idea, mostly focusing on topological application.

To start with, let us discuss how to get a bounded Lorentzian metric space from a compact region of an unbounded one. For any subset $Y\subset X$ we introduce
\[
I^0(Y):=\{y\in Y| d(x,y)=d(y,x)=0, \forall x\in Y\}.
\]
This set will play the role of a local spacelike infinity when we restrict our considerations to the subset $Y$. We also introduce $\mathring{Y}=Y \setminus I^{0}(Y)$.

\begin{lemma}\label{lem:clms-blms}
    Let $(X,d)$ be a  Lorentzian metric space without the chronological boundary, and $Y\subset X$ a compact subset. Let us endow $\mathring{Y}$ with a distance function $d\vert_{\mathring{Y} \times \mathring{Y}}$ and define
    \[
    BY:=\mathring{Y}/\!\sim,
     \]
     $d_{BY}=d\vert_{\mathring{Y} \times \mathring{Y}}/\!\sim$, where $\sim$ is the equivalence relation on $\mathring{Y}$ induced by the distance function $d\vert_{\mathring{Y} \times \mathring{Y}}$ (see Subsection 1.3 of \cite{minguzzi22}). Then $(BY,d_Y)$ is a bounded Lorentzian metric space. The topology $\mathscr{T}_{BY}$ of the bounded Lorentzian metric space $BY$ coincides with $\mathscr{T}\vert_{\mathring{Y}}/\!\sim$.
\end{lemma}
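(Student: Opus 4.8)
The plan is to reduce everything to the distance-quotient construction for compact spaces, applied not to $\mathring{Y}$ directly but to the whole compact set $Y$, and then to excise the local spacelike infinity. First I would record that $(Y, d\vert_{Y\times Y})$ inherits property (i) and the continuity of $d$ in the subspace topology $\mathscr{T}\vert_Y$, and that $Y$ is compact and Hausdorff (the ambient $X$ being Hausdorff by Lemma \ref{lem:Hsdf}). In particular $I_\epsilon$ is closed, so $I_\epsilon\cap(Y\times Y)$ is a closed subset of the compact set $Y\times Y$ and hence compact for every $\epsilon>0$. The only axiom that $(Y,d\vert_{Y\times Y})$ may fail is the distinguishing property (iii), precisely because the points of $I^0(Y)$ all share the identically-zero distance profile.

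Next I would pass to the distance quotient $\tilde Y:=Y/\!\sim$ and endow it with the quotient topology $\tilde{\mathscr T}:=\mathscr T\vert_Y/\!\sim$. The descended distance is well defined, inherits the reverse triangle inequality, and distinguishes points by construction; the one point that does not follow formally is its continuity, since a product of quotient maps need not be a quotient map. This is the crux of the argument, and I would resolve it by observing that $\sim$ is a \emph{closed} equivalence relation: its graph is the intersection over $w\in Y$ of the closed sets $\{(x,y)\colon d(x,w)=d(y,w)\}$ and $\{(x,y)\colon d(w,x)=d(w,y)\}$. Hence $\tilde Y$ is compact Hausdorff, the continuous surjection $q\times q\colon Y\times Y\to\tilde Y\times\tilde Y$ from a compact space onto a Hausdorff space is a closed, hence quotient, map, and continuity of $d/\!\sim$ follows (this is the content developed in \cite[Subsection 1.3]{minguzzi22}). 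Pushing the compact sets $I_\epsilon\cap(Y\times Y)$ forward under $q\times q$, we get that the $I_\epsilon$ of $\tilde Y$ are compact, so $(\tilde Y, d/\!\sim)$ is a bounded Lorentzian metric space. The key structural observation is that $I^0(Y)$ is closed (an intersection of zero-level sets of the continuous $d_x,d^x$) and constitutes a single $\sim$-class, which maps to one point $*\in\tilde Y^{+}\cap\tilde Y^{-}$, the spacelike boundary of $\tilde Y$; equivalently $\mathring Y=Y\setminus I^0(Y)$ is a saturated open subset of $Y$ with $q(\mathring Y)=\tilde Y\setminus\{*\}$.

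The identification of $BY$ with $\tilde Y$ minus its spacelike point is then routine but must be checked: for $w\in I^0(Y)$ one has $d(x,w)=d(w,x)=0$ for all $x\in Y$, so the extra test points of $I^0(Y)$ impose no condition and the equivalence relation induced on $\mathring Y$ agrees with the restriction to $\mathring Y$ of the one on $Y$. Thus $BY=\mathring Y/\!\sim$ is canonically identified with $\tilde Y\setminus\{*\}$, with matching distances. I would also note the clean reason the excision does not disturb the $I_\epsilon$: if $x,y\in Y$ with $d(x,y)\ge\epsilon>0$ then $x\ll y$, so neither $x$ nor $y$ lies in $I^0(Y)$, whence $I_\epsilon\cap(\mathring Y\times\mathring Y)=I_\epsilon\cap(Y\times Y)$ is compact. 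Since removing the (unique) spacelike boundary point from a bounded Lorentzian metric space yields again a bounded Lorentzian metric space \cite{minguzzi22}, I conclude that $(BY,d_Y)$ is a bounded Lorentzian metric space.

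Finally, for the topology I would invoke the standard fact that the restriction of a quotient map to a saturated open set is a quotient map onto its image with the subspace topology. Applied to $q\colon Y\to\tilde Y$ and the saturated open set $\mathring Y$, this gives that $q(\mathring Y)=BY$ carries simultaneously the subspace topology from $\tilde{\mathscr T}$ and the quotient topology $\mathscr T\vert_{\mathring Y}/\!\sim$, so the two coincide. As the subspace topology from $\tilde{\mathscr T}$ is the (unique) topology $\mathscr T_{BY}$ of the bounded Lorentzian metric space $BY$, we obtain $\mathscr T_{BY}=\mathscr T\vert_{\mathring Y}/\!\sim$. The genuine obstacle throughout is the continuity of the descended distance in the quotient topology, handled as above via the closedness of $\sim$ and the compactness of $Y$; everything else is bookkeeping about saturated sets and the trivial role of $I^0(Y)$.
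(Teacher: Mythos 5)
Your proof is correct, and it reaches the conclusion by a somewhat different route than the paper. The paper works with $\mathring{Y}$ from the outset: it equips $\mathring{Y}$ with the induced topology, notes (exactly as you do at the end) that $\{(p,q)\in\mathring{Y}\times\mathring{Y}: d(p,q)\ge\epsilon\}=I_\epsilon\cap(Y\times Y)$ is compact because points with $d\ge\epsilon>0$ cannot lie in $I^0(Y)$, and then invokes the distance-quotient result \cite[Prop.\ 1.19]{minguzzi22} as a black box to conclude that $BY=\mathring{Y}/\!\sim$ is a bounded Lorentzian metric space, with \cite[Cor.\ 1.7]{minguzzi22} (uniqueness of the topology in the absence of $i^0$) giving the topological identification. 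You instead quotient the whole compact set $Y$ first, obtaining a bounded Lorentzian metric space with spacelike point $*=q(I^0(Y))$, and then excise $*$; this obliges you to verify that $I^0(Y)$ is a single closed saturated class, that the equivalence relations induced on $\mathring{Y}$ and on $Y$ agree on $\mathring{Y}$, and to use the saturated-open-set lemma for quotient maps. The extra bookkeeping buys something: a self-contained derivation of the continuity of the descended distance (closedness of $\sim$ plus compactness of $Y$ makes $q\times q$ a closed, hence quotient, map), rather than an appeal to the quotient machinery of \cite{minguzzi22} wholesale, and a more transparent picture of the role of the local spacelike infinity. Both arguments ultimately hinge on the same key observation, namely that $I_\epsilon$ with $\epsilon>0$ never meets $I^0(Y)$, so the excision is invisible to the compactness condition.
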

\begin{proof}
We need to verify three properties. Property (i) on the reverse triangle inequality is trivially satisfied. By  \cite[Prop.\ 1.19]{minguzzi22} we need only to show that there is a topology $\check T$ for $\mathring{Y}$ such that $d\vert_{\mathring{Y}\times \mathring{Y}}$ is continuous and  $\{(p,q)\in \mathring{Y}\times \mathring{Y}: d(p,q) \ge \epsilon \}$ is compact in $\check T \times \check T$. Let $\check T=\mathscr{T}\vert_{\mathring{Y}
}$ i.e.\ the induced topology on $\mathring{Y}$ by $\mathscr{T}$, then the induced metric $d\vert_{\mathring{Y}\times \mathring{Y}}$ is continuous in this induced topology. Moreover,
\[
\{(p,q)\in \mathring{Y}\times \mathring{Y}: d(p,q) \ge \epsilon \}=I_\epsilon \cap (Y\times Y)
\]
thus it is compact in $\mathscr{T}\times \mathscr{T}$ and hence in $\check T \times \check T$ as $\mathscr{T}$ and $\check T$  agree on $\mathring{Y}$.

Since $\mathring{Y}/\sim$ does not contain $i^0$ its topology is unique and coincides with any (necessarily unique) topology satisfying property (ii) of bounded Lorentzian metric space \cite[Cor.\ 1.7]{minguzzi22}. In particular, $T:=\check T/\sim$ satisfies such property (because the projection quotient of a compact set is a compact set), which concludes the proof.
\end{proof}

\begin{corollary}
    For any finite subset $A\subset X$ of a Lorentzian metric space without a chronological boundary, $B\overline{I(A)}$ is a bounded Lorentzian metric space. In particular, the closure of any chronological diamond produces a bounded Lorentzian metric space.
\end{corollary}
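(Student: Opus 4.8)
The plan is to reduce the statement to Lemma \ref{lem:clms-blms}, whose hypothesis requires only that $Y$ be a compact subset of a Lorentzian metric space \emph{without} chronological boundary. The entire content therefore lies in verifying that $\overline{I(A)}$ is compact; once this is established, taking $Y=\overline{I(A)}$ in that lemma immediately yields that $(B\overline{I(A)},d_Y)$ is a bounded Lorentzian metric space, since the standing assumption that $X$ has no chronological boundary is exactly what the lemma needs.

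To prove compactness of $\overline{I(A)}$, I would first exploit the finiteness of $A$ to express $I(A)$ as a finite union of chronological diamonds. Writing $A=\{a_1,\ldots,a_n\}$, the definition gives $I(A)=\bigcup_{1\le i,j\le n} I^+(a_i)\cap I^-(a_j)=\bigcup_{i,j} I(a_i,a_j)$. Since the closure of a finite union equals the union of the closures, we obtain $\overline{I(A)}=\bigcup_{i,j}\overline{I(a_i,a_j)}$. Each set $\overline{I(a_i,a_j)}$ is compact: because $X$ has empty chronological boundary, both $X^+$ and $X^-$ are empty, so property (ii) is equivalent to the relative compactness of all chronological diamonds by Proposition \ref{prop:chdiamonds-compact}. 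A finite union of compact sets is compact, hence $\overline{I(A)}$ is compact, as required. For the final assertion about a single chronological diamond, I would apply the same reasoning (equivalently, Lemma \ref{lem:clms-blms} directly) to the compact set $Y=\overline{I(p,q)}$.

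I do not expect any genuine obstacle here: the result is a routine packaging of the relative compactness of chronological diamonds together with the quotient construction of Lemma \ref{lem:clms-blms}. The only points deserving a moment's care are that the absence of chronological boundary is indeed part of the hypothesis — so that Proposition \ref{prop:chdiamonds-compact} applies and so that Lemma \ref{lem:clms-blms} is applicable — and that taking closures commutes with the finite union appearing in the definition of $I(A)$.
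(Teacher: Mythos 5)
Your proposal is correct and follows essentially the same route as the paper, which simply cites Proposition \ref{prop:chdiamonds-compact} and Lemma \ref{lem:clms-blms}; you have merely made explicit the (routine) step that $\overline{I(A)}$ is a finite union of closures of chronological diamonds and hence compact.
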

\begin{proof}
    By Proposition \ref{prop:chdiamonds-compact} and Lemma \ref{lem:clms-blms}.
\end{proof}

In the sequel we omit the index $Y$ in $d_{BY}$, because it will be  clear from the context. However, it is very important to distinguish between a point $x\in \mathring{Y}$ and its equivalence class $[x]_Y \in BY$. Since we need to consider different compact sets $Y$ to get information about the whole space $X$, we will always keep the subscript $Y$ of the equivalence class $[x]_Y$.

Let us study the relation between the points of a Lorentzian metric space and the corresponding equivalence classes in more detail. We interchangeably understand $[x]_Y$ both as a point of the bounded Lorentzian metric space $BY$ and as the corresponding subset of $\mathring{Y}$ consisting of indistinguishable points. We will frequently use the observation that for $x\in \mathring{Y}$ and $y \in BY$ the statements $[x]_Y=y$ and $x\in y$ are equivalent.

Intuitively, one would expect that if we enlarge the compact region $Y$, the equivalence class of a fixed point $x$ will shrink (because the larger is $Y$, the better it distinguishes the points). There is a caveat in this argument, because as $Y$ becomes larger, new points equivalent to $x$ may appear. The correct version of this statement is the following.

\begin{lemma} \label{cqptx}
    Let $(X,d)$ be a Lorentzian metric space. Let $p,q\in X$ and let us consider compact sets $Y,Y'\subset X$ such that $p,q\in Y$ and $I(p,q)\subset Y\subset Y'$. For  $x\in I(p,q)$ we have $[x]_{Y'} \subset [x]_{Y}$.
\end{lemma}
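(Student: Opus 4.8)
The plan is to verify the inclusion at the level of representatives, treating $[x]_{Y}$ and $[x]_{Y'}$ as subsets of $\mathring{Y}$ and $\mathring{Y'}$ respectively, as the paper allows. First I would unpack what the two memberships mean. A point $a\in\mathring{Y'}$ lies in $[x]_{Y'}$ exactly when it is indistinguishable from $x$ in $\mathring{Y'}$, i.e.\ $d(a,z)=d(x,z)$ and $d(z,a)=d(z,x)$ for every $z\in\mathring{Y'}$; similarly $a\in[x]_{Y}$ means $a\in\mathring{Y}$ together with the analogous identities over all $z\in\mathring{Y}$. So, fixing an arbitrary $a\in[x]_{Y'}$, the task splits in two: show that $a$ actually belongs to $\mathring{Y}$, and then show that the indistinguishability relations, known over $\mathring{Y'}$, restrict to $\mathring{Y}$.

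The preliminary observation is that $x\in I(p,q)$ gives $p\ll x\ll q$ with $p,q\in Y\subset Y'$; since $x\in I(p,q)\subset Y\subset Y'$ is chronologically related to both $p$ and $q$, none of $p,q,x$ lies in $I^0(Y')$, hence all three belong to $\mathring{Y'}$ (and, by the same reasoning with $Y$ in place of $Y'$, to $\mathring{Y}$). I would then carry out the crucial step: because $a\in[x]_{Y'}$ and $p,q\in\mathring{Y'}$, the defining identities yield $d(p,a)=d(p,x)>0$ and $d(a,q)=d(x,q)>0$, that is $p\ll a\ll q$. Hence $a\in I(p,q)$, and since $I(p,q)\subset Y$ by hypothesis, $a\in Y$; being related to $p,q\in Y$, the point $a$ avoids $I^0(Y)$ and therefore $a\in\mathring{Y}$. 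This trapping of $a$ inside the diamond $I(p,q)$ via the chronological relations it inherits from $x$ is the heart of the argument, and the step I expect to require the most care, since a priori $a$ ranges over the much larger set $\mathring{Y'}$.

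It remains to transfer the indistinguishability from $\mathring{Y'}$ down to $\mathring{Y}$. For this I would first record the monotonicity $\mathring{Y}\subset\mathring{Y'}$: any $z\in\mathring{Y}$ is chronologically related to some $w\in Y\subset Y'$, so $z\notin I^0(Y')$ and thus $z\in\mathring{Y'}$. Consequently, for every $z\in\mathring{Y}$ the identities valid over $\mathring{Y'}$ apply and give $d(a,z)=d(x,z)$ and $d(z,a)=d(z,x)$. Therefore $a$ is indistinguishable from $x$ within $\mathring{Y}$, which combined with $a\in\mathring{Y}$ gives $a\in[x]_{Y}$. As $a\in[x]_{Y'}$ was arbitrary, we conclude $[x]_{Y'}\subset[x]_{Y}$, as claimed.
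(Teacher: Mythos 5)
Your proof is correct and follows essentially the same route as the paper's: use the indistinguishability of $a$ from $x$ against $p$ and $q$ to trap $a$ inside $I(p,q)\subset Y$, then restrict the indistinguishability identities from $Y'$ down to $Y$. You are somewhat more careful than the paper about the distinction between $Y$ and $\mathring{Y}$ (checking $a\notin I^0(Y)$ and $\mathring{Y}\subset\mathring{Y'}$), which is a welcome bit of extra rigor but not a different argument.
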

\begin{proof}
    It is enough to consider the case $p\ll q$.
     Assume that $y\in [x]_{Y'}$. Then $d(p,y)=d(p,x)>0$ and $d(y,q)=d(x,q)>0$, so $y\in  I(p,q)\subset Y$. Moreover, for any $z\in Y\subset Y'$ we have $d(x,z)=d(y,z)$ and $d(z,x)=d(z,y)$. But this means that $y\in [x]_{Y}$ as desired.
\end{proof}

A natural question is to what extent the space $X$ can be reconstructed from its bounded regions. The answer is given by the following lemma.

\begin{lemma}\label{lemClasses}
    Let $X$ be a Lorentzian metric space without chronological boundary. Take $p,q \in X$ and a family $\{Y_{n}\}_{n\in \mathbb{N}}$ of compacts subset of $X$. Assume that for every $n\in \mathbb{N}$ it holds $I(p,q)\subset Y_{n}$ and $p,q \in Y_{n}$. Finally, assume that $\bigcup_{n}Y_{n}=X$. Then:
     \begin{enumerate}
        \item For every $x\in I(p,q)$ we have $\bigcap_{m}[x]_{Y_m}=\{x\}$;
        \item Suppose that a sequence of points $y_n \in BY_{n}$  has been given, such that, regarding them as equivalence classes,
        $y_m\subset y_n$ for any pair  $m\geq n$, and $y_{1} \subset I(p,q)$. Then there is one and only one $x\in I(p,q)$ such that $[x]_{Y_n}=y_n$ for every $n \in \mathbb{N}$.
    \end{enumerate}
\end{lemma}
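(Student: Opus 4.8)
The plan is to prove the two assertions in order, using the first to supply the uniqueness half of the second and reserving the real work for its existence half. For the first assertion, observe that every $x\in I(p,q)$ satisfies $d(p,x)>0$ with $p\in Y_m$, hence $x\notin I^0(Y_m)$, i.e.\ $x\in\mathring{Y}_m$; thus $x\in[x]_{Y_m}$ for every $m$ and so $x\in\bigcap_m[x]_{Y_m}$. Conversely, suppose $y\in\bigcap_m[x]_{Y_m}$. Given an arbitrary $z\in X$, the hypothesis $\bigcup_m Y_m=X$ provides an index $m$ with $z\in Y_m$, and membership $y\in[x]_{Y_m}$ forces $d(y,z)=d(x,z)$ and $d(z,y)=d(z,x)$. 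Hence $x$ and $y$ have identical distance profiles on all of $X$, and property (iii) gives $y=x$, proving $\bigcap_m[x]_{Y_m}=\{x\}$.

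Uniqueness in the second assertion is then immediate: if $x,x'\in I(p,q)$ both satisfy $[x]_{Y_n}=y_n=[x']_{Y_n}$ for all $n$, then $x'\in[x]_{Y_n}$ for every $n$, so $x'\in\bigcap_n[x]_{Y_n}=\{x\}$. For existence, I would fix for each $n$ a representative $a_n\in y_n$ (recall $y_m\subset y_1\subset I(p,q)$ for all $m$, so every $y_n$ lies in $I(p,q)$) and introduce the closed ``distance-profile'' set
\[
C_n:=\{w\in X:\ d(w,z)=d(a_n,z)\ \text{and}\ d(z,w)=d(z,a_n)\ \ \forall z\in Y_n\},
\]
which is closed because $d$ is continuous. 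The key identity is $y_n=C_n\cap\overline{I(p,q)}$: membership in $C_n$ already forces, through $z=p$ and $z=q$, that $d(p,w)=d(p,a_n)>0$ and $d(w,q)=d(a_n,q)>0$, hence $w\in I(p,q)\subset\mathring{Y}_n$; combined with the fact that points of $I^0(Y_n)$ impose no constraint (they have vanishing distance to and from everything in $Y_n$, so quantifying $z$ over $Y_n$ or over $\mathring{Y}_n$ yields the same class), this gives $C_n\cap\overline{I(p,q)}=C_n\cap\mathring{Y}_n=y_n$.

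Consequently each $y_n$ is a closed subset of the compact set $\overline{I(p,q)}$ (compact by Proposition~\ref{prop:chdiamonds-compact}, as $X$ has no chronological boundary), hence itself compact. The family $\{y_n\}$ is a nested decreasing collection of nonempty compacta, since $y_m\subset y_n$ for $m\ge n$, so $\bigcap_n y_n\neq\varnothing$; I pick $x$ in this intersection. Then $x\in y_n\subset I(p,q)\subset\mathring{Y}_n$ for every $n$, and since the equivalence classes partition $\mathring{Y}_n$ and $x$ lies in the class $y_n$, we conclude $[x]_{Y_n}=y_n$ for all $n$, which is exactly what is required.

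The main obstacle I anticipate is precisely the compactness of the $y_n$: a priori an equivalence class in $BY_n$ is only closed inside $\mathring{Y}_n$, and $\mathring{Y}_n$ need not be closed in $X$, so one must rule out the class acquiring spurious limit points on the quotient boundary. The identity $y_n=C_n\cap\overline{I(p,q)}$ is designed to do exactly this: sharing the distance profile of $a_n$ already pins a point strictly between $p$ and $q$, which keeps it inside $I(p,q)$ and prevents escape to the boundary. Everything else is a routine use of the finite intersection property and of property (iii).
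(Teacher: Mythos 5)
Your proof is correct and follows essentially the same route as the paper: the distinguishing property (iii) plus $\bigcup_n Y_n=X$ for part 1, and the finite intersection property for nested nonempty compact classes for part 2. The only difference is that you justify in detail, via the identity $y_n=C_n\cap\overline{I(p,q)}$, why each class $y_n$ is closed in $X$ (hence compact), a point the paper disposes of with the one-line remark that the classes are ``closed as intersections of closed sets''; your extra care here is warranted and does not change the argument.
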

In other words, there is one-to-one correspondence between points of $X$ and nested families of   equivalence classes.

We note that the present formulation can be applied to the $\sigma$-compact Lorentzian metric spaces only. However, it is easy to see that countability of the family $\{Y_n\}$ plays no role in the proof. In fact, we can take any enlarging family of compact sets enumerated by any directed set. We omit this generalisation for the sake of readability of the statement, as we shall apply it only in the $\sigma$-compact case.
\begin{proof}

    \begin{enumerate}
        \item Clearly, $x\in \bigcap_{m}[x]_{Y_m}$ as it belongs to all of its equivalence classes. Now take $y\in \bigcap_{m\ge 1}[x]_{Y_m}$ and $z\in X$. There is  $m$ such that $z\in Y_m$. If $z\in I^0(Y_m)$, then $d(z,x)=d(z,y)=0$, and $d(x,z)=d(y,z)=0$. Otherwise, $z\in \mathring{Y}_m$, and, as $y\in [x]_{Y_m}$, $d(x,z)=d(y,z)$ and $d(z,x)=d(z,y)$. Since $z$ is arbitrary, it means that $x=y$. So, the only point in the intersection is $x$.
        \item All equivalence classes $y_m$ are closed (as intersections of closed sets), non-empty, and contained in $Y_1$. Moreover, the nesting property $y_m\subset y_n$ for $m\ge n$ implies that any finite subfamily of them has a non-empty intersection. Then by compactness of $Y_1$ the set $L=\bigcap_{m\ge 1}y_m$ is non-empty. Clearly, $x\in L$ if and only if for every $m$ it holds $[x]_{Y_m}=y_m$. But then, by the previously proven point, $L$ contains exactly one element, so such an element $x$ is unique.
    \end{enumerate}
\end{proof}

The lemma above explains how the points of the original space can be recovered from the equivalence classes of its bounded regions. Another useful observation is that the open sets of a Lorentzian metric space without chronological boundary can be recovered from the topologies of its bounded regions via the following procedure.

\begin{lemma}\label{lem:top-from-bound}
    Let $(X,d)$ be a Lorentzian metric space and let $\mathscr{G}\subset X$ be a generating set. Then for every $x\in X$ and open set $O\ni x$, there are an open set $V\subset X$, a finite subset $A\subset \mathscr{G}$ and an open set $U\subset B\overline{I(A)}$ such that $x\in V\subset O$ where $V$ is the preimage of $U$ under the canonical projection $\mathring{\overline{I(A)}}\to B\overline{I(A)}$. If $A$ does the job, so does any larger finite set $A'\subset \mathscr{G}$.
\end{lemma}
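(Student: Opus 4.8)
The plan is to exploit that $\mathscr{G}$ generating forces, by Remark \ref{rmk:genset=no-bnd}, $X^+=X^-=\varnothing$, so that by Proposition \ref{prop:lms-hdf-lc-unique} the topology $\mathscr{T}$ is the unique one satisfying (ii), Hausdorff and locally compact, with the sets (\ref{don}) forming a subbasis; moreover Proposition \ref{prop:chdiamonds-compact} makes every chronological diamond relatively compact. Fix $x$ and an open $O\ni x$. Using generation I would pick $p,q\in\mathscr{G}$ with $p\ll x\ll q$ and auxiliary $p^-,q^+\in\mathscr{G}$ with $p^-\ll p$, $q\ll q^+$. The role of $p^-,q^+$ is to guarantee $p\in I(p^-,q)$ and $q\in I(p,q^+)$, hence $p,q\in\overline{I(A)}$ for every finite $A\subseteq\mathscr{G}$ containing $\{p^-,p,q,q^+\}$; and since $d(p,q)>0$ we even get $p,q\in\mathring{\overline{I(A)}}$. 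Note $x\in I(p,q)\subseteq\mathring{\overline{I(A)}}$ and that $\overline{I(p,q)}$ is compact. The set $V$ will be produced as a finite intersection of sets of the form (\ref{don}) together with the two conditions $d(p,\cdot)>0$, $d(\cdot,q)>0$ cutting out $I(p,q)$; such a $V$ is automatically $\mathscr{T}$-open and contained in $\mathring{\overline{I(A)}}$.

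The heart of the argument is a single compactness step. I would consider all sets
\[
N = I(p,q) \cap \bigcap_{i} \{z : a_i < d(u_i, z) < b_i\} \cap \bigcap_{j} \{z : c_j < d(z, v_j) < e_j\},
\]
with $u_i,v_j\in X$ and the open intervals chosen to contain the corresponding values at $x$, so that $x\in N$; these $N$ are closed under finite intersection (merge the data). Suppose, for contradiction, that no such $N$ lies in $O$. Then each $N\cap(X\setminus O)$ is nonempty, and its closure is a nonempty compact subset of $\overline{I(p,q)}\setminus O$; since intersections of finitely many $N$ are again of this form, these closures have the finite intersection property and so, by compactness of $\overline{I(p,q)}$, share a point $z^\ast\in\overline{I(p,q)}\setminus O$. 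Taking for each $w\in X$ and each $\epsilon>0$ the $N$ defined by the single extra condition $|d(w,\cdot)-d(w,x)|<\epsilon$ (resp.\ $|d(\cdot,w)-d(x,w)|<\epsilon$) and using continuity of $d$, I would deduce $d(w,z^\ast)=d(w,x)$ and $d(z^\ast,w)=d(x,w)$ for all $w\in X$. Property (iii) then gives $z^\ast=x\in O$, contradicting $z^\ast\notin O$. Hence some $N=:V$ satisfies $x\in V\subseteq O$.

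It then remains to realise $V$ as a preimage. The finitely many base points $u_i,v_j$ of $V$ lie in $X=\bigcup_A\overline{I(A)}$ (the union over finite $A\subseteq\mathscr{G}$, which is all of $X$ since $\bigcup_A I(A)=I(\mathscr{G})=X$), so I would enlarge $A\supseteq\{p^-,p,q,q^+\}$, keeping it finite and inside $\mathscr{G}$, until all of them lie in $Y:=\overline{I(A)}$. Any base point falling in $I^0(Y)$ yields a condition that is vacuous on $Y\supseteq I(p,q)\supseteq V$ and may be dropped without altering $V$; the remaining base points, together with $p,q$, lie in $\mathring Y$. By Lemma \ref{lem:clms-blms} the projection $\pi\colon\mathring{\overline{I(A)}}\to B\overline{I(A)}$ is continuous and $d_Y$ is continuous on $BY$, so each surviving condition $\{a<d(u_i,\cdot)<b\}$ (resp.\ $\{c<d(\cdot,v_j)<e\}$) is the $\pi$-preimage of the open set $\{\beta:a<d_Y([u_i]_Y,\beta)<b\}$ (resp.\ $\{\beta:c<d_Y(\beta,[v_j]_Y)<e\}$) of $BY$. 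Letting $U$ be the finite intersection of these, $U$ is open in $BY$ and $\pi^{-1}(U)=\mathring Y\cap V=V$, the last step because $V\subseteq I(p,q)\subseteq\mathring Y$. Thus $A$, $U$ and $V=\pi^{-1}(U)$ prove the statement.

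For the last clause, given finite $A'\supseteq A$ inside $\mathscr{G}$ one has $Y\subseteq Y':=\overline{I(A')}$ and $\mathring Y\subseteq\mathring Y'$; the same $V$ is still open, still contained in $I(p,q)\subseteq\mathring Y'$, and still cut out by conditions referring only to distances to and from points of $Y\subseteq Y'$, so $V=(\pi')^{-1}(U')$ for the analogous $U'\subseteq BY'$. The step I expect to be the main obstacle is the simultaneous control of the two requirements $x\in V$ and $V\subseteq O$ at the level of the quotient $BY$: because $\pi$ is merely a quotient map, a neighbourhood of $[x]_Y$ need not have its whole preimage inside $O$ unless $Y$ already separates $x$ sharply from the points of $X\setminus O$ near it. What lets the single finite-intersection argument above handle at once both the collapse of the fibre of $[x]_Y$ into $O$ and its thickening to an open set is precisely the \emph{global} distinguishing property (iii), deployed on the compact diamond $\overline{I(p,q)}$.
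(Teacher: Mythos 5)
Your proof is correct and follows essentially the same route as the paper's: obtain a finite intersection $V$ of sets of the form (\ref{don}) with $x\in V\subset O$, enlarge the finite set $A\subset\mathscr{G}$ until all base points lie in $I(A)$, discard the conditions whose base points fall in $I^0(\overline{I(A)})$, and push the remaining conditions down to $B\overline{I(A)}$. The only difference is that your central compactness/finite-intersection-property step re-derives from properties (ii)--(iii) the fact that such finite intersections form a neighbourhood basis at $x$, which the paper obtains directly by invoking Proposition \ref{prop:lms-hdf-lc-unique}.
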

In other words, we can get the base of the topology of $X$ by lifting the bases of topologies of its bounded regions.
\begin{proof}
    Fix $p,q\in \mathscr{G}$ so that $p\ll x\ll q$ and let us replace $O$ by $O'=O\cap I(p,q)$. Clearly, $x\in O'\subset O$, so it is enough to find a sub-neighborhood of $O'$ satisfying the requirements.
    By Proposition \ref{prop:lms-hdf-lc-unique}, we can find a sub-neighborhood $V\subset O'$ such that $x\in V$ and
    \[
    V=\bigcap_{i\leq n}(d^{r_i})^{-1}((a_i,b_i)) \cap \bigcap_{i\leq n}(d_{s_i})^{-1}((c_i,d_i))
    \]
    for some $n\in\mathbb{N}$, $r_1,\ldots,r_n,s_1,\ldots, s_n\in X$ and $a_i,b_i,c_i,d_i\in [-\infty,+\infty]$. As $\mathscr{G}$ is a generating set, we can find a finite set $A\subset \mathscr{G}$ so that $r_i,s_i\in I(I(A))$ for every $i=1,\ldots,n$, and $p,q\in I(A)$. We  have $V\subset I(p,q)\subset \mathring{\overline{I(A)}}$. Observe that all points $r_i$ and $s_i$ belong to $\mathring{\overline{I(A)}}$. Thus $V$ is the preimage of an open set $U\subset B\overline{I(A)}$ defined by
    \[
    U=\bigcap_{i\leq n}(d^{[r_i]_{\overline{I(A)}}})^{-1}((a_i,b_i)) \cap \bigcap_{i\leq n}(d_{[s_i]_{\overline{I(A)}}})^{-1}((c_i,d_i)).
    \]
\end{proof}

\begin{lemma}\label{lem:dense-sets-lift}
    Let $\{Y_i\}_{i\in \mathbb{N}}$ be a compact exhaustion of a Lorentzian metric space without chronological boundary $(X,d)$. Assume that for each $n\in\mathbb{N}$, $\mathscr{S}_n\subset \mathring{Y}_n$ is such that
    \[
    \mathscr{S'}_n=\{[p]_{Y_n}| p\in \mathscr{S}_n\}
    \]
    is dense in $BY_n$.
    Then $\mathscr{S}=\bigcup_{n\in\mathbb{N}} \mathscr{S}_n$ is dense in $X$.
\end{lemma}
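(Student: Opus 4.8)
The plan is to show that every nonempty open set of $X$ meets $\mathscr{S}$. Since $X$ has no chronological boundary, Proposition \ref{prop:lms-hdf-lc-unique} gives it the unique Hausdorff, locally compact topology in which the chronological diamonds are open, so it suffices to fix $x\in X$ and an open neighbourhood $O\ni x$ and to produce a point of some $\mathscr{S}_m$ inside $O$. The conceptual difficulty is that density of $\mathscr{S}'_n$ in $BY_n$ controls only the classes $[p]_{Y_n}$: a class meeting a lifted open set does not force its representative $p\in\mathscr{S}_n$ to lie in $O$ unless that lifted set is \emph{saturated} and contained in $O$. For a fixed $n$ this typically fails, since $[x]_{Y_n}$ may contain points of $I(p,q)$ distinct from $x$ that a small $O$ excludes. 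The resolution is that, by Lemmas \ref{cqptx} and \ref{lemClasses}, these classes shrink to $\{x\}$ as $n\to\infty$, so the obstruction disappears for large $n$.

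First I would pick $p,q$ with $x\in I(p,q)$ (possible since $X=I(X)$) and replace $O$ by $O\cap I(p,q)$, so that $O\subset I(p,q)$ and $K:=\overline{I(p,q)}$ is compact by Proposition \ref{prop:chdiamonds-compact}. As $\{Y_i\}$ is a compact exhaustion I fix $N$ with $K\subset Y_N$; then for $m\ge N$ one has $p,q\in Y_m$, $I(p,q)\subset Y_m$, and $p,q\in\mathring{Y}_m$ because $d(p,q)>0$. Set $C_m:=[x]_{Y_m}$, viewed inside $\mathring{Y}_m$. For $y\in C_m$, from $y\sim x$ and $p,q\in\mathring{Y}_m$ we get $d(p,y)=d(p,x)>0$ and $d(y,q)=d(x,q)>0$, hence $C_m\subset I(p,q)$; a short continuity argument then shows each $C_m$ is closed, and being a subset of $K$ it is compact. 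By Lemma \ref{cqptx} the sequence $(C_m)_{m\ge N}$ is decreasing, and the first assertion of Lemma \ref{lemClasses}, applied to the family $\{Y_m\}_{m\ge N}$, gives $\bigcap_{m\ge N}C_m=\{x\}$. Since $x\in O$, the sets $C_m\setminus O$ form a decreasing family of compacts with empty intersection, so the finite intersection property yields some $m\ge N$ with $C_m\subset O$.

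With this $m$ fixed I would produce a saturated open neighbourhood of $x$ inside $O$. The quotient map $\pi:\mathring{Y}_m\to BY_m$ is closed: the distance quotient of the compact $Y_m$ is a compact Hausdorff bounded Lorentzian metric space (bounded Lorentzian metric spaces are Hausdorff, see \cite{minguzzi22}), so the associated quotient of the compact $Y_m$ is continuous into a Hausdorff space and hence closed, and restricting a closed map to the preimage $\mathring{Y}_m$ of an open subset keeps it closed (cf.\ Lemma \ref{lem:clms-blms} for the identification of the topologies). Consequently $\pi\big(\mathring{Y}_m\setminus O\big)$ is closed in $BY_m$ and, because $\pi^{-1}([x]_{Y_m})=C_m\subset O$, it omits $[x]_{Y_m}$; thus $W:=BY_m\setminus\pi\big(\mathring{Y}_m\setminus O\big)$ is an open neighbourhood of $[x]_{Y_m}$ with $V:=\pi^{-1}(W)\subset O$, where $V$ is open and saturated and contains $x$. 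Finally, density of $\mathscr{S}'_m$ in $BY_m$ gives $p\in\mathscr{S}_m$ with $[p]_{Y_m}\in W$, and saturation forces $p\in V\subset O$; hence $p\in\mathscr{S}\cap O$, as required.

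The step I expect to be the main obstacle is the use of Lemmas \ref{cqptx} and \ref{lemClasses} to pass from fixed bounded regions—where the density transfer is simply false—to the limit, namely establishing that the classes $C_m$ form a decreasing family of compacts collapsing to $\{x\}$ so that $C_m\subset O$ eventually. The closedness of the quotient map $\pi$ is the second point requiring care, but it follows cleanly from compactness of $Y_m$ and Hausdorffness of the associated bounded Lorentzian metric space.
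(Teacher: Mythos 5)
Your proof is correct, but it takes a genuinely different route from the paper's. The paper's argument is purely local and much shorter: it takes a subbasic neighbourhood $V\subset O$ of $x$ of the form (\ref{don}), picks $n$ so large that $x$ and all the finitely many parameters $r_i,s_i$ defining $V$ lie in $\mathring{Y}_n$, and then observes that $V\cap\mathring{Y}_n$ is \emph{automatically saturated} for $\sim_{Y_n}$ (indistinguishable points of $\mathring{Y}_n$ give the same values of $d^{r_i}$ and $d_{s_i}$), hence is the preimage of a nonempty open set $U\subset BY_n$; density of $\mathscr{S}'_n$ finishes the proof in one line. You instead manufacture a saturated open subneighbourhood by a global compactness argument: the classes $C_m=[x]_{Y_m}$ are compact, decreasing by Lemma \ref{cqptx}, and collapse to $\{x\}$ by Lemma \ref{lemClasses}, so $C_m\subset O$ eventually, after which closedness of the quotient map $\pi:\mathring{Y}_m\to BY_m$ yields the saturated open set $V=\pi^{-1}(W)\subset O$. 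Both arguments are sound; the paper's buys brevity and uses only the subbasis characterization of Proposition \ref{prop:lms-hdf-lc-unique}, while yours is more structural (it isolates exactly why a fixed bounded region cannot suffice and why the obstruction vanishes in the limit) and would survive in settings where one does not have such an explicit subbasis.

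Two points in your write-up deserve care. First, you need $N$ with $\overline{I(p,q)}\subset Y_N$; this follows from the standard reading of ``compact exhaustion'' (with $Y_i\subset\mathrm{int}\,Y_{i+1}$), but note that an increasing family of compacts with union $X$ need not swallow a given compact set, whereas the paper's proof only needs that \emph{finite} sets eventually lie in $\mathring{Y}_n$, which it derives from the union property alone. Second, the closedness of $\pi:\mathring{Y}_m\to BY_m$ is correct but should be spelled out via the compactified quotient: the map $\Pi:Y_m\to Y_m/\!\approx$ (collapsing $I^0(Y_m)$ to a single point $i^0$) is continuous from a compact space to a Hausdorff one, hence closed, and its restriction to the saturated set $\mathring{Y}_m=\Pi^{-1}\big((Y_m/\!\approx)\setminus\{i^0\}\big)$ remains closed onto $BY_m$, whose quotient topology agrees with the bounded Lorentzian metric space topology by Lemma \ref{lem:clms-blms}. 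With these details filled in, the argument is complete.
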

\begin{proof}
    To start with, we observe that for any finite set $A$ we can find $n\in\mathbb{N}$ such  that $A\subset \mathring{Y}_n$. Indeed, $(X,d)$ has no chronological boundary, so $A\subset I(A')$ for some finite set $A'\subset X$. For some $n\in\mathbb{N}$, $A'\cup A\subset Y_n$. It is easy to see that in this case $A\cap Y_n^0=\varnothing$.

    Consider a non-empty open set  $O\subset X$ and let $x\in O$.  By Proposition \ref{prop:lms-hdf-lc-unique}, we can find a subset $V\subset O$ such that $x\in V$ and
    \[
    V=\bigcap_{i\leq m}(d^{r_i})^{-1}((a_i,b_i)) \cap \bigcap_{i\leq m}(d_{s_i})^{-1}((c_i,d_i))
    \]
    for some $m\in\mathbb{N}$, $r_1,\ldots,r_m,s_1,\ldots, s_m\in X$ and $a_i,b_i,c_i,d_i\in [-\infty,+\infty]$. Let $n\in\mathbb{N}$ be such that $r_1,\ldots,r_m,s_1,\ldots,s_m\in \mathring{Y}_n$. Then $V\cap \mathring{Y}_n$ is the preimage of an open set
    \[
    U=\bigcap_{i\leq m}(d^{[r_i]_{Y_n}})^{-1}((a_i,b_i)) \cap \bigcap_{i\leq m}(d_{[s_i]_{Y_n}})^{-1}((c_i,d_i))\subset BY_n
    \]
    under the canonical projection $\mathring{Y}_n\to BY_n$.
    As $\mathscr{S'}_n$ is dense in $BY_n$, there is $y'\in U\cap \mathscr{S'_n}$, and the representative $y$, $y'= [y]_{Y_n}$, is such that  $y\in  V \cap \mathscr{S}$. We have found that any open set of $X$ contains a point of $\mathscr{S}$, i.e.\ $\mathscr{S}$ is dense.
\end{proof}
\begin{corollary}\label{crl:comp-dense}
    Let $(X,d)$ be a Lorentzian metric space without chronological boundary, and $\{Y_n\}$ be a compact exhaustion of $X$. Then there is a dense countable  subset $\mathscr{S}\subset X$ such that for every $n\in\mathbb{N}$ the set $\mathscr{S}\cap \mathring{Y}_n$ projects to a dense countable subset of $BY_n$.
\end{corollary}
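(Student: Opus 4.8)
The plan is to build $\mathscr{S}$ as a union $\bigcup_{n}\mathscr{S}_n$ of countable sets $\mathscr{S}_n\subset\mathring{Y}_n$ chosen so that each projects onto a dense subset of the bounded Lorentzian metric space $BY_n$, and then to feed this directly into Lemma \ref{lem:dense-sets-lift} to promote the local density into global density in $X$. The only external input I would invoke is that every bounded Lorentzian metric space is separable (in fact second countable), a fact established in \cite{minguzzi22}; this is exactly what supplies a countable dense subset inside each $BY_n$ to start from. Note that the machinery being used (Lemmas \ref{lem:clms-blms} and \ref{lem:dense-sets-lift}) carries the standing assumption of an empty chronological boundary, which I would regard as in force here, since otherwise the objects $BY_n$ are not bounded Lorentzian metric spaces.

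First I would fix the compact exhaustion $\{Y_n\}$ and recall from Lemma \ref{lem:clms-blms} that each $BY_n$ is a bounded Lorentzian metric space. By separability of bounded Lorentzian metric spaces I would choose a countable dense set $D_n\subset BY_n$. Each element of $D_n$ is by construction a nonempty equivalence class of points of $\mathring{Y}_n$; using countable choice I would pick one representative $p\in\mathring{Y}_n$ from each class and collect these into a countable set $\mathscr{S}_n\subset\mathring{Y}_n$, so that $\mathscr{S}'_n:=\{[p]_{Y_n}\mid p\in\mathscr{S}_n\}=D_n$ is dense in $BY_n$. This places me in the exact hypotheses of Lemma \ref{lem:dense-sets-lift}.

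Next I would set $\mathscr{S}=\bigcup_{n}\mathscr{S}_n$, which is countable as a countable union of countable sets, and apply Lemma \ref{lem:dense-sets-lift} to conclude that $\mathscr{S}$ is dense in $X$. For the remaining ``for every $n$'' clause the argument is immediate: since $\mathscr{S}_n\subset\mathscr{S}$ and $\mathscr{S}_n\subset\mathring{Y}_n$, we have $\mathscr{S}_n\subset\mathscr{S}\cap\mathring{Y}_n$, so the projection of $\mathscr{S}\cap\mathring{Y}_n$ to $BY_n$ contains $D_n$; as a superset of a dense set is dense, $\mathscr{S}\cap\mathring{Y}_n$ projects to a dense subset of $BY_n$, which is moreover countable because it is the projection of a subset of the countable set $\mathscr{S}$.

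I expect no serious obstacle, as the whole argument is essentially a packaging of separability of the bounded pieces with Lemma \ref{lem:dense-sets-lift}. The only point requiring a little care is the passage from a dense subset $D_n$ of $BY_n$ to genuine representatives in $\mathring{Y}_n$: one must remember that points of $BY_n$ are \emph{equivalence classes} of points of $\mathring{Y}_n$ and use choice to select representatives, and one must note that $\mathscr{S}\cap\mathring{Y}_n$ may strictly contain $\mathscr{S}_n$ (picking up representatives coming from other $\mathscr{S}_m$) without this ever destroying density. Establishing the separability of bounded Lorentzian metric spaces is the single nontrivial fact, and it is already available from \cite{minguzzi22}.
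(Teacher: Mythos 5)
Your proposal is correct and follows essentially the same route as the paper: take a countable dense subset of each bounded space $BY_n$ (available from \cite{minguzzi22}), lift it by choosing representatives to a countable $\mathscr{S}_n\subset\mathring{Y}_n$, apply Lemma \ref{lem:dense-sets-lift} to the union for global density, and observe that $\mathscr{S}\cap\mathring{Y}_n\supset\mathscr{S}_n$ for the local density. Your explicit remark that the no-chronological-boundary hypothesis is implicitly in force is a fair and slightly more careful reading than the paper's own terse proof.
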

\begin{proof}
    Each bounded Lorentzian metric space $BY_n$ has a countable dense subset $\mathscr{S}'_n\subset BY_n$ \cite{minguzzi22}. By choosing an arbitrary representative of each class, we can lift $\mathscr{S}'_n$ to $\mathscr{S}_n\subset X$ which projects to $\mathscr{S}_n$. By Lemma \ref{lem:dense-sets-lift}, $\mathscr{S}=\bigcup_{n\in\mathbb{N}} \mathscr{S}_n$ is dense in $X$. At the same time, for any $n\in\mathbb{N}$, $\mathscr{S}\cap \mathring{Y}_n \supset \mathscr{S}_n$ is dense in $BY_n$.
\end{proof}

\subsection{Countably-generated and sequenced spaces}
\begin{definition}
    Let $(X,d)$ be a Lorentzian metric space. We say that $X$ is \emph{countably-generated} if it admits a countable generating set.
    We say that $(X,d,\{p^k\}_{k\in\mathbb{N}})$ is a \emph{sequenced} Lorentzian metric space if $(X,d)$ is a Lorentzian metric space and $\{p^i\}_{i\in \mathbb{N}}$ is a sequence of points in $X$ such that
    \[
    \{p^i| i\in \mathbb{N}\}
    \]
    is a generating set. We will refer to $\{p^k\}_{k\in\mathbb{N}}$ as a generating sequence.
\end{definition}
Note that  countably-generated Lorentzian metric spaces
do not have a chronological boundary (Remark \ref{rmk:genset=no-bnd}).

\begin{proposition}
    Let $(M,g)$ be a smooth globally hyperbolic Lorentzian  manifold (without boundary), and let $(M,d)$ be the associated, by Proposition \ref{prop:globhyp-is-LMS}, Lorentzian metric space. Then $M$ is countably generated.
\end{proposition}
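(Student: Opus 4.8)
The plan is to exhibit an explicit countable generating set, taking for $\mathscr{G}$ a countable dense subset of $M$. First I would recall that, as a smooth manifold, $M$ is second countable (this is part of the standing definition of a spacetime), hence separable, so it admits a countable dense subset $\mathscr{D}\subset M$. The claim will then be that $\mathscr{D}$ already generates $X$ in the sense of Definition \ref{def:gen-set}, i.e.\ $M=I(\mathscr{D})$.

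The two ingredients I would use are standard facts about the causal structure of smooth spacetimes. Namely, (a) the chronological futures and pasts $I^{+}(x)$ and $I^{-}(x)$ are open, and (b) they are non-empty for every $x\in M$ (along any timelike curve through $x$ one finds points strictly to its future and to its past). By the proof of Proposition \ref{prop:globhyp-is-LMS} the relation $\ll$ induced by the Lorentzian distance $d$ coincides with the spacetime chronological relation, so $I^{\pm}(x)$ are exactly the sets $\{y:x\ll y\}$ and $\{y:y\ll x\}$ appearing in the notation of Subsection \ref{ssec:notation}.

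To prove $M=I(\mathscr{D})$ I would fix $x\in M$. Since $I^{-}(x)$ is a non-empty open set and $\mathscr{D}$ is dense, there is $p\in\mathscr{D}\cap I^{-}(x)$, that is $p\ll x$; symmetrically there is $q\in\mathscr{D}\cap I^{+}(x)$ with $x\ll q$. Hence $x\in I^{+}(p)\cap I^{-}(q)=I(p,q)\subset I(\mathscr{D})$. As $x$ was arbitrary, $M\subset I(\mathscr{D})\subset M$, so $\mathscr{D}$ is a countable generating set and $M$ is countably generated.

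I do not expect a serious obstacle here: the only points requiring care are that $M$ be second countable, which is built into the definition of a (globally hyperbolic) spacetime, and that $I^{\pm}(x)\neq\varnothing$ for every point, which holds in any spacetime. In particular, beyond what Proposition \ref{prop:globhyp-is-LMS} already guarantees (that $(M,d)$ is a Lorentzian metric space), no further use of global hyperbolicity is needed: openness together with non-emptiness of the chronological futures and pasts, combined with separability, suffices.
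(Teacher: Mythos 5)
Your proposal is correct and coincides with the paper's own proof: both take a countable dense subset of the (second countable) manifold $M$ and use that $I^{\pm}(x)$ are non-empty open sets to find generating points $p\ll x\ll q$ in that dense set. No further comment is needed.
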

\begin{proof}
    Since $M$ is a manifold, there is a countable dense subset $\mathscr{G}\subset M$. For every $x\in M$ both $I^{+}(x)$ and $I^{-}(x)$ are non-empty open sets, so there are $p\in I^{-}(x)\cap \mathscr{G}$ and $q\in I^{+}(x)\cap \mathscr{G}$. Therefore, $\mathscr{G}$ generates $X$.
\end{proof}

The difference between countably generated   and  sequenced Lorentzian metric spaces is similar to the difference between metric spaces and pointed metric spaces. We shall  discuss some properties of  countably generated Lorentzian metric spaces.

Observe that we can pass from a countably generated to a sequenced Lorentzian metric space by  fixing an order for the sequence,  whenever it could be convenient. Given two sequenced Lorentzian metric spaces, say $(X,d,\{p^k\}_{k\in\mathbb{N}})$ and  $(X',d',\{p'{}^k\}_{k\in\mathbb{N}})$, it is natural to consider a special class of maps $X\to X'$, mapping $p^k$ to $p'{}^k$.
Clearly, this class depends on  specific choices for the generating sequences. In particular, we introduce the following notion.
\begin{definition}\label{def:cg-iso}
    We say that two sequenced Lorentzian metric spaces
    \[
    (X,d,\{p^k\}_{k\in \mathbb{N}}) \ \ \textrm{and} \ \
      (X',d',\{p'{}^k\}_{k\in \mathbb{N}})
     \]
       are \emph{isomorphic} if there exists a bijective distance preserving map $\phi: X \to X'$ such that $p'{}^n=\phi(p^n)$ for every $n\in \mathbb{N}$. We also say that such a map $\phi$ is an \emph{isomorphism}.
\end{definition}
By Corollary \ref{crl:isometry-isomorphism} any isomorphism of sequenced Lorentzian metric spaces is an isometry and thus a topological homeomorphism (see Corollary \ref{crl:isometry-isomorphism} and the paragraph below it).

In Section \ref{sec:GH}, devoted to  Gromov-Hausdorff convergence, the choice of the sequence will be of importance.

\begin{proposition}
    \label{prop:cg-Polish}
    If $(X,d)$ is a countably generated Lorentzian metric space, then its topology is $\sigma$-compact, second-countable and Polish.
\end{proposition}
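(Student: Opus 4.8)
The plan is to establish the three properties in the order $\sigma$-compact, second countable, Polish, the last following almost formally once the first two are combined with the local compactness and Hausdorffness recorded in Proposition \ref{prop:lms-hdf-lc-unique}. Throughout I fix a countable generating set $\mathscr{G}=\{p^k\}_{k\in\mathbb{N}}$; recall that its existence forces the chronological boundary to be empty (Remark \ref{rmk:genset=no-bnd}), so $X=I(X)$ and the topology is the unique one of Proposition \ref{prop:lms-hdf-lc-unique}.

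First I would dispatch $\sigma$-compactness. By definition of a generating set, $X=I(\mathscr{G})=\bigcup_{i,j} I(p^i,p^j)$, and each diamond satisfies $I(p^i,p^j)\subset\overline{I(p^i,p^j)}\subset X$ with $\overline{I(p^i,p^j)}$ compact by Proposition \ref{prop:chdiamonds-compact}. Hence $X=\bigcup_{i,j}\overline{I(p^i,p^j)}$ is a countable union of compact sets. Reindexing the finite unions of these closures gives a compact exhaustion $\{Y_n\}$, which I arrange to be proper, i.e.\ $Y_n\subset\mathrm{int}\,Y_{n+1}$ (possible in a locally compact Hausdorff space).

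For second countability I would reduce to the bounded regions via Lemma \ref{lem:top-from-bound}. There are only countably many finite subsets $A\subset\mathscr{G}$, and for each the set $\overline{I(A)}$ is compact, so $B\overline{I(A)}$ is a bounded Lorentzian metric space, hence second countable (indeed Polish) by \cite{minguzzi22}; pick a countable base $\mathscr{B}_A$ of $B\overline{I(A)}$. Each $U\in\mathscr{B}_A$ lifts to the preimage $V\subset X$ under $\mathring{\overline{I(A)}}\to B\overline{I(A)}$, and this $V$ is open in $X$ because, as in the proof of Lemma \ref{lem:top-from-bound}, it is a finite intersection of sets of the form (\ref{don}), which are open by continuity of $d$. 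I claim the countable family $\mathscr{B}=\bigcup_{A}\{\text{lift of }U:U\in\mathscr{B}_A\}$ is a base: given $x$ and open $O\ni x$, Lemma \ref{lem:top-from-bound} yields a finite $A\subset\mathscr{G}$ and open $U\subset B\overline{I(A)}$ whose lift lies between $x$ and $O$, and replacing $U$ by a member $U'\in\mathscr{B}_A$ that contains the image of $x$ and lies in $U$ keeps its lift a neighbourhood of $x$ contained in $O$. This is the step I expect to be the main obstacle: one must be sure that the lifts are genuinely open in $X$ (not merely in the subspace $\mathring{\overline{I(A)}}$) and that lifting a base of each region really produces a base of $X$; both are exactly what the construction in Lemma \ref{lem:top-from-bound} guarantees, which is why I would lean on that lemma rather than argue directly with the subbasis (\ref{don}) and a countable dense set.

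Finally, the Polish property would follow by standard point-set topology. The space is Hausdorff and locally compact (Proposition \ref{prop:lms-hdf-lc-unique}) and now second countable; local compactness plus Hausdorff gives complete regularity, hence regularity, so Urysohn's metrization theorem yields metrizability, while second countability yields separability. To get completeness I would pass to the one-point compactification $X^+$: using the proper exhaustion $\{Y_n\}$ the point $\infty$ has the countable neighbourhood base $\{\,\{\infty\}\cup(X\setminus Y_n)\,\}_n$, so $X^+$ is compact, Hausdorff and second countable, hence compact metrizable and in particular Polish; as $X$ is locally compact Hausdorff it is open in $X^+$, and an open subspace of a Polish space is Polish. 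Therefore $X$ is separable and completely metrizable, i.e.\ Polish.
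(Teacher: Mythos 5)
Your proposal is correct and follows essentially the same route as the paper: $\sigma$-compactness from the compact closures $\overline{I(p^i,p^j)}$ over the countable generating set (Proposition \ref{prop:chdiamonds-compact}), second countability by lifting countable bases of the bounded regions $B\overline{I(A)}$ via Lemma \ref{lem:top-from-bound}, and the Polish property from the standard fact that a second countable, locally compact Hausdorff space is Polish (which you spell out via the one-point compactification where the paper simply cites references). The one point you rightly flag — that lifts of base elements must be open in $X$ and not merely in $\mathring{\overline{I(A)}}$ — is handled exactly as in the paper, since the lift of the smaller base element sits inside the open-in-$X$ set $V\subset I(p,q)$ produced by Lemma \ref{lem:top-from-bound} and is therefore itself open in $X$.
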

\begin{proof}
We start from $\sigma$-compactness.
Let $\mathscr{G}=\{p^n| n\in \mathbb{N}\}$ be a countable generating set of $X$. Set $X^m=\overline{I(p^1,\ldots,p^m)}$. Then
\[
X=\bigcup_{m}X^m.
\]
By Proposition \ref{prop:chdiamonds-compact} the sets $\overline{I(p^1,\ldots,p^n)}$ are compact, so $X$ is $\sigma$-compact.

Let us prove second countability. We recall that for every $m$ the topology $\mathscr{T}^m$ of the bounded Lorentzian metric space $BX^m$ is second countable \cite{minguzzi22}. We then have only to show that a union of their countable bases (lifted to $X$) is a base of the topology topology $\mathscr{T}$. This follows from Lemma \ref{lem:top-from-bound}.

Every second
countable locally compact Hausdorff space (hence $\sigma$-compact) is Polish (see the argument in \cite[Prop.\ 3.1]{minguzzi11c}\cite[Thm.\ 1.10]{minguzzi22}).
\end{proof}
\begin{corollary}\label{crl:dense}
   Let $(X,d)$ be a countably generated Lorentzian metric space. Then there is a countable dense subset $\mathscr{S}\subset X$. Moreover, if $x,y\in X$ and $x\neq y$, then there is $z\in \mathscr{S}$ such that $d(x,z)\neq d(y,z)$ or $d(z,x)\neq d(z,y)$. Finally, the subbasis of the topology of $X$ is given by the sets of the form (\ref{don}) with $p,r\in \mathscr{S}$.
\end{corollary}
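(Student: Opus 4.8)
The plan is to dispatch the three assertions in turn, reusing the density/separation machinery already built for the interior of a general Lorentzian metric space. For the existence of $\mathscr{S}$ I would invoke Proposition \ref{prop:cg-Polish}: a countably generated space is second-countable, hence separable, so a countable dense subset $\mathscr{S}\subset X$ exists. (Alternatively, Corollary \ref{crl:comp-dense} applied to the compact exhaustion $X^m=\overline{I(p^1,\dots,p^m)}$ produces such a set explicitly; the extra projection property it carries is not needed here.) I would then fix one such $\mathscr{S}$ and prove the remaining two claims for it. I also recall that, by Remark \ref{rmk:genset=no-bnd}, a countably generated space has empty chronological boundary, so $X=I(X)$ and $I^{\pm}(x)\neq\varnothing$ for every $x$; this will be used repeatedly.

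For the ``moreover'' (distinguishing) part, let $x\neq y$. Property (iii), equivalently Remark \ref{rmk:InteriorDistinguishes}, furnishes some $z_0\in X$ with $d(x,z_0)\neq d(y,z_0)$ or $d(z_0,x)\neq d(z_0,y)$. Treating the first case (the second being symmetric), I may assume $d(x,z_0)<d(y,z_0)$ and choose $m$ strictly between them. The set $U=\{u: d(x,u)<m\}\cap\{u: d(y,u)>m\}$ is open by continuity of $d$ and contains $z_0$, so by density of $\mathscr{S}$ there is $z\in U\cap\mathscr{S}$; by construction $d(x,z)<m<d(y,z)$, so $z\in\mathscr{S}$ distinguishes $x$ and $y$. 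This is exactly the density argument used in the proof of Lemma \ref{lem:InteriorHasNoBoundary}, now run so as to land inside the countable set $\mathscr{S}$.

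For the last assertion I would apply Lemma \ref{lem:subbasis} at each $x\in X$ to the family $\mathscr{A}$ of sets of the form (\ref{don}) with $p,r\in\mathscr{S}$ (allowing the void constraints $a=-\infty,b=+\infty$ or $c=-\infty,e=+\infty$, so that one-sided sets such as $\{u: d(u,z)<m\}$ belong to $\mathscr{A}$). Condition (a) of that lemma is the separation just obtained: given $y\neq x$ and $z\in\mathscr{S}$ distinguishing them with, say, $d(x,z)<m<d(y,z)$, the disjoint sets $\{u: d(u,z)<m\}\ni x$ and $\{u: d(u,z)>m\}\ni y$ both lie in $\mathscr{A}$. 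Condition (b) uses $X=I(X)$ together with density: choosing $p\in I^{-}(x)\cap\mathscr{S}$ and $q\in I^{+}(x)\cap\mathscr{S}$ (possible since $I^{\pm}(x)$ are non-empty and open), the diamond $I(p,q)=\{u:d(p,u)>0\}\cap\{u:d(u,q)>0\}$ is a member of $\mathscr{A}$ containing $x$, and $\overline{I(p,q)}$ is compact by Proposition \ref{prop:chdiamonds-compact}. Lemma \ref{lem:subbasis} then shows $\mathscr{A}$ is a subbasis for the neighborhood system of each point, whence it is a subbasis for $\mathscr{T}$. The only point requiring care --- and the closest thing to an obstacle --- is bookkeeping: verifying that the separating and compact-neighborhood sets are genuine instances of (\ref{don}) with the distinguished points taken in $\mathscr{S}$, and that the pointwise neighborhood-subbasis conclusion of Lemma \ref{lem:subbasis} upgrades to a subbasis for the whole topology, exactly as in Propositions \ref{prop:lms-hdf-lc-unique} and \ref{prop:IntNbhdSBs}.
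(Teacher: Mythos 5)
Your proposal is correct and takes essentially the same route as the paper: the authors also obtain $\mathscr{S}$ from the Polish (hence separable) property of Proposition \ref{prop:cg-Polish} and then defer the remaining two claims to the analogous results of the earlier paper, whose arguments are exactly the density-plus-separation trick and the application of Lemma \ref{lem:subbasis} that you spell out. Your write-up just makes explicit the details the paper cites away.
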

\begin{proof}
Existence of a countable dense subset is a consequence of $X$ being Polish. The rest is identical to  \cite[Prop.\ 1.11, Cor.\ 1.14]{minguzzi22}.
\end{proof}

\subsection{Bounded Lorentzian metric spaces}
In Subsection \ref{ssec:breg} we have seen that the compact regions of a Lorentzian metric space produce bounded Lorentzian metric spaces that provide information on the local properties of the original space. We applied this idea in Prop.\ \ref{prop:cg-Polish}. In this subsection we go in the reverse direction, namely, we study how the general theory of Lorentzian metric spaces can be applied to the bounded case.

From Example \ref{exmp:blms} we already know that bounded Lorentzian metric spaces, as the terminology suggests, are special cases of Lorentzian metric spaces. At the same time, the bounded Lorentzian metric spaces do not belong to the class of the spaces without boundary (and thus they can not be countably generated) for which the most strong results were achieved.
\begin{lemma}
    Let $(X,d)$ be a bounded Lorentzian metric space. Then $I^{+}(X)=I^{+}(X^-)$ and $I^{-}(X)=I^{-}(X^+)$. In particular, $I(X)=I(X^+\cup X^-)$.
\end{lemma}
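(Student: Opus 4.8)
The plan is to establish the two future/past equalities separately and then derive the ``in particular'' clause formally. Since $X^-\subset X$ gives $I^+(X^-)\subset I^+(X)$ (and likewise $I^-(X^+)\subset I^-(X)$) for free, the only content lies in the two reverse inclusions $I^+(X)\subset I^+(X^-)$ and $I^-(X)\subset I^-(X^+)$. Moreover, property (i) and the compactness of $I_\epsilon$ are symmetric under reversal of the distance (replacing $d(x,y)$ by $\tilde d(x,y):=d(y,x)$, which swaps $X^+$ with $X^-$ and $I^+$ with $I^-$), so it suffices to prove $I^+(X)\subset I^+(X^-)$; applying the same argument to $\tilde d$ yields $I^-(X)\subset I^-(X^+)$. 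The heart of the proof is a ``push to the past boundary'' argument: given $y$ with nonempty chronological past, I produce a point $p'\in X^-$ still chronologically preceding $y$ by maximising the function $d^y=d(\cdot,y)$.

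Concretely, fix $y\in I^+(X)$, choose $p_0\ll y$, and set $\epsilon_0:=d(p_0,y)>0$. Consider the slice $S:=\{x\in X: d(x,y)\ge \epsilon_0\}$. I claim $S$ is compact: it is closed, being the preimage of $[\epsilon_0,\infty)$ under the continuous map $d^y$, and it is contained in $\pi_1(I_{\epsilon_0})$, which is compact as the continuous image of the compact set $I_{\epsilon_0}$ (here I use that $X$ is \emph{bounded}, so that $I_{\epsilon_0}$ is compact). A closed subset of a compact set is compact. Since $d^y$ is continuous and $S$ is compact and nonempty (it contains $p_0$), it attains a maximum $M:=d(p',y)$ at some $p'\in S$, with $M\ge\epsilon_0>0$. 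This $M$ is in fact the global maximum of $d^y$ over all of $X$, since any $x\notin S$ satisfies $d(x,y)<\epsilon_0\le M$. In particular $M>0$ forces $p'\ll y$.

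It remains to check $p'\in X^-$, i.e.\ that $p'$ has empty chronological past. Suppose instead that $w\ll p'$ for some $w$, so $d(w,p')>0$. Since also $d(p',y)=M>0$, the reverse triangle inequality (property (i)) yields
\[
d(w,y)\ \ge\ d(w,p')+d(p',y)\ =\ d(w,p')+M\ >\ M,
\]
contradicting the maximality of $M$. Hence no such $w$ exists, so $p'\in X^-$ and $y\in I^+(p')\subset I^+(X^-)$. This proves $I^+(X)\subset I^+(X^-)$, and the dual argument gives $I^-(X)\subset I^-(X^+)$, establishing the two stated equalities.

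For the ``in particular'' clause I would argue directly from the definitions. Every $p\in X^+$ has $I^+(p)=\varnothing$ and every $p\in X^-$ has $I^-(p)=\varnothing$, so $I^+(X^+)=\varnothing$ and $I^-(X^-)=\varnothing$. Therefore
\[
I(X^+\cup X^-)=\big(I^+(X^+)\cup I^+(X^-)\big)\cap\big(I^-(X^+)\cup I^-(X^-)\big)=I^+(X^-)\cap I^-(X^+),
\]
and the two equalities just proved turn the right-hand side into $I^+(X)\cap I^-(X)=I(X)$. The main obstacle is the single compactness step guaranteeing existence of the maximiser $p'$: this is exactly where the boundedness hypothesis is indispensable, since finiteness of the timelike diameter ensures $M<\infty$ and compactness of $I_{\epsilon_0}$ provides the compact set on which $d^y$ attains its supremum; the reverse triangle inequality then does the rest, certifying that the maximiser lands on the boundary.
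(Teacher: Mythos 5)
Your proposal is correct and follows essentially the same route as the paper: both produce the required point of $X^-$ by maximising $d^y=d(\cdot,y)$ over a compact set obtained from the compactness of $I_{\epsilon}$ (you use the superlevel set $\{d^y\ge\epsilon_0\}$, the paper uses $\pi_1(I_\epsilon)$), and both then invoke the reverse triangle inequality to show the maximiser has empty chronological past. The explicit time-reversal duality and the formal derivation of the ``in particular'' clause are also fine.
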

In other words, the boundaries have enough points to generate (in the sense of Definition \ref{def:gen-set}) everything except the boundaries themselves.
\begin{proof}
    Let us first prove  $I^{+}(X)=I^{+}(X^-)$. By monotonicity of $I^+$, $I^{+}(X^-)\subset I^+(X)$, so we have to show the other inclusion only.
    Fix $x\in I^{+}(X)$. By definition of $I^+$, there is $y\in X$ such that $\epsilon=d(y,x)>0$. Let $p$ be a point of maximum of $d^{x}$ on the compact set $K=\pi_1(I_{\epsilon})$, where $\pi_1$ is the canonical projection $X\times X\to X$ to the first factor. As $y\in K$ we have $d(p,x)\geq \epsilon>0$. Now, if there were $r\ll p$, then we would have
    \[d^{x}(r)=d(r,x)\geq d(r,p)+d(p,x)>d(p,x)\geq \epsilon,\]
    so $r\in K$ and $p$ would not be a point of maximum, in contradiction to our assumption. We conclude that  $p\in X^-$ and $p\ll x$, i.e. $x\in I^{+}(X^-)$.

    The equality $I^{-}(X)=I^{-}(X^+)$ can be proven similarly. Finally,
    \[
    I(X)=I^{+}(X^+)\cap I^{-}(X^-)\subset I^{+}(X^+\cup X^-) \cap I^{-}(X^+\cup X^-)=I(X^+\cup X^-),
    \]
    $I(X^+\cup X^-)\subset I(X)$ by monotonicity, so $I(X)=I(X^+\cup X^-)$.
\end{proof}
When the boundaries can be cut out, the resulting space is countably generated.
\begin{theorem}\label{thm:blmc-cg}
    Let $(X,d)$ be a bounded Lorentzian metric space with the topology $\mathscr{T}$.
    If $I(X) $ is dense in $X$, then $(I(X) ,d_{I(X) \times I(X) })$ is a countably-generated  Lorentzian metric space.
\end{theorem}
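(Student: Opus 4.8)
The plan is to separate the statement into two independent tasks: first, establish that $(I(X),d\vert_{I(X)\times I(X)})$ is genuinely a Lorentzian metric space with empty chronological boundary, and second, exhibit an explicit countable generating set for it. The first task requires essentially no new work. By Example~\ref{exmp:blms} the bounded Lorentzian metric space $(X,d)$ is in particular a Lorentzian metric space, and since by hypothesis $I(X)$ is dense in $X$, Lemma~\ref{lem:InteriorHasNoBoundary} applies directly and yields that $(I(X),d\vert_{I(X)\times I(X)})$ is a Lorentzian metric space whose chronological boundary is empty, the set $I(X)$ carrying the topology induced by $\mathscr{T}$. Thus it only remains to verify the countable generating condition.

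For the second task I would produce the generating set from the separability already available in the bounded case. Since $X$ is a bounded Lorentzian metric space, it possesses a countable dense subset $D\subset X$ (a property of bounded Lorentzian metric spaces established in \cite{minguzzi22}). By Remark~\ref{rmk:IntIsOpen} the set $I(X)$ is open, and a dense subset of a topological space meets every nonempty open set; consequently $\mathscr{S}:=D\cap I(X)$ is a countable set that is dense in $I(X)$ with respect to the subspace topology. Note also that the chronological relation on $I(X)$ is merely the restriction of $\ll$ on $X$, because the distance is only restricted.

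Finally I would check that $\mathscr{S}$ generates $I(X)$, that is, $I(X)=I(\mathscr{S})$ with the chronological convex hull computed inside $I(X)$. Fix $x\in I(X)$. Because $I(X)$ has no chronological boundary, both $I^{-}(x)\cap I(X)$ and $I^{+}(x)\cap I(X)$ are nonempty, and they are open since the chronological relation is open by continuity of $d$. Density of $\mathscr{S}$ then furnishes $p\in\mathscr{S}\cap I^{-}(x)$ and $q\in\mathscr{S}\cap I^{+}(x)$, so that $p\ll x\ll q$ with $p,q\in\mathscr{S}$. Hence every point of $I(X)$ belongs to $I(\mathscr{S})$, so $\mathscr{S}$ is a countable generating set and $(I(X),d\vert_{I(X)\times I(X)})$ is countably generated.

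The only input beyond Lemma~\ref{lem:InteriorHasNoBoundary} is the availability of countably many points lying inside the open set $I(X)$ to serve as generators, and this is secured for free from the separability of bounded Lorentzian metric spaces \cite{minguzzi22} together with the openness of $I(X)$. The generation step itself is the same density argument already carried out in the proof of Lemma~\ref{lem:InteriorHasNoBoundary}, the only point requiring care being that the relative chronological futures and pasts $I^{\pm}(x)\cap I(X)$ are nonempty, which is exactly what the no-boundary conclusion of that lemma guarantees; I therefore anticipate no serious obstacle.
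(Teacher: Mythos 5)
Your proposal is correct and follows essentially the same route as the paper: reduce to Lemma \ref{lem:InteriorHasNoBoundary}, take a countable dense subset of the bounded space, intersect it with the open set $I(X)$, and use the nonemptiness of the chronological futures and pasts to verify generation. The only cosmetic difference is that you argue via density of $\mathscr{S}$ in the subspace $I(X)$ while the paper notes $\mathscr{S}\cap I(X)$ is dense in all of $X$; both observations yield the same conclusion.
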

Note that the density assumption does not hold for finite causets.
\begin{proof} By Lemma \ref{lem:InteriorHasNoBoundary} we need only to show that $I(X) $ is countably generated.

By Remark \ref{rmk:IntIsOpen}, $I(X) $ is open. It follows that if $\mathscr{S}$ is dense in $X$, then so is $\mathscr{S} \cap I(X) $ (take a non-empty open set $O\subset X$, then $O\cap I(X) $ is also open and non-empty by density of $I(X) $, so  $O\cap I(X) \cap \mathscr{S}$ is non-empty).

    Now let $\mathscr{S}$ be a countable dense set of $X$ and set $\mathscr{G}=\mathscr{S} \cap I(X) $. We claim that it generates $I(X) $. Indeed, take $x\in I(X) $ and consider the open sets $I^+(x)$ and $I^{-}(y)$. They are non-empty by definition of $I(X) $, thus there are some $p\in I^-(x) \cap \mathscr{G}$ and $q\in I^+(x) \cap \mathscr{G}$ as desired.
\end{proof}

Another question worth answering is how one can identify bounded Lorentzian metric spaces among general Lorentzian metric spaces. We give one such  criterion.
\begin{lemma}
    The bounded Lorentzian metric spaces with spacelike boundary are precisely the Lorentzian metric spaces, such that
    \begin{enumerate}
        \item The spacelike infinity is not empty;
        \item At least one of the topologies satisfying property (ii) of Definition \ref{cg-lms} is compact.
    \end{enumerate}
\end{lemma}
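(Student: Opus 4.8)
The plan is to prove the two inclusions separately, since the statement is an equivalence. Of the two, the direction ``Lorentzian metric space satisfying (1)--(2) $\Rightarrow$ bounded with spacelike boundary'' is the routine one, while the converse reduces to a single point of content, namely the compactness of the topology of a bounded Lorentzian metric space that contains its spacelike infinity. I would therefore organise the write-up around isolating that compactness claim and dispatching everything else by unwinding definitions.

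First I would treat the easy direction. Suppose $(X,d)$ is a Lorentzian metric space with $X^+\cap X^-\neq\varnothing$ and let $T$ be a compact topology satisfying property (ii). To recognise $(X,d)$ as a bounded Lorentzian metric space in the sense of \cite{minguzzi22} I only need a topology in which $d$ is continuous and every $I_\epsilon$ is compact, together with the distinguishing property (iii). Properties (i) and (iii) are already part of Definition \ref{cg-lms}, and $d$ is continuous in $T$ by (ii). The one substantive point is compactness of $I_\epsilon$: since $d$ is continuous in the product topology, $I_\epsilon=d^{-1}([\epsilon,\infty))$ is closed in $X\times X$; because $T$ is compact, $X\times X$ is compact in $T\times T$ (finite product of compacts), so $I_\epsilon$ is a closed subset of a compact space and hence compact. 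Finally, $X^+\cap X^-\neq\varnothing$ says precisely that the single-point spacelike boundary $i^0$ (at most one point, by the remark following Definition \ref{def:chron-bound}) is present, so the space is ``with spacelike boundary.''

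For the forward direction I would start from Example \ref{exmp:blms}, which already gives that a bounded Lorentzian metric space is a Lorentzian metric space whose intrinsic topology satisfies (ii); and ``with spacelike boundary'' is exactly condition (1). The only thing left is condition (2), i.e.\ that this topology is compact. This is where I would do real work (or invoke \cite{minguzzi22}, where it is recorded). The argument I have in mind uses the subbasis of sets of the form (\ref{don}) valid for bounded spaces: given an open cover, pick a member $U_0\ni i^0$ and a basic neighbourhood $V\subset U_0$ of $i^0$. Since $d(p,i^0)=d(i^0,r)=0$ for all $p,r$, each subbasic factor containing $i^0$ has the form $\{q:d(p,q)<b\}$ or $\{q:d(q,r)<e\}$ with $b,e>0$, so $V$ is a finite intersection of such sets. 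Then $X\setminus V$ is the corresponding finite union of sets $\{q:d(p,q)\geq b\}=\pi_2\!\left(I_b\cap(\{p\}\times X)\right)$ and $\{q:d(q,r)\geq e\}=\pi_1\!\left(I_e\cap(X\times\{r\})\right)$, each of which is compact as the continuous image of a closed slice of the compact set $I_\epsilon$. Hence $X\setminus V$ is compact, it is covered by finitely many members of the given cover, and adjoining $U_0$ yields a finite subcover of $X$.

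The main obstacle, then, is this compactness step: concretely, showing that the complement of every neighbourhood of $i^0$ is compact. The conceptual content is that $X\setminus\{i^0\}$ is only $\sigma$-compact (a countable union of slices of the $I_{1/n}$), and it is the single point $i^0$ that compactifies it, much like a one-point compactification; the reverse-triangle inequality and the compactness of each $I_\epsilon$ are what force the complements of $i^0$-neighbourhoods to sit inside finitely many such slices. If one prefers to avoid reproving this, the compactness of a bounded Lorentzian metric space containing its spacelike infinity is already available in \cite{minguzzi22} and may be cited, after which the forward direction is immediate and the whole lemma is mere bookkeeping with the definitions.
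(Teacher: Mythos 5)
Your proposal is correct in substance but follows a genuinely different route from the paper, whose entire proof is the single line ``This is a direct consequence of [Prop.\ 1.15]{minguzzi22}.'' You instead unwind both implications explicitly. Your backward direction is fine and is worth having on record: from a compact topology satisfying (ii) you recover compactness of the full sets $I_{\epsilon}=d^{-1}([\epsilon,\infty))$ as closed subsets of the compact product, which together with (i), (iii) and continuity of $d$ is exactly Definition 1.1 of \cite{minguzzi22}, and the nonempty spacelike infinity is the (unique, by (iii)) point $i^0\in X^+\cap X^-$. Your forward direction correctly isolates the only nontrivial content --- compactness of a bounded Lorentzian metric space containing $i^0$ --- and your ``complement of a basic neighbourhood of $i^0$ is a finite union of slices $\pi_2\bigl(I_b\cap(\{p\}\times X)\bigr)$, hence compact'' argument is the right mechanism. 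What the paper's citation buys is brevity; what your argument buys is an actual explanation of \emph{why} the space is compact (the one-point-compactification picture you describe).

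One step of your direct argument needs a caveat. You take ``a basic neighbourhood $V\subset U_0$ of $i^0$'' that is a finite intersection of sets of the form (\ref{don}). Within the present paper, the subbasis property for sets of the form (\ref{don}) is only established at points of $I(X)$ (Proposition \ref{prop:IntNbhdSBs}) or for spaces with empty chronological boundary (Proposition \ref{prop:lms-hdf-lc-unique}); the point $i^0$ lies in $X^+\cap X^-$, so neither applies. Verifying hypothesis (b) of Lemma \ref{lem:subbasis} at $i^0$ amounts to exhibiting a compact set containing a (\ref{don})-neighbourhood of $i^0$, which is uncomfortably close to the compactness you are trying to prove; so this step must itself be imported from \cite{minguzzi22}, not from anything proved here. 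Since you would in any case be citing \cite{minguzzi22} for that, your own fallback --- quoting the compactness statement of \cite{minguzzi22} directly --- is the cleaner option, and it is precisely what the paper does.
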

\begin{proof}
    This is direct consequence of \cite[Prop. 1.15]{minguzzi22}.
\end{proof}
\section{Causal relation and time functions}\label{sec:causal}\label{ssec:glob-hype}

\subsection{Causal relation and global hyperbolicity }
In a Lorentzian metric space  there is a natural chronological order relation, defined as the set $I=\{d>0\}$. In \cite{minguzzi22} a (maximal) causal relation compatible with the distance function $d$ and the reverse triangle inequality was defined. We recall it here, dropping unnecessary topological assumptions  for future convenience.
\begin{definition}\label{def:causal}
   Let $X$  be a set endowed with  a function $d: X\times X \rightarrow [0,+\infty)$ satisfying the reverse triangle inequality (i.e.\ property (i) of a Lorentzian metric space). The {\em (extended/maximal) causal relation} $J \subset  X\times X$ is the set
    \begin{equation}
        J=\{(x,y)\in X\times X| \ d(p,y) \ge d(p,x) \ \mathrm{and} \ d(x,p)\ge d(y,p) ,\ \forall p\in X\}.
    \end{equation}
\end{definition}
As in \cite{minguzzi22}, we use the notation $x\le y$ for $(x,y)\in J$, and $x<y$ for $x\leq y$ but $x\ne y$.
\begin{remark}\label{rmk:glob-caus-strong}
    If $(X,d)$ is a Lorentzian metric space and $C\subset X$ is a compact subset, then there are two natural orders on $\mathring{C}$, namely the  restriction of $J$ to $\mathring{C}$ and the  lift of the causal order on $BC$. It is easy to see that the former is contained in  the latter, i.e.\  for $x,y\in \mathring{C}$,  $x \leq y$  implies $[x]_C\leq [y]_C$.
\end{remark}
\begin{definition}
     Let $X$ and $d$ be as in Definition \ref{def:causal}, and let us consider a set $A\subset X$. We define the \emph{causal future} of $A$ as
    \[
    J^{+}(A)=\{p\in X| \ \exists q\in A, \, p \ge q\},
    \]
    the \emph{causal past} of $A$ as
    \[
    J^{-}(A)=\{p\in X| \ \exists q\in A, \, p \le q\},
    \]
    and the \emph{causal hull} of $A$ as
    \[
    J(A)=J^{+}(A)\cap J^{-}(A).
    \]
    If $A$ is finite it is convenient to use the notation
    \[
    J^{\pm}(p_1,\ldots,p_n):=J^{\pm}(\{p_1,\ldots, p_n\}),
    \]
    and similarly for $J$.
\end{definition}
We remark that this notation is compatible with the conventions of Subsection \ref{ssec:notation}.
\begin{theorem} \label{thm:Jproperties}
    Let $(X,d)$ be as in Definition \ref{def:causal}. The relation $J$ is closed, reflexive and transitive, $I\subset J$ and $I\circ J\cup J\circ I\subset I$. If $(x,y), (y,z)\in J$ then $d(x,y)+d(y,z)\le d(x,z)$. If  $(X,d)$ satisfies (iii)  then $J$ is antisymmetric.
\end{theorem}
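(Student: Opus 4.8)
The plan is to verify the listed properties one at a time, exploiting directly the defining inequalities of $J$: recall that $(x,y)\in J$ means $d(p,y)\ge d(p,x)$ and $d(x,p)\ge d(y,p)$ for every $p\in X$. Reflexivity is immediate, since both inequalities become equalities when $y=x$. For transitivity, if $(x,y),(y,z)\in J$ I would simply chain: for each $p$ one has $d(p,z)\ge d(p,y)\ge d(p,x)$ and $d(x,p)\ge d(y,p)\ge d(z,p)$, which is exactly $(x,z)\in J$. Closedness (understood in any topology making $d$ continuous, in particular the topology of the Lorentzian metric space) follows because, for fixed $p$, each of the sets $\{(x,y):d(p,y)\ge d(p,x)\}$ and $\{(x,y):d(x,p)\ge d(y,p)\}$ is the preimage of $[0,\infty)$ under a continuous function on $X\times X$, hence closed; $J$ is the intersection of all of these over $p\in X$.

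For the inclusion $I\subset J$, I would take $(x,y)\in I$, i.e.\ $d(x,y)>0$, and check the two conditions for an arbitrary $p$. The key is a dichotomy driven by property (i): to see $d(p,y)\ge d(p,x)$, note the inequality is trivial if $d(p,x)=0$, while if $d(p,x)>0$ the reverse triangle inequality applied to $p\ll x\ll y$ gives $d(p,y)\ge d(p,x)+d(x,y)>d(p,x)$. The condition $d(x,p)\ge d(y,p)$ is handled symmetrically, splitting on whether $d(y,p)$ vanishes. For $I\circ J\cup J\circ I\subset I$, the content is that interposing a single chronological step into a causal relation produces a chronological relation, in either order: if $x\le y$ and $y\ll z$, then evaluating the second $J$-inequality for $(x,y)$ at $p=z$ gives $d(x,z)\ge d(y,z)>0$; if $x\ll y$ and $y\le z$, evaluating the first $J$-inequality for $(y,z)$ at $p=x$ gives $d(x,z)\ge d(x,y)>0$. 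Either way $(x,z)\in I$.

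The strengthened reverse triangle inequality for causally related triples, $d(x,y)+d(y,z)\le d(x,z)$ whenever $(x,y),(y,z)\in J$, is where a little care is needed, since property (i) supplies an inequality only when both $d(x,y)$ and $d(y,z)$ are strictly positive. My plan is a short case analysis: if both are positive the reverse triangle inequality gives the claim directly; if exactly one of them vanishes, the desired bound reduces to $d(x,z)\ge\max\{d(x,y),d(y,z)\}$, which is precisely what the $J$-inequalities yield upon evaluating the appropriate condition at $p=x$ or $p=z$ (as in the previous step); and if both vanish the inequality is trivial. Finally, antisymmetry under (iii) follows by combining the two memberships $(x,y),(y,x)\in J$: the first conditions force $d(p,x)=d(p,y)$ for all $p$ and the second conditions force $d(x,p)=d(y,p)$ for all $p$, so $x$ and $y$ are indistinguishable by $d$, whence $x=y$ by property (iii).

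I would expect the only genuine subtlety to be the case split in the strengthened inequality, together with the observation that the evaluations at $p=x$ and $p=z$ recover exactly the missing terms; everything else is a direct unwinding of the definition of $J$. I would also flag that the closedness assertion presupposes a topology rendering $d$ continuous, which is available once $(X,d)$ is a Lorentzian metric space but not from the bare hypotheses of Definition \ref{def:causal} alone.
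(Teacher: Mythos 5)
Your proof is correct and is essentially the argument the paper relies on: the paper simply cites \cite[Thm.\ 5.7]{minguzzi22} and notes that closedness uses continuity of $d$ while the remaining properties follow from the definition of $J$ and the reverse triangle inequality, which is exactly the direct case-by-case unwinding you carry out. Your closing caveat --- that closedness presupposes a topology making $d$ continuous, not supplied by Definition~\ref{def:causal} alone --- matches the paper's own remark on this point.
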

\begin{proof}
    The proof of \cite[Thm.\ 5.7]{minguzzi22} applies. Note that  closedness follows from continuity of $d$, while transitivity  and the last two properties relating $J$ to $I$ follow from the triangle inequality (but not from the other properties of a bounded Lorentzian metric space). In particular, the compactness assumptions of property (ii) are never used. The last statement on antisymmetry is clear from the definition of $J$.
\end{proof}

  We  establish a connection with the  property of global hyperbolicity for ordered spaces in the sense of \cite{minguzzi12d,minguzzi23}.
    \begin{theorem}\label{thm:lms-via-emeralds}
        Let $(X,d)$ be a Lorentzian metric space without chronological boundary, and $C\subset X$ any compact subset. Let $K$ be a closed order on $X$ such that $I\subset K\subset J$. Then the set $K(C)$ is compact.
    \end{theorem}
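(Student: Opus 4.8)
The goal is to show that for a closed order $K$ with $I\subset K\subset J$ and any compact $C\subset X$, the set $K(C)=K^+(C)\cap K^-(C)$ is compact, where $X$ has no chronological boundary. Since $X$ has a unique Hausdorff, locally compact topology $\mathscr{T}$ (Proposition \ref{prop:lms-hdf-lc-unique}), the strategy is to sandwich $K(C)$ inside a compact set. The plan is to cover $C$ by finitely many chronological diamonds whose closures are compact, and then to bound $K(C)$ by the closure of a single large chronological diamond.

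First I would use the absence of chronological boundary to enlarge $C$. For each $x\in C$ there exist $p_x,q_x\in X$ with $p_x\ll x\ll q_x$, so $C$ is covered by the open chronological diamonds $\{I(p_x,q_x)\}_{x\in C}$. By compactness of $C$, extract a finite subcover indexed by $x_1,\dots,x_n$, and set $A=\{p_{x_1},q_{x_1},\dots,p_{x_n},q_{x_n}\}$. Then $C\subset I(A)\subset \overline{I(A)}$, and $\overline{I(A)}$ is compact by Proposition \ref{prop:chdiamonds-compact} (applied to the finitely many diamonds composing $I(A)$, as in Corollary \ref{crl:prop-iip}). The key point to establish next is that $K^{\pm}(C)$ can each be trapped: I would again use no-boundary to pick $p',q'\in X$ with $p'\ll p_i$ and $q_i\ll q'$ for all the finitely many endpoints, so that $C\subset I(p',q')$ with $p'\ll x\ll q'$ for every $x\in C$.

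The heart of the argument is the following containment: I claim $K(C)\subset \overline{I(p'',q'')}$ for suitably chosen $p''\ll p'$ and $q'\ll q''$. Indeed, take $w\in K(C)$, so there are $x,y\in C$ with $x\le_K w\le_K y$ (interpreting $K$-order via $K^+,K^-$). Since $K\subset J$, we have $x\le w\le y$ in the $J$-order, and by Theorem \ref{thm:Jproperties} the relations $I\circ J\subset I$ and $J\circ I\subset I$ apply. Because $p'\ll x$ (so $(p',x)\in I$) and $(x,w)\in J$, transitivity through $I\circ J\subset I$ gives $p'\ll w$; symmetrically, $(w,y)\in J$ together with $y\ll q'$ yields $w\ll q'$ via $J\circ I\subset I$. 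Hence $w\in I(p',q')$, and therefore $K(C)\subset I(p',q')\subset \overline{I(p',q')}$.

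Finally, since $\overline{I(p',q')}$ is compact by Proposition \ref{prop:chdiamonds-compact}, it suffices to show $K(C)$ is closed, whence it is a closed subset of a compact set and thus compact. Closedness follows because $K$ is a closed order and $C$ is compact: $K^+(C)=\pi_2\big(K\cap(C\times X)\big)$, and I would verify this projection is closed using that $\pi_2$ restricted to $X\times \overline{I(p',q')}$ is a proper (hence closed) map—exactly the properness-via-compact-fiber argument already used in the proof of Proposition \ref{prop:chdiamonds-compact}, invoking \cite[10.2, Cor.\ 5 and 10.1, Prop.\ 1]{bourbaki66}. The same reasoning applies to $K^-(C)$, and intersecting two closed sets gives that $K(C)$ is closed. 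The main obstacle I anticipate is the bookkeeping in the containment step: one must be careful that $w\in K(C)$ genuinely lies \emph{strictly} to the future of $p'$ and past of $q'$ (i.e.\ in the open chronological diamond, not merely the causal one), which is precisely what the mixed transitivity $I\circ J\cup J\circ I\subset I$ from Theorem \ref{thm:Jproperties} delivers, and which requires the no-boundary hypothesis to produce the strictly chronological brackets $p'\ll x$ and $y\ll q'$ around every point of the compact set $C$.
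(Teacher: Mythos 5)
There is a genuine gap in your second paragraph: you claim that, because $X$ has no chronological boundary, one can choose a \emph{single} pair $p',q'\in X$ with $p'\ll p_{x_i}$ and $q_{x_i}\ll q'$ for \emph{all} $i$ simultaneously, so that $C\subset I(p',q')$. Absence of chronological boundary only guarantees that each individual point has some point in its chronological past and some in its chronological future; it does not provide a common chronological lower (or upper) bound for a finite set of points. This fails already for the disjoint union of two copies of Minkowski space (which is a Lorentzian metric space without chronological boundary, with $d\equiv 0$ between the components), where two points in different components have no common past point, and even in connected globally hyperbolic spacetimes such as de Sitter space, where two sufficiently separated points on a late Cauchy surface have disjoint chronological pasts. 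So the single diamond $I(p',q')$ you want to trap $K(C)$ in need not exist. This is precisely why the paper works with the finite union $I(A)=\bigcup_{p,q\in A}I^+(p)\cap I^-(q)$ rather than a single diamond.

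The repair is immediate and brings you back to the paper's argument, which is otherwise the same as yours: you already have $C\subset I(A)$ with $A$ finite and $\overline{I(A)}$ compact (a finite union of relatively compact diamonds, via Proposition \ref{prop:chdiamonds-compact}). Given $w\in K(C)$ with $x\le_K w\le_K y$ for some $x,y\in C$, pick $p',q'\in A$ \emph{depending on} $x$ and $y$ with $p'\ll x$ and $y\ll q'$; then $I\circ J\cup J\circ I\subset I$ from Theorem \ref{thm:Jproperties} gives $p'\ll w\ll q'$, hence $w\in I(A)$, so $K(C)\subset \overline{I(A)}$. Your closedness argument for $K^{\pm}(C)$ (projection along the compact factor $C$ is a closed map) is correct and is the same fact the paper imports from Nachbin, so with the above fix the proof is complete.
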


    The typical choice will be $K=J$.

    \begin{proof}
        By \cite[Prop. 1.4]{nachbin65} $K^{+}(C)$ and $K^{-}(C)$ are closed, so $K(C)$ is closed.

        Following \cite{minguzzi19c}, we are going to show that $J(C)\subset I(A)$ for some finite set $A\subset X$. By assumptions, $K(C)\subset J(C)$, and $I(A)$ is relatively compact by Proposition \ref{prop:chdiamonds-compact}, so this will be enough to conclude that $K(C)$ is compact.

         For every $x\in C$ take $p,q\in X$ such that $p\ll x \ll q$. Then $I(p,q)$ is an open neighborhood of $x$, and thus sets of this type form an open covering of $C$. So, there is a finite set $A\subset X$ such that $C\subset I(A)$.

    Now take $x\in J(C)$. It means that there are $p,q\in C$ such that $p\le x \le q$. Since $C\subset I(A)$, there are $p',q'\in A$ such that $p'\ll p$ and $q\ll q'$. By Theorem \ref{thm:Jproperties}, $p'\ll x \ll q'$, therefore $x\in I(A)$. We conclude that $J(C)\subset I(A)$.
    \end{proof}

In other words, in the absence of a chronological boundary, the property (ii) is essentially the global hyperbolicity assumption in the sense of \cite{minguzzi12d,minguzzi23}.
     Moreover, as the next result finish clarifying, the extended causal relation $J$, which is in a sense the maximal relation compatible with $d$, can be replaced by any smaller closed order $K$.

\begin{theorem} \label{thm:cnqg}
    Assume that $(X,d)$ satisfies Definition \ref{cg-lms} except for the property (ii) and that it has no chronological boundary.
    Then $(X,d)$ is a Lorentzian metric space if and only if there is a   relation $K$,  $I\subset K\subset J$,  and a topology of $X$ with respect to which $d$ is continuous and  $K(C)$ is compact for every compact set $C$.
\end{theorem}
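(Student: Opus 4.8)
The plan is to prove the biconditional by establishing each direction separately, noting that the forward implication is essentially already contained in the preceding results. First I would observe that the hypotheses guarantee properties (i) and (iii) hold throughout, and that the chronological boundary is empty, so whenever $(X,d)$ happens to be a Lorentzian metric space it carries its unique topology $\mathscr{T}$ by Proposition \ref{prop:lms-hdf-lc-unique}.

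For the forward direction, suppose $(X,d)$ is a Lorentzian metric space. Then property (ii) holds for $\mathscr{T}$, and I would simply take $K=J$. By Theorem \ref{thm:Jproperties} the relation $J$ is closed, reflexive and transitive with $I\subset J$, so it is a legitimate closed order (antisymmetry coming from property (iii)) satisfying $I\subset K\subset J$. That $d$ is continuous in $\mathscr{T}$ is part of property (ii), and that $J(C)=K(C)$ is compact for every compact $C$ is exactly the content of Theorem \ref{thm:lms-via-emeralds}. This disposes of one implication with no real work.

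For the converse, suppose we are given a relation $K$ with $I\subset K\subset J$ and a topology $T$ in which $d$ is continuous and $K(C)$ is compact for every compact $C$. I want to deduce property (ii), and by Theorem \ref{thm:propii-via-chdiamonds} (applicable since the chronological boundary is empty and (i),(iii) hold) it suffices to show that each closed chronological diamond $\overline{I(p,q)}$ is compact in $T$. The key step is to bound the diamond inside $K(C)$ for a suitable compact $C$: since $I\subset K$, for any $x\in I(p,q)$ we have $p\ll x\ll q$, hence $p\leq x\leq q$ in the sense of $K$, so $I(p,q)\subset K(\{p,q\})$. As $\{p,q\}$ is a finite, hence compact, set, $K(\{p,q\})$ is compact by hypothesis, and therefore closed once I verify the space is Hausdorff. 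Thus $\overline{I(p,q)}\subset K(\{p,q\})$ is a closed subset of a compact set and is compact, giving property (ii) and completing the proof.

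The main obstacle I anticipate is the Hausdorffness needed to conclude that the compact set $K(\{p,q\})$ is closed (so that it contains the closure $\overline{I(p,q)}$). Property (iii) is available, so I would reprise the separation argument of Lemma \ref{lem:Hsdf}, which uses only continuity of the functions $d_z,d^z$ and the distinguishing property to separate distinct points by open sets of the form $\{d(z,\cdot)<m\}$ and $\{d(z,\cdot)>m\}$; this argument never invoked property (ii), so it applies verbatim to the topology $T$ here. With Hausdorffness in hand the compact set $K(\{p,q\})$ is automatically closed, the containment $\overline{I(p,q)}\subset K(\{p,q\})$ follows, and the argument closes.
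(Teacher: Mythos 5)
Your proposal is correct and follows essentially the same route as the paper: the forward direction takes $K=J$ and invokes Theorem \ref{thm:lms-via-emeralds}, while the converse traps $I(p,q)$ inside the compact set $K(\{p,q\})$, uses the Hausdorff separation argument of Lemma \ref{lem:Hsdf} (which needs only continuity of $d$ and property (iii)) to conclude that compact sets are closed and hence that $\overline{I(p,q)}$ is compact, and then applies Theorem \ref{thm:propii-via-chdiamonds}. The only nitpick is that a point $x\in I(p,q)$ need not satisfy $p\ll x\ll q$ specifically (it could be $q\ll x\ll p$, etc.), but the containment $I(p,q)\subset K(\{p,q\})$ holds regardless, so this does not affect the argument.
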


Note that for $p,q\in X$, taking $C=\{p,q\}$, we get that $K(p,q)$ is compact.

\begin{proof}
If $(X,d)$ is a Lorentzian metric space then the claim on the existence of $K$ is true,  due to Theorem \ref{thm:lms-via-emeralds}, taking $K=J$.
For the converse, suppose that there is a topology such that $d$ is  continuous and $K(C)$ is compact for every compact set $C$, then for every $p,q\in X$, setting $C=\{p,q\}$ we find $I(p,q)\subset I(C)\subset K(C)$, and since $C$ is compact we conclude that $I(p,q)$ is relatively compact (since (iii) holds, any topology that makes $d$ continuous is Hausdorff by the usual arguments, so every compact set is closed). By Theorem \ref{thm:propii-via-chdiamonds}, $(X,d)$ is a Lorentzian metric space.
\end{proof}

\begin{remark}\label{rmk:larger-than-causal}
    Let $(X,d)$ be as above and assume that there is a generating set $\mathscr{G}\subset X$.
     By the property $I\circ J\cup J\circ I\subset I$, for every pair $x,y\in X$ there are $p,q\in \mathscr{G}$ such that $J(x,y)\subset I(p,q)$.
\end{remark}


\begin{theorem}\label{thm:lms-via-kdiam}
    Assume that $(X,d)$ satisfies Definition \ref{cg-lms} except for the property (ii), and let $\mathscr{G}\subset X$ be a generating set. Then $(X,d)$ is a Lorentzian metric space if and only if there is a   relation $K$,  $I\subset K\subset J$,  and a topology on $X$ with respect to which $d$ is continuous and  $K(p,q)$ is compact for every $p,q\in \mathscr{G}$.
\end{theorem}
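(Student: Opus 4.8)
The plan is to prove both implications by reducing everything to the diamond characterisation of property (ii) over the generating set, i.e.\ to statement~3 of Theorem~\ref{thm:propii-via-chdiamonds}. Since $\mathscr{G}$ generates $X$, Remark~\ref{rmk:genset=no-bnd} gives $X^{+}=X^{-}=\varnothing$, so both the boundary hypothesis of that theorem and the existence of a generating set are in force. Consequently $(X,d)$ is a Lorentzian metric space if and only if there is a topology making $d$ continuous with $\overline{I(p,q)}$ compact for all $p,q\in\mathscr{G}$. This is the target I would aim for in the ``if'' direction and the consequence I would exploit in the ``only if'' direction.

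For the ``only if'' direction, I would take $K=J$ and the canonical topology $\mathscr{T}$ of the Lorentzian metric space. By Theorem~\ref{thm:Jproperties} the relation $J$ is a closed order with $I\subset J\subset J$, so Theorem~\ref{thm:lms-via-emeralds} applied to the finite (hence compact) set $C=\{p,q\}$ shows that $K(p,q)=J(C)$ is compact. Thus $K=J$ together with $\mathscr{T}$ witnesses the right-hand side; this is also the note recorded after Theorem~\ref{thm:cnqg}.

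For the ``if'' direction, suppose a relation $K$ with $I\subset K\subset J$ and a topology $T$ making $d$ continuous and $K(p,q)$ compact for every $p,q\in\mathscr{G}$ are given. First I would note that, because (iii) holds, the argument of Lemma~\ref{lem:Hsdf} — which uses only continuity of the functions $d_p,d^r$ and the distinguishing property, and never property (ii) — shows that $T$ is Hausdorff, so every compact set is closed. Fix $p,q\in\mathscr{G}$. From $I\subset K$ we get $I^{\pm}(A)\subset K^{\pm}(A)$ for every set $A$, whence $I(p,q)=I^{+}(\{p,q\})\cap I^{-}(\{p,q\})\subset K^{+}(\{p,q\})\cap K^{-}(\{p,q\})=K(p,q)$. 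Since $K(p,q)$ is compact, hence closed, we obtain $\overline{I(p,q)}\subset K(p,q)$, a closed subset of a compact set, hence compact. Therefore $T$ realises statement~3 of Theorem~\ref{thm:propii-via-chdiamonds}, and $(X,d)$ is a Lorentzian metric space.

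The essential content lies in the reduction to the generating set: relative compactness of diamonds need only be checked over $\mathscr{G}$, and the inclusion $\overline{I(p,q)}\subset K(p,q)$ is the bridge that converts the hypothesis on $K$ into the diamond condition of Theorem~\ref{thm:propii-via-chdiamonds}. The only delicate point — and thus the main (mild) obstacle — is upgrading compactness of $K(p,q)$ to closedness for the \emph{merely given} topology $T$, so that it controls $\overline{I(p,q)}$; this is precisely where property (iii) is used, exactly as in the proof of Theorem~\ref{thm:cnqg}.
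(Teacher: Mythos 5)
Your proof is correct and follows essentially the same route as the paper: the forward direction takes $K=J$ and gets compactness of $J(p,q)$ from the containment of causal hulls in chronological diamonds (you cite Theorem \ref{thm:lms-via-emeralds} where the paper cites Remark \ref{rmk:larger-than-causal}, but these encode the same fact), and the converse uses Hausdorffness from property (iii) to upgrade compactness of $K(p,q)$ to closedness, deduces relative compactness of $I(p,q)$ for $p,q\in\mathscr{G}$, and concludes via the diamond characterisation over the generating set (Theorem \ref{thm:propii-via-chdiamonds}, statement 3, which the paper reaches equivalently through Corollary \ref{crl:prop-iip}).
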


This result is interesting already for spaces without chronological boundary as we can make the choice $\mathscr{G}= X$ which clarifies the connection  with a characterization of global hyperbolicity in the smooth case \cite{bernal06b} (causality, i.e.\ antisymmetry of $J$, and compactness of causal diamonds), in the sense that, taking also into account Thm.\ \ref{thm:cnqg}, property (ii) is reformulated as a compactness condition for $K$-causal diamonds where the choice of $K$ is largely arbitrary.

\begin{proof}
    We  have only to establish equivalence between the compactness assumptions. Assume that $(X,d)$ is a Lorentzian metric space, then by Remark \ref{rmk:larger-than-causal}, for every $p,q\in \mathscr{G}$, since $J(p,q)$ is closed, it is compact, thus the claim on the existence of $K$ is true, just choose $K=J$.

    For the converse, observe that the sets of the form $K(p,q)$, $p,q\in \mathscr{G}$ are closed  (since (iii) holds, any topology that makes $d$ continuous is Hausdorff by the usual arguments, so every compact set is closed). As $I\subset K$, $I(p,q)\subset K(p,q)$, for $p,q\in \mathscr{G}$ and so these chronological diamonds are relatively compact.
    By Corollary \ref{crl:prop-iip}, $(X,d)$ is a Lorentzian metric space.
    %
%
%
    \end{proof}

\begin{corollary}
    Let $C\subset X$ be a compact subset of a Lorentzian metric space $(X,d)$ with no chronological boundary. Then $BJ(C)$ is a bounded Lorentzian metric space.
\end{corollary}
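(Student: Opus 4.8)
The plan is to recognize the statement as an immediate composition of two results already established: the compactness of causal hulls (Theorem \ref{thm:lms-via-emeralds}) and the construction of a bounded Lorentzian metric space out of a compact region (Lemma \ref{lem:clms-blms}). The strategy is therefore to exhibit a compact set $Y\subset X$ to which Lemma \ref{lem:clms-blms} applies, and the natural candidate is $Y=J(C)$ itself, so that $BY=BJ(C)$.

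First I would verify that $J$ is an admissible choice for the relation $K$ in Theorem \ref{thm:lms-via-emeralds}. Since $(X,d)$ is a Lorentzian metric space it satisfies property (iii), so by Theorem \ref{thm:Jproperties} the relation $J$ is closed, reflexive, transitive and antisymmetric, i.e.\ a closed order, and trivially $I\subset J\subset J$. Applying Theorem \ref{thm:lms-via-emeralds} with $K=J$ to the compact set $C$ then yields that $J(C)$ is compact. Second, I would invoke Lemma \ref{lem:clms-blms} with $Y:=J(C)$: as $Y$ is a compact subset of the boundary-free Lorentzian metric space $(X,d)$, the lemma gives that $BY=BJ(C)$ is a bounded Lorentzian metric space, which is exactly the claim.

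There is essentially no obstacle here beyond checking that the hypotheses of the two cited results are met; the only point deserving a word is the admissibility of $K=J$ in Theorem \ref{thm:lms-via-emeralds}, and that is immediate from Theorem \ref{thm:Jproperties}. In particular no new compactness or distinguishing argument is required, since those are already packaged inside the two results being combined.
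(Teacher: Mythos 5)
Your proposal is correct and follows essentially the same route as the paper: the paper's proof simply cites Lemma \ref{lem:clms-blms}, implicitly relying on the compactness of $J(C)$ established in Theorem \ref{thm:lms-via-emeralds}, which you make explicit (including the check that $K=J$ is an admissible closed order via Theorem \ref{thm:Jproperties}). No gaps.
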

\begin{proof}
    This follows from Lemma \ref{lem:clms-blms}.
\end{proof}

\subsection{Time functions}\label{ssec:time}
In the bounded case the existence of time functions constructed from $d$ was used recurrently in various arguments \cite{minguzzi22}. The existence of  time functions is  also an interesting problem in itself. Let us look for similar results in the unbounded case.

First, let us recall the definition.
\begin{definition}
    Let $(X, d)$ be a Lorentzian metric space. A continuous function $\tau: X\to \mathbb{R}$ with $\tau(x) <\tau(y)$ whenever $x<y$ is called a {\em time function}.
\end{definition}

\begin{lemma}
    \label{lem:time-func}
    Let $(X,d)$ be a countably-generated Lorentzian metric space. Then there is a bounded time function $\tau: X\to [-1,1]$.
\end{lemma}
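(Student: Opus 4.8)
The plan is to build a bounded time function directly from the Lorentzian distance by exploiting the countable generating sequence. Let $\mathscr{G}=\{p^n\mid n\in\mathbb{N}\}$ be a countable generating set. The idea is to let each pair $(p^i,p^j)$ with $p^i\ll p^j$ contribute a continuous "elementary" function that increases strictly along the chronological order whenever a point is genuinely between $p^i$ and $p^j$, and then sum these contributions with summable weights so that the total series converges uniformly to a bounded continuous function.

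First I would fix, for each ordered pair $(i,j)$ with $p^i\ll p^j$, the two continuous functions $d_{p^i}=d(p^i,\cdot)$ and $d^{p^j}=d(\cdot,p^j)$, both continuous by property (ii). A natural elementary building block is $\tau_{ij}(x):=d(p^i,x)-d(x,p^j)$, or a bounded monotone reparametrization thereof; the reverse triangle inequality guarantees that if $x<y$ (causal order $J$) with both inside $I(p^i,p^j)$, then $d(p^i,x)\le d(p^i,y)$ and $d(x,p^j)\ge d(y,p^j)$ by Theorem \ref{thm:Jproperties}, so $\tau_{ij}$ is nondecreasing along $\le$. To get \emph{strict} monotonicity at a given timelike-related pair, I would use that $\mathscr{G}$ is generating: for $x<y$ with $x\ne y$ there is a pair $p^i\ll x$, $y\ll p^j$, and I would want the corresponding $\tau_{ij}$ to separate $x$ from $y$. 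The key point to extract is that for some index pair the contribution is strictly increasing; this is where distinguishing (iii) and the structure of $J$ (via $I\circ J\cup J\circ I\subset I$) must be combined.

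Next I would define $\tau(x)=\sum_{(i,j)} 2^{-\pi(i,j)}\,\bar\tau_{ij}(x)$, where $\pi$ is a bijection $\mathbb{N}^2\to\mathbb{N}$ and each $\bar\tau_{ij}$ is a bounded (say $[-1,1]$-valued) version of the elementary function, obtained by composing $\tau_{ij}$ with a fixed strictly increasing bounded homeomorphism of $\mathbb{R}$ such as $t\mapsto t/(1+|t|)$ or $\arctan$. Uniform convergence of the series follows from the Weierstrass $M$-test with $M_{ij}=2^{-\pi(i,j)}$, so $\tau$ is continuous and $\tau(X)\subset[-1,1]$ after a final normalization. Monotonicity of $\tau$ is immediate since every term is nondecreasing along $\le$; and strictness follows because, for $x<y$, at least one term is strictly increasing, while no term decreases, so the whole sum strictly increases.

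The main obstacle, and the step requiring the most care, is establishing \emph{strict} monotonicity from the generating property together with antisymmetry of $J$. Given $x<y$ (so $x\ne y$ and $x\le y$), I must produce an index pair $(i,j)$ for which $\bar\tau_{ij}(x)<\bar\tau_{ij}(y)$, i.e.\ $d(p^i,x)-d(x,p^j)<d(p^i,y)-d(y,p^j)$. If $x\ll y$ this is easy: choosing $p^i\ll x$ gives $d(p^i,y)\ge d(p^i,x)+d(x,y)>d(p^i,x)$, so the $d_{p^i}$-part already strictly increases. The delicate case is when $x<y$ causally but not chronologically, where one must invoke distinguishing property (iii) to find a reference point $p^k\in\mathscr{G}$ (using that $\mathscr{G}$ separates points, as in Corollary \ref{crl:dense}) witnessing $d(p^i,x)\ne d(p^i,y)$ or $d(x,p^j)\ne d(y,p^j)$ for a generating pair, and then argue that the relevant inequality goes the right way because of $x\le y$. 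I would handle this by showing that monotone-nondecreasing plus distinguishing forces strict increase somewhere along the order, the cleanest route being to verify that $\tau_{ij}$ cannot be constant on any nontrivial causal segment when the endpoints are separated by a generating reference point.
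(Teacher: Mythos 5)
There is a genuine gap in the strict-monotonicity step, and it is exactly at the point you flag as delicate. Your family of reference points is the generating sequence $\mathscr{G}=\{p^n\}$, and for $x<y$ with $x\not\ll y$ you need some $p^k\in\mathscr{G}$ with $d(p^k,x)<d(p^k,y)$ or $d(x,p^k)>d(y,p^k)$. You justify this by saying that ``$\mathscr{G}$ separates points, as in Corollary \ref{crl:dense}'' --- but that corollary is about a countable \emph{dense} subset $\mathscr{S}$ (which exists because $X$ is Polish), not about a generating set. A generating set need not be dense and need not separate points: it only has to hit $I^-(x)$ and $I^+(x)$ for every $x$. Property (iii) guarantees that \emph{some} $z\in X$ distinguishes $x$ from $y$, and $x\le y$ then forces the distinguishing inequality to go the right way; but nothing forces such a $z$ to lie in $\mathscr{G}$. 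Concretely, the relation $J_{\mathscr{G}}$ obtained by imposing the defining inequalities of $J$ only at points of $\mathscr{G}$ can be strictly larger than $J$ and can identify distinct causally related points, in which case every one of your terms $\bar\tau_{ij}$ takes equal values at $x$ and $y$ and $\tau(x)=\tau(y)$. So as written the construction yields a continuous bounded function that is nondecreasing along $\le$, but not necessarily a time function.

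The repair is immediate and turns your argument into essentially the paper's proof: take the reference family to be a countable dense subset $\mathscr{S}=\{x_n\}$ from Corollary \ref{crl:dense} (whose second assertion is precisely the separating statement you need), and set
\[
\tau(x)=\sum_{n=1}^{\infty}2^{-n}\left(\frac{d(x_n,x)}{1+d(x_n,x)}-\frac{d(x,x_n)}{1+d(x,x_n)}\right).
\]
Each summand is nondecreasing along $\le$ by the defining inequalities of $J$, the series is uniformly convergent hence $\tau$ is continuous and bounded by $1$, and for $x<y$ some $x_n$ makes one summand strictly increase. Your pairing of reference points $(p^i,p^j)$ and the composition of the difference with $t\mapsto t/(1+|t|)$ are harmless cosmetic variations; the only substantive issue is which countable family you sum over.
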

\begin{proof}
    Let $\mathscr{S}\subset X$ be a countable dense subset in $X$ (its existence follows from Propositions \ref{prop:lms-hdf-lc-unique}). Fix an enumeration for $\mathscr{S}$, i.e.\ a sequence $\{x_n\}_{n\in \mathbb{N}}$ such that
    \[
    \{x_n|n\in\mathbb{N}\}=\mathscr{S}.
    \]
    Let us set
    \begin{equation} \label{oqrp}
    \tau(x)=\sum_{n=1}^{\infty}\frac{1}{2^{n}}\left(\frac{d(x_i,x)}{1+d(x_i,x)}  -\frac{d(x,x_i)}{1+d(x,x_i)}
    \right).
    \end{equation}
    The expression in the parenthesis has absolute value bounded by $1$, so the series converges absolutely and $|\tau(x)|\leq 1$. Now, if $x\leq y$, then $d(x_n,x)\leq d(x_n,y)$ and   $d(x,x_n)\geq d(y,x_n)$. Thus (using that $t\mapsto \frac{t}{t+1}$ is increasing on $[0,+\infty)$) we get $\tau(x)\leq \tau(y)$. If in addition to that $x\neq y$, then some of the points of $\mathscr{S}$ distinguish $x$ from $y$ (see Corollary \ref{crl:dense}), so some of the inequalities above become strict and we get $\tau(x)>\tau(y)$.
\end{proof}

We will also need another variant of this construction.
\begin{lemma}\label{lem:time-func-seq}
    Let $(X,d)$ be a countably-generated Lorentzian metric space, and let $\{Y_i\}_{i\in\mathbb{N}}$ be a family of compact subsets of $X$ such that $Y_i\subset Y_j$ for every $i\leq j$, and
    \[
    \bigcup_{i\in \mathbb{N}}Y_i=X.
    \]
    Then there is a family of time functions
    \[
    \tau_i: BY_i\to [-1,1], \ i\in\mathbb{N}
    \]
    such that the function $\tau: X\to [-1,1]$
    \[
    \tau(x):=\lim_{i\rightarrow \infty}\tau_i([x]_{Y_i}), \ \forall x\in X
    \]
    is well-defined and provides a time function.
\end{lemma}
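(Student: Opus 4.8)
The plan is to construct all the $\tau_i$ from a \emph{single} countable dense set, so that the distance-quotient construction forces the evaluations $\tau_i([x]_{Y_i})$ to be partial sums of one fixed series. First I apply Corollary \ref{crl:comp-dense} to the family $\{Y_i\}$ (a compact exhaustion of $X$) to fix a countable dense set $\mathscr{S}=\{x_n\}_{n\in\mathbb{N}}\subset X$ with the property that for every $i$ the image set $\{[x_n]_{Y_i}\mid x_n\in\mathscr{S}\cap\mathring{Y}_i\}$ is dense in $BY_i$. Since a countably-generated space has no chronological boundary (Remark \ref{rmk:genset=no-bnd}), every $x_n$ lies in $I(X)$; picking $p\ll x_n\ll q$ and using $\bigcup_i Y_i=X$ shows $x_n\in\mathring{Y}_i$ for all large $i$, while the inclusions $Y_i\subset Y_j$ ($i\le j$) guarantee that once $x_n$ enters $\mathring{Y}_i$ it stays there (any witness $z\in Y_i$ to $x_n\notin I^0(Y_i)$ also lies in $Y_j$). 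Hence the index sets $A_i:=\{n\mid x_n\in\mathring{Y}_i\}$ are increasing with $\bigcup_i A_i=\mathbb{N}$.

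On each $BY_i$ I then define, in analogy with \eqref{oqrp},
\[
\tau_i(y)=\sum_{n\in A_i}\frac{1}{2^n}\left(\frac{d([x_n]_{Y_i},y)}{1+d([x_n]_{Y_i},y)}-\frac{d(y,[x_n]_{Y_i})}{1+d(y,[x_n]_{Y_i})}\right),\qquad y\in BY_i.
\]
Exactly as in Lemma \ref{lem:time-func}, this series converges absolutely with $|\tau_i|\le 1$, is continuous, and is monotone along the causal order of $BY_i$ because $t\mapsto t/(1+t)$ is increasing on $[0,+\infty)$; monotonicity is strict on distinct comparable points because $\{[x_n]_{Y_i}\}$ is dense, hence distinguishing, in $BY_i$ (the bounded analog of Corollary \ref{crl:dense}, from \cite{minguzzi22}). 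Thus each $\tau_i$ is a genuine time function $BY_i\to[-1,1]$.

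The decisive observation is that the quotient $\mathring{Y}_i\to BY_i$ preserves distances between interior points: for $x,x_n\in\mathring{Y}_i$ one has $d([x_n]_{Y_i},[x]_{Y_i})=d(x_n,x)$ and $d([x]_{Y_i},[x_n]_{Y_i})=d(x,x_n)$. Therefore, for fixed $x\in X$ and all $i$ large enough that $x\in\mathring{Y}_i$,
\[
\tau_i([x]_{Y_i})=\sum_{n\in A_i}\frac{1}{2^n}\left(\frac{d(x_n,x)}{1+d(x_n,x)}-\frac{d(x,x_n)}{1+d(x,x_n)}\right),
\]
which is exactly the partial sum over $A_i$ of the series \eqref{oqrp} associated with $\mathscr{S}$. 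Since that series converges absolutely and $A_i\uparrow\mathbb{N}$, the tail $\sum_{n\notin A_i}2^{-n}\to 0$ forces convergence, and
\[
\lim_{i\to\infty}\tau_i([x]_{Y_i})=\sum_{n=1}^{\infty}\frac{1}{2^n}\left(\frac{d(x_n,x)}{1+d(x_n,x)}-\frac{d(x,x_n)}{1+d(x,x_n)}\right)=:\tau(x).
\]
So the limit defining $\tau$ exists for every $x$, and $\tau$ is precisely the function produced by Lemma \ref{lem:time-func} from the dense set $\mathscr{S}$, which that lemma already certifies to be continuous and strictly increasing along $J$, i.e.\ a time function on $(X,d)$.

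The only delicate point is the bookkeeping that makes the limit exist and coincide with an $X$-time function: it rests on (a) the distance-invariance of interior points under $\mathring{Y}_i\to BY_i$, so that each summand is independent of $i$, and (b) each $x_n$ entering $\mathring{Y}_i$ permanently, so that $A_i$ exhausts $\mathbb{N}$. Both are secured by the simultaneous choice of $\mathscr{S}$ in Corollary \ref{crl:comp-dense} together with the nesting $Y_i\subset Y_j$; once these are in place the time-function property on $X$ is inherited for free from Lemma \ref{lem:time-func}, so no additional order-theoretic estimates are needed.
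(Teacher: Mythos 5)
Your proposal is correct and follows essentially the same route as the paper: both fix, via Corollary \ref{crl:comp-dense}, a single countable dense set whose traces on the $\mathring{Y}_i$ project densely into each $BY_i$, define each $\tau_i$ as a partial sum of one fixed absolutely convergent series of the form (\ref{oqrp}) (so that the quotient's distance-invariance makes the summands independent of $i$), and let the limit be the full series, which Lemma \ref{lem:time-func} already certifies as a time function on $X$. The only difference is bookkeeping: the paper weights a double enumeration $s^k_j$ by $2^{-(j+k)}$ and sums over $k\le n$ (which also yields the universal tail bound of Remark \ref{rmk:time-func-seq-uni}), whereas you use a single enumeration weighted by $2^{-n}$ restricted to the increasing index sets $A_i$; both are sound.
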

In the lemma above the expression under the limit in general makes sense only for large enough  values of $i$ (namely, large enough to ensure that $x\in Y_i$), but the limit depends on the tail of the sequence only, so the whole expression is well-defined.
\begin{proof}
    Let $\mathscr{S}$ be a countable dense subset of $X$ such that  $\mathscr{S}_i=\mathscr{S}\cap \mathring{Y}_i$ projects to a  dense countable subset of  $B{Y}_i$, see Cor.\
    \ref{crl:comp-dense} for the existence.


   For each $n$ let $s^n_k$, $k\in\mathbb{N}$ be a denumerable sequence whose image is $\mathscr{S}_n$ (the sequence shall have repetitions if $\mathscr{S}_n$ is finite).


    For $n\in \mathbb{N}$ and $x\in \mathring{Y}_n$ set
    \[
    \tau_n([x]_{Y_n})=\sum_{k=1}^{n}\sum_{j=1}^{\infty} \frac{1}{2^{j+k}} \left(\frac{d(s_j^k,x)}{1+d(s_j^k,x)}-\frac{d(x,s_j^k)}{1+d(x,s_j^k)}\right).
    \]
    Note that the right hand side depends on the class $[x]_{Y_n}$ only, so $\tau_n$ is a well-defined function $BY_n\to [-1,1]$ for each $n\in \mathbb{N}$. The proof that $\tau_n$ is a time function goes in the same way as in Lemma \ref{lem:time-func}.

    Finally, for fixed $x\in X$ we have
    \[
    \lim_{n\rightarrow\infty} \tau_n([x]_{Y_n})=\sum_{k=1}^{\infty}\sum_{j=1}^{\infty} \frac{1}{2^{j+k}} \left(\frac{d(s_j^k,x)}{1+d(s_j^k,x)}-\frac{d(x,s_j^k)}{1+d(x,s_j^k)}\right) .
    \]
    It is easy to see that the sum converges for any $x\in X$, and the resulting function $\tau$ is a time function.
\end{proof}

\begin{remark}\label{rmk:time-func-seq-uni}
    In fact we  proved a stronger result. The convergence of $\tau_n$ to $\tau$ is actually uniform. Moreover, there is a universal (independent of the space $X$) bound
    \[|\tau_n(x)-\tau(x)|\leq 2^{-n} \qquad (\forall x\in X).
    \]
\end{remark}
\begin{remark}\label{rmk:time-func-resc}
    In the setting of Lemma \ref{lem:time-func} (or Lemma \ref{lem:time-func-seq}), for any two given $x,y$ such that $x<y$, by rescaling and shifting of $\tau_n$ one can always ensure that $\tau(x)=0$ and $\tau(y)=1$.
\end{remark}
\section{Lorentzian length space}\label{sec:length}
As in \cite{minguzzi22}, we say that a continuous curve $\sigma: [a,b]\to X$ is \emph{isochronal} if $a\le s<t\le b$ implies $\sigma(s)\ll \sigma(t)$, \emph{isocausal} if  $a\le s<t\le b$ implies $\sigma(s)< \sigma(t)$, and  \emph{maximal} if it is isocausal and if for $a\le t<t'<t''\le b$ it satisfies
\begin{equation}
d(\sigma(t),\sigma(t'))+d(\sigma(t'),\sigma(t''))=d(\sigma(t),\sigma(t''))
\label{eqn:maxcurv}
\end{equation}
We recall that for any time function $\tau$, by\footnote{Of course, in \cite{minguzzi22} this was proved for bounded Lorentzian metric spaces, but the proof relies on continuity and monotonicity of $\tau$ and $\sigma$ only, thus applies to much more general setting.} \cite[Prop. 5.8]{minguzzi22} any isocausal curve $\sigma$ can be reparametrized to be $\tau$-uniform, i.e.\ such that
$\tau(\sigma(s))=s$ for any $s$ in the domain of $\sigma$.

\begin{definition}\label{def:lenspace}
    We say that a Lorentzian metric space $(X,d)$ is a Lorentzian \emph{prelength} space if for every $(x,y)\in I$ there exists an isocausal curve $\sigma:[a,b]\to X$ connecting $x$ to $y$. We say that $X$ is a Lorentzian \emph{length space}, if for any $(x,y)\in I$ there is a maximal curve $\sigma:[a,b]\to X$ connecting $x$ to $y$.
\end{definition}
\begin{remark}
    We note that the notions of Lorentzian (pre)length spaces differ from homonymous notions introduced in \cite{kunzinger18}. In fact, any Lorentzian metric space can be considered as a pre-length space of \cite{kunzinger18} (but not vice versa). A closer analogue of the prelength space property introduced above is the notion of causally path connected pre-length space of \cite{kunzinger18}, but the analogy is not complete since in \cite{kunzinger18} only locally Lipschitz curves are considered. The notions of Lorentzian length spaces here and in \cite{kunzinger18} are close, save, again for the locally Lipschitz condition restricting the set of the curves in the latter.  Since the locally Lipschitz condition clearly depends on the auxiliary metric, absent in our approach, more precise comparison is not possible.

\end{remark}

  For convenience, we extend the domain of definition of each curve to $\mathbb{R}$ by setting
    \begin{equation} \label{gid}
    \hat{\sigma}_n(s)= \begin{cases}
        \sigma_n(a_n),& \text{ if }s< a_n\\
        \sigma_n(s),& \text{ if }s\in[a_n,b_n]\\
        \sigma_n(b_n),& \text{ if }s>b_n.
    \end{cases}
    \end{equation}

    \begin{definition}
    Let $(X,d)$ be a Lorentzian metric space and
     let $\sigma_n:[a_n,b_n] \to X$, $\sigma: [a,b]\to X$ be continuous curves.
    We say that $\sigma_n$ {\em converges pointwise} (respectively, {\em converges uniformly}) to $\sigma$  if $a_n \to a$, $b_n\to b$, and $\hat \sigma_n$  converges  to $\hat \sigma$ pointwise (respectively, uniformly with respect to a uniformity on $X$ which should be specified).  Given a subset $Q\subset \mathbb{R}$, we say that  $\sigma_n$ {\em converges pointwise on $Q$} if $a_n \to a$, $b_n\to b$, and $\hat \sigma_n|_{Q}$  converges pointwise to $\hat \sigma|_Q$.
    \end{definition}

    \begin{remark}
        We deal only with sequences of curves whose image is contained in a compact subset. In this case, as we observed \cite{minguzzi22}, there is no need to specify the uniformity over the compact set as  it is unique when it exists \cite[Thm. 36.19]{willard70}. A uniformity exists when $(X,d)$ is countably generated because  it is then Polish (Prop.\ \ref{prop:cg-Polish}) and hence metrizable.
    \end{remark}

The following construction is of use.
\begin{lemma}\label{lem:curves-from-dense}
     Let $(X,d)$ be a countably-generated Lorentzian metric space, and let $\tau: X\to \mathbb{R}$ be a time function. Let $a,b\in \mathbb{R}$  be such that $a<b$, and assume that $Q$ is a dense subset of $[a,b]$ such that $a,b\in Q$. Let $\zeta: Q\to X$ be such that
    \begin{itemize}
        \item $\tau(\zeta(t)))=t$, $\forall t\in Q,$
        \item  $\zeta(t)\leq \zeta(t')$ whenever $t,t'\in Q$, $a\leq t <t' \leq b$.
    \end{itemize}
    Then $\zeta$ extends to an isocausal curve $\overline{\zeta}:[a,b]\to X$ parametrised by $\tau$. Moreover, if $\zeta$ satisfies the maximality condition (\ref{eqn:maxcurv}) for all $t,t',t''\in Q$, then $\overline{\zeta}$ is maximal.
\end{lemma}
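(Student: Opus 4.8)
The plan is to define $\overline{\zeta}$ on all of $[a,b]$ by taking monotone limits along $Q$ and then to verify, in turn, that the extension is well-defined, $\tau$-parametrised, isocausal, continuous, and finally maximal. First I would fix $t\in [a,b]\setminus Q$ and choose sequences $s_n\uparrow t$ and $u_n\downarrow t$ in $Q$. By the monotonicity hypothesis, $\zeta(s_n)\leq \zeta(s_m)$ for $s_n\leq s_m$ and $\zeta(s_n)\leq \zeta(u_k)$ for all $n,k$, so the $\zeta(s_n)$ form an isocausal sequence trapped between two fixed points of $\mathscr{G}$ (pick $p,q\in\mathscr{G}$ with $p\ll \zeta(a)$ and $\zeta(b)\ll q$, so the whole picture lies in the relatively compact set $\overline{I(p,q)}$ by Proposition \ref{prop:chdiamonds-compact}). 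Relative compactness gives a convergent subsequence; the key point is that the limit is independent of the chosen sequence. This independence is exactly where $\tau$ earns its keep: any two subsequential limits $x_1,x_2$ satisfy $\tau(x_i)=t$ by continuity of $\tau$, and they are causally comparable (being limits of the monotone net, using closedness of $J$ from Theorem \ref{thm:Jproperties}); but a time function is strictly monotone on $J$, so $x_1\leq x_2$ together with $\tau(x_1)=\tau(x_2)$ forces $x_1=x_2$. Hence the limit is unique and $\overline{\zeta}(t)$ is well-defined with $\tau(\overline{\zeta}(t))=t$.

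Next I would establish the causal ordering $\overline{\zeta}(t)\leq\overline{\zeta}(t')$ for all $a\leq t<t'\leq b$. For $t,t'\in Q$ this is given; for the general case I approximate $t,t'$ by points of $Q$ on the correct sides and pass to the limit, again invoking closedness of $J$. This simultaneously shows $\overline{\zeta}$ is isocausal (strict inequality $\overline{\zeta}(t)<\overline{\zeta}(t')$ follows because $\tau$ separates the two values, so the points cannot coincide). Continuity of $\overline{\zeta}$ I would then deduce from the fact that $\tau\circ\overline{\zeta}=\mathrm{id}$ is continuous and $\overline{\zeta}$ is monotone with respect to the closed order $J$ into a locally compact Hausdorff space whose topology is the unique one of Proposition \ref{prop:lms-hdf-lc-unique}: a monotone map into such a space that has the intermediate-value behaviour forced by density of $Q$ and closedness of $J$ cannot jump. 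Concretely, if $t_n\to t$ then $\overline{\zeta}(t_n)$ lies in the compact set $\overline{I(p,q)}$ and every subsequential limit $x$ has $\tau(x)=t$ and is comparable to $\overline{\zeta}(t)$, hence equals it by the same time-function argument; since every subsequence has a further subsequence converging to $\overline{\zeta}(t)$, the whole sequence converges.

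Finally, for the maximality clause I assume (\ref{eqn:maxcurv}) holds for all triples in $Q$ and I must upgrade it to all of $[a,b]$. Given $t<t'<t''$ in $[a,b]$, approximate each by points of $Q$ and use continuity of $d$ (property (ii)) together with continuity of $\overline{\zeta}$ just established, so that both sides of (\ref{eqn:maxcurv}) pass to the limit and the identity is preserved. The main obstacle throughout is the well-definedness of the extension off $Q$, i.e.\ ruling out that different approximating sequences produce different limit points; everything hinges on combining relative compactness of chronological diamonds with the strict monotonicity of the time function $\tau$ along $J$, which together pin the limit down uniquely. Once that is secured, the remaining verifications are routine limit arguments using closedness of $J$ and continuity of $d$ and $\tau$.
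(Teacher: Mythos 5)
Your proof is correct, and it rests on exactly the same mechanism as the paper's: the values of $\zeta$ are trapped in a compact diamond, closedness of $J$ makes all subsequential limits causally comparable, and the strict monotonicity of the time function $\tau$ then forces any two such limits to coincide. The packaging differs, though. The paper proves that $\zeta$ is \emph{Cauchy continuous} with respect to a complete metric (available because the space is Polish, Prop.~\ref{prop:cg-Polish}) and then obtains the extension together with its continuity in one stroke from the standard extension theorem; the contradiction it derives (two distinct limits $w\le z$ with $\tau(w)=\tau(z)$) is precisely your uniqueness argument. You instead build $\overline{\zeta}$ pointwise as the unique subsequential limit and must then verify continuity separately, which you do correctly via the ``every subsequence has a further subsequence converging to $\overline{\zeta}(t)$'' argument in the compact metrizable set $\overline{I(p,q)}$. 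Your route buys a slightly more self-contained argument that does not invoke complete metrizability (only compactness of chronological diamonds and metrizability of compact subsets), at the cost of a somewhat longer verification; both treat the maximality clause identically, as a closed condition preserved under passage to a dense set of parameters.
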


\begin{remark} \label{prvt}
We note that this lemma has a consequence that is interesting by itself in the case $Q=[a,b]$: a monotonous map $[a,b]\to X$ parametrized by a time function (i.e. any map satisfying the two conditions above) is necessarily continuous, and thus an isocausal curve.
\end{remark}

\begin{proof}
    The key idea is extracted from the proof of \cite[Thm. 5.12]{minguzzi22}.
    To begin with, let us verify that $t< t'$ implies the strict inequality $\zeta(t)< \zeta(t')$. Indeed by the first property of $\zeta$,
    \[
    \tau(\zeta(t))=t<t'=\tau(\zeta(t')),
    \]
    which, together with the definition of a time function, excludes the possibility $\zeta(t)=\zeta(t')$.

     To finish the proof we have to show that $\zeta$ is Cauchy continuous\footnote{We recall that a function is called Cauchy continuous if it maps Cauchy sequences to Cauchy sequences.}, thus has a continuous extension. Here we use the fact that $X$ is completely metrisable (Prop.\ \ref{prop:cg-Polish}) and fix some complete metric $\gamma$. Note that the extension will be $\tau$-uniform and isocausal due to closedness of $J$. For that, analogously to \cite{minguzzi22}, we assume the contrary, namely that there is a Cauchy sequence $q_n\in Q$ such that $\zeta(q_n)$ is not Cauchy. Define $r=\lim_{n}q_n$. Since $\zeta(q_n)$ is not Cauchy, we can find $\epsilon>0$ and sequences $n_k$, $m_k$, ${m_k}<{n_k}$, such that
    \[
        \gamma(\zeta(q_{n_k}),\zeta(q_{m_k}))>\epsilon
    \]
    for all $k\in \mathbb{N}$. By passing to subsequences  we can assume that $\lim_{k}\zeta(q_{m_k})=w$, $\lim_{k}\zeta(q_{n_k})=z$ (note that $J(\zeta(a),\zeta(b))$ is compact). We have $\gamma(w,z)\geq \epsilon$, so $w\neq z$.  Moreover, we have $\zeta(q_{m_k})<\zeta(q_{n_k})$, thus, since $J$ is closed, $w\le z$. We conclude that $\tau(w)< \tau(z)$. But by the continuity of $\tau$, $\tau(\zeta(w))=\tau(\zeta(z))=r$, which contradicts the fact that $\tau$ is a time function. So, such subsequences do not exist and $\zeta$ is Cauchy continuous. Therefore, it can be extended to $[a,b]$. Its extension $\overline{\zeta}$ is isocausal by closedness of $J$, and parametrised by $\tau$ by continuity of the latter.

    Finally, we note that the maximality condition (\ref{eqn:maxcurv}) is closed, so, as long as we deal with continuous curves, it holds for all arguments if and only if it holds on a dense subset.
\end{proof}

\begin{theorem}\label{thm:d-uni}
    Let us consider a countably generated Lorentzian metric space $(X,d)$,
    let $K$ be a compact subset of $X$, let $\sigma_n: [a_n,b_n]\to K$ be a sequence of continuous curves, and let $\sigma: [a,b]\to K$ be yet another continuous curve. The sequence $\sigma_n$ uniformly converges to $\sigma$ iff for each $z\in X$ the sequences of functions  $d_z\circ\hat \sigma_n$ and $d^z\circ\hat \sigma_n$   converge uniformly to $d_z\circ\hat \sigma$ and $d^z\circ\hat \sigma$, respectively.
\end{theorem}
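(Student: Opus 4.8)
The plan is to fix a complete metric $\gamma$ inducing the (unique, by Proposition~\ref{prop:cg-Polish}) Polish uniformity of $X$; since $K$ is compact it carries only one uniformity, so uniform convergence $\sigma_n\to\sigma$ means $\sup_{s\in\mathbb R}\gamma(\hat\sigma_n(s),\hat\sigma(s))\to0$. I would first record that the endpoint convergences $a_n\to a$, $b_n\to b$ are part of the very notion of curve convergence and must be granted on both sides of the equivalence: they cannot be read off from the distance functions, as is seen by letting all $\sigma_n$ and $\sigma$ be one and the same constant curve on shrinking domains, which makes every $d_z\circ\hat\sigma_n$ and $d^z\circ\hat\sigma_n$ coincide with $d_z\circ\hat\sigma$ and $d^z\circ\hat\sigma$. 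Granting $a_n\to a$ and $b_n\to b$, I would choose finite numbers $A\le\min\{a,\inf_n a_n\}$ and $B\ge\max\{b,\sup_n b_n\}$; outside $[A,B]$ the extended curves are constant and equal to their values at $A$ and at $B$, so for each of the functions in the statement the supremum over $\mathbb R$ equals the supremum over the \emph{compact} interval $[A,B]$. This reduces everything to $[A,B]$ and, crucially, legitimises the subsequence extractions in the parameter made below.

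For the direct implication I would use uniform continuity. Each $d_z$ and $d^z$ is continuous, hence uniformly continuous on the compact set $K$; given $\varepsilon>0$ choose $\delta>0$ so that $\gamma(p,q)<\delta$ implies $|d_z(p)-d_z(q)|<\varepsilon$ for $p,q\in K$. For $n$ large one has $\sup_s\gamma(\hat\sigma_n(s),\hat\sigma(s))<\delta$, whence $\sup_s|d_z(\hat\sigma_n(s))-d_z(\hat\sigma(s))|\le\varepsilon$, and likewise for $d^z$.

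The converse is the substantial part, and I would argue by contradiction. If $\sigma_n$ did not converge uniformly there would be $\varepsilon>0$, a subsequence (which I do not relabel), and parameters $s_n\in[A,B]$ with $\gamma(\hat\sigma_n(s_n),\hat\sigma(s_n))\ge\varepsilon$. Using compactness of $[A,B]$ and of $K$, I would pass to a further subsequence with $s_n\to s_\ast\in[A,B]$ and $\hat\sigma_n(s_n)\to w\in K$; continuity of $\hat\sigma$ gives $\hat\sigma(s_n)\to\hat\sigma(s_\ast)$, so $\gamma(w,\hat\sigma(s_\ast))\ge\varepsilon$ and thus $w\neq\hat\sigma(s_\ast)$. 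By property (iii) there is $z$ with either $d_z(w)\neq d_z(\hat\sigma(s_\ast))$ or $d^z(w)\neq d^z(\hat\sigma(s_\ast))$; assume the former (the latter being handled identically with $d^z$ in place of $d_z$). Now $d_z(\hat\sigma_n(s_n))\to d_z(w)$ by continuity of $d_z$, while
\[
\bigl|d_z(\hat\sigma_n(s_n))-d_z(\hat\sigma(s_\ast))\bigr|\le\|d_z\circ\hat\sigma_n-d_z\circ\hat\sigma\|_\infty+\bigl|d_z(\hat\sigma(s_n))-d_z(\hat\sigma(s_\ast))\bigr|,
\]
whose right-hand side tends to $0$ (the first term by the hypothesised uniform convergence, the second by continuity of $d_z\circ\hat\sigma$ together with $s_n\to s_\ast$). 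Hence $d_z(\hat\sigma_n(s_n))$ would converge to both $d_z(w)$ and $d_z(\hat\sigma(s_\ast))$, forcing their equality, a contradiction.

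I expect the converse to be the main obstacle: the task is to upgrade control that holds only per test point $z$ into genuine uniform control in $\gamma$, and the only tools are the separating family $\{d_z,d^z\}$ (property (iii)) together with compactness. The engine is the double extraction, first of a convergent parameter $s_n\to s_\ast$ and then of a convergent image $\hat\sigma_n(s_n)\to w$, which relies on the preliminary reduction to the compact interval $[A,B]$ and on the image lying in the compact set $K$. A point to treat with care is that property (iii) separates two distinct points using only \emph{one} of $d_z$ or $d^z$, so both convergence hypotheses are genuinely needed.
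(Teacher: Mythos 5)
Your proposal is correct. The forward implication is exactly the paper's (uniform continuity of $d_z$, $d^z$ on the compact set $K$), but your converse takes a genuinely different route. The paper argues directly: it covers $[a,b]$ by finitely many parameter intervals on which $\sigma$ stays inside basic neighborhoods of the form (\ref{don}), and then uses the hypothesised uniform convergence only for the \emph{finitely many} test points $p_j$ entering those neighborhoods to trap $\sigma_n(t)$ in the same neighborhood for large $n$. You instead argue by contradiction with a double subsequence extraction — first a convergent parameter $s_n\to s_\ast$, then a convergent image $\hat\sigma_n(s_n)\to w$ in $K$ — and invoke property (iii) to separate $w$ from $\hat\sigma(s_\ast)$ by a single $d_z$ or $d^z$. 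Both arguments ultimately rest on (iii) (the paper via the subbasis result, Prop.~\ref{prop:lms-hdf-lc-unique}, whose proof uses the separating property); yours is shorter and more elementary, at the price of being non-constructive, whereas the paper's covering argument identifies an explicit finite family of test points and an explicit threshold $N$. Your preliminary remark is also well taken: the endpoint convergences $a_n\to a$, $b_n\to b$ are part of the paper's definition of curve convergence but genuinely cannot be recovered from the functions $d_z\circ\hat\sigma_n$, $d^z\circ\hat\sigma_n$ (your constant-curves-on-shrinking-domains example shows this), so they must be read as granted in the converse direction — a point the paper's proof silently passes over. The reduction of the supremum over $\mathbb{R}$ to the compact interval $[A,B]$ is also handled correctly and is needed to justify the extraction of $s_n\to s_\ast$.
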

From now on we shall be somewhat sloppy in distinguishing between $\sigma$ and $\hat \sigma$, hoping that it shall not cause confusion.
\begin{proof}
    Assume that $\sigma_n$ converges to $\sigma$ uniformly. Note that the restrictions to $K$ of the continuous functions $d^z$ and $d_z$ are uniformly continuous. As a consequence, uniform convergence is preserved in the sense that $d_z\circ \sigma_n$ ($d^z\circ \sigma_n$) uniformly converges to  $d_z\circ \sigma$ (resp.\ $d^z\circ \sigma$) whenever $\sigma_n$ uniformly converges to $\sigma$. This gives one of the implications.

    Conversely, suppose that for each $z\in X$ the sequences of functions  $d_z\circ\sigma_n$ and $d^z\circ\sigma_n$   converge uniformly respectively to $d_z\circ\sigma$ and $d^z\circ\sigma$. Let $\gamma$ be any metric inducing the topology of $X$. Fix $\epsilon>0$.
     By continuity of $\gamma$, for any $t\in [a,b]$ there is a an open set $W_t\subset X\times X$ such that $(\sigma(t),\sigma(t))\in W_t$, and
     \[
     \gamma(x,y)<\epsilon,\,\forall (x,y)\in W_t.
     \]
     We can always assume that $W_t=V_t\times V_t$, where $V_t$ is a finite intersection of sets of the form (\ref{don}). By continuity of $\sigma$, there are $s_t,f_t\in \mathbb{R}$ such that $s_t<t<f_t$, and $\sigma((s_t,f_t)\cap [a,b])\subset V_t$.
     Note that the sets $(s_t,f_t)$  form an open cover of $[a,b]$, so we can find some finite $m$ and $\{t_1,\ldots,t_m\}\in [a,b]$ such that
     \[
     \bigcup_{i=1}^m (s_{t_i},f_{t_i})\supset [a,b].
     \]
     It is always possible to choose $s'_i,f'_i\in (s_{t_i},f_{t_i})\cap [a,b]$ so that
      \[
      \bigcup_{i=1}^m [s_{i}',f'_{i}] =[a,b].
      \]
      The sets $V_{t_i}$ in the most general case can be described as
      \begin{equation}\label{eqn:Vti}
      V_{t_i}=\bigcap_{j=1}^l \left(
      \left(d^{p_j})^{-1}\left(\left(a_{i,j},b_{i,j}\right)\right) \right)
      \cap
      \left(d_{p_j})^{-1}\left(\left(c_{i,j},e_{i,j}\right)\right)\right)
      \right)
      \end{equation}
      for some $l\in \mathbb{N}$, and $\{p_1.\ldots,p_l\}\subset X$ and\footnote{Here we use the same set of points $p_1,\ldots, p_l$ to define the  neighborhoods $V_{t_i}$ for each $i\in\{1,\ldots,m\}$ for notational purposes. Such a presentation is always possible,  since  for every $p\in X$ we have $\left(d_p\right)^{-1}((-\infty,+\infty))=\left(d_p\right)^{-1}((-\infty,+\infty))=X$. Therefore, we can always enlarge the set of points involved in the right hand side of (\ref{eqn:Vti}) without affecting the left hand side. In particular, we can work with a union of sets of points, needed to define each  $V_{t_i}$.} $a_{i,j},b_{i.j},c_{i,j},e_{i,j}\in [-\infty,+\infty]$. Define
      \[
      a'_{i,j}=\inf_{s_i'\leq t \leq f_i'}\big(d^{p_j}(\sigma(t))\big),
      \]

      \[
      b'_{i,j}=\sup_{s_i'\leq t \leq f'_i}\big(d^{p_j}(\sigma(t))\big),
      \]

      \[
      c'_{i,j}=\inf_{s_i'\leq t \leq f_i'}\big(d_{p_j}(\sigma(t))\big),
      \]

      \[
      e'_{i,j}=\sup_{s_i'\leq t \leq f_i'}\big(d_{p_j}(\sigma(t))\big).
      \]
      As we are working with continuous functions on compact regions, the infima and suprema are actually maximums and minumums, so $a'_{i,j},b'_{i,j} \in (a_{i,j},b_{i,j})$ and $c'_{i,j},e'_{i,j} \in (c_{i,j},e_{i,j})$ for all $i,j$ for which it makes sense. Then there is $\delta>0$ such that
      $a'_{i,j}-\delta,b'_{i,j}+\delta \in (a_{i,j},b_{i,j})$ and $c'_{i,j}-\delta,e'_{i,j}+\delta \in (c_{i,j},e_{i,j})$.

      To conclude the proof, let $N\in \mathbb{N}$ be such that $\forall t, \forall n\geq N,\ \forall i\in\{1,\ldots,l\}$
      \[
      |d^{p_i}\circ \sigma_n(t)-d^{p_i}\circ \sigma(t)|<\epsilon,\quad \mathrm{and}\quad |d_{p_i}\circ \sigma_n(t)-d_{p_i}\circ \sigma(t)|<\delta .
      \]
      Existence of such a number $N$ follows from uniform convergence of functions $d^{p_i}\circ\sigma_n\to d^{p_i}\circ\sigma$ and $d_{p_i}\circ\sigma_n\to d_{p_i}\circ\sigma$. Then for $t\in [s_i',f_i']$ we have $\sigma_n(t)\in V_{t_i}$ whenever $n\geq N$. This implies
      \[
      \gamma(\sigma(t),\sigma_n(t))<\epsilon, \ \forall t,\ \forall n\geq N
      \]
      Existence of such a number $N$ for every $\epsilon>0$ is precisely the uniform convergence.
\end{proof}
\begin{corollary}\label{cor:isocaus-uni}
    Let $Q\subset \mathbb{R}$ be a dense set.  A sequence of isocausal curves $\sigma_n$ uniformly converges to an isocausal curve $\sigma$ iff it converges pointwise on $Q$.
\end{corollary}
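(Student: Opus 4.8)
The forward implication is immediate: uniform convergence entails $a_n\to a$, $b_n\to b$ together with pointwise convergence of $\hat\sigma_n$ to $\hat\sigma$ everywhere, hence in particular on $Q$. All the content lies in the converse, and the plan is to reduce it to Theorem \ref{thm:d-uni}. Recalling the standing convention that all the $\sigma_n$ and $\sigma$ have image in a common compact set $K$, Theorem \ref{thm:d-uni} tells us that it suffices to prove, for every fixed $z\in X$, that the scalar functions $d_z\circ\hat\sigma_n$ and $d^z\circ\hat\sigma_n$ converge uniformly on $\mathbb{R}$ to $d_z\circ\hat\sigma$ and $d^z\circ\hat\sigma$ respectively. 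The pointwise hypothesis transfers to these compositions for free: since $d_z$ and $d^z$ are continuous and $\hat\sigma_n\to\hat\sigma$ pointwise on $Q$, we obtain $d_z\circ\hat\sigma_n\to d_z\circ\hat\sigma$ and $d^z\circ\hat\sigma_n\to d^z\circ\hat\sigma$ pointwise on $Q$.

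The key structural observation is that isocausality turns these one-variable functions into monotone ones. For $s<t$ isocausality gives $\sigma(s)<\sigma(t)$, i.e.\ $(\sigma(s),\sigma(t))\in J$; taking $p=z$ in Definition \ref{def:causal} yields $d(z,\sigma(t))\ge d(z,\sigma(s))$ and $d(\sigma(s),z)\ge d(\sigma(t),z)$. Hence $d_z\circ\sigma$ is non-decreasing and $d^z\circ\sigma$ is non-increasing, and the same applies to every $\sigma_n$. Because each $\hat\sigma_n$ is constant (equal to the relevant endpoint value) outside $[a_n,b_n]$, the extended function $d_z\circ\hat\sigma_n:\mathbb{R}\to\mathbb{R}$ is globally monotone non-decreasing and bounded (its range lies in $d_z(K)$, bounded as $d_z$ is continuous on the compact $K$), while $d_z\circ\hat\sigma$ is monotone non-decreasing, continuous, and constant outside $[a,b]$; symmetrically for $d^z$.

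Thus everything reduces to a Pólya-type statement: a sequence of monotone functions converging pointwise on a dense set to a bounded continuous monotone function converges uniformly. The plan is to fix $\epsilon>0$ and, using continuity of $d_z\circ\hat\sigma$ together with density of $Q$, choose partition points $x_0<\cdots<x_k$ in $Q$ with $x_0<a$, $x_k>b$ and $(d_z\circ\hat\sigma)(x_{j+1})-(d_z\circ\hat\sigma)(x_j)<\epsilon$. For $t\in[x_j,x_{j+1}]$ one sandwiches $(d_z\circ\hat\sigma_n)(t)$ between its values at $x_j$ and $x_{j+1}$, and does the same for the limit; the difference is then bounded by $\epsilon$ plus the errors at the finitely many points $x_j\in Q$, which vanish for large $n$ by pointwise convergence on $Q$.

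The single point requiring care, and the main obstacle, is the behaviour on the tails $t<x_0$ and $t>x_k$, where one must reconcile the constancy of $d_z\circ\hat\sigma$ with the moving endpoints of $\hat\sigma_n$. Here I would use that $a_n\to a>x_0$ and $b_n\to b<x_k$, so that for all large $n$ one has $x_0<a_n$ and $b_n<x_k$; consequently $d_z\circ\hat\sigma_n$ is already constant on $(-\infty,x_0]$ and on $[x_k,\infty)$, matching the constancy of $d_z\circ\hat\sigma$, and the tail error is governed entirely by the vanishing errors at $x_0$ and $x_k$. Combining the interior and tail estimates gives $\sup_{\mathbb{R}}|d_z\circ\hat\sigma_n-d_z\circ\hat\sigma|\le 2\epsilon$ for large $n$. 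The mirror argument (monotone non-increasing functions) handles $d^z\circ\hat\sigma_n$, and Theorem \ref{thm:d-uni} then delivers the uniform convergence of $\sigma_n$ to $\sigma$.
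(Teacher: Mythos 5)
Your proposal is correct and follows essentially the same route as the paper: reduce to Theorem \ref{thm:d-uni}, observe that isocausality makes $d_z\circ\hat\sigma_n$ and $d^z\circ\hat\sigma_n$ monotone, and then run a P\'olya-type sandwich argument over a partition of points of $Q$ chosen so that the limit function oscillates by less than $\epsilon$ on each subinterval, with the tails handled via $a_n\to a$, $b_n\to b$. The only differences are cosmetic (the paper fixes the partition to cover $[a-1,b+1]$ once and for all rather than shrinking to $[a_n,b_n]$ for large $n$, and lands on a $5\epsilon$ bound instead of your $2\epsilon$).
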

In particular, by setting $Q=\mathbb{R}$, we get that the pointwise and uniform convergence for isocausal curves coincide. Note that the requirement that $\sigma$ is isocausal curve is essential, because there are sequences of strictly decreasing functions $[0,1]\to [0,1]$ that converge to zero pointwise on $(0,1]$ but not at $0$, which gives a counterxample for $X=\mathbb{R}$ with the natural Lorentzian metric space structure.
\begin{proof}
    Obviously, we need to check only one of the implications. Let $\sigma_n: [a_n,b_n]\to X$ be a sequence of isocausal curves converging to $\sigma: [a,b]\to X$ pointwise on $Q$. Consider $z\in X$. Consider the continuous non-decreasing function $d_z\circ \hat{\sigma}: \mathbb{R}\to [0,+\infty)$. Since $a_n\to a$ and $b_n\to b$, without any loss of generality we may assume that $a_n,b_n\in [a-1,b+1]$ for all $n\in\mathbb{N}$. By continuity of $d_z\circ \sigma$, we can find some $m\in\mathbb{N}$ and $s_1,\ldots,s_m\in Q$ such that
    \[
    s_1<a-1<s_2<\cdots < b+1 < s_n
    \]
    in such a way that $d_z\circ \sigma(s_{i+1})-d_z\circ\sigma(s_{i})<\epsilon$.
    Let $N\in\mathbb{N}$ be such that for every $n\geq N$ and $i\in \{1,\ldots,l\}$,
    \[
    |d_z(\sigma_n(s_i))-d_z(\sigma(s_i))|<\epsilon.
    \]
    Such a number $N$ exists by the assumed pointwise convergence.

    Then for $t\in [s_{i},s_{i+1}]$ and $n\geq N$ we have
    \begin{align*}
    |d_z\circ \sigma_n(t)-d_z\circ \sigma(t)| & \leq     |d_z\circ \sigma_n(t)-d_z\circ \sigma_n(s_i)|+    |d_z\circ \sigma_n(s_i)-d_z\circ \sigma(s_i)| \\
    &\quad +    |d_z\circ \sigma(s_i)-d_z\circ \sigma(t)|\\
    &\leq
        |d_z\circ \sigma_n(s_{i+1})-d_z\circ \sigma_n(s_i)|+|d_z\circ \sigma_n(s_i)-d_z\circ \sigma(s_i)| \\
        &\quad +    |d_z\circ \sigma(s_i)-d_z\circ \sigma(s_{i+1})| \\
        &\leq
     2|d_z\circ \sigma(s_{i+1})-d_z\circ \sigma(s_i)|+3\epsilon\leq 5 \epsilon.
    \end{align*}
    Here we have used the fact that
    \[
    0<d_z\circ \sigma(s_i)-d_z\circ \sigma(t)\leq d_z\circ \sigma(s_i)-d_z\circ \sigma(s_{i+1}), \ \forall t\in [s_i,s_{i+1}],
    \]
    and similarly for $d_z\circ \sigma_n$, because those functions are non-decreasing.
     This proves that $d_z\circ \sigma_n$ converges uniformly to $d_z\circ \sigma$. Replacing $d_z$ with $d^z$ and `non-decreasing' with `non-increasing' we get that $d^z\circ \sigma_n$ converges uniformly to $d^z\circ \sigma$. So, the claim follows from Theorem \ref{thm:d-uni}.
\end{proof}
\begin{theorem}[Limit curve theorem]\label{thm:lim-curv}
    Let $(X, d)$ be a Lorentzian metric space without chronological boundary. Let $\sigma_n: [a_n, b_n] \to X$ be a sequence of isocausal curves parametrized with respect to a given time function $\tau$, $\tau(\sigma_n(t))=t$. Suppose that
    \[
        \lim_{n\to \infty }\sigma_n(a_n)=x, \          \lim_{n\to \infty }\sigma_n(b_n)=y,
    \]
    and $x\neq y$. Then there exists a $\tau$-uniform isocausal curve $\sigma : [a, b] \to X$ and a subsequence $\{\sigma_{n_k}\}_k$ that converges uniformly to $\sigma$. If the curves $\sigma_n$ are maximizing (i.e. Equation (\ref{eqn:maxcurv}) holds) then so is $\sigma$.
\end{theorem}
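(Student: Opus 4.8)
The plan is to build the limit curve on a countable dense set by a diagonal extraction, extend it to a genuine isocausal curve via Lemma~\ref{lem:curves-from-dense}, and finally promote pointwise-on-a-dense-set convergence to uniform convergence via Corollary~\ref{cor:isocaus-uni}. First I would fix the limit interval. Since each $\sigma_n$ is $\tau$-parametrized we have $\tau(\sigma_n(a_n))=a_n$ and $\tau(\sigma_n(b_n))=b_n$, so continuity of $\tau$ together with $\sigma_n(a_n)\to x$ and $\sigma_n(b_n)\to y$ gives $a_n\to a:=\tau(x)$ and $b_n\to b:=\tau(y)$. Isocausality forces $\sigma_n(a_n)<\sigma_n(b_n)$, hence by closedness of $J$ (Theorem~\ref{thm:Jproperties}) the limits satisfy $x\le y$; combined with $x\ne y$ this yields $x<y$ and therefore $a<b$, so the target domain $[a,b]$ is nondegenerate.

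The geometric heart of the argument is to trap the whole family in a single compact set. As $X$ has no chronological boundary, I would pick $p,q\in X$ with $p\ll x$ and $y\ll q$. The chronological relation is open because $d$ is continuous, so for all large $n$ one has $p\ll\sigma_n(a_n)$ and $\sigma_n(b_n)\ll q$; then for any $t\in[a_n,b_n]$, $p\ll\sigma_n(a_n)\le\sigma_n(t)\le\sigma_n(b_n)\ll q$, and the mixed composition property $I\circ J\cup J\circ I\subset I$ of Theorem~\ref{thm:Jproperties} gives $\sigma_n(t)\in I(p,q)$. Consequently, for large $n$ the image of $\hat\sigma_n$ lies in the compact set $K:=\overline{I(p,q)}$ (Proposition~\ref{prop:chdiamonds-compact}). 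Now I would fix a countable dense subset $Q\subset[a,b]$ with $a,b\in Q$ and, using sequential compactness of $K$, run a diagonal argument to extract a subsequence $\{n_k\}$ such that $\hat\sigma_{n_k}(t)\to\zeta(t)\in K$ for every $t\in Q$. A short endpoint analysis (using that $\tau$ is a time function and that $\sigma_{n_k}(a_{n_k})\to x$, $\sigma_{n_k}(b_{n_k})\to y$) shows $\zeta(a)=x$ and $\zeta(b)=y$.

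Next I would verify the hypotheses of Lemma~\ref{lem:curves-from-dense} for $\zeta$: continuity of $\tau$ and $\tau(\sigma_{n_k}(t))=t$ give $\tau(\zeta(t))=t$ on $Q$, while for $t<t'$ in $Q$ the relation $\sigma_{n_k}(t)<\sigma_{n_k}(t')$ passes to the limit $\zeta(t)\le\zeta(t')$ by closedness of $J$ (this also covers the endpoints). Lemma~\ref{lem:curves-from-dense} then extends $\zeta$ to a $\tau$-uniform isocausal curve $\sigma:[a,b]\to X$ with $\sigma|_Q=\zeta$. Since $\hat\sigma_{n_k}\to\sigma$ pointwise on $Q$ and $\sigma$ is isocausal, Corollary~\ref{cor:isocaus-uni} upgrades this to uniform convergence of $\sigma_{n_k}$ to $\sigma$. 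For the maximizing case, I would pass the identity~(\ref{eqn:maxcurv}) to the limit along $n_k$: for any triple $t<t'<t''$ in $Q$, continuity of $d$ gives $d(\zeta(t),\zeta(t'))+d(\zeta(t'),\zeta(t''))=d(\zeta(t),\zeta(t''))$, and the final clause of Lemma~\ref{lem:curves-from-dense} (maximality on a dense set implies maximality) then shows $\sigma$ is maximal.

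The main obstacle is the confinement step: without the no-chronological-boundary hypothesis there need be no single compact diamond containing all the $\sigma_n$, and the diagonal extraction would fail; the openness of $\ll$ and the composition rule $I\circ J\cup J\circ I\subset I$ are exactly what make it work here. A secondary technical point is the sequential compactness (equivalently, metrizability) of $K$ required for the diagonal argument, which I would supply from the fact that the relevant compact region sits inside a second-countable, hence metrizable, part of the space, as in the countably generated setting underlying Lemma~\ref{lem:curves-from-dense} and Corollary~\ref{cor:isocaus-uni}. The endpoint bookkeeping — guaranteeing that the extracted curve is genuinely defined on all of $[a,b]$ with endpoints $x$ and $y$ rather than on a shrunken interval — also deserves care, but it is forced by the $\tau$-parametrization and the strict monotonicity of the time function.
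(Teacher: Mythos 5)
Your proposal is correct and follows essentially the same route as the paper's proof: confine the curves to the compact set $\overline{I(p,q)}$ using $p\ll x$, $y\ll q$ and isocausality, extract a subsequence converging on a countable dense set of parameters by a diagonal argument, check monotonicity and $\tau$-parametrization of the limit so that Lemma~\ref{lem:curves-from-dense} applies, and then upgrade to uniform convergence via Corollary~\ref{cor:isocaus-uni}. The only differences are cosmetic (a generic countable dense $Q$ instead of $\mathbb{Q}\cap[a,b]$, and slightly more explicit endpoint and metrizability bookkeeping).
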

\begin{remark}
    It will be shown in the very beginning of the proof that under these assumptions the curves stay in a compact set, so the uniform convergence is defined unambiguously.
\end{remark}
\begin{proof}
    Let $p,q\in X$ be such that $p\ll x$ and $y\ll q$. Then for large enough $n$ we have $\sigma_n(a_n)\gg p$ and $\sigma_n(b_n)\ll q$. By the isocausality of the curves,  it follows that for large enough $n$, $\sigma_n$ stays in the compact set $\overline{I(p,q)}$. By cutting the sequence if necessary, we can assume that this happens for all $n\in\mathbb{N}$.
    This allows us to adopt most of the technique of the proof of \cite[Theorem 5.12]{minguzzi22}.

    Note that by the continuity of $\tau$,
    \[
    \lim_{n\to \infty}\tau(a_n)=\tau(x)=:a,
    \]
    \[
    \lim_{n\to \infty}\tau(b_n)=\tau(y)=:b.
    \]
    By passing to subsequences and the diagonal argument, we can ensure that for every $t\in \mathbb{Q}\cap [a,b]$ the sequence $\sigma_n(t)$ has a limit which we denote with $\sigma(t)$. It is easy to see that $\tau(\sigma(t))=t$. Let us show that for $t,t'\in \mathbb{Q}\cap [a,b]$, $t<t'$ implies $\sigma(t)\leq \sigma(t')$. Assume that this is not so, i.e.\ there is $z\in X$ such that $d(z,\sigma(t))>d(z,\sigma(t'))$ or $d(\sigma(t),z)<d(\sigma(t'),z)$.


    For large enough $n\in\mathbb{N}$, $d(\sigma_n({t}),z)<d(\sigma_n({t}'),z)$ or $d(z,\sigma_n({t}))<d(z,\sigma_n({t}'))$, which is in contradiction with isocausality of $\sigma_n$. We conclude that such a point $z$ can not exist, and thus $\sigma$ satisfies requirements of Lemma \ref{lem:curves-from-dense} with $Q=\mathbb{Q}\cap [a,b]$, i.e.\ it extends to an isocausal curve $\sigma: [a,b]\to X$.

    The convergence $\sigma_n\to \sigma$ (after passing to the subsequences according to the procedure above) is uniform by Corollary \ref{cor:isocaus-uni} for $Q=\mathbb{Q}$.
\end{proof}

Being a (pre)length space is a local property, so one would expect that it can be studied by looking at the bounded regions. This is indeed so, although establishing this fact requires some work.
\begin{theorem}\label{thm:length-local}
    Let $(X,d)$ be a countably-generated Lorentzian metric space. Let $\mathscr{F}$ be a family of compact subsets of $X$ such that for every $m\in\mathbb{N}$ and $p^1,\ldots,p^m \in X$ there is $F\in \mathscr{F}$ such that $I(p^1,\ldots,p^m)\subset F$.

    Then the following statements are equivalent:
    \begin{enumerate}
        \item $X$ is a prelength (resp.\ length) space;
        \item For any $p,q,x,y \in X$ and $F\in \mathscr{F}$ such that $p\ll x \ll y \ll q$ and $I(p,q)\subset F$, there is an isocausal (resp.\ maximal isocausal)  curve connecting $[x]_F$ to $[y]_F$ in $BF$.
    \end{enumerate}
\end{theorem}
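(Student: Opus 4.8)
The plan is to prove the two implications separately; the forward direction $1\Rightarrow 2$ is essentially a projection argument, while the backward direction $2\Rightarrow 1$ is the substantial one and requires a lifting-by-limits construction. For the forward direction I would assume $X$ is a prelength (resp.\ length) space and fix $p,q,x,y,F$ as in the statement. Since $x\ll y$ there is an isocausal (resp.\ maximal) curve $\sigma\colon[a,b]\to X$ joining $x$ to $y$. First I confine $\sigma$ to the diamond: for every $s$ one has $x\le\sigma(s)\le y$, so combining $p\ll x$ and $y\ll q$ with the relations $I\circ J\cup J\circ I\subset I$ of Theorem \ref{thm:Jproperties} gives $p\ll\sigma(s)\ll q$, i.e.\ $\sigma(s)\in I(p,q)\subset F$. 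Since $F$ is compact (hence closed) and $\overline{I(p,q)}$ is compact by Proposition \ref{prop:chdiamonds-compact}, one has $\overline{I(p,q)}\subset F$; together with the fact that each $\sigma(s)$ is chronologically related to $p$ and $q$ this places $\sigma(s)$ in $\mathring F$, away from the local spacelike infinity $I^0(F)$. Composing with the quotient $\mathring F\to BF$ then yields a curve from $[x]_F$ to $[y]_F$, which is isocausal in $BF$ because by Remark \ref{rmk:glob-caus-strong} the restriction of $J$ to $F$ embeds into the causal order of $BF$; maximality passes through verbatim because $d$ descends unchanged to representatives, preserving the identity (\ref{eqn:maxcurv}).

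The backward direction is where the real difficulty lies. The obstruction is precisely Remark \ref{rmk:glob-caus-strong}: the causal order of a single $BF$ is in general strictly larger than the restriction of $J$, so a curve that is isocausal in $BF$ need \emph{not} lift to an isocausal curve of $X$, since a point outside $F$ may witness a failure of the defining inequalities of $J$. My plan is therefore to use not one $F$ but an exhausting family and to pass to a limit, so that ``causal inside $F_n$'' sharpens to ``causal in $X$''. Given $(x,y)\in I$, I would choose auxiliary points $p_-\ll p\ll x\ll y\ll q\ll q_+$, which is possible because a countably generated space has no chronological boundary. Using $\sigma$-compactness (Proposition \ref{prop:cg-Polish}) and the defining property of $\mathscr F$, I build an increasing exhaustion $F_1\subset F_2\subset\cdots$ with $\bigcup_n F_n=X$, $F_n\in\mathscr F$, and $I(p,q)\subset F_n$, $p,q\in F_n$ for all $n$.

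For each $n$, hypothesis 2 furnishes an isocausal (resp.\ maximal) curve $\bar\sigma_n$ in $BF_n$ from $[x]_{F_n}$ to $[y]_{F_n}$, which I parametrise by the time function $\tau_n$ of $BF_n$ provided by Lemma \ref{lem:time-func-seq}; its pointwise limit is a time function $\tau$ on $X$ subject to the uniform bound $|\tau_n-\tau|\le 2^{-n}$ of Remark \ref{rmk:time-func-seq-uni}. For $t$ in a fixed countable dense set $Q$ of times and $n$ large I choose a representative $w^n_t\in\mathring F_n$ of $\bar\sigma_n(t)$, taking $w^n_{\inf Q}=x$ and $w^n_{\sup Q}=y$. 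Evaluating the $BF_n$-causal inequalities $[x]_{F_n}\le\bar\sigma_n(t)\le[y]_{F_n}$ at the classes of $p$ and $q$ gives $d(p,w^n_t)\ge d(p,x)>0$ and $d(w^n_t,q)\ge d(y,q)>0$, hence $p\ll w^n_t\ll q$, so all the $w^n_t$ lie in the single compact set $\overline{I(p,q)}$.

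The limit step then runs as in the limit curve theorem (Theorem \ref{thm:lim-curv}): a diagonal argument extracts a subsequence along which $w^{n_k}_t\to\zeta(t)\in\overline{I(p,q)}$ for every $t\in Q$, with $\zeta(\inf Q)=x$ and $\zeta(\sup Q)=y$. The decisive observation is that $\zeta$ is genuinely $J$-monotone: for $t<t'$ and any fixed $v\in X$ one has $v\in F_{n_k}$ for large $k$, so the $BF_{n_k}$-causal inequalities give $d(v,w^{n_k}_{t'})\ge d(v,w^{n_k}_t)$ and $d(w^{n_k}_t,v)\ge d(w^{n_k}_{t'},v)$, and letting $k\to\infty$ with continuity of $d$ yields these for all $v\in X$, i.e.\ $\zeta(t)\le\zeta(t')$. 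The uniform bound $|\tau_n-\tau|\le2^{-n}$ forces $\tau(\zeta(t))=t$, so $\zeta$ meets the hypotheses of Lemma \ref{lem:curves-from-dense} and extends to an isocausal curve from $x$ to $y$, proving $X$ prelength. In the length case I additionally carry the maximality identity (\ref{eqn:maxcurv}) for $\bar\sigma_{n_k}$ (again $d_{BF}=d$ on representatives) to the limit by continuity of $d$, whereupon Lemma \ref{lem:curves-from-dense} delivers a maximal curve. The main obstacle throughout is exactly the mismatch between the local causal orders of the $BF_n$ and the global relation $J$, which this exhaustion-and-limit device is designed to overcome.
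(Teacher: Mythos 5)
Your backward direction ($2\Rightarrow 1$) is essentially the paper's own argument: exhaust $X$ by sets $F_n\in\mathscr F$, take the curves in $BF_n$ given by hypothesis~2, parametrise them by the time functions of Lemma~\ref{lem:time-func-seq}, pick representatives, confine them to $\overline{I(p,q)}$, extract a diagonal limit on rational times, verify $J$-monotonicity by testing against an arbitrary $v\in X$ that eventually lies in $F_{n_k}$, and conclude with Lemma~\ref{lem:curves-from-dense}. That part is sound.

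The forward direction ($1\Rightarrow 2$), however, has a genuine gap. You claim that composing $\sigma$ with the quotient $\mathring F\to BF$ yields an isocausal curve ``because by Remark~\ref{rmk:glob-caus-strong} the restriction of $J$ to $F$ embeds into the causal order of $BF$.'' Remark~\ref{rmk:glob-caus-strong} only asserts that $u\le v$ implies $[u]_F\le [v]_F$; it is a monotone map of preorders, not an order embedding. Two distinct points $\sigma(s)<\sigma(s')$ on the curve are distinguished by \emph{some} point of $X$, but possibly by no point of $F$, in which case $[\sigma(s)]_F=[\sigma(s')]_F$. The naive projection can therefore be constant on whole subintervals of $[a,b]$, and a curve that is constant on a subinterval is not isocausal (isocausality requires the strict relation $<$, hence distinct classes, for distinct parameters). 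This is exactly the obstruction the paper flags at the start of its proof of this implication, and it is repaired there by reparametrising: one builds a time function $\tau_F$ on $BF$ (normalised so that $\tau_F(x)=0$, $\tau_F(y)=1$), shows that for each $t\in[0,1]$ the class $[\zeta(s)]_F$ of any point on the curve with $\tau_F(\zeta(s))=t$ is independent of the choice of $s$, and obtains in this way a strictly $<$-monotone, $\tau_F$-parametrised map $[0,1]\to BF$ whose continuity follows from Remark~\ref{prvt}. Without some such reparametrisation step your forward implication does not go through; with it, your maximality remark (that (\ref{eqn:maxcurv}) descends since $d$ is computed on representatives) is fine.
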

Obvious examples of such families are the compact sets of the form $\overline{I(A)}$ and $\overline{J(A)}$ with $A\subset \mathscr{G}$ running over all finite subset of a generating set $\mathscr{G}\subset X$. In Section \ref{sec:GH} we will see another example.
\begin{remark}\label{rmk:thm:length-local}
    The first statement in Theorem \ref{thm:length-local} implies the second for any family $\mathfrak{F}$ of compact subsets of $X$  (e.g.\ a family consisting of just one compact subset). This will be clear from the proof, but formally follows from the given formulation, because any family can be enlarged to one, satisfying the requirements of the theorem.
\end{remark}
\begin{proof}
    \begin{enumerate}
        \item  Assume that $(X,d)$ is a prelength space. Take $x,y,p,q\in X$ and $F\in \mathscr{F}$ so that $p\ll x \ll y \ll q$ and $I(p,q)\subset F$. Let $\zeta: [0,1]\to X$ be an isocausal curve connecting $x$ with $y$ in $X$. We want to project it to a curve connecting $[x]_{F}$ with $[y]_{F}$ in $BF$. The main problem here is that the naive projection of $\zeta$ may be constant at some intervals of $[0,1]$ (and thus fails to be isocausal), so we have to find the right parametrisation.

         Let us take a countable set  $\mathscr{S}_F\subset \mathring{F}$ such that its projection to $BF$ is dense, cf.\ Cor.\ \ref{crl:comp-dense}. Let us consider  a sequence $\{s_k\}_{k\in\mathbb{N}}$  whose image is $\mathscr{S}_F$.

        We construct a function $\tau_F: X\rightarrow [-1,1]$ as in Lemma \ref{lem:time-func-seq}
        \[
        \tau_F(x)=\alpha \sum_{j=1}^{\infty} \frac{1}{2^{j}} \left(\frac{d(s_{j},x)}{1+d(s_{j},x)}-\frac{d(x,s_{j})}{1+d(x,s_{j})}\right) + \beta.
        \]
        Here $\alpha>0$ and $\beta$ can be always chosen so that $\tau_F(x)=0$ and $\tau_F(y)=1$, as noted in Remark \ref{rmk:time-func-resc}.
        Clearly, $\tau_F\vert_{\mathring{F}}$ passes to the quotient $BF$ and becomes a time function on $BF$ which we denote in the same way.

        For every $t\in [0,1]$, by continuity, we can always choose $s$ and hence $\zeta(s)$ on the curve $\zeta$ such that $\tau_F(\zeta(s))=t$.

        Note that although $\zeta(s)$ may be non-unique, its equivalence class in  $BF$ always is, because $\tau_F$ is a time function on $BF$. More explicitly, if $\zeta(s)$ and $\zeta(s')$ satisfy $\tau_F(\zeta(s))=\tau_F(\zeta(s'))=t$, for $s\ne s'$, then without loss of generality we can assume $s<s'$ which by isocausality implies $\zeta(s)<\zeta(s')$ and hence, since $J\cap (F\times F)\subset J_F$, $[\zeta(s)]_F\le [\zeta(s')]_F$, thus since $\tau_F$ is a time function on $BF$ and $\tau_F([\zeta(s)]_F)=\tau_F([\zeta(s')]_F)=t$ it must be $[\zeta(s)]_F=[\zeta(s')]_F$.

        So, there is a well defined map $t\mapsto [\zeta(s(t))]_{F}$ such that (here $s(t)$ might be regarded as a multivalued map), by the same argument of the previous paragraph, satisfies $t<t' \Rightarrow [\zeta(s(t))]_F<[\zeta(s(t'))]_F$. Moreover, $[\zeta(s(0))]_{F}=[x]_F$ and $ [\zeta(s(1))]_{F}=[y]_F$.

The continuity of the map $t\mapsto [\zeta(s(t))]_{F}$  follows from Remark \ref{prvt}, thus, it is an isocausal curve.

        \item Conversely, assume that $(X,d)$ has the property described in the second statement.

        By fixing a countable generating set and its enumeration we can always assume that we have a sequenced Lorentzian metric space $(X,d,\{p^n\}_{n\in\mathbb{N}})$. For each $n\in \mathbb{N}$ let $F_n\in\mathscr{F}$ be such that $I(p^1,\ldots,p^m)\subset F_n$.

         Let $x,y\in X$,  $x\ll y$. There is some $m\in \mathbb{N}$ such that $x,y\in I(p^1,\ldots,p^m)$.  The second statement implies that for every $n\geq m$ the points $[x]_{F^n}$ and $[y]_{F^n}$ are connected by an isocausal curve in $BF^n$, which we denote with $\sigma^n$. We intend to prove that this implies the existence of an isocausal curve connecting $x$ with $y$.

        The rest of the proof follows the approach used several times in \cite{minguzzi22}. To start with, let us choose a convenient parametrisation. Namely, we apply Lemma \ref{lem:time-func-seq} to build time functions $\tau^n$ on $BF^n$ for every $n\in \mathbb{N}$ in such a way that for any $p\in X$
        \[
    \lim_{n\rightarrow\infty} \tau^n([p]_{F^n})=\tau(p),
        \]
       where $\tau$ is a time function on $X$. By Remark \ref{rmk:time-func-resc} we can assume that $\tau(x)=0$ and $\tau(y)=1$. Assume that each curve $\sigma^n$ is parametrized by the time function $\tau^n$, i.e.\ $\tau^n(\sigma^n(t))=t$ for $t\in (a_n,b_n)$, where  $a_n=\tau^n(x)$, $b_n=\tau^n(y)$, see \cite[Prop.\ 5.8]{minguzzi22}.

       By Remark \ref{rmk:time-func-resc}
        \begin{equation}
        |\tau(\sigma^n(t))-t|\leq \alpha 2^{-n}
        \label{eqn:inhlem:tausigma}
        \end{equation}

        for $t\in (a_n,b_n)$, where $\alpha\in (0,+\infty)$ is the scaling constant. Similarly, $|a_n|<\alpha 2^{-n}$ and $|1-b_n|< \alpha 2^{-n}$.
        By enlarging $m$ if necessary, we can always achieve $\alpha 2^{-m}<1$.

        For any $n\geq m$ and $t\in (\alpha 2^{-n},1-\alpha 2^{-n})\cap \mathbb{Q}$ take arbitrary representative $\zeta_n(t)\in \sigma^n(t)$. For $t\in [0,\alpha 2^{-n}]\cap \mathbb{Q}$ set $\zeta_n(t)=x$, and  for  $t\in [1-\alpha 2^{-n},1]\cap \mathbb{Q}$ set $\zeta_n(t)=y$.

        Note that for any $n\geq m$ and any $t\in [0,1]\cap \mathbb{Q}$ we have
        \[\zeta_n(t)\in I(p^1,\ldots,p^m)\subset F^m.\]
        So, for fixed $t$ we can find a convergent subsequence of $\zeta_n(t)$, and by the diagonal argument make it  convergent simultaneously for all $t\in [0,1]\cap \mathbb{Q}$. Let $\zeta(t)$ stand for the limit of the chosen subsequence of $\zeta_n(t)$. By passing to the limit in (\ref{eqn:inhlem:tausigma}) (taking into account that for every $n\in\mathbb{N}$, $\tau(\zeta_n(0))=\tau(x)=0$, $\tau(\zeta_n(1))=\tau(y)=1$,
        we get
        \begin{equation}\label{eqn:inhlem:tauzeta}
        \tau(\zeta(t))=t, \ \forall t\in [0,1]\cap \mathbb{Q}.
        \end{equation}

        Let us verify that $t<t'$, implies $\zeta(t)\leq \zeta(t')$. Assume the contrary. Then for some $t,t'\in [0,1]$ and $z\in X$ we have $t<t'$, but
        $d(z,\zeta(t))>d(z,\zeta(t'))$, or $d(\zeta(t),z)<d(\zeta(t),z))$. Take $N$ such that $z\in I(p^1,\ldots,p^N)$. Then $z\in F_n$ for all $n\geq N$. By construction $[\zeta_n(t)]_{F_n}\leq [\zeta_n(t')]_{F_n}$, so for every $n\geq N$ we have
        \[
        d(z,\zeta_n(t))\leq d(z,\zeta_n(t')),\quad         d(\zeta_n(t),z)\geq d(\zeta_n(t'),z).
        \]
        By passing to subsequences and taking the limit we get a contradiction with the assumption made.


        Together with (\ref{eqn:inhlem:tauzeta}) this makes $\zeta$ admissible for Lemma \ref{lem:curves-from-dense} with $Q=[a,b]\cap \mathbb{Q}$. Thus $\zeta$ extends to a continuous isocausal curve $[0,1]\to X$ such that $\zeta(0)=x$, $\zeta(1)=y$.

       \item It is easy to see that in both arguments if we start from a maximal curve, we get a maximal curve again. So, the version in the brackets follows easily.
    \end{enumerate}
\end{proof}

\subsection{The length functional}
In this subsection we collect results on the length functional and the related equivalent definition of  length space. For this material, the general case does not differ significantly  from the bounded one, so most of the statements and proofs are very close to those in \cite{minguzzi22}. Yet, these results are important as they allow one to compare our Lorentzian length spaces with alternative definitions existing in the literature.
\begin{definition}
Let $(X,d)$ be a Lorentzian metric space, and let $\sigma: [0,1]\to X$ be an  isocausal curve. Its  \emph{Lorentzian length} is
\[
L(\sigma):=\textrm{inf} \sum_{i=0}^{k-1} d(\sigma(t_i),\sigma(t_{i+1}))
\]
where the infimum is over the set of all partitions $\{t_0,t_1,t_2, \cdots, t_k\}$, $k\in \mathbb{N}$, $t_i\in [0,1]$, $t_i<t_{i+1}$, $t_0=0$, $t_k=1$.
\end{definition}
By the reverse triangle inequality, $L(\sigma)\le d(\sigma(0),\sigma(1))$.
\begin{proposition}
Let $\sigma: [0,1]\to X$ be an isocausal curve with  endpoints $x$ and $y$.
We have $L(\sigma)=d(x,y)$ iff $\sigma$ is maximal.
\end{proposition}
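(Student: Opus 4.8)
The plan is to reduce the equivalence to a statement about individual partition sums. For a partition $P=\{t_0<\dots<t_k\}$ with $t_0=0$ and $t_k=1$, write $S(P)=\sum_{i=0}^{k-1} d(\sigma(t_i),\sigma(t_{i+1}))$. Since $\sigma$ is isocausal, consecutive points satisfy $(\sigma(t_i),\sigma(t_{i+1}))\in J$, so the extended reverse triangle inequality of Theorem \ref{thm:Jproperties} applies and, together with transitivity of $J$, telescopes to give $S(P)\le d(x,y)$ for every $P$; this is exactly the already-noted bound $L(\sigma)\le d(x,y)$. The elementary observation driving the whole argument is that the infimum of a nonempty family of reals all bounded above by $d(x,y)$ equals $d(x,y)$ precisely when every member equals $d(x,y)$. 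Consequently $L(\sigma)=d(x,y)$ if and only if $S(P)=d(x,y)$ for every partition $P$, and this is the form I would actually prove.

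For the direction assuming maximality, I would show by induction on the number of subintervals that $S(P)=d(\sigma(t_0),\sigma(t_k))=d(x,y)$. The base case is trivial; the inductive step merges the last two subintervals using the maximality identity $d(\sigma(t_{k-2}),\sigma(t_{k-1}))+d(\sigma(t_{k-1}),\sigma(t_k))=d(\sigma(t_{k-2}),\sigma(t_k))$. Hence every $S(P)$ equals $d(x,y)$, and by the reduction above $L(\sigma)=d(x,y)$.

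For the converse, assume $L(\sigma)=d(x,y)$, so by the reduction every partition sum equals $d(x,y)$. Fix an arbitrary triple $0\le t<t'<t''\le 1$ and compare the partition $P_1$ obtained by listing $0,t,t',t'',1$ in increasing order (suppressing any repetition among the endpoints $0,1$) with $P_2=P_1\setminus\{t'\}$. Both are admissible, and $S(P_1)=S(P_2)=d(x,y)$; all their summands coincide except that $P_1$ contributes $d(\sigma(t),\sigma(t'))+d(\sigma(t'),\sigma(t''))$ where $P_2$ contributes $d(\sigma(t),\sigma(t''))$. Subtracting gives the maximality identity for this triple, and since the triple was arbitrary, $\sigma$ is maximal; the boundary cases $t=0$ or $t''=1$ only require not duplicating the shared endpoints.

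The main obstacle is conceptual rather than computational: one must recognize the reduction step, namely that equality of the infimum with its upper bound forces \emph{every} partition sum to saturate that bound, and that the inequality in play is the \emph{causal} reverse triangle inequality of Theorem \ref{thm:Jproperties} rather than property (i) alone, since an isocausal curve need not have chronologically related consecutive points. Once this is in place, both implications are routine telescoping and bookkeeping.
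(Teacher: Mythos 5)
Your proof is correct and follows essentially the same route as the paper, which simply invokes the argument of Prop.~6.2 of the bounded case: every partition sum is bounded above by $d(x,y)$ via the causal reverse triangle inequality of Theorem~\ref{thm:Jproperties}, so the infimum saturates the bound iff every partition sum does, and the two implications then follow by merging/removing interior partition points. Your explicit observation that one must use the $J$-version of the inequality (since consecutive points of an isocausal curve need not be chronologically related) is exactly the right point of care.
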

\begin{proof}
    The proof of \cite[Prop. 6.2]{minguzzi22} applies without any change.
\end{proof}

\begin{theorem}[Upper semi-continuity of the length functional] $\empty$ \\
Let $(X,d)$ be a Lorentzian metric space. Let $\sigma_n: [0,1]\to X$ and $\sigma: [0,1]\to X$ be  isocausal curves and suppose that $\sigma_n\to \sigma$ pointwise. Then
\[
\limsup L(\sigma_n)\le L(\sigma).
\]
\end{theorem}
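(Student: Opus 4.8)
The plan is to exploit the fact that, unlike the positive-signature case where length is a supremum, here the Lorentzian length is defined as an \emph{infimum} over partitions; this is precisely what makes upper semicontinuity the natural statement. For a partition $P=\{0=t_0<t_1<\cdots<t_k=1\}$ and an isocausal curve $\gamma:[0,1]\to X$, write
\[
\Sigma_P(\gamma):=\sum_{i=0}^{k-1}d(\gamma(t_i),\gamma(t_{i+1})),
\]
so that by definition $L(\gamma)=\inf_P\Sigma_P(\gamma)$, the infimum being taken over all partitions of $[0,1]$.

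The argument would proceed in three short steps. First, I fix an arbitrary partition $P$. Since $L(\sigma_n)$ is an infimum over all partitions, it is in particular bounded above by the value attached to $P$, giving $L(\sigma_n)\le\Sigma_P(\sigma_n)$ for every $n$. Second, because $\sigma_n\to\sigma$ pointwise and all curves share the domain $[0,1]$ (so that $\hat\sigma_n$ agrees with $\sigma_n$ on $[0,1]$), we have $\sigma_n(t_i)\to\sigma(t_i)$ at each of the finitely many nodes $t_i$ of $P$; continuity of $d$ in the product topology then yields $d(\sigma_n(t_i),\sigma_n(t_{i+1}))\to d(\sigma(t_i),\sigma(t_{i+1}))$ for each $i$, and summing the finitely many terms gives $\Sigma_P(\sigma_n)\to\Sigma_P(\sigma)$. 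Combining the two, $\limsup_n L(\sigma_n)\le\limsup_n\Sigma_P(\sigma_n)=\Sigma_P(\sigma)$.

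Finally, since $P$ was arbitrary, I would take the infimum over all partitions on the right-hand side to conclude $\limsup_n L(\sigma_n)\le\inf_P\Sigma_P(\sigma)=L(\sigma)$, as desired. I do not expect any genuine obstacle: the only conceptual point is the direction of the semicontinuity, which is forced by $L$ being an infimum rather than a supremum, and the only hypotheses actually used are pointwise convergence at finitely many fixed parameters and continuity of $d$. In particular neither isocausality of the limit curve nor any compactness of the ambient diamonds enters, so the statement holds verbatim without a chronological-boundary or countable-generation assumption, in agreement with the corresponding bounded-case argument of \cite{minguzzi22}.
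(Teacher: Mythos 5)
Your argument is correct and is exactly the one the paper invokes: the paper's proof simply states that the bounded-case proof of \cite{minguzzi22} applies without change, and that proof is precisely the fix-a-partition/continuity-of-$d$/infimum argument you give. No gaps; the observation that only pointwise convergence at finitely many nodes and continuity of $d$ are needed is also accurate.
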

\begin{proof}
    The proof of \cite[Thm. 6.3]{minguzzi22} applies without any change.
\end{proof}

Let $(X,d)$ be a Lorentzian prelength space. Then for $x\ll y$ we can define
\[
\check{d}(x,y)=\sup_{\sigma(0)=x,\sigma(1)=y} L(\sigma),
\]
where the supremum goes over isocausal curves. If $x\ll y$ fails, we define $\check{d}(x,y)=0$. Then we have the following.
\begin{theorem}\label{thm:alt-lenspace}
Let $(X,d)$ be a countably generated Lorentzian prelength space. The following conditions are equivalent
\begin{itemize}
\item[(i)] $ \check d=d$,
\item[(ii)] $(X,d)$ is a length space.
\end{itemize}
\end{theorem}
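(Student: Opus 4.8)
The plan is to prove the two implications separately, leaning on the characterization of maximal curves (the preceding proposition: an isocausal curve $\sigma$ from $x$ to $y$ satisfies $L(\sigma)=d(x,y)$ iff it is maximal), on the upper semi-continuity of $L$, and on the limit curve theorem (Thm.\ \ref{thm:lim-curv}). For the direction (ii)$\Rightarrow$(i), suppose $(X,d)$ is a length space. If $x\ll y$, pick a maximal curve $\sigma$ joining $x$ to $y$; then $L(\sigma)=d(x,y)$. Since the reverse triangle inequality gives $L(\eta)\le d(x,y)$ for \emph{every} isocausal $\eta$ from $x$ to $y$, the supremum defining $\check d(x,y)$ is attained and equals $d(x,y)$. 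If instead $x\ll y$ fails, then $d(x,y)=0$ by the definition of $\ll$, and $\check d(x,y)=0$ by convention. Hence $\check d=d$ in all cases.

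For the harder direction (i)$\Rightarrow$(ii), fix $x\ll y$; note $x\neq y$, since the reverse triangle inequality forces $d(x,x)=0$, so $\ll$ is irreflexive. From $\check d(x,y)=d(x,y)$ I would select isocausal curves $\sigma_n$ from $x$ to $y$ with $L(\sigma_n)\to d(x,y)$. Because $X$ is countably generated, a time function $\tau$ exists (Lemma \ref{lem:time-func}); reparametrize each $\sigma_n$ to be $\tau$-uniform via \cite[Prop.\ 5.8]{minguzzi22}. This reparametrization preserves both the endpoints and the length $L(\sigma_n)$ (the latter is defined through partitions and is invariant under monotone reparametrization), and it gives all the $\sigma_n$ the common domain $[a,b]=[\tau(x),\tau(y)]$, with $\sigma_n(a)\equiv x$ and $\sigma_n(b)\equiv y$.

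Now I would invoke the limit curve theorem. A countably generated space has no chronological boundary (Remark \ref{rmk:genset=no-bnd}), so Thm.\ \ref{thm:lim-curv} applies: with the constant (hence convergent) endpoints $x\neq y$ and the common $\tau$-parametrization, there is a $\tau$-uniform isocausal curve $\sigma:[a,b]\to X$ and a subsequence $\sigma_{n_k}\to\sigma$ uniformly. Uniform convergence yields $\sigma(a)=x$, $\sigma(b)=y$, so $\sigma$ joins $x$ to $y$. After an affine reparametrization onto $[0,1]$ (which preserves pointwise convergence and length), the upper semi-continuity theorem gives $L(\sigma)\ge\limsup_k L(\sigma_{n_k})=d(x,y)$; together with $L(\sigma)\le d(x,y)$ this forces $L(\sigma)=d(x,y)$, so $\sigma$ is maximal. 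As $x\ll y$ was arbitrary, $(X,d)$ is a length space.

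The main obstacle I anticipate is purely the parametrization bookkeeping: one must verify carefully that passing to a $\tau$-uniform parametrization changes neither the endpoints nor $L$, that this makes the endpoint data of Thm.\ \ref{thm:lim-curv} literally constant (so the convergence hypothesis is trivial), and that the domain $[a,b]$ can be reconciled with the $[0,1]$ convention in the upper semi-continuity statement through a harmless affine rescaling. Once these routine checks are in place, the proof is simply the combination of the limit curve theorem with upper semi-continuity, the latter supplying exactly the inequality $L(\sigma)\ge d(x,y)$ needed to upgrade a length-maximizing \emph{sequence} into a genuine maximizer.
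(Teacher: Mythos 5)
Your proposal is correct and follows essentially the same route as the paper: the paper's proof simply invokes the argument of the bounded case (\cite[Thm.\ 6.3]{minguzzi22}), which is exactly your combination of a length-maximizing sequence of $\tau$-uniformly parametrized isocausal curves, the (unbounded) limit curve theorem (Thm.\ \ref{thm:lim-curv}), and upper semi-continuity of $L$, with the easy direction handled via the characterization of maximal curves. The parametrization bookkeeping you flag is indeed routine and causes no difficulty.
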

\begin{proof}
 The proof of \cite[Thm. 6.4]{minguzzi22} applies with the Limit Curve Theorem \cite[Thm.\ 5.12]{minguzzi22} replaced by its unbounded version Theorem \ref{thm:lim-curv}.
\end{proof}

For further use, it is convenient to introduce the reduced causal relation $\check{J}$.
\begin{definition}
Let $(X,d)$ be a Lorentzian prelength space. The \emph{restricted causal relation} $\check J\subset J$ is defined as the set of pairs of points connected by an isocausal curve
or coincident.
\end{definition}

Note that with this definition the prelength space condition reads $I\subset \check J$.

\begin{proposition} \label{pby}
Let $(X,d)$ be a countably generated Lorentzian prelength space. Then for every $(x,y)\in \bar I$, $x \ne y$,  there exists a  isocausal curve
$\sigma: [a,b]\to X$ connecting $x$ to $y$.
\end{proposition}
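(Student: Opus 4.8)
The plan is to realise $(x,y)\in\bar I$ as a limit of chronologically related pairs and then push an associated sequence of connecting isocausal curves through the Limit Curve Theorem (Thm.~\ref{thm:lim-curv}). First I would record the two structural facts that make this machinery available. Since $(X,d)$ is countably generated it has no chronological boundary (Remark~\ref{rmk:genset=no-bnd}), so Thm.~\ref{thm:lim-curv} applies, and it admits a bounded time function $\tau$ by Lemma~\ref{lem:time-func}. Moreover, because $J$ is closed and contains $I$ (Thm.~\ref{thm:Jproperties}), we have $\bar I\subset J$; hence $x\le y$, and since $x\ne y$ in fact $x<y$, which by the defining property of a time function gives $\tau(x)<\tau(y)$. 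In particular the target domain $[\tau(x),\tau(y)]$ is nondegenerate.

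Next I would build the approximating data. By definition of the closure there is a sequence $(x_n,y_n)\to(x,y)$ in $X\times X$ with $x_n\ll y_n$ for every $n$. The prelength hypothesis $I\subset\check J$ provides, for each $n$, an isocausal curve joining $x_n$ to $y_n$; reparametrising it to be $\tau$-uniform (as recalled after Definition~\ref{def:lenspace}, via \cite[Prop.~5.8]{minguzzi22}) yields $\sigma_n\colon[\tau(x_n),\tau(y_n)]\to X$ with $\tau(\sigma_n(t))=t$, $\sigma_n(\tau(x_n))=x_n$ and $\sigma_n(\tau(y_n))=y_n$. Setting $a_n:=\tau(x_n)$ and $b_n:=\tau(y_n)$, continuity of $\tau$ gives $a_n\to\tau(x)$ and $b_n\to\tau(y)$, while $\sigma_n(a_n)=x_n\to x$ and $\sigma_n(b_n)=y_n\to y$ with $x\ne y$.

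These are precisely the hypotheses of Thm.~\ref{thm:lim-curv}, so I would invoke it to obtain a $\tau$-uniform isocausal curve $\sigma\colon[\tau(x),\tau(y)]\to X$ together with a subsequence $\sigma_{n_k}$ converging uniformly to $\sigma$. It remains to confirm that $\sigma$ actually connects $x$ to $y$. Fixing a metric $\gamma$ inducing the topology (available since $X$ is Polish, Prop.~\ref{prop:cg-Polish}) and writing $a:=\tau(x)$, the extended curves satisfy $\hat\sigma_{n_k}\to\hat\sigma$ uniformly, whence
\[
\gamma\big(\hat\sigma_{n_k}(a_{n_k}),\hat\sigma(a)\big)\le \gamma\big(\hat\sigma_{n_k}(a_{n_k}),\hat\sigma(a_{n_k})\big)+\gamma\big(\hat\sigma(a_{n_k}),\hat\sigma(a)\big)\to 0,
\]
the first term by uniform convergence and the second by continuity of $\hat\sigma$ together with $a_{n_k}\to a$. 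Since $\hat\sigma_{n_k}(a_{n_k})=x_{n_k}\to x$, this forces $\sigma(a)=\hat\sigma(a)=x$; the symmetric argument at $b:=\tau(y)$ gives $\sigma(b)=y$. Thus $\sigma$ is the desired isocausal curve from $x$ to $y$.

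The main obstacle I anticipate is exactly this last endpoint-matching step: Thm.~\ref{thm:lim-curv} produces a limit curve on $[\tau(x),\tau(y)]$ but does not, as stated, assert that its endpoints are $x$ and $y$, so one must combine the uniform convergence of the $\tau$-extended curves $\hat\sigma_{n_k}$ with the convergence of the moving endpoints $a_{n_k}\to a$. Everything else is a routine assembly of already established results: the existence of the time function and of the approximating isocausal curves, and the verification that the Limit Curve Theorem's hypotheses ($\tau$-parametrisation, convergence of endpoints, $x\ne y$, absence of chronological boundary) are met.
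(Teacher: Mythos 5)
Your proposal is correct and follows essentially the same route as the paper, which simply invokes the bounded-case argument of \cite[Prop.\ 5.14]{minguzzi22} with the Limit Curve Theorem replaced by its unbounded version (Thm.\ \ref{thm:lim-curv}): approximate $(x,y)\in \bar I$ by chronologically related pairs, connect them by $\tau$-uniform isocausal curves via the prelength property, and extract a uniformly convergent subsequence. Your explicit endpoint-matching step is a correct (and welcome) unpacking of what the paper's notion of uniform convergence of the extended curves $\hat\sigma_n$ already encodes.
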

\begin{proof}
 The proof of \cite[Prop. 5.14]{minguzzi22} applies with Limit Curve Theorem \cite[Thm.\ 5.12]{minguzzi22} replaced by its unbounded version Theorem \ref{thm:lim-curv}.

\end{proof}

\begin{corollary}
Let $(X,d)$ be a countably generated Lorentzian prelength space. The restricted causal relation $\check J$  is reflexive, transitive, antisymmetric and closed.  Moreover, $I\subset \check{J}$ and $I\circ \check{J}\cup \check{J}\circ I \subset I$.
\end{corollary}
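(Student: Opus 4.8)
The plan is to dispatch all the algebraic properties first, since they are immediate, and to isolate closedness as the one place where the analytic machinery developed earlier is needed. Reflexivity holds by definition, as coincident pairs belong to $\check J$. The inclusion $I\subset\check J$ is precisely the prelength hypothesis (every chronologically related pair is joined by an isocausal curve), as already noted after the definition of $\check J$. Antisymmetry is inherited: since $\check J\subset J$ and $J$ is antisymmetric by Theorem \ref{thm:Jproperties} (property (iii) holds), so is $\check J$. Finally $I\circ\check J\cup\check J\circ I\subset I$ follows from $\check J\subset J$ together with the relation $I\circ J\cup J\circ I\subset I$ of Theorem \ref{thm:Jproperties}.

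For transitivity I would argue by concatenation. Given $(x,y),(y,z)\in\check J$, the cases where one pair is a coincidence reduce to the other, so assume isocausal curves $\sigma$ from $x$ to $y$ and $\sigma'$ from $y$ to $z$; after reparametrizing their domains to abut at the common value $y$, their concatenation is a continuous curve from $x$ to $z$. The only point to check is isocausality across the junction: for parameters $s<t$ lying on opposite pieces one has $\sigma(s)\le y\le\sigma'(t)$, hence $\sigma(s)\le\sigma'(t)$ by transitivity of $J$, and equality is excluded by antisymmetry (it would force $\sigma(s)=y=\sigma'(t)$, i.e.\ both parameters sit at the junction). Thus $\sigma(s)<\sigma'(t)$ and the concatenated curve is isocausal, giving $(x,z)\in\check J$.

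The substantive step is \textbf{closedness}, and here the plan is to invoke the Limit Curve Theorem (Thm.\ \ref{thm:lim-curv}); note that a countably generated space has no chronological boundary, so that theorem applies. Take $(x_n,y_n)\in\check J$ with $(x_n,y_n)\to(x,y)$ in the product topology. If $x=y$ the limit is a coincident pair, hence in $\check J$. Otherwise $x\ne y$; since the topology is Hausdorff (Lemma \ref{lem:Hsdf}), separating $x$ from $y$ by disjoint open sets shows $x_n\ne y_n$ for all large $n$, so each such pair is joined by a genuine isocausal curve $\sigma_n$. Fixing a time function $\tau$ (Lemma \ref{lem:time-func}) and reparametrizing each $\sigma_n$ to be $\tau$-uniform (the remark after Definition \ref{def:lenspace}), the endpoints satisfy $\sigma_n(a_n)=x_n\to x$ and $\sigma_n(b_n)=y_n\to y$ with $x\ne y$. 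The Limit Curve Theorem then produces a $\tau$-uniform isocausal curve $\sigma$ from $x$ to $y$, whence $(x,y)\in\check J$. I expect this closedness argument to be the main obstacle, in the sense that it is the only part that genuinely uses compactness/global hyperbolicity through Theorem \ref{thm:lim-curv}; the remaining properties are formal. One subtlety worth flagging is the passage to $x_n\ne y_n$, which relies on Hausdorffness and so on property (iii), and the fact that reparametrization preserves the isocausal character while matching the fixed time function required by the Limit Curve Theorem.
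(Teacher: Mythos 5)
Your proposal is correct and follows exactly the route the paper intends (the corollary is stated without an explicit proof, but the ingredients are precisely those you use: the prelength condition for $I\subset\check J$, Theorem \ref{thm:Jproperties} for antisymmetry and the push-up property, concatenation for transitivity, and the Limit Curve Theorem \ref{thm:lim-curv} for closedness, with the case split on $x=y$ and the Hausdorff separation to guarantee $x_n\ne y_n$ handled correctly). No gaps.
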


\begin{corollary}\label{crl:l-as-sup}
    Let $(X,d)$ be a countably generated Lorentzian length space and $x,y\in X$. Define
    \[
    l(x,y)=\sup_{\sigma(0)=x,\sigma(1)=y} L(\sigma),
    \]
    where we set $\sup \varnothing=-\infty$. Then $l(x,y)=d(x,y)$ if $(x,y)\in \check{J}$, and $l(x,y)=-\infty$ otherwise.
\end{corollary}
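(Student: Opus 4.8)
The plan is to split the argument according to whether $(x,y)\in\check J$ and, inside $\check J$, according to the sign of $d(x,y)$, each time comparing the supremum defining $l$ with the auxiliary quantity $\check d$ whose value is pinned down by Theorem~\ref{thm:alt-lenspace}. First I would dispose of the complement of $\check J$: if $(x,y)\notin\check J$ then, by the definition of the restricted causal relation, $x\ne y$ and no isocausal curve joins $x$ to $y$, so the family indexing the supremum defining $l(x,y)$ is empty and the stated convention gives $l(x,y)=\sup\varnothing=-\infty$, exactly as claimed.

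Next, for $(x,y)\in\check J$ I would first record the two-sided bound $0\le L(\sigma)\le d(x,y)$ valid for every isocausal curve $\sigma$ joining $x$ to $y$; here the lower bound holds because each summand in the partition sums defining $L$ is nonnegative, and the upper bound is the reverse triangle inequality estimate noted just after the definition of $L$. Taking the supremum over such curves (a supremum over a nonempty set when $x\ne y$) yields $l(x,y)\le d(x,y)$. When $x\ll y$, i.e.\ $d(x,y)>0$, the supremum defining $l(x,y)$ is verbatim the supremum defining $\check d(x,y)$, so $l(x,y)=\check d(x,y)$; since $(X,d)$ is a length space, Theorem~\ref{thm:alt-lenspace} gives $\check d=d$ and hence $l(x,y)=d(x,y)$.

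It remains to treat the pairs of $\check J$ with $d(x,y)=0$, where $\check d$ is assigned the value $0$ by fiat and so cannot be invoked directly. If $x\ne y$ (a null-related pair, $x<y$ but $x\not\ll y$) there is an isocausal curve by hypothesis, and the bound above forces every such curve to satisfy $L(\sigma)=0$, whence $l(x,y)=0=d(x,y)$. If $x=y$, then property (i) gives $d(x,x)=0$ (taking $y=x$ in the reverse triangle inequality would otherwise yield $d(x,x)\ge 2\,d(x,x)$), and, $\check J$ being reflexive, the only competitor is the constant (degenerate) curve at $x$, which has length $0$; thus $l(x,x)=0=d(x,x)$. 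The step I expect to require the most care is precisely this $d(x,y)=0$ stratum: one must notice that the vanishing of $d$ squeezes all admissible lengths to $0$ rather than leaving the supremum undetermined, and on the diagonal one must reconcile the reflexivity of $\check J$ with the supremum formula by admitting the length-zero constant curve as the unique competitor. Everything outside this stratum is a direct translation through Theorem~\ref{thm:alt-lenspace}.
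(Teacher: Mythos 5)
Your proof is correct and follows essentially the same route as the paper's: the case $x\ll y$ is delegated to Theorem~\ref{thm:alt-lenspace}, a null-related pair in $\check J$ is handled by squeezing $0\le L(\sigma)\le d(x,y)=0$, and the empty supremum gives $-\infty$ off $\check J$. Your extra care on the diagonal stratum (where no strictly isocausal curve from $x$ to itself exists, so one must admit the constant curve by convention to make the claimed value $0$ rather than $-\infty$) addresses an edge case that the paper's own proof passes over silently.
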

\begin{proof}
    If $x\ll y$, then the statement follows from Theorem \ref{thm:alt-lenspace}. If $x\ll y$ fails, then any isocausal curve connecting $x$ with $y$ necessarily has zero length. So, in the case $(x,y)\in \check{J}$, when such a curve exists, $l(x,y)=0$, and $l(x,y)=-\infty$ otherwise.
\end{proof}

\subsection{Approaches using the time-separation function }\label{ssec:braun}

In the application of rough spacetime geometry to optimal transport it has proved convenient to adopt a modified two-point function $l$ of codomain $\{-\infty\}\cup [0,+\infty)$ which we might call {\em time-separation}, in order to distinguish it from $d$, and  for consistency with recent literature. It contains more information than the Lorentzian distance $d$, as we shall clarify in a moment.

In a recent work  by Braun and McCann \cite{braun23b} the authors reprhased some of our results on bounded Lorentzian metric spaces by means of the function $l$. The translation between the two settings might be not immediately obvious to readers not acquainted to both formalisms, so it is investigated in some detail here.  Compared to Kunzinger-S\"amann's approach \cite{kunzinger18}, to which previous work by these authors relied, our approach has some important advantages as the absence of any auxiliary metric.
 In the KS-approach an additional metric was used to parametrize curves, prove the limit curve theorem, define causality properties such as non-total imprisonment and global hyperbolicity, in analogy to the smooth setting, and  thus its presence can be regarded as a distinct feature of KS's approach. Instead, our approach  relies just on function $d$, in fact stability under Gromov-Hausdorff convergence was used to identify the best definition for a (pre)length space based on function $d$.

Braun and McCann \cite{braun23b} studied a non-compact framework with assumptions conceived  so as to obtain, in compact regions, bounded Lorentzian metric spaces, and hence so as to import our results on topology (e.g.\ Polish property), limit curve theorems, absence of additional metric, etc.\ to their non-compact setting (see \cite[Appendix B]{braun23b}). They were not concerned with Gromov-Hausdorff convergence, so they included in their definitions properties that can be shown not to be preserved under GH-limits. Those properties were meant to facilitate  additional results. For instance, their assumption that through every point passes an isochronal curve implies that  $J=\bar I$ by \cite[Remark 5.2]{minguzzi22}.


Let us compare the classes of   spacetimes  considered in \cite{braun23b}  with that given by Definition \ref{cg-lms}. Since they are based on the notion of bounded Lorentzian metric space \cite{minguzzi22} over compact subsets they should not be too different. This expectation will be confirmed, cf.\ Cor.\ \ref{kxuy}.

 We shall use repeatedly the definition of positive part of a number belonging to $\{-\infty\}\cup [0,+\infty)$. This is given by $(x)_+:=x$ if $x>0$ and $(x)_+:=0$ otherwise. It is also denoted $x_+$ for brevity. This function is non-decreasing, that is, for $x,y \in  \{-\infty\}\cup [0,+\infty)$, we have
$x\le y \Rightarrow x_+\le y_+$.

In Definition \ref{cg-lms} we have a set $X$ and a continuous function $d: X\times X \to [0,+\infty)$. In \cite{braun23b} the so-called time-separation function $l$ appears. Unlike the Lorentzian distance $d$, the time-separation function $l$ is allowed to take also the infinite value $- \infty$. Its topological properties are weaker than those of $d$: the function $l$ itself is only upper semi-continuous, while the lower semi-continuity is required for its positive part $l_+$ only.
\begin{remark}\label{rmk:lfinite}
    The most general setting of \cite{braun23b} allows also $l_{+}(x,y)=+\infty$ for some $x,y \in X$.  However, other assumptions made in \cite{braun23b}, namely causality and the length space property, exclude this possibility \cite[Cor. 2.17]{braun23b}. For this reason, and in order to simplify the exposition, we assume that $l$ is valued in $\{-\infty\}\cup [0,+\infty)$ from the very beginning.
\end{remark}

\begin{lemma}\label{lem:braun-d}
    Let $X$ be a topological space, $l: X\times X \rightarrow \{-\infty\}\cup [0,+\infty)$ an arbitrary function.
    Then the following conditions are equivalent:
    \begin{enumerate}
        \item $l$ is upper continuous, and $l_{+}$ is lower semi-continuous;
        \item $l_+$ is continuous and the set
        \[
        K_{l}:=\{(x,y)\in X\times X| l(x,y)\geq 0\}
        \]
        is closed.
    \end{enumerate}
\end{lemma}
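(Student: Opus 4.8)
The plan is to reduce everything to the standard level-set characterisation of semicontinuity for extended-real-valued functions: $l$ is upper semicontinuous exactly when every superlevel set $\{l \ge c\}$ (with $c \in \mathbb{R}$) is closed, while $l_+$ is lower semicontinuous exactly when every superlevel set $\{l_+ > c\}$ is open. Two elementary observations will make the bookkeeping work. First, because $l$ takes only the value $-\infty$ or a nonnegative real, one has $l_+ = \max(l,0)$ with the convention $\max(-\infty,0)=0$. Second, $K_l = \{l \ge 0\}$ is precisely the set on which $l$ is finite. I expect no serious obstacle here; the only delicate point is keeping track of the value $-\infty$ and of how the threshold $c$ sits relative to $0$.

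For the implication $(1) \Rightarrow (2)$, I would first note that $K_l = \{l \ge 0\}$ is a superlevel set of the upper semicontinuous function $l$, hence closed. It then remains to show $l_+$ is continuous. Lower semicontinuity is assumed, so only upper semicontinuity is at issue, and for this I compute the superlevel sets of $l_+$: for $c > 0$ one has $\{l_+ \ge c\} = \{l \ge c\}$ (since $l_+ > 0$ forces $l_+ = l$), which is closed because $l$ is upper semicontinuous; for $c \le 0$ the set $\{l_+ \ge c\}$ is all of $X \times X$, trivially closed. Thus $l_+$ is upper semicontinuous, and combined with the hypothesis it is continuous.

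For the converse $(2) \Rightarrow (1)$, lower semicontinuity of $l_+$ is immediate from its continuity, so the content is the upper semicontinuity of $l$, which I would establish by showing $\{l \ge c\}$ is closed for every real $c$, splitting on the sign of $c$. For $c > 0$ the identity $\{l \ge c\} = \{l_+ \ge c\}$ again holds, and the right-hand side is closed because $l_+$ is continuous. For $c \le 0$, since the finite values of $l$ are all $\ge 0 \ge c$ while the only other value $-\infty$ is $< c$, one has $\{l \ge c\} = \{l \ne -\infty\} = K_l$, which is closed by hypothesis. Exhausting all real thresholds yields upper semicontinuity of $l$.

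The whole argument is a routine manipulation of superlevel sets rather than anything genuinely hard; the single step that requires care is the case $c \le 0$ in the second implication, where one must use that $l$ omits every negative finite value so that the superlevel set collapses exactly onto $K_l$. Were $l$ allowed to take finite negative values this collapse would fail and the equivalence would break, so this is the place where the restricted codomain $\{-\infty\}\cup[0,+\infty)$ is essential.
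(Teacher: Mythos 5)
Your proof is correct and is essentially the paper's own argument: the paper works with the open sublevel sets $\{l<c\}$ and $\{l_+<c\}$ and the same case split on the sign of $c$, which are just the complements of your closed superlevel sets. The key observations — that $\{l\ge c\}$ collapses to $K_l$ for $c\le 0$ and coincides with $\{l_+\ge c\}$ for $c>0$ because $l$ omits finite negative values — are identical in both proofs.
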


\begin{proof}
  Let us assume that $l$ is upper semi-continuous. The sets
  \[
  \tilde{R}_c=\{(x,y)\in X\times X| l(x,y)< c\}
  \]
  are open for all $c\in \mathbb{R}$. It follows that $K_l=X\times X \setminus \tilde{R}_0$ is closed. Moreover, it implies that for any $c$ the sets
  \[
  R_c=\{(x,y)\in X\times X|l_{+}(x,y)< c\}
  \]
  are open. Indeed, if $c\leq 0$, then $R_c=\varnothing$, otherwise $R_c=\tilde{R}_c$. We conclude that the upper semi-continuity of $l$ together with the lower semicontinuity of $l_{+}$ implies continuity of $l_{+}$ and closedness of $K_l$.

  Now let us assume that $l_{+}$ is continuous  and $K_l$ is closed. We have to show only that $l$ is upper semi-continuous, i.e. that the sets $\tilde{R}_c$ defined above are open for all values of $c$. If $c\leq 0$, then $\tilde{R}_c=X\times X \setminus K_l$, and thus is open. If $c>0$, then $\tilde{R}_c=R_c$, which is open by continuity of $l_{+}$.
\end{proof}

\begin{proposition}
Let $X$ be a topological space endowed with a continuous function  $d: X\times X\to [0,+\infty)$ and a closed relation $K$ such that $d(x,y)>0$ implies $(x,y)\in K$ (i.e.\ $I\subset K$). Then there is a unique function $l: X \times X\to \{-\infty\} \cup [0,+\infty)$ such that $l_+=d$ and $K_l=K$. It is given by
 \begin{align*}
 l(x,y)&=d(x,y),& &\forall (x,y)\in K, \\
 l(x,y)&=-\infty,& &\forall (x,y)\in X\times X \setminus K.
\end{align*}
\end{proposition}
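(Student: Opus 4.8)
The statement is essentially a bookkeeping lemma: the candidate $l$ is written down explicitly, so the plan is to verify directly that the displayed formula satisfies $l_+ = d$ and $K_l = K$ (existence), and then to argue that these two constraints pin $l$ down uniquely. The guiding observation is that passing to the positive part $l_+$ destroys exactly one piece of information, namely whether $l$ equals $0$ or $-\infty$ at a pair where $l_+=0$; the relation $K$ records precisely that distinction. Hence the pair $(d,K)$ should carry the same content as $l$, and the formula simply reassembles it.

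For existence I would first note that the formula produces a function valued in $\{-\infty\}\cup[0,+\infty)$, since on $K$ it equals $d\geq 0$ and off $K$ it equals $-\infty$. To check $l_+=d$ I split on $K$: if $(x,y)\in K$ then $l(x,y)=d(x,y)\geq 0$, so $l_+(x,y)=d(x,y)$ whether $d(x,y)>0$ or $d(x,y)=0$; if $(x,y)\notin K$ then $l(x,y)=-\infty$ gives $l_+(x,y)=0$, and here the hypothesis $I\subset K$ enters, since $(x,y)\notin K$ forces $(x,y)\notin I$, i.e.\ $d(x,y)=0$, so again $l_+(x,y)=d(x,y)$. The identity $K_l=K$ is immediate from the sign of $l$: one has $l\geq 0$ exactly on $K$ and $l=-\infty<0$ elsewhere.

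For uniqueness, suppose $l'$ also satisfies $l'_+=d$ and $K_{l'}=K$. On $K=K_{l'}$ we have $l'(x,y)\geq 0$, and reading off $l'_+=d$ forces $l'(x,y)=d(x,y)$: if $l'(x,y)>0$ this is definitional, while if $l'(x,y)=0$ then $d(x,y)=l'_+(x,y)=0$ as well. Off $K$ we have $l'(x,y)<0$, and since the only admissible negative value in the codomain $\{-\infty\}\cup[0,+\infty)$ is $-\infty$, necessarily $l'(x,y)=-\infty$. Thus $l'=l$.

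The only point requiring care — there is no genuine obstacle — is the boundary value: one must keep track of the case $l(x,y)=0$, where $l_+$ cannot by itself distinguish $l=0$ from $l=-\infty$, and invoking $I\subset K$ (equivalently $K_l=K$ in the uniqueness argument) is exactly what closes that gap. Finally, although the topological hypotheses play no role in the bare existence and uniqueness, they explain the admissibility of the reconstructed $l$: since $d=l_+$ is continuous and $K=K_l$ is closed, Lemma \ref{lem:braun-d} guarantees that $l$ is upper semi-continuous and $l_+$ lower semi-continuous, so $l$ is a time-separation function of the kind considered in \cite{braun23b}.
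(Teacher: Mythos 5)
Your proof is correct and matches the paper's approach: the paper simply states that the result is ``immediate from the definitions,'' and your argument is exactly that direct verification, spelled out (existence by case-splitting on $K$ using $I\subset K$, uniqueness from the fact that $-\infty$ is the only negative value in the codomain). Nothing to correct.
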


\begin{proof}
It is immediate from the definitions.
\end{proof}


    So, we conclude that there is a one-to-one correspondence between functions $l$ satisfying the properties listed in the first statement of Lemma \ref{lem:braun-d} and pairs $(d,K)$, where $d$ is continuous function on $X\times X$ and $K$ is a closed relation such that $I\subset K$.

    In the rest of this subsection we will use these objects interchangeably.


We are ready to show that the reverse triangle inequality assumption imposed in \cite{braun23b} is equivalent to the reverse triangle inequality of Definition \ref{cg-lms} plus some natural properties for the relation $K$.
\begin{lemma}\label{lem-BMC-triangle}
    The following conditions are equivalent:
    \begin{enumerate}
        \item $l$ satisfies the extended reverse triangle inequality, i.e. $l(x,z)\geq  l(x,y)+l(y,z)$ for any $x,y,z\in X$;
        \item $d$ satisfies the (restricted) reverse triangle inequality, the relation $K$ is transitive, and $K\subset J$;
    \end{enumerate}
\end{lemma}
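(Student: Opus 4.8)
The plan is to work throughout with the dictionary just established: the function $l$ is completely encoded by the pair $(d,K)$ via $l=d$ on $K$ and $l=-\infty$ off $K$, where $d=l_+$ and $K=K_l=\{l\geq 0\}$. I will use the arithmetic convention $a+(-\infty)=-\infty$ for every $a\in\{-\infty\}\cup[0,+\infty)$, so that a single summand $-\infty$ forces the right-hand side of the extended inequality to be $-\infty$. The only case in which that right-hand side is finite is $(x,y),(y,z)\in K$, and there $l(x,y)=d(x,y)$, $l(y,z)=d(y,z)$; so the extended inequality reduces to producing $(x,z)\in K$ together with $d(x,z)\geq d(x,y)+d(y,z)$. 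This is the backbone of the $(2)\Rightarrow(1)$ direction and clarifies the bookkeeping of $-\infty$ throughout.

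For $(1)\Rightarrow(2)$ I would first recover the restricted inequality for $d$. If $d(x,y),d(y,z)>0$ then $(x,y),(y,z)\in I\subset K$, so the relevant values of $l$ coincide with those of $d$; the extended inequality gives $l(x,z)\geq d(x,y)+d(y,z)>0$, whence $(x,z)\in K$, $l(x,z)=d(x,z)$, and the restricted inequality follows. Transitivity of $K$ is the same computation with $\geq 0$ replacing $>0$: if $l(x,y),l(y,z)\geq 0$ then $l(x,z)\geq l(x,y)+l(y,z)\geq 0$. The inclusion $K\subset J$ is the step needing care: given $(x,y)\in K$ and an arbitrary $p$, I apply the extended inequality to the triples $(p,x,y)$ and $(x,y,p)$. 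For $(p,x,y)$, either $l(p,x)=-\infty$, so $d(p,x)=0\leq d(p,y)$, or $l(p,x)=d(p,x)\geq 0$, in which case $l(p,y)\geq d(p,x)+d(x,y)\geq d(p,x)\geq 0$ yields $d(p,y)\geq d(p,x)$; symmetrically, $(x,y,p)$ gives $d(x,p)\geq d(y,p)$. Since $p$ is arbitrary, $(x,y)\in J$.

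For $(2)\Rightarrow(1)$, by the reduction above I only treat $(x,y),(y,z)\in K$. Transitivity of $K$ gives $(x,z)\in K$, hence $l(x,z)=d(x,z)$, and it remains to show $d(x,z)\geq d(x,y)+d(y,z)$. When both $d(x,y)$ and $d(y,z)$ are strictly positive, this is exactly the restricted reverse triangle inequality. The point where the restricted inequality is powerless, and where $K\subset J$ does the work, is when one of the distances vanishes: if $d(x,y)=0$, then $(x,y)\in K\subset J$ gives $x\leq y$, and evaluating the defining inequality of $J$ at $p=z$ yields $d(x,z)\geq d(y,z)=d(x,y)+d(y,z)$; symmetrically, if $d(y,z)=0$ then $(y,z)\in K\subset J$ gives $y\leq z$, and evaluating at $p=x$ yields $d(x,z)\geq d(x,y)=d(x,y)+d(y,z)$. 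This closes all cases.

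The main obstacle is precisely this boundary phenomenon. The restricted reverse triangle inequality only constrains chronologically related triples, in which all distances are positive, so it cannot by itself control $d(x,z)$ when a pair is causal but not chronological, that is, lies in $K$ with zero distance. The inclusion $K\subset J$ is exactly the extra hypothesis that bridges this gap, and dually, in $(1)\Rightarrow(2)$, extracting $K\subset J$ from the extended inequality through the $-\infty$ bookkeeping on the two triples is the step carrying the real content.
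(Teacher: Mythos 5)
Your proposal is correct and follows essentially the same route as the paper's proof: both directions reduce the extended inequality to the case $(x,y),(y,z)\in K_l$, extract transitivity and $K\subset J$ from the $-\infty$ bookkeeping in one direction, and in the converse use the restricted inequality when both distances are positive and the defining inequalities of $J$ when one vanishes. Your treatment is merely slightly more explicit about the $l=-\infty$ cases than the paper's.
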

\begin{proof}
     Let us assume that $l$ satisfies the extended reverse triangle inequality. Then $d$ satisfies the usual restricted one as $d$ and $l$ coincide when they are positive. Moreover, if $(x,y)\in K$ and $(y,z)\in K$, then $l(x,z)\geq l(x,y)+l(y,z)\geq 0$, so $(x,z)\in K$, i.e.\ $K$ is transitive. Finally, to establish $K\subset J$ we have to show that for every $(x,y)\in K$ and $z\in X$ we have $d(x,z)\ge d(y,z)$ and $d(z,y)\ge d(z,x)$. In fact, as $l(x,y)\ge 0$
    \[
   l(x,z)\geq l(x,y)+l(y,z)\geq l(y,z).
    \]
    and
    \[
    l(z,y)\geq l(z,x)+l(x,y)\geq l(z,x).
    \]
    This implies that $d(x,z)\geq d(y,z)$ and $d(z,y)\geq d(z,x)$ as desired.

    Conversely, let us assume that all the conditions of the second statement are satisfied and let us show that
    \[
    l(x,z)\ge l(x,y)+l(y,z), \,\forall x,y,z\in X.
    \]
    If either $l(x,y)$ or $l(y,z)$ is equal to $-\infty$, then the inequality is trivial. So, we assume that $(x,y)\in K$ and $(y,z)\in K$. By transitivity of $K$,  $(x,z)\in K$, so we need only to prove that
    \[
    d(x,z)\ge d(x,y)+ d(y,z)
    \]
    whenever $(x,y),(y,z)\in K$. If $d(x,y)>0$ and $d(y,z)>0$, the desired inequality follows from the (restricted) reverse triangular inequality, so the only non-trivial case is for $d(x,y)=0$ or $d(y,z)=0$. But as $(x,y),(y,z)\in K \subset J$, we have the inequalities
    \[
    d(x,z) \ge d(y,z),\,\quad d(x,z) \ge d(x,y),
    \]
    which are precisely the extended triangle inequalities in the cases $d(x,y)=0$ and $d(y,z)=0$, respectively.
\end{proof}

\begin{lemma}\label{lem:braun-caus}
    The following conditions are equivalent:
    \begin{enumerate}
        \item The relation $K_l$ is antisymmetric;
        \item The function $l$ satisfies: if for some $x,y\in X$
        \[\min(l(x,y),l(y,x))>-\infty,\]
        then $x=y$.
    \end{enumerate}
     Further, the following conditions are equivalent:
    \begin{enumerate}
        \item The relation $K_l$ is reflexive;
        \item The function $l$ satisfies:  for every $x$,  $l(x,x)\ge 0$.
    \end{enumerate}

Joining the two equivalences: $K_l$ is antisymmetric and reflexive iff     the \emph{causality condition} of \cite[Def.\ 2.1]{braun23b} holds.
\end{lemma}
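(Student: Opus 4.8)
The plan is to reduce all three equivalences to a single elementary observation about the codomain of $l$. Since $l$ takes values in $\{-\infty\}\cup[0,+\infty)$, there are no values in the open interval $(-\infty,0)$; consequently, for any pair $(x,y)$ the three statements $l(x,y)>-\infty$, $l(x,y)\geq 0$, and $(x,y)\in K_l$ are mutually equivalent. This dictionary is essentially the only ingredient needed, and once it is in place each equivalence becomes a matter of rewriting the definitions rather than genuine argument.

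For the first equivalence, I would spell out antisymmetry of $K_l$ as: $(x,y)\in K_l$ and $(y,x)\in K_l$ imply $x=y$. Translating both memberships through the dictionary, this reads $l(x,y)\geq 0$ and $l(y,x)\geq 0$ imply $x=y$, i.e.\ $\min(l(x,y),l(y,x))\geq 0$ implies $x=y$. Because $\geq 0$ and $>-\infty$ coincide on the codomain of $l$, the hypothesis $\min(l(x,y),l(y,x))\geq 0$ is the same as $\min(l(x,y),l(y,x))>-\infty$, which is exactly condition (2). Both implications are immediate in both directions, so no work beyond this rewriting is needed.

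The second equivalence is even more direct: reflexivity of $K_l$ asserts $(x,x)\in K_l$ for every $x$, and by the dictionary this is precisely $l(x,x)\geq 0$ for every $x$, which is condition (2). Here there is nothing to prove beyond unpacking the definition of $K_l$ on the diagonal.

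Finally, for the joint statement I would recall the causality condition of \cite[Def.\ 2.1]{braun23b} and observe that, once written out, it is exactly the conjunction of the two point-conditions appearing as item (2) in the two equivalences above (the reflexivity-type requirement $l(x,x)\geq 0$ together with the antisymmetry-type requirement on $\min(l(x,y),l(y,x))$). The claim then follows at once by conjoining the two equivalences just established. The only point demanding care in the whole lemma is therefore bookkeeping: keeping track of the empty gap $(-\infty,0)$ in the codomain of $l$, and checking that the verbatim wording of the cited definition matches the conjunction of the two conditions. I expect no substantive analytic obstacle.
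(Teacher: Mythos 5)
Your proposal is correct and matches the paper's approach: the paper dismisses this lemma as a ``straightforward consequence of the definition of $K_l$,'' and your dictionary ($l(x,y)\ge 0 \Leftrightarrow l(x,y)>-\infty \Leftrightarrow (x,y)\in K_l$, valid because the codomain of $l$ omits $(-\infty,0)$) is exactly the unpacking that one-line proof has in mind. Nothing is missing.
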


\begin{proof}
    Straightforward consequence of definition of $K_l$.
\end{proof}
\begin{remark}
    Note that if the equivalent conditions of Lemma \ref{lem-BMC-triangle} are satisfied and $d$ distinguishes points, then $K$ is automatically anti-symmetric (because $J$ is, and $K\subset J$).
\end{remark}

From Lemmas \ref{lem:braun-d}-\ref{lem:braun-caus} we conclude that a topological space endowed with a signed time separation function $l$ in the sense of \cite[Def. 2.1]{braun23b} is exactly the same as a space endowed with a function $d$ satisfying all the properties of Definition \ref{cg-lms} except for the compactness assumption of property (ii) and the distinguishing property (iii), and endowed with an additional intermediate closed causal relation $K$.  We recall from Subsection \ref{ssec:glob-hype}, that the compactness condition of property (ii) is essentially equivalent to global hyperbolicity. Without it the class of the spaces is too general to establish any strong common properties. In fact, in the major part of \cite{braun23b} stronger conditions have been imposed. In particular, the so-called  `First standing assumption' \cite[Hypothesis 2.9]{braun23b} includes, among other conditions, compactness of the sets $K_l(p,q)$ and absence of the chronological boundary, which means that all such spaces can be considered within the setting of this paper. In fact we have the following result.

\begin{corollary} \label{kxuy}
    There is a one-to-one correspondence between the following objects:
    \begin{enumerate}
        \item Lorentzian length spaces $(X,d)$ such that any point lies on an isochronal curve;
        \item Length metric spacetimes in the sense of \cite[Def. 2.7]{braun23b} satisfying the `first standing assumption' \cite[Hypothesis 2.9]{braun23b}.
    \end{enumerate}
\end{corollary}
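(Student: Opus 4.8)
The plan is to assemble the corollary from the correspondence between $l$ and pairs $(d,K)$ established just above, and then to show that under the hypotheses in play the relation $K$ is not independent data but is forced to coincide with $J$, so that the pair $(d,K)$ collapses to $d$ alone.

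First I would make the dictionary explicit. By the reconstruction Proposition above together with Lemmas \ref{lem:braun-d}, \ref{lem-BMC-triangle} and \ref{lem:braun-caus}, a signed time-separation function $l$ in the sense of \cite[Def.\ 2.1]{braun23b} is the same datum as a pair $(d,K)$ with $d:X\times X\to[0,+\infty)$ continuous and satisfying the restricted reverse triangle inequality, and $K$ a closed, reflexive, transitive, antisymmetric relation with $I\subset K\subset J$. The word \emph{metric} in \cite[Def.\ 2.7]{braun23b} corresponds to the distinguishing property (iii); the length-space clause of that definition corresponds to $(X,d)$ being a Lorentzian length space; and the assumption that an isochronal curve passes through every point is common to both classes. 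Finally, the `first standing assumption' \cite[Hypothesis 2.9]{braun23b} contributes exactly the compactness of the diamonds $K_l(p,q)=K(p,q)$ and the emptiness of the chronological boundary.

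The crucial step is to eliminate $K$ from the data. By \cite[Remark 5.2]{minguzzi22} the hypothesis that an isochronal curve passes through every point gives $J=\bar I$ (and in particular $X=I(X)$, so that there is no chronological boundary). Since $K$ is closed and contains $I$, it contains the closure $\bar I=J$; as $K\subset J$ by construction, this forces $K=J=\bar I$. Thus $K$ is completely determined by $d$, and the correspondence $l\leftrightarrow(d,K)$ becomes a correspondence $l\leftrightarrow d$. It then remains to check that the two descriptions of $(X,d)$ agree. Going from (2) to (1): with $K=J$, the compactness of $K(p,q)=J(p,q)$ for all $p,q\in X$ together with (i), (iii) and the absence of chronological boundary lets me invoke Theorem \ref{thm:lms-via-kdiam} (with $\mathscr{G}=X$, a generating set by Remark \ref{rmk:genset=no-bnd}) to conclude that $(X,d)$ is a Lorentzian metric space; the length clause then makes it a Lorentzian length space, and the isochronal hypothesis is inherited. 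Going from (1) to (2): starting from a Lorentzian length space with an isochronal curve through every point, I set $K:=J$ (equivalently, reconstruct $l$ via the Proposition), note that $J$ is closed, reflexive, transitive and, using (iii), antisymmetric by Theorem \ref{thm:Jproperties}, that $J(p,q)$ is compact by Theorem \ref{thm:lms-via-emeralds} applied to $C=\{p,q\}$, and that $J=\bar I$ again holds; translating back through the lemmas yields a length metric spacetime obeying the first standing assumption. Since in class (2) we have shown $K=J$, these two assignments are mutually inverse.

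I expect the main obstacle to be precisely the identification $K=J$: one must verify that the isochronal-curve hypothesis is strong enough to yield $J=\bar I$ and hence pin down $K$, and that the distinguishing property (iii), concealed in the word ``metric'', is genuinely available, so that $J$ is antisymmetric and Theorem \ref{thm:lms-via-kdiam} applies. Matching the remaining clauses of \cite[Def.\ 2.7]{braun23b} and \cite[Hypothesis 2.9]{braun23b} to properties (i)--(iii) and to global hyperbolicity is then bookkeeping already handled by Lemmas \ref{lem:braun-d}--\ref{lem:braun-caus}.
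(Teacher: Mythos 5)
Your overall strategy is the same as the paper's: use Lemmas \ref{lem:braun-d}--\ref{lem:braun-caus} as a dictionary $l\leftrightarrow(d,K)$, use the isochronal-curve hypothesis to force $\bar I=J$ and hence $K=J$, and invoke Theorem \ref{thm:lms-via-kdiam} (with the compactness of $K$-diamonds from the first standing assumption) to recover property (ii). All of that is correct and matches the paper's proof.

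The genuine gap is in your last sentence, where you declare the matching of the two length-space clauses to be ``bookkeeping already handled by Lemmas \ref{lem:braun-d}--\ref{lem:braun-caus}.'' Those lemmas only translate the triangle inequality and the causality conditions; they say nothing about curves. The length clause of \cite[Def.\ 2.7]{braun23b} states that $l(x,y)$ is the supremum of lengths of \emph{causal} curves from $x$ to $y$ for \emph{all} pairs, so for $(x,y)\in J\setminus I$ with $x\ne y$ it demands $l(x,y)=0$, i.e.\ the existence of at least one connecting isocausal curve (otherwise the supremum is $-\infty$). In the direction $(1)\Rightarrow(2)$ this is exactly the statement $J=\check J$, which the paper obtains from \cite[Remark 5.2, Cor.\ 5.16]{minguzzi22} (cf.\ Proposition \ref{pby} and Corollary \ref{crl:l-as-sup}); it does not follow from your dictionary. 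In the direction $(2)\Rightarrow(1)$ you must first extract the prelength property (for $x\ll y$, $d(x,y)>0$ is a supremum over connecting curves, so at least one exists) and then upgrade ``$d$ is the supremum of lengths'' to the existence of maximizers via Theorem \ref{thm:alt-lenspace}; the supremum being attained is not part of either definition and needs the limit curve theorem hidden in that equivalence. A secondary omission: \cite[Def.\ 2.1]{braun23b} requires the underlying topology to be first countable, which in the direction $(1)\Rightarrow(2)$ the paper verifies by showing (via Theorem \ref{thm:aleksandrov} and the isochronal hypothesis) that the Lorentzian metric space topology coincides with the Alexandrov topology; your proposal does not address this.
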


Note that the fact that every point lies on an isochronal curve implies $p\in \overline{I^\pm(p)}$ which implies $\bar I= J$ by \cite[Remark 5.2]{minguzzi22}, so $\bar I=K=J$ and there is no gap between $K$ and $J$ in the context of \cite{braun23b} under their first standing assumption. By Thm.\ \ref{thm:aleksandrov} this assumption also implies that the Lorentzian metric space topology coincides with the Alexandrov topology (these results, \cite[Remark 5.2]{minguzzi22} and  Thm.\ \ref{thm:aleksandrov}, on the consequence of $p\in \overline{I^\pm(p)}$ constitute what might be called the no-gaps theorem).

\begin{proof}
Recall that in \cite{braun23b} one works with a first countable space $X$ endowed with a time-separation function $l$, which is allowed to take values in $\{-\infty\}\cup [0,+\infty]$. The latter is assumed to satisfy the extended reverse triangle inequality, be upper semi-continuous, have a lower semi-continuous positive part and satisfy causality conditions (see the comment after Lemma \ref{lem:braun-caus}). By \cite[Def.\ 2.7]{braun23b} the space is called a length metric space if  (a) every point  is a midpoint of a timelike curve, and (b) the time separation function is the supremum of the lengths of  isocausal curves connecting the given two points. Finally, The First Standing Assumption in addition to that imposes compactness of the causal diamonds defined by the causal relation $K_l$.

2 $\Rightarrow$ 1. Let us first show that any  length metric spacetime $(X,l)$ satisfying the first standing assumption is a Lorentzian metric space. By Remark \ref{rmk:lfinite}, under these assumptions $l(x,y)<+\infty$ for all $x,y\in X$, so we can apply Lemma \ref{lem:braun-d} to define Lorentzian distance function $d$ from the time-separation $l$. By Lemma \ref{lem-BMC-triangle}, $d=l_+$ satisfies the reverse triangle inequality. The distinguishing property (iii) follows from \cite[Corollary 2.21]{braun23b}. So, $(X,d)$ satisfies Definition \ref{cg-lms} except for property (ii).  Taking into account that property (a) of \cite[Def.\ 2.7]{braun23b}  implies absence of chronological boundary, and $d$ is continuous in the original topology of $X$ (Lemma \ref{lem:braun-d}), we apply Theorem \ref{thm:lms-via-kdiam} to $K=K_l$ and establish that The First Standing Assumptions implies property (ii).
We need only to show that $(X,d)$ is a length space. It is a prelength space in our sense, because for any $x\ll y$, $d(x,y)$ is positive and a supremum of the lengths of isocausal curves connecting $x$ with $y$, thus at least one such curve exists. It is a Lorentzian length space by Theorem \ref{thm:alt-lenspace}.

Before going to the converse direction, let us check if any information is lost when we pass from $(X,l)$ to $(X,d)$. Potentially, the additional ingredients of the former are the original topology of $X$ and the relation $K_l$. As a byproduct of the argument above, the original topology of $X$ is equal to its unique Lorentzian metric space topology. By \cite[Remark 5.2 ]{minguzzi22}, the existence of an isochronal curve passing through every point implies $J=\overline{I}$. Thus the only possibility is $K_l=J$.

  1$\Rightarrow$ 2. We need only to prove that if $(X,d)$ is a Lorentzian length space with any point lying on an isochronal curve, then $(X,l)$ with $l$ being such that $l_+=d$ and $K_l=J$ is a length metric spacetime satisfying The First Standing Assumption.

To start with, we have to prove that the topology of $X$ is first countable. By \cite[Remark 2.8]{braun23b} the Alexandrov topology (called there the chronological topology) is first countable in this setting. Thus, it is enough to show that the Alexandrov topology is equal to the Lorentzian metric space topology of $X$. For that let $U\subset X$ be an open set and $x\in U$. By assumptions, there is an isochronal curve $\gamma:[0,1]\to X$ such that $\gamma(1/2)=x$. Then there are $a,b\in \gamma^{-1}(U)$ such that $a<1/2<b$, thus $\gamma(a)\in I^-(x)\cap U$ and $\gamma(b)\in I^+(x)\cap U$.  By Theorem \ref{thm:aleksandrov} this implies that the Alexandrov and the Lorentzian metric space topologies of $X$ coincide.

Then by Lemmas \ref{lem:braun-d}--\ref{lem:braun-caus}, the pair $(d,J)$ produces a time-separating function $l$. By \cite[Remark 5.2, Cor.\ 5.16]{minguzzi22} in our setting $J=\check{J}$, so this $l$ in fact coincides with the one defined in Corollary \ref{crl:l-as-sup}. So, by that corollary, for every $x,y\in X$, $l(x,y)$ is a supremum of lengths of isocausal curves connecting $x$ with $y$, and thus $(X,l)$ is a length metric spacetime.
\end{proof}

\section{Gromov-Hausdorff convergence}\label{sec:GH}
In this section we define the Gromov-Hausdorff convergence for non-compact Lorentzian metric spaces following the philosophy of the Gromov-Hausdorff convergence of pointed metric spaces.

It is convenient to introduce the following additional notation.
For a sequenced Lorentzian metric space $(X,d,(p^{n})_{n\in \mathbb{N}})$ we define $X^m=\overline{I(p^1,\ldots,p^m)}\cup \{p^1,\ldots,p^m\}$. Alternatively, we can write
\[
X^m=\overline{I_R(p^1,\ldots,p^m)},
\]
 where  $I_R=I\cup \Delta$ is the reflexive chronological relation with $\Delta\subset X\times X$ standing for the diagonal. It will be also convenient to have a short notation for equivalence classes in $BX^m$, so we define $[x]^m=[x]_{X^m}$.

For a relation $R\subset X\times X'$ we use the notation $R_1:=\pi_1(R)$ and $R_2:=\pi_2(R)$, where $\pi_1$ and $\pi_2$ are the canonical projections $X\times X'\to X$ and $X\times X'\to X'$ respectively. In this notation $R$ is a correspondence if and only if $R_1=X$ and $R_2=X'$.
If $X$ and $X'$ are endowed with Lorentzian distance functions $d$ and $d'$ respectively, by distortion of the relation $R$ we mean
\[
\mathrm{dis}R=\sup_{(x,x'),(y,y')\in R} |d(x,y)-d'(x',y')|.
\]
Note that we assume that the distortion is defined independently of $R$ being a correspondence.

\subsection{Quasi-correspondences}
Roughly speaking, in  metric geometry one says that a sequence of pointed metric spaces $\{(p_n,X_n)\}_{n\in \mathbb{N}}$ converges to a pointed metric space $(p,X)$ if for any $R>0$ the sequence of balls $B_R(p_n)\subset X_n$ converges to the ball $B_R(p)\subset X$. There are, however, two important details. Firstly, it is necessary to require that, in the right sense, the sequence of points $p_n$ converges to $p$, otherwise the limit may fail to be unique \cite{burago01}. Secondly, the direct realization of this idea, e.g.\ the requirement of the existence, for large enough $n$, of a correspondence of arbitrarily small distortion between the balls $B_R(p_n)$ and $B(p)$ is too restrictive. It is much more practical to require that the ball $B_R(p_n)$ is mapped to some set $Y\subset X$ that only approximates $B_R(p)$. Namely, in \cite{burago01} it is required that an $\epsilon$-neighborhood (in the sense of the Hausdorff distance) of $Y$ contains $B_{R-\epsilon}(p)$.

The natural analogue of a ball in the Lorentzian case is a closed chronological diamond. However, in general, a spacetime can not be written as a union of an increasing sequence of chronological diamonds, so we should work with closed chronological sets $\overline{I(p^1,\ldots,p^m)}$ instead. Another fundamental difference from the metric case is that the causal diamond is defined by two points instead of one point and a number. For this reason, we should deal with the sequenced spaces instead of the pointed ones. The points of the chosen generating sequence, similarly to the preferred points of pointed spaces, have two roles: they define compact sub-regions and serve as benchmarks ensuring uniqueness of the limit. Moreover, we can not slightly adjust the radius to get a subset of the causal diamond, therefore we need another way to deform it. These ideas lead to the concept of quasi-correspondence to which this subsection is devoted.

\begin{definition}
    Let $(X,d,\{p^k\}_{k\in\mathbb{N}})$ and $(X',d',\{p'{}^k\}_{k\in\mathbb{N}})$ be sequenced Lorentzian metric spaces. For $m\in \mathbb{N}$ and $\epsilon>0$ we say that $R\subset X^m\times X'{}^m$ is an $(m,\epsilon)$ \emph{quasi-correspondence between  $(X,d,\{p^k\}_{k\in\mathbb{N}})$ and $(X',d',\{p'{}^k\}_{k\in\mathbb{N}})$} (or just $(m,\epsilon)$ $X-X'$-\emph{quasi-correspondence}) if the following holds:
    \begin{itemize}
        \item $R$ is compact;
        \item $I_{\epsilon}(p^1,\ldots,p^m)\subset R_1$
        \item  $I_{\epsilon}(p'{}^1,\ldots,p'{}^m)\subset R_2$;
        \item $\mathrm{dis}R<\epsilon$;
        \item $(p^r,p'{}^r)\in R$ for $r=1,\ldots,m$.
    \end{itemize}
\end{definition}

\begin{remark}\label{rmk:quasi-cor-big-eps}
    Clearly, if $R$ is an $(m,\epsilon)$ $X-X'$ quasi-correspondence, then it is also an $(m,\epsilon')$ $X-X'$ quasi-correspondence for any $\epsilon'\geq \epsilon$.
\end{remark}
\begin{remark}
    The compactness assumption is not essential and is introduced for convenience only. Indeed, if $R$ satisfies all the conditions but  compactness, then its closure $\overline{R}$ is compact and still satisfies the remaining properties, cf.\ \cite[Prop.\ 4.5]{minguzzi22}.
\end{remark}
\begin{remark}\label{rmk:quasi-cor-alt}
    One can view an $(m,\epsilon)$ quasi-correspondence as a usual correspondence between the compact spaces $R_1$ and $R_2$ such that
    \begin{itemize}
        \item $R$ is compact;
        \item $I_{\epsilon}(p^1,\ldots,p^m)\subset R_1\subset X^m$;
        \item  $I_{\epsilon}(p'{}^1,\ldots,p'{}^m)\subset R_2 \subset X'{}^m$;
       \item $\mathrm{dis}R<\epsilon$;
        \item $(p^r,p'{}^r)\in R$ for $r=1,\ldots,m$.
    \end{itemize}
    See also Fig.\ \ref{xm}
\end{remark}

\begin{figure}[ht]
\centering
\includegraphics[width=12cm]{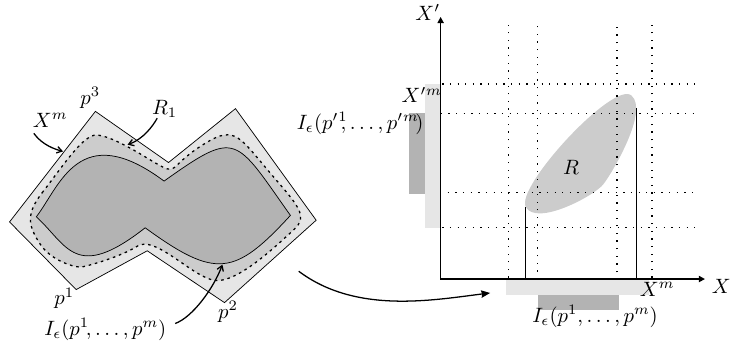}
\caption{The $(m,\epsilon)$ quasi-correspondence.}\label{xm}
\end{figure}
\begin{lemma}\label{lem:quasicor-op}
    If $(X,d,\{p^k\}_{k\in\mathbb{N}})$ and $(X',d',\{p'{}^k\}_{k\in\mathbb{N}})$ are sequenced Lorentzian metric spaces, and $R$ is an $(m,\epsilon)$ $X-X'$ quasi-correspondence, then $R^T$ is an $(m,\epsilon)$ $X'-X$-quasi-correspondence. If in addition to that $(X'',d'',\{p''{}^k\}_{k\in\mathbb{N}})$ is a sequenced Lorentzian metric space and $R'$ is an $(m,\epsilon')$ $X''-X'$ quasi-correspondence, then $R'\circ R$ is a $(m,\epsilon+\epsilon')$ $X''-X$ quasi-correspondence.
\end{lemma}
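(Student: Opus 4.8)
The plan is to check, one at a time, the five defining conditions of a quasi-correspondence: for $R^T$ those of an $(m,\epsilon)$ $X'-X$ quasi-correspondence, and for $R'\circ R$ those of an $(m,\epsilon+\epsilon')$ $X''-X$ quasi-correspondence. Most conditions are immediate, and the only genuine work lies in the two projection conditions for the composition.

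For the transpose I would first note that $R^T=\{(x',x):(x,x')\in R\}$ merely swaps the two projections, $(R^T)_1=R_2$ and $(R^T)_2=R_1$, and leaves the distortion unchanged because $|d'(x',y')-d(x,y)|=|d(x,y)-d'(x',y')|$. Compactness survives since the coordinate swap is a homeomorphism; the containments $I_{\epsilon}(p'{}^1,\ldots,p'{}^m)\subset (R^T)_1$ and $I_{\epsilon}(p^1,\ldots,p^m)\subset (R^T)_2$ are exactly the two containments for $R$ read off the swapped projections; and $(p'{}^r,p^r)\in R^T$ follows from $(p^r,p'{}^r)\in R$. This settles the first assertion with no real obstacle.

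For the composition I would fix the convention
\[
R'\circ R=\{(x'',x)\in X''{}^m\times X^m:\ \exists\, x'\in X'{}^m,\ (x'',x')\in R'\ \text{and}\ (x,x')\in R\},
\]
the only reading compatible with the stated type $X''-X$. Write $S:=R'\circ R$. I would obtain compactness of $S$ by realising it as the image, under the continuous projection $(x'',x',x)\mapsto (x'',x)$, of the fibre-product set $P=\{(x'',x',x):(x'',x')\in R',\ (x,x')\in R\}$, which is closed (a finite intersection of preimages of the closed sets $R',R$) and contained in the compact box $R'_1\times(R'_2\cap R_2)\times R_1$, hence compact. The benchmark condition $(p''{}^r,p^r)\in S$ follows by taking $x'=p'{}^r$ together with $(p''{}^r,p'{}^r)\in R'$ and $(p^r,p'{}^r)\in R$. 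For the distortion I would estimate, for arbitrary $(x''_1,x_1),(x''_2,x_2)\in S$ with witnesses $x'_1,x'_2$, by inserting the intermediate value $d'(x'_1,x'_2)$:
\[
|d''(x''_1,x''_2)-d(x_1,x_2)|\le \mathrm{dis}R'+\mathrm{dis}R;
\]
since $\mathrm{dis}R'+\mathrm{dis}R$ is a fixed number strictly below $\epsilon'+\epsilon$, taking the supremum gives $\mathrm{dis}S\le \mathrm{dis}R'+\mathrm{dis}R<\epsilon+\epsilon'$. It is essential here to bound by the sum of the two distortions rather than termwise, so as to keep the inequality strict.

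The step carrying the real content is the pair of projection conditions $I_{\epsilon+\epsilon'}(p''{}^1,\ldots,p''{}^m)\subset S_1$ and $I_{\epsilon+\epsilon'}(p^1,\ldots,p^m)\subset S_2$, which I would prove by a two-stage tracing that splits the budget $\epsilon+\epsilon'$ into the loss $\epsilon'$ incurred crossing $R'$ plus the loss $\epsilon$ incurred crossing $R$. For the first, take $x''$ with $d''(p''{}^i,x'')\ge \epsilon+\epsilon'$ and $d''(x'',p''{}^j)\ge\epsilon+\epsilon'$ for suitable $i,j$; as these exceed $\epsilon'$ we have $x''\in I_{\epsilon'}(p''{}^1,\ldots,p''{}^m)\subset R'_1$, so some $x'$ satisfies $(x'',x')\in R'$. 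Applying $\mathrm{dis}R'<\epsilon'$ to the benchmark pairs $(p''{}^i,p'{}^i),(x'',x')$ and $(x'',x'),(p''{}^j,p'{}^j)$ yields $d'(p'{}^i,x')>\epsilon$ and $d'(x',p'{}^j)>\epsilon$, whence $x'\in I_{\epsilon}(p'{}^1,\ldots,p'{}^m)\subset R_2$ (using the second projection condition of $R$), producing $x$ with $(x,x')\in R$ and thus $x''\in S_1$. The symmetric argument, spending $\epsilon$ to cross $R$ first and $\epsilon'$ to cross $R'$ afterwards, gives the containment for $S_2$. The only obstacle is bookkeeping: tracking in which space each point lives, applying each distortion bound to the correct benchmark pair, and choosing the split of the $\epsilon+\epsilon'$ budget so that the intermediate point $x'$ lands in the $I_{(\cdot)}$-set guaranteed to lie in the relevant projection.
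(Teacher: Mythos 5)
Your proposal is correct and follows essentially the same route as the paper: the only substantive step is the pair of containments $I_{\epsilon+\epsilon'}(\cdot)\subset S_1,S_2$, which you prove by exactly the paper's two-stage tracing argument (cross one correspondence spending its distortion budget, land in the $I_{(\cdot)}$-set guaranteed to lie in the other correspondence's projection), merely starting from the $X''$ side where the paper starts from the $X$ side and declares the other inclusion analogous. The paper dismisses compactness, the benchmark pairs and the distortion bound as clear or as identical to the bounded case; your explicit fibre-product argument for compactness and your remark on bounding by the sum of the actual distortions (to preserve strictness of the inequality) fill in those routine details correctly.
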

\begin{proof}
The only non-trivial fact is that for $R$, $R'$ as in the second statement,
    \[I_{\epsilon+\epsilon'}(p^1,\ldots,p^m)\subset (R'\circ R)_1,\quad I_{\epsilon+\epsilon'}(p'{}^1,\ldots,p'{}^m)\subset (R'\circ R)_2.
    \]
    We concentrate on the first inclusion, the other can be shown analogously. Consider $x\in I_{\epsilon+\epsilon'}(p^1,\ldots,p^m)$. Then there are some $i,j\in\{1,\ldots,m\}$ such that $d(p^i,x)\geq \epsilon+\epsilon'$ and $d(x,p^j)\geq \epsilon+\epsilon'$. Since $x\in I_\epsilon(p^1,\ldots,p^m)\subset R_1$ there is also some $x'\in X'$ such that $(x,x')\in R$. We have
    \[
    d(p'{}^i,x')>d(p^i,x)-\epsilon\geq \epsilon', \quad d(x',p'{}^j)>d(x,p^j)-\epsilon\geq \epsilon',
    \]
    so $x'\in I_{\epsilon'}(p'{}^1,\ldots p'{}^m)\subset {R'}_1$. Thus, there is $x''\in X'$ such that $(x',x'')\in R'$, and therefore $(x,x'')\in R'\circ R$. We conclude that $x\in (R'\circ R)_1$.

    The distortion estimates are the same as in \cite[Lemma 4.8]{minguzzi22}, the rest is clear.
\end{proof}

The two lemmas above are generalizations of the corresponding properties of the usual correspondences. Instead, the next ones are specific for the quasi-correspondences and this is essentially the reason to introduce them. We start from the following result.
\begin{lemma}\label{lem:sub-cor-quasi-cor}
    Let $X$ and $X'$ be two Lorentzian metric spaces and let $Y\subset X$ and $Y'\subset X'$ be compact subsets. Let $R\subset Y\times Y'$  be  a compact correspondence such that $\mathrm{dis}R<\epsilon$ where $\epsilon>0$. Let $m\in\mathbb{N}$ and $p^1,\ldots,p^m\in Y$, $p'{}^1,\ldots,p'{}^m\in Y'$ and let us assume that the following conditions hold:
    \begin{itemize}
        \item $I_{\epsilon}(p^1,\ldots,p^m)\subset Y$;
        \item $I_{\epsilon}(p'{}^1,\ldots,p'{}^m)\subset Y'$;
        \item $(p^i,p'{}^i)\in R$ for each $i=1,\ldots, m$.
    \end{itemize}
     The relation
    \[
    R'=R\cap \Big(\overline{I_R(p^1,\ldots,p^m)}\times \overline{I_R(p'{}^1,\ldots,p'{}^m)}\Big),
    \]
    satisfies $\mathrm{dis}R'<\mathrm{dis}R$, $I_{\epsilon}(p^1,\ldots,p^m)\subset R'_1$ and $I_{\epsilon}(p'{}^1,\ldots,p'{}^m)\subset R'_2$.
\end{lemma}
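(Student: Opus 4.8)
The plan is to verify the three asserted properties directly, as they are largely independent. The distortion bound is immediate from monotonicity: since $R'\subset R$ and $\mathrm{dis}$ is a supremum over pairs drawn from the relation, restricting to a smaller set cannot increase it, so $\mathrm{dis}R'\le \mathrm{dis}R$, and in particular $\mathrm{dis}R'<\epsilon$, which is what is used downstream. The substance of the lemma lies in the two inclusions, and by the evident left--right symmetry it suffices to establish $I_{\epsilon}(p^1,\ldots,p^m)\subset R'_1$.

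First I would fix $x\in I_{\epsilon}(p^1,\ldots,p^m)$ and record its two roles. By definition there are indices $i,j\in\{1,\ldots,m\}$ with $d(p^i,x)\ge\epsilon$ and $d(x,p^j)\ge\epsilon$. Since $\epsilon>0$ we have $I_{\epsilon}\subset I\subset I_R$, so $x\in\overline{I_R(p^1,\ldots,p^m)}$; this settles membership of $x$ in the first factor of the diamond defining $R'$. Moreover the hypothesis $I_{\epsilon}(p^1,\ldots,p^m)\subset Y$ gives $x\in Y=R_1$, so because $R$ is a correspondence there is at least one $x'\in Y'$ with $(x,x')\in R$.

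The key step is to show that any such partner $x'$ automatically lies in $\overline{I_R(p'{}^1,\ldots,p'{}^m)}$, which I would do by transporting the chronological relations across $R$ via the distortion estimate. Applying $\mathrm{dis}R<\epsilon$ to the pairs $(p^i,p'{}^i),(x,x')\in R$ yields $|d(p^i,x)-d'(p'{}^i,x')|<\epsilon$, whence $d'(p'{}^i,x')>d(p^i,x)-\epsilon\ge 0$, i.e.\ $p'{}^i\ll x'$; applying it to $(x,x'),(p^j,p'{}^j)\in R$ gives $d'(x',p'{}^j)>d(x,p^j)-\epsilon\ge 0$, i.e.\ $x'\ll p'{}^j$. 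Thus $x'\in I^+(p'{}^i)\cap I^-(p'{}^j)\subset I(p'{}^1,\ldots,p'{}^m)\subset\overline{I_R(p'{}^1,\ldots,p'{}^m)}$. Consequently $(x,x')\in R'$ and $x\in R'_1$, as required; the inclusion $I_{\epsilon}(p'{}^1,\ldots,p'{}^m)\subset R'_2$ follows by the identical argument with the roles of the two spaces exchanged.

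The only point I would watch carefully is the bookkeeping around strictness: because the distortion is \emph{strictly} below $\epsilon$ while the membership condition $d\ge\epsilon$ is non-strict, the subtraction still produces the strict positivity $d'>0$ needed to conclude $p'{}^i\ll x'$ and $x'\ll p'{}^j$ and hence to place $x'$ inside the open chronological hull. No compactness is needed for the inclusions beyond the correspondence property $R_1=Y$, $R_2=Y'$ that supplies the partner $x'$, and the distortion claim is purely formal; so I do not expect any genuine obstacle here.
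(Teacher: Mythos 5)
Your proposal is correct and follows essentially the same route as the paper's proof: the distortion bound by monotonicity of the supremum, and the two inclusions by picking a partner $x'$ of $x$ via the correspondence property and transporting the witnesses $d(p^i,x)\ge\epsilon$, $d(x,p^j)\ge\epsilon$ across $R$ using $\mathrm{dis}R<\epsilon$ to get $d'(p'{}^i,x')>0$ and $d'(x',p'{}^j)>0$. Your remark that the stated strict inequality $\mathrm{dis}R'<\mathrm{dis}R$ should really be read as $\mathrm{dis}R'\le\mathrm{dis}R<\epsilon$ matches what the paper's own proof actually establishes.
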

\begin{proof}
    By construction $R'\subset R$, so $\mathrm{dis}R'\leq \mathrm{dis}R<\epsilon$.

    Let us show that $I_{\epsilon}(p^1,\ldots,p^m)\subset R'_1$. For that take $x\in I_{\epsilon}(p^1,\ldots,p^m)$. By assumptions $x\in R_1$, so there is $x'\in Y'$ such that $(x,x')\in R$. We intend to show that $x'\in I(p'{}^1,\ldots,p'{}^m)$, which would imply $(x,x')\in R'$ and thus $x\in R'_1$.  Since $x\in  I_{\epsilon}(p^1,\ldots,p^m)$, there are $i,j\in \{1,\ldots,m\}$ such that $d(p^{i},x)\geq \epsilon$ and $d(x,p^{j})\geq \epsilon$. Taking into account that $(x,x')\in R$, $(p^{i},p'{}^i)\in R$, $(p^{j},p'{}^j)\in R$ and $\mathrm{dis}R<\epsilon$ we get
    \[
    |d(p^{i},x)-d'(p'{}^{i},x')|<\epsilon,
    \]
    \[
    |d(x,p^{j})-d'(x',p'{}^{j})|<\epsilon.
    \]
    thus
    \[
    d(p'{}^{i},x')>0,\quad d(x',p'{}^{j})>0,
    \]
    so $x'\in  I(p'{}^1,\ldots,p'{}^m)$.

    The proof of the inclusion  $I_{\epsilon}(p'{}^1,\ldots,p'{}^m)\subset R'_2$ is analogous.
\end{proof}

\begin{corollary}\label{crl:sub-quasi-corr}
    Let $(X,d,\{p^k\}_{k\in\mathbb{N}})$ and $(X',d',\{p'{}^k\}_{k\in\mathbb{N}})$ be sequenced Lorentzian metric spaces. Let $\{r_k\}_{k\in\mathbb{N}}$ be an increasing sequence of natural numbers such that
    \[
    \{p^{r_k}|k\in\mathbb{N}\} \quad \mathrm{and}\quad\{p'{}^{r_k}|k\in\mathbb{N}\}
    \]
    are still generating sets of $X$ and $X'$ respectively.  Let $R$ be a $(m,\epsilon)$ $X-X'$ quasi-correspondence and let $l\in\mathbb{N}$ be such that $r_{l}\leq m$. The relation
       \[
    R'=R\cap \Big(\overline{I_R(p^{r_1},\ldots,p^{r_{l}})}\times \overline{I_R(p'{}^{r_1},\ldots,p'{}^{r_{l}})}\Big)
    \]
    is a $(l,\epsilon)$ quasi-correspondence between $(X,d,\{p^{r_k}\}_{k\in \mathbb{N}})$ and $(X',d',\{p'{}^{r_k}\}_{k\in\mathbb{N}})$.

\end{corollary}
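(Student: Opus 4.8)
The plan is to obtain $R'$ as a direct instance of Lemma \ref{lem:sub-cor-quasi-cor}, applied to the sub-family of marked points indexed by $r_1<\cdots<r_l$, after reinterpreting the quasi-correspondence $R$ as an ordinary correspondence between its compact legs. Concretely, I would set $Y:=R_1$ and $Y':=R_2$; by Remark \ref{rmk:quasi-cor-alt} these are compact subsets of $X$ and $X'$, and $R$ is a compact correspondence between them with $\mathrm{dis}R<\epsilon$. First I note that the assertion is even meaningful only because $\{p^{r_k}\}$ and $\{p'{}^{r_k}\}$ are generating sets, so that $(X,d,\{p^{r_k}\})$ and $(X',d',\{p'{}^{r_k}\})$ are genuine sequenced Lorentzian metric spaces; this hypothesis plays no further role in the verification.

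Next I would check that the three hypotheses of Lemma \ref{lem:sub-cor-quasi-cor} hold for the points $p^{r_1},\ldots,p^{r_l}$ and $p'{}^{r_1},\ldots,p'{}^{r_l}$ with the same $\epsilon$. Since $r$ is increasing and $r_l\le m$, the index set $\{r_1,\ldots,r_l\}$ is contained in $\{1,\ldots,m\}$, whence, by the very definition of $I_\epsilon$ of a finite family as a union over index pairs, one has the monotonicity
\[
I_{\epsilon}(p^{r_1},\ldots,p^{r_l})\subset I_{\epsilon}(p^1,\ldots,p^m)\subset R_1=Y,
\]
and likewise $I_{\epsilon}(p'{}^{r_1},\ldots,p'{}^{r_l})\subset R_2=Y'$, using the defining inclusions of the $(m,\epsilon)$ quasi-correspondence. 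Moreover $(p^{r_i},p'{}^{r_i})\in R$ for $i=1,\ldots,l$, because $(p^{r},p'{}^{r})\in R$ for all $r\le m$ and each $r_i\le m$; in particular $p^{r_i}\in R_1=Y$ and $p'{}^{r_i}\in R_2=Y'$, so the chosen points indeed lie in the prescribed compact sets.

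Applying Lemma \ref{lem:sub-cor-quasi-cor} then yields, verbatim, that
\[
R'=R\cap\Big(\overline{I_R(p^{r_1},\ldots,p^{r_{l}})}\times \overline{I_R(p'{}^{r_1},\ldots,p'{}^{r_{l}})}\Big)
\]
satisfies $\mathrm{dis}R'<\mathrm{dis}R<\epsilon$, $I_{\epsilon}(p^{r_1},\ldots,p^{r_l})\subset R'_1$ and $I_{\epsilon}(p'{}^{r_1},\ldots,p'{}^{r_l})\subset R'_2$. Since in the reindexed spaces one has $X^l=\overline{I_R(p^{r_1},\ldots,p^{r_l})}$ and $X'{}^l=\overline{I_R(p'{}^{r_1},\ldots,p'{}^{r_l})}$, this already supplies three of the five defining conditions. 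It remains to add the two the lemma does not mention. Compactness of $R'$ is immediate: it is the intersection of the compact set $R$ with a closed product, hence closed in $R$ and compact. For the marked pairs, reflexivity of $I_R=I\cup\Delta$ gives $p^{r_i}\in I_R^+(p^{r_i})\cap I_R^-(p^{r_i})\subset I_R(p^{r_1},\ldots,p^{r_l})$, so $p^{r_i}\in\overline{I_R(p^{r_1},\ldots,p^{r_l})}$, and similarly $p'{}^{r_i}\in\overline{I_R(p'{}^{r_1},\ldots,p'{}^{r_l})}$; combined with $(p^{r_i},p'{}^{r_i})\in R$ this gives $(p^{r_i},p'{}^{r_i})\in R'$ for $i=1,\ldots,l$. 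All five conditions hold, so $R'$ is an $(l,\epsilon)$ quasi-correspondence. The only genuinely delicate points are the two elementary observations — the monotonicity of $I_\epsilon$ and the reflexivity of $I_R$ under passing to the sub-family of indices; everything else is a bookkeeping translation through Remark \ref{rmk:quasi-cor-alt}.
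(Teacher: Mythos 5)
Your proposal is correct and is essentially the paper's own proof: the paper likewise invokes Remark \ref{rmk:quasi-cor-alt} and applies Lemma \ref{lem:sub-cor-quasi-cor} with $Y=R_1$, $Y'=R_2$ and the subfamily $p^{r_1},\ldots,p^{r_l}$ (resp.\ $p'{}^{r_1},\ldots,p'{}^{r_l}$) in place of $p^1,\ldots,p^m$. You simply spell out the bookkeeping the paper leaves implicit (monotonicity of $I_\epsilon$ in the index set, compactness of $R'$, and membership of the marked pairs via reflexivity of $I_R$), all of which checks out.
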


\begin{proof}
Keeping in mind Remark \ref{rmk:quasi-cor-alt}, apply Lemma \ref{lem:sub-cor-quasi-cor} with $Y=R_1$, $Y'=R_2$, and $p^{r_1},\ldots,p^{r_{l}}$ (respectively, $p'{}^{r_1},\ldots,p'{}^{r_{l}}$) playing the role of $p^1,\ldots,p^m$ (respectively, $p'{}^1,\ldots,p'{}^m$).
\end{proof}

\begin{corollary}\label{crl:bound-corr-quasi-corr}
    Let $(X,d)$  be a bounded Lorentzian metric space such that $I(X)$ is dense, and similarly for $(X',d')$. Let $R\subset X\times X'$  be  a compact correspondence such that $\mathrm{dis}R<\epsilon$ and let $\{p^k\}_{k\in\mathbb{N}}$ be a generating sequence for $I(X)$. Then for every $m\in\mathbb{N}$ it is possible to find a generating sequence
    $\{p'{}^k\}_{k\in\mathbb{N}}$ of $I(X')$ such that for every $l\leq m$ there is a $(l,\epsilon)$ quasi-correspondence between $(I(X),d,\{p^k\}_{k\in\mathbb{N}})$ and $(I(X'),d',\{p'{}^k\}_{k\in\mathbb{N}})$.
\end{corollary}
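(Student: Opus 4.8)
The plan is to transport the chosen generators of $I(X)$ into $I(X')$ along the correspondence $R$ and then cut $R$ down to the relevant diamonds using Lemma \ref{lem:sub-cor-quasi-cor}. First I would record two structural facts. Since $R$ is a \emph{compact} correspondence, $X=\pi_1(R)$ and $X'=\pi_2(R)$ are continuous images of a compact set, hence compact; in particular $d'$ is uniformly continuous with respect to any compatible metric $\gamma'$ on $X'$. Moreover, by Theorem \ref{thm:blmc-cg} both $(I(X),d)$ and $(I(X'),d')$ are countably-generated Lorentzian metric spaces without chronological boundary, so $I(X')$ possesses a countable generating set and, by (the proof of) Lemma \ref{lem:InteriorHasNoBoundary}, for generators $p^1,\dots,p^l\in I(X)$ one has $\overline{I_R(p^1,\dots,p^l)}\subset I(X)$, the closure being the same whether taken in $X$ or in $I(X)$ (and likewise in $X'$). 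Note also that every $z\in I_\epsilon(p^1,\dots,p^l)$ satisfies $p^i\ll z\ll p^j$ for some $i,j$, hence lies in $I(X)$ automatically.

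Next I would construct the first $m$ terms of the target sequence. Set $\eta=\epsilon-\mathrm{dis}R>0$. For each $k\le m$ pick $q^k\in X'$ with $(p^k,q^k)\in R$; this $q^k$ may lie on the chronological boundary, so I replace it by a nearby interior point. Using uniform continuity of $d'$ and density of $I(X')$, choose $p'{}^k\in I(X')$ so close to $q^k$ in $\gamma'$ that $|d'(p'{}^k,z')-d'(q^k,z')|<\eta/4$ and $|d'(z',p'{}^k)-d'(z',q^k)|<\eta/4$ for all $z'\in X'$. A short estimate then shows that the augmented relation
\[
R^{*}:=R\cup\{(p^k,p'{}^k)\mid 1\le k\le m\}
\]
is still a compact correspondence between $X$ and $X'$ with $\mathrm{dis}R^{*}<\epsilon$: pairs already in $R$ contribute at most $\mathrm{dis}R$, while any comparison involving a new point differs from a genuine $R$-comparison by strictly less than $\eta$. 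Finally I would extend $p'{}^1,\dots,p'{}^m$ to a full sequence $\{p'{}^k\}_{k\in\mathbb{N}}$ by enumerating after them a countable generating set of $I(X')$; since $I(X')$ has no chronological boundary, so that $I(I(X'))=I(X')$, enlarging a generating set preserves the generating property and $\{p'{}^k\}_{k\in\mathbb{N}}$ generates $I(X')$.

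With these data fixed, for each $l\le m$ I would produce the quasi-correspondence by restricting $R^{*}$:
\[
R'_l:=R^{*}\cap\Big(\overline{I_R(p^1,\dots,p^l)}\times\overline{I_R(p'{}^1,\dots,p'{}^l)}\Big).
\]
Applying Lemma \ref{lem:sub-cor-quasi-cor} to the correspondence $R^{*}$ with $Y=X$, $Y'=X'$ and the points $p^1,\dots,p^l$, $p'{}^1,\dots,p'{}^l$ (the hypotheses $I_\epsilon(p^1,\dots,p^l)\subset X$ and $I_\epsilon(p'{}^1,\dots,p'{}^l)\subset X'$ being trivial, and $(p^i,p'{}^i)\in R^{*}$ by construction) yields at once $\mathrm{dis}R'_l<\epsilon$, $I_\epsilon(p^1,\dots,p^l)\subset (R'_l)_1$ and $I_\epsilon(p'{}^1,\dots,p'{}^l)\subset (R'_l)_2$. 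Compactness of $R'_l$ is clear, and $(p^r,p'{}^r)\in R'_l$ for $r\le l$ because each generator lies in its own reflexive diamond. By the first paragraph the two closures sit inside $I(X)$ and $I(X')$, so $R'_l$ is a genuine relation between the compact slices $(I(X))^l$ and $(I(X'))^l$; thus $R'_l$ is an $(l,\epsilon)$ quasi-correspondence between $(I(X),d,\{p^k\})$ and $(I(X'),d',\{p'{}^k\})$, as required (cf.\ Remark \ref{rmk:quasi-cor-alt}).

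The main obstacle is the boundary step in the second paragraph: the $R$-image of a generator $p^k$ need not meet $I(X')$, so one cannot simply take $R$-partners as the new generators. The resolution rests on the compactness of $X'$ (forcing uniform continuity of $d'$) together with the density of $I(X')$, which let one perturb into the interior while keeping the distortion strictly below $\epsilon$; the slack $\eta=\epsilon-\mathrm{dis}R>0$ is exactly what makes this possible.
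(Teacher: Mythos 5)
Your proposal is correct and follows essentially the same route as the paper: push the first $m$ generators through $R$, perturb their images into the dense interior $I(X')$ while keeping the added distortion below the slack $\epsilon-\mathrm{dis}R$, append a generating sequence of $I(X')$, and cut the augmented correspondence down with Lemma \ref{lem:sub-cor-quasi-cor}. The only (immaterial) difference is that the paper controls the perturbation via continuity of the distinction metric $\gamma_{X'}$, whereas you use uniform continuity of $d'$ on the compact space $X'$; both yield the same estimate.
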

Recall that under the above assumptions, $I(X)$ and $I(X')$ are countably generated (Theorem \ref{thm:blmc-cg}).

\begin{proof}
Without loss of generality we can assume that $X$ and $X'$ include  the spacelike boundary, so that they are compact.
Let $\{q^k\}_{k\in\mathbb{N}}$ be any generating sequence of $I(X')$ and let $0<\delta<(\epsilon-\mathrm{dis}R)/2$.
For each $k\in\{1,\ldots,m\}$ let $p''{}^{k}\in X'$ be such that $(p^{k},p''{}^{k})\in R$ and choose $p'{}^{k}\in I(X')$  so  that $\gamma_{X'}(p'{}^k,p''{}^k)<\delta$,
where $\gamma_{X'}$ is the distinction metric \cite{minguzzi22} of $X'$. Such a point $p'{}^k$ always exists because $I(X')$ is dense and the distinction metric is continuous.
Set $p'{}^{m+k}=q^k$ for $k\in\mathbb{N}$. Clearly, this defines a generating sequence $\{p'{}^k\}_{k\in \mathbb{N}}$ of $X'$. Then for each $l\in\{1,\ldots,m\}$ apply Lemma \ref{lem:sub-cor-quasi-cor} to $Y=X$, $Y'=X'$ and $p^{1},\ldots p^l$ (respectively, $p'{}^{1},\ldots p'{}^l$) instead of $p^{1},\ldots p^m$ (respectively, $p'{}^{1},\ldots p'{}^m$) where the correspondence is $R\cup \{(p^k, p'{}^k), k=1, \cdots, m\}$.
\end{proof}

An important property of the Gromov-Hausdorff distance is its positivity on pairs of non-isometric spaces. This allows us to use it as a true measure of similarity of two metric spaces. The following result plays a similar role for the quasi-correspondences.
\begin{proposition}\label{prop:quasi-corr-implies-iso}
Let $({X},d,\{p^k\}_{k\in\mathbb{N}})$ and $({X}',d',\{p'^k\}_{k\in\mathbb{N}})$ be two sequenced Lorentzian metric spaces. Then the following statements are equivalent:
\begin{enumerate}
    \item $({X},d,\{p^k\}_{k\in\mathbb{N}})$ and $({X}',d',\{p'^k\}_{k\in\mathbb{N}})$ are isomorphic;
    \item  For every $m\in \mathbb{N}$ and $\epsilon>0$ there is a $(m,\epsilon)$ $X-X'$-quasi-correspondence $R_{m,\epsilon}$.
\end{enumerate}
\end{proposition}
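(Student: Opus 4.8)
The statement is the Lorentzian analogue of the classical fact that two compact metric spaces at zero Gromov--Hausdorff distance are isometric, and I would treat the two implications separately. The implication $1\Rightarrow 2$ is immediate: given an isomorphism $\phi$, I would take $R=\{(x,\phi(x)):x\in X^m\}$, the graph of $\phi$ over $X^m$. Since $\phi$ preserves $d$ it preserves $\ll$ and sends $I(p^1,\ldots,p^m)$ onto $I(p'{}^1,\ldots,p'{}^m)$, and being a homeomorphism (Corollary \ref{crl:isometry-isomorphism}) it sends $X^m$ onto $X'{}^m$; hence $R$ is compact, $R_1=X^m\supset I_\epsilon(p^1,\ldots,p^m)$, $R_2=X'{}^m\supset I_\epsilon(p'{}^1,\ldots,p'{}^m)$, $\mathrm{dis}R=0<\epsilon$ and $(p^r,p'{}^r)\in R$. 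So $R$ is an $(m,\epsilon)$-quasi-correspondence for every $\epsilon>0$.

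For $2\Rightarrow 1$ the plan is a diagonal/compactness argument producing the isometry out of quasi-correspondences of vanishing distortion. I would fix $\epsilon_n\downarrow 0$, set $R_n:=R_{n,\epsilon_n}$, and note that by Lemma \ref{lem:quasicor-op} each $R_n^T$ is an $(n,\epsilon_n)$ $X'$-$X$-quasi-correspondence. Both spaces are countably generated, hence Polish and metrizable (Proposition \ref{prop:cg-Polish}); I fix countable dense sets $\mathscr{S}=\{x_i\}\subset X$ and $\mathscr{S}'=\{x'_j\}\subset X'$ that distinguish points (Corollary \ref{crl:dense}) and contain the two generating sequences. The first point is that images cannot escape to infinity: for each $x$ I pick $p^i\ll x\ll p^j$, so for large $n$ one has $x\in I_{\epsilon_n}(p^1,\ldots,p^n)\subset (R_n)_1$, and using $\mathrm{dis}R_n<\epsilon_n$ with $(p^i,p'{}^i),(p^j,p'{}^j)\in R_n$ shows every $R_n$-image of $x$ eventually lies in the fixed compact diamond $\overline{I(p'{}^i,p'{}^j)}$ (Proposition \ref{prop:chdiamonds-compact}), and symmetrically for $R_n^T$. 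A diagonal extraction over $\mathscr{S}\cup\mathscr{S}'$ then yields one subsequence along which all forward images $\psi_n(x_i)$ converge to some $\phi(x_i)$ and all backward images $\chi_n(x'_j)$ converge to some $\bar\psi(x'_j)$.

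The crux, and the main obstacle, is to promote these dense-set limits to a globally defined distance-preserving \emph{bijection}: in the Lorentzian setting preservation of $d$ carries no automatic continuity, so neither uniqueness of a limiting image nor surjectivity comes for free. The device I would use is a cross relation obtained by pairing a forward image of an arbitrary $x\in X$ with a backward image of $x'_m$ inside $R_n$: since $(x,\psi_n(x))\in R_n$ and $(\chi_n(x'_m),x'_m)\in R_n$, the distortion bound gives, in the limit, $d(x,\bar\psi(x'_m))=d'(y',x'_m)$ and $d(\bar\psi(x'_m),x)=d'(x'_m,y')$ for every subsequential limit $y'$ of $\psi_n(x)$ and every $m$. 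Because $\mathscr{S}'$ is dense it distinguishes points of $X'$ (property (iii) together with continuity of $d'$), so these relations determine $y'$ uniquely; hence the whole sequence $\psi_n(x)$ converges in the compact metrizable diamond, and I obtain a well-defined map $\bar\phi:X\to X'$ extending $\phi$ and satisfying the cross relation $(\ast\ast)$: $d(x,\bar\psi(x'_m))=d'(\bar\phi(x),x'_m)$ together with its reverse.

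It then remains to verify that $\bar\phi$ is an isomorphism. Pairing two forward images and passing to the limit shows $d'(\bar\phi(x),\bar\phi(y))=d(x,y)$, so $\bar\phi$ preserves $d$; injectivity is then immediate from the distinguishing property. For surjectivity I would substitute $x=\bar\psi(y')$ into $(\ast\ast)$ and use that $\bar\psi$ preserves $d'$ (by the symmetric construction), obtaining $d'(\bar\phi(\bar\psi(y')),x'_m)=d'(y',x'_m)$ for all $m$, and likewise the reverse; distinguishing then forces $\bar\phi\circ\bar\psi=\mathrm{id}_{X'}$, so $\bar\phi$ is onto. Thus $\bar\phi$ is a distance-preserving bijection with $\bar\phi(p^k)=\phi(p^k)=p'{}^k$, and Corollary \ref{crl:isometry-isomorphism} upgrades it to a homeomorphism, i.e.\ an isomorphism of sequenced Lorentzian metric spaces. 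The only repeatedly used routine fact is that every fixed point belongs to $(R_n)_1$ (resp.\ $(R_n)_2$) for all large $n$, which follows from the generating property and $\epsilon_n\to 0$.
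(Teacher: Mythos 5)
Your proof is correct, and while the easy direction and the opening moves of $2\Rightarrow 1$ (trapping images in fixed compact diamonds via the benchmark pairs $(p^k,p'^k)$, diagonal extraction over a countable dense set, distance preservation by passing distortion bounds to the limit) match the paper's argument, you reach bijectivity by a genuinely different route. The paper extends $\phi$ to all of $X$ by picking an arbitrary convergent subsequence, builds the reverse map $\phi'$, then descends to the distance quotients $BX^m$ and $BX'{}^m$, invokes the bijectivity theorem for distance-preserving maps of \emph{bounded} Lorentzian metric spaces from the earlier work, and finally reassembles surjectivity on $X$ through the nested-equivalence-class reconstruction of Lemma \ref{lemClasses}. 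You instead introduce the cross relation $d(x,\bar\psi(x'_m))=d'(\bar\phi(x),x'_m)$ (and its reverse), obtained by pairing a forward image of $x$ with a backward image of $x'_m$ inside the same $R_n$; this does double duty, first pinning down every subsequential limit of $\psi_n(x)$ via the distinguishing property of the dense set (Corollary \ref{crl:dense}), so that $\bar\phi$ is well defined independently of all choices, and then, after the substitution $x=\bar\psi(y')$ and the symmetric fact that $\bar\psi$ preserves $d'$, forcing $\bar\phi\circ\bar\psi=\mathrm{id}_{X'}$ by the same distinguishing property. Your argument is more self-contained, as it avoids the bounded-space machinery (the quotient maps $\phi^m$, the cited bijectivity theorem, and Lemma \ref{lemClasses}) entirely, at the cost of carrying two families of images and the cross relation through the limit; the paper's version instead leans on results it has already set up for other purposes. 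Both are valid proofs of the proposition.
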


We recall that the notion of isomorphism of sequenced Lorentzian metric space was introduced in Definition \ref{def:cg-iso}.
\begin{proof}
    If $\phi: X\to X'$ is an isomorphism, then
    \[
    R_{m,\epsilon}=\{(x,\phi(x))| x\in X^m\}
    \]
    is a $(m,\epsilon)$ $X-X'$ quasi-correspondence. So, we have to prove only the converse implication.

    Let us denote with $R_{m,\epsilon}$ the quasi-correspondences of  the second statement. We shall need some preliminary constructions.


    Let $\{\epsilon_m\}_{m\in\mathbb{N}}$ be a decreasing sequence of positive numbers converging to zero.
    For every $m\in\mathbb{N}$ we set $Q_m=R_{m,\epsilon_m}$.

     Let $\mathscr{S}\subset X$ be a countable dense subset of $X$.
     Without any loss of generality we can assume that $p^k\in\mathscr{S}$ for each $k\in\mathbb{N}$ (or include the sequence $\{p^k\}_{k
     \in \mathbb{N}}$ in $\mathscr{S}$).
    For every $m\in \mathbb{N}$ and $x\in \mathscr{S}\cap I_{\epsilon_m}(p^1,\ldots,p^m)$ we choose $\phi_m(x)\in X'$ so that $(x,\phi_m(x))\in Q_m$. We also define for every $k\leq m$, $\phi_m(p^k)=p'{}^k$. These two conditions are compatible by the defining properties of the quasi-correspondences. This construction defines $\phi_m(x)$ for each $x\in\mathscr{S}$ provided that $m$ is sufficiently large.

    Let us show that for every $r\in\mathbb{N}$,  $x\in \mathscr{S}\cap I_{\epsilon_r}(p^1,\ldots,p^r)$, and $m\geq r$ we have $\phi_m(x)\in X'{}^r$. Indeed, for every such a point $x$ there are $i,j\in \{1,\ldots,r\}$ such that
    \[
    d(p^i,x)\geq \epsilon_r,\ d(x,p^j)\geq \epsilon_r,
    \]
    which implies $d(p'{}^i,\phi_m(x))>0$, $d(\phi_m(x),p'{}^j)>0$, and thus $\phi_m(x)\in X'{}^r$. Here we used the assumption $\epsilon_r\geq\epsilon_m$ for $m\geq r$.
    The set $X'{}^r$ is compact, so for each $x\in\mathscr{S}$ the sequence $\phi_m(x)$ has a convergent subsequence. By the  diagonal argument we can do it simultaneously for all $x\in \mathscr{S}$.

    The procedure above produces a map $\phi: \mathscr{S} \to X'$.
    By straightforward arguments, $\phi$ preserves the distance, and $\phi(p^k)=p'{}^k$ for all $k\in\mathbb{N}$. Let us show that $\phi$ can be extended to a distance-preserving map $X\to X'$. The argument is very close to the proof of \cite[Prop.\ 4.18]{minguzzi22}. Let $x\in X$, then there are some $i,j\in \mathbb{N}$ and $\epsilon>0$ such that
    \[
        d(p^i,x)>\epsilon, \ d(x,p^j)>\epsilon.
    \]
    Since $X$ is first countable, there is a sequence $\{x_n\}_{n\in\mathbb{N}}$ of points of $\mathscr{S}$ such that
    \[
        d(p^i,x_n)>\epsilon, \ d(x_n,p^j)>\epsilon,
    \]
    and $x_n\to x$ as $x\to \infty$. Take $m\in\mathbb{N}$ such that $i,j\leq m$ and $\epsilon_m<\epsilon$. Then $\phi(x_n)\in X^m$. As $X^m$ is compact, we can set $\phi(x)$ to be the limit of an arbitrarily chosen convergent subsequence of $\{\phi(x_n)\}_{n\in\mathbb{N}}$. By the same argument as in \cite[Prop.\ 4.18]{minguzzi22}, $\phi$ is distance-preserving. Therefore, $\phi$ is continuous, as preimage of any set of the subbsasis (\ref{don}) of $X'$ contains a set of the analogous subbasis of $X$. It follows that $\phi(X^r)\subset X'{}^r$ for every $r\in \mathbb{N}$.

    By reversing the roles of $X$ and $X'$ we can obtain another distance preserving map $\phi':X'\to X$, such that $\phi'(X'{}^m)\subset X{}^m$ and $\phi'(p'{}^m)=p^m$ for all $m\in \mathbb{N}$.

    The distance preserving maps pass to the quotient under $\sim$, thus we can define sequences of the distance-preserving maps $\phi^m=\phi\vert_{\mathring{X}^m}/\!\sim$ and $\phi'^{m}=\phi'\vert_{\mathring{X}'^m}/\!\sim$. Note that the characterizing property of $\phi^m$ is that $\phi^m([x]^m)=[\phi(x)]^m$ for any $x\in \mathring{X}^m$.

    As $BX^r$ and $BX'{}^r$ are bounded Lorentzian metric spaces, by  \cite[Thm. 3.5]{minguzzi22} it follows that for every $m$ the map $\phi^{m}$ is bijective. We are going to show that this implies the bijectivity of $\phi$. Namely, that for $x'\in X'$ there is exactly one $x\in X$ such that $\phi(x)=x'$.

    Take $l$ such that $x'\in I(p'{}^1,\ldots,p'{}^l)$. Let $i,j\in \{1,\ldots, l\}$ be such that $x'\in I(p'{}^i,p'{}^j)$.
    Then for any $m \ge l$, by bijectivity of $\phi^m$, there is a unique $x^m\in BX^m$ such that $\phi^m(x^m)=[x']^m$. We claim that $x^m$ are nested, i.e.\ for $m\ge n\ge l$  we have $x^m\subset x^n$.
    Indeed, take $y\in x^m$. Note that $\phi(y)\in I(p'{}^i,p'{}^j)$ (because $[\phi(y)]^m=[x']^m$, where $x'\in I(p'{}^i,p'{}^j)$).
    Thus, we can apply Lemma \ref{cqptx} to derive
    \[
        x'\in [\phi(y)]^m\subset [\phi(y)]^n=\phi^n([y]^n).
    \]
    But this means that $\phi^{n}([y]^n)=[x']^n$, thus $[y]^n=x^n$ (note that the injectivity of $\phi^n$, implying uniqueness of $x^n$ is essential at this point) which is the same as $y\in x^n$. As this holds for arbitrary $y\in x^m$, we conclude that $x^m\subset x^n$.

    Applying the second statement of Lemma \ref{lemClasses} to the family $\{x^m\}_{m\ge l}$ we get that there is one and only one  point $x\in X$ such that $\star$: $[x]^m=x^m$ for any $m\ge l$. By construction, property $\star$ of $x$  is equivalent to
    \[
        [\phi(x)]^m=[x']^m, \qquad \forall m\ge l .
    \]
    Applying the first statement of Lemma \ref{lemClasses},
    \[
    \{\phi(x)\}=\cap_{m\ge l} \, [\phi(x)]^m= \cap_{m\ge l}\, [x']^m= \{x'\},    \]
    which reads    $\phi(x)=x'$.
    The argument above shows that for any $x'\in X'$ there is a unique $x\in X$ such that $\phi(x)=x'$, so this concludes the proof of the bijectivity of $\phi$.
\end{proof}

\begin{remark}\label{rmk:prop:quasi-corr-implies-iso}
    From the proof of Proposition \ref{prop:quasi-corr-implies-iso} we see that a sufficient condition for the isomorphism of the Lorentzian metric spaces $(X,d,\{p^k\}_{k\in\mathbb{N}})$ and $(X',d',\{p'{}^k\})$ is the existence of an $(m,\epsilon_m)$ $X-X'$ quasi-correspondence for every $m\in\mathbb{N}$, where $\epsilon_m>0$ and $\lim_{m\to \infty}\epsilon_m>0$.
\end{remark}
\subsection{Gromov-Hausdorff convergence}
\begin{definition}\label{def:GHconv}
We say that the sequence of sequenced Lorentzian metric spaces $({X}_n,d_n,\{p_n^m\}_{m\in \mathbb{N}})$ GH-converges to the sequenced Lorentzian metric space $({X},d,\{p^m\}_{m\in \mathbb{N}})$ if, for every $m$ and $\delta>0$, there exists $n_0$, such that for every $n \ge n_0$ there exists an $(m,\delta)$ $X_n-X$ quasi-correspondence.
\end{definition}

The following alternative formulation is less insightful, but often more practical.
\begin{lemma}\label{lem:weakGH-alt}
Let $\{\delta_m\}_{m\in\mathbb{N}}$ be   a sequence of real positive numbers  such that
\[
\lim_{m\rightarrow\infty}\delta_m=0.
\]
Let $({X}_n,d_n,\{p_n^k\}_{k\in\mathbb{N}})$ be a sequence of sequenced Lorentzian metric spaces, and let $({X},d,\{p^k\}_{k\in\mathbb{N}})$ be one more sequenced Lorentzian metric space. The  following statements are equivalent:
\begin{enumerate}
    \item $({X}_n,d_n,\{p_n^k\}_{k\in\mathbb{N}})$ GH-converges to  $({X},d,\{p^k\}_{k\in\mathbb{N}})$;
    \item There is an increasing sequence of integers $\{N_m\}_{m\in\mathbb{N}}$ and  relations $R_n\subset X_n\times X$ for $n\in\mathbb{N}$,  such that for every $m\in\mathbb{N}$  and $n$ such that $N_m\leq n \leq N_{m+1}$, $R_n$ is a $(m,\delta_m)$ $X_n-X$ quasi-correspondence.
\end{enumerate}
\end{lemma}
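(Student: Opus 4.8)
The plan is to prove the two implications separately, relying only on the two monotonicity features of quasi-correspondences already established: that an $(m,\epsilon)$ quasi-correspondence remains one when $\epsilon$ is enlarged (Remark \ref{rmk:quasi-cor-big-eps}), and that it can be restricted to any lower level $l\le m$ without changing $\epsilon$ (Corollary \ref{crl:sub-quasi-corr}, applied with the trivial renumbering $r_k=k$, for which $\overline{I_R(p^1,\ldots,p^l)}=X^l$, so that the restriction is simply $R\cap(X^l_n\times X^l)$). These are the only two structural facts needed; everything else is bookkeeping in the two indices $m$ (the level) and $n$ (the sequence position).

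For the implication $2\Rightarrow 1$, I would fix $m\in\mathbb{N}$ and $\delta>0$. Since $\delta_m\to 0$, I choose $M\ge m$ so large that $\delta_{m''}<\delta$ for every $m''\ge M$, and set $n_0=N_M$. Given $n\ge n_0$, the fact that $\{N_m\}$ is increasing yields an index $m''\ge M$ with $N_{m''}\le n\le N_{m''+1}$, so by hypothesis $R_n$ is an $(m'',\delta_{m''})$ quasi-correspondence. Because $m\le m''$, Corollary \ref{crl:sub-quasi-corr} (with $r_k=k$ and $l=m$) shows that $R_n\cap(X^m_n\times X^m)$ is an $(m,\delta_{m''})$ quasi-correspondence, and since $\delta_{m''}<\delta$, Remark \ref{rmk:quasi-cor-big-eps} upgrades it to an $(m,\delta)$ quasi-correspondence. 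This is precisely the condition of Definition \ref{def:GHconv}.

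For the implication $1\Rightarrow 2$, I would apply Definition \ref{def:GHconv} at each level $m$ with tolerance $\delta=\delta_m$, obtaining $n_0(m)$ such that an $(m,\delta_m)$ quasi-correspondence exists for all $n\ge n_0(m)$. Defining $\{N_m\}$ recursively by $N_0=0$ and $N_m=\max\{N_{m-1}+1,\,n_0(m)\}$ makes $\{N_m\}$ strictly increasing with $N_m\ge n_0(m)$. For each $n$ falling in a block $N_m\le n\le N_{m+1}$ (assigning the boundary value $n=N_{m+1}$ to the higher block, i.e.\ reading the blocks as half-open intervals $[N_m,N_{m+1})$ with a terminal endpoint, so the assignment $n\mapsto m$ is unambiguous) one has $n\ge N_m\ge n_0(m)$, hence an $(m,\delta_m)$ quasi-correspondence exists, and I let $R_n$ be any such; the finitely many $R_n$ with $n<N_1$ are unconstrained and may be chosen arbitrarily. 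This supplies exactly the data of statement~2.

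The argument carries no genuine obstacle beyond keeping the two indices straight; the single delicate point is that a \emph{small} tolerance secured at a \emph{high} level $m''$ must be transported down to the fixed level $m$. This is exactly what Corollary \ref{crl:sub-quasi-corr} guarantees: without it one could not ensure that the restricted relation still covers $I_\delta(p^1,\ldots,p^m)$ in $R_1$ and $I_\delta(p'^1,\ldots,p'^m)$ in $R_2$, which are the defining inclusions of an $(m,\delta)$ quasi-correspondence.
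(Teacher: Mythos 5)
Your proposal is correct and follows essentially the same route as the paper: in the direction $1\Rightarrow 2$ you pick $N_m$ above the threshold $n_0(m)$ and let $R_n$ be a block-level quasi-correspondence, and in the direction $2\Rightarrow 1$ you transport a high-level, small-tolerance quasi-correspondence down to level $m$ via Corollary \ref{crl:sub-quasi-corr} and Remark \ref{rmk:quasi-cor-big-eps}. The only (immaterial) differences are that you apply the restriction before the tolerance enlargement where the paper does the reverse, and that you make explicit the half-open block convention at the endpoints $n=N_{m+1}$, which the paper leaves implicit in its definition of $r_n$.
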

The main advantage of this formulation is that instead of several families of correspondences (one for each $m$) we have just one family $R_n$ which is both enlarging so that its domains eventually exhaust the whole space, and becoming precise as $n$ grows.

It is also convenient to introduce, along with $N_m$, a dual sequence
\begin{equation}\label{eqn:rn-from-Nn}
r_n=\max\{m\in\mathbb{N}|n\geq N_m\}.
\end{equation}
It is easy to see that $r_n$ is (not necessarily strictly) increasing,
\begin{equation}\label{eqn:rn-and-Nn}
    N_m \leq n \leq N_{m+1} \Leftrightarrow r_n=m,\quad \forall n,m\in\mathbb{N},
\end{equation}
and
\begin{equation}\label{eqn:Nn-from-rn}
    N_m=\min\{n\in\mathbb{N} | r_n\geq m\}.
\end{equation}

\begin{proof}
Let us assume that   $({X}_n,d_n,\{p_n^k\}_{k\in\mathbb{N}})$  $GH$-converges  to $({X},d,\{p^k\}_{k\in\mathbb{N}})$. For every $m\in\mathbb{N}$ there is $N_m$ such that for every $n\ge N_m$ there is an $X_n-X$ quasi-correspondence $R^{n,m}$ of order $m$ and distortion $\delta_{m}$. By increasing $N_m$ when necessary, we can always make it into a strictly increasing sequence. Set $R_n=R^{n,r_n}$, where $r_n$ is defined by (\ref{eqn:rn-and-Nn}). Taking (\ref{eqn:rn-and-Nn}) into account, we get that for every $n$ such that $N_m\leq n\leq N_{m+1}$, $R_n$ is a $(m,\delta_m)$ $X_n-X$ quasi-correspondence. We conclude that the first statement implies the second one.

For the converse, assume that $N_m$ and $R_n$ as in the second statement are given for each $m, \,n\in\mathbb{N}$.  Fix $m\in \mathbb{N}$ and $\epsilon>0$. Then there is $m_0\in\mathbb{N}$ such that for every $m'\geq m_0$ we have $\delta_{m'}< \epsilon$. We can always assume that $m_0>m$. In order to prove the Gromov-Hausdorff convergence, we have to show that we can pick some $N$ such that for every $n\geq N$ there is an $(m,\epsilon)$ $X_n-X$ quasi-correspondence $R'_n$. We intend to show that this holds for $N=N_{m_0}$.
By the assumption, for $n\geq N_{m_0}$ there is an $(r_n,\delta_{r_m})$ $X_n-X$ quasi-correspondence $R_n$. Here $r_n$ is defined by (\ref{eqn:rn-and-Nn}). By Remark \ref{rmk:quasi-cor-big-eps}, $R_n$ is also a $(r_n,\epsilon)$ quasi-correspondence. Note that $n\geq N_m$ implies $r_n\geq m$, so, applying Corollary \ref{crl:sub-quasi-corr}, there is a $(m,\epsilon)$ $X_n-X$ quasi-correspondence $R'_n$. We conclude that the second statement implies the first one.
\end{proof}

\begin{theorem}\label{thm:uniqueGHlim}
Suppose that a sequence of sequenced Lorentzian metric spaces $({X}_n,d_n,\{p_n^k\}_{k\in\mathbb{N}})$  $GH$-converges  to both the sequenced Lorentzian metric spaces  $({X},d,\{p^k\}_{k\in\mathbb{N}})$ and $({X}',d',\{p'^k\}_{k\in\mathbb{N}})$, then ${X}$ and ${X}'$ are isomorphic.
\end{theorem}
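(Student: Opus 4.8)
The plan is to reduce the statement to the existence of quasi-correspondences and then glue the two given convergences. By Proposition \ref{prop:quasi-corr-implies-iso}, in order to prove that $(X,d,\{p^k\}_{k\in\mathbb{N}})$ and $(X',d',\{p'^k\}_{k\in\mathbb{N}})$ are isomorphic it suffices to exhibit, for every $m\in\mathbb{N}$ and every $\epsilon>0$, an $(m,\epsilon)$ $X-X'$ quasi-correspondence. So I would fix $m$ and $\epsilon$ and set $\delta=\epsilon/2$.

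First I would invoke the two GH-convergence hypotheses. Since $(X_n,d_n,\{p_n^k\}_{k\in\mathbb{N}})$ GH-converges to $(X,d,\{p^k\}_{k\in\mathbb{N}})$, Definition \ref{def:GHconv} yields $n_1$ such that for every $n\geq n_1$ there is an $(m,\delta)$ $X_n-X$ quasi-correspondence $R_n$. Likewise, convergence to $(X',d',\{p'^k\}_{k\in\mathbb{N}})$ yields $n_2$ such that for every $n\geq n_2$ there is an $(m,\delta)$ $X_n-X'$ quasi-correspondence $S_n$. The crucial point is that both families are indexed by the \emph{same} approximating sequence $\{X_n\}$, so I may select a single index $n\geq\max(n_1,n_2)$ for which both $R_n$ and $S_n$ exist.

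Next I would compose them through the common space $X_n$. By Lemma \ref{lem:quasicor-op}, the transpose $R_n^{T}$ is an $(m,\delta)$ $X-X_n$ quasi-correspondence and $S_n^{T}$ is an $(m,\delta)$ $X'-X_n$ quasi-correspondence; these now share their second factor $X_n$, which is exactly the configuration required by the composition part of that lemma. Applying it with $R:=S_n^{T}$ and $R':=R_n^{T}$ gives that $R_n^{T}\circ S_n^{T}$ is an $(m,\delta+\delta)=(m,\epsilon)$ $X-X'$ quasi-correspondence. Since $m$ and $\epsilon$ were arbitrary, Proposition \ref{prop:quasi-corr-implies-iso} delivers the desired isomorphism between $X$ and $X'$.

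The argument is short because all the substantive work is already packaged in Proposition \ref{prop:quasi-corr-implies-iso} and Lemma \ref{lem:quasicor-op}; no further compactness or topological input is needed. The only genuine care required is the bookkeeping of directions in the composition---transposing so that the shared factor $X_n$ occupies the correct slot for Lemma \ref{lem:quasicor-op}, and verifying that the distortion orders add to give precisely $\epsilon$. A secondary, routine point is that the threshold $n_0$ in Definition \ref{def:GHconv} depends on the limit space, so one must take $\max(n_1,n_2)$ to obtain a single index $n$ that serves both limits at once; I do not foresee any real obstacle beyond this.
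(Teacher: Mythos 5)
Your argument is correct and is essentially the paper's own proof: both fix the data, obtain for a common index $n$ an $(m,\delta)$ $X_n$--$X$ and an $(m,\delta)$ $X_n$--$X'$ quasi-correspondence, compose them through $X_n$ via Lemma \ref{lem:quasicor-op} to get an $(m,2\delta)$ $X$--$X'$ quasi-correspondence, and conclude by Proposition \ref{prop:quasi-corr-implies-iso}. The only (immaterial) differences are that you work directly from Definition \ref{def:GHconv} with $\max(n_1,n_2)$ rather than routing through Lemma \ref{lem:weakGH-alt}, and you verify the criterion for all $(m,\epsilon)$ rather than along a sequence $\epsilon_m\to 0$ as in Remark \ref{rmk:prop:quasi-corr-implies-iso}.
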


\begin{proof}
Let us consider a sequence of positive numbers $\epsilon_m$ such that $\lim_{m\to\infty}\epsilon_m=0$. By Proposition \ref{prop:quasi-corr-implies-iso} and Remark \ref{rmk:prop:quasi-corr-implies-iso}, in order to show that $X$ and $X'$ are isomorphic it is enough to build an $(m,\epsilon_m)$ $X-X'$ quasi-correspondence. Applying Lemma \ref{lem:weakGH-alt} to $\epsilon_m/2$ in the place of $\epsilon_m$, for every $m\in \mathbb{N}$ there is some $n$ such that there is an $(m,\epsilon_m/2)$ $X_n-X$ quasi-correspondences $R_m$ and an $(m,\epsilon_m/2)$ $X_n-X'$ quasi-correspondences $R'_m$. Then by Lemma \ref{lem:quasicor-op}, $Q_m=R'_mR_m^T$  is an $(m,\epsilon_m)$ $X-X'$ quasi-correspondence.
\end{proof}

\subsection{Dependence of GH limit on the generating sequences}
This section can be skipped on first reading.
We want to study how the GH-convergence and GH limit depend on the chosen generating sequence.
We start with the following observation.

\begin{lemma}\label{lem:gen-set-fin-rem}
   Let $\mathscr{G}$ be a generating set of a Lorentzian metric space $(X,d)$ and $S\subset \mathscr{G}$ be a finite subset. Then
   $\mathscr{G}\setminus S$ is a generating set of $X$.
\end{lemma}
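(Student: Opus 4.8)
The plan is to show that removing finitely many points from a generating set leaves it generating, i.e.\ that every point of $X$ still admits both a past and a future witness in $\mathscr{G}\setminus S$. Recall that $\mathscr{G}$ generates $X$ means $X=I(\mathscr{G})$, which unwinds to: for every $x\in X$ there exist $p,q\in\mathscr{G}$ with $p\ll x\ll q$. So I must produce such $p,q$ lying outside the finite set $S$.

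\textbf{Key steps.} Fix $x\in X$. First I would use that $\mathscr{G}$ generates $X$ to find $p,q\in\mathscr{G}$ with $p\ll x\ll q$. If both $p,q\in\mathscr{G}\setminus S$ we are done, so the issue is only when a chosen witness happens to lie in $S$. The idea is to push the witness one step further out along the chronological relation and land on a fresh point of $\mathscr{G}$. Concretely, since $p\ll x$, the point $p$ itself satisfies $p\in I^-(x)$, and because $\mathscr{G}$ generates $X$, $p$ has its own past witness: there is $p'\in\mathscr{G}$ with $p'\ll p$. By transitivity of $\ll$ (which holds by the reverse triangle inequality, as recalled in Subsection \ref{ssec:notation}) we get $p'\ll p\ll x$, hence $p'\ll x$. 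Symmetrically, $q$ has a future witness $q'\in\mathscr{G}$ with $q\ll q'$, giving $x\ll q'$. This produces, for each $x$, a whole chain of past witnesses $p\gg p'\gg p''\gg\cdots$ and of future witnesses $q\ll q'\ll q''\ll\cdots$, all in $\mathscr{G}$.

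\textbf{The finiteness argument.} The remaining point is that these chains cannot be trapped inside the finite set $S$. Here I would invoke antisymmetry of $\ll$ (again from Theorem/Subsection \ref{ssec:notation}: the reverse triangle inequality makes $\ll$ a strict order). Iterating the construction $|S|+1$ times produces points $p=p_0\gg p_1\gg\cdots\gg p_{|S|+1}$, all in $\mathscr{G}$ and all distinct (distinctness follows from antisymmetry, since $p_{i+1}\ll p_i$ forbids $p_{i+1}=p_i$, and transitivity forbids any later coincidence). A sequence of $|S|+2$ distinct points cannot be contained in $S$, so at least one $p_i\notin S$, i.e.\ $p_i\in\mathscr{G}\setminus S$, and still $p_i\ll x$. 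The same reasoning on the future side yields $q_j\in\mathscr{G}\setminus S$ with $x\ll q_j$. Thus $x\in I^+(p_i)\cap I^-(q_j)\subset I(\mathscr{G}\setminus S)$, and since $x$ was arbitrary, $X=I(\mathscr{G}\setminus S)$, which is the claim.

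I do not expect a serious obstacle: the only mild subtlety is making the ``escape from $S$'' rigorous, which the pigeonhole-plus-antisymmetry argument handles cleanly. One should just be careful that antisymmetry and transitivity of $\ll$ are exactly the properties guaranteed by the reverse triangle inequality (property (i)), which is the single structural hypothesis a generating set presupposes, so no use of topology, continuity, or the compactness condition (ii) is needed.
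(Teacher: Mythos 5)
Your proof is correct, and the underlying mechanism is the same one the paper exploits — apply the generating property to points of $\mathscr{G}$ itself, transfer the witness relation along $\ll$ by transitivity, and use the fact that $\ll$ is a strict order to guarantee the new witness is a genuinely different point — but the bookkeeping is organized differently. The paper argues by induction on $|S|$: to remove a single point $p^n$ it picks $q\ll p^n\ll r$ with $q,r\in\mathscr{G}$, notes $q,r\neq p^n$ by anti-reflexivity of $\ll$, concludes $I(\mathscr{G})=I(\mathscr{G}\setminus\{p^n\})$, and then strips off the remaining points by the inductive hypothesis. You instead fix $x$, build in one shot a descending (resp.\ ascending) chain of $|S|+2$ points of $\mathscr{G}$, observe that they are pairwise distinct because $\ll$ is transitive and irreflexive, and apply pigeonhole to escape $S$. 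Both arguments are equally elementary and use exactly the hypotheses you identify (property (i) only, no topology); the induction is marginally leaner since each step only needs a chain of length two, whereas your version needs the (easy) observation that long $\ll$-chains have no repetitions. One tiny terminological nit: what forbids $p_{i+1}=p_i$ is irreflexivity of $\ll$ rather than antisymmetry, though for a transitive relation the two are interchangeable and both follow from the reverse triangle inequality, exactly as you say.
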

\begin{proof}
    The proof goes by induction on the cardianality of $S$.
    The base $S=\varnothing$ is clear. So, suppose that the result is already established for $|S|<n$ and let us prove it for
    \[
    S=\{p^1,\ldots,p^n\}.
    \]
    Because $\mathscr{G}$ is a generating set, and the chronological order is anti-reflexive, there are some $q,r\in \mathscr{G}\setminus \{p^n\}$ such that $q\ll p^{n} \ll r$. By transitivity of the chronological order, it follows that
    \[
    I(\mathscr{G})=    I(\mathscr{G} \setminus \{p^n\}),
    \]
    so $\mathscr{G} \setminus \{p^n\}$ is a generating set. But then by inductive assumption
    \[
    \mathscr{G} \setminus S= (\mathscr{G} \setminus \{p^n\})\setminus \{p^1,\ldots,p^{n-1}\}
    \]
    is a again a generating set.
\end{proof}

\begin{theorem}\label{thm:change-gensec-X}
         Let $(X_n,d_n,\{p_n^k\}_{k\in \mathbb{N}})$ be a sequence of sequenced Lorentzian metric spaces GH-convergent to a sequenced Lorentzian metric space $(X,d,$ $\{p^k\}_{k\in\mathbb{N}})$. Let $\{q^k\}_{k\in\mathbb{N}}$ be another generating sequence for $X$. Then it is possible to choose for each $n\in\mathbb{N}$ a generating sequence $\{q^k_n\}_{k\in\mathbb{N}}$ for $X_n$ so that
          $(X_n,d_n,\{q_n^k\}_{k\in \mathbb{N}})$ GH-converges to  $(X,d,\{q^k\}_{k\in \mathbb{N}})$.

          Moreover, suppose that there is a set $A\subset\mathbb{N}$ and a function $\rho: A\to \mathbb{N}$ such that for every $k\in A$, $q^{k}=p^{\rho(k)}$. Assume that \emph{at least one} of the following conditions hold:
          \begin{enumerate}
              \item[(a)] The image of $\rho$ is a cofinite subset of $\mathbb{N}$;
              \item[(b)] The set $\mathbb{N}\setminus A$ is infinite.
          \end{enumerate}
          Then we can accomplish the above result while satisfying
          for every $n\in\mathbb{N}$ and $k\in A$
          \[
          q^{k}_n=p^{\rho(k)}_n.
          \]
\end{theorem}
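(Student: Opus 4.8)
The plan is to transport the quasi-correspondences that witness the original ($p$-)convergence into quasi-correspondences for the new ($q$-)sequences, exploiting that each $q^k$ sits strictly inside a chronological diamond spanned by the $p^j$. Fix a decreasing null sequence $\delta_m\downarrow 0$. For each $m$, since $\{p^k\}$ generates $X$, choose for every $k\le m$ indices $a_k,b_k$ with $p^{a_k}\ll q^k\ll p^{b_k}$, set $\epsilon^*(m)=\min_{k\le m}\min\bigl(d(p^{a_k},q^k),d(q^k,p^{b_k})\bigr)>0$, let $M(m)$ be an integer exceeding all the $a_k,b_k$ (and, with the second part in mind, all $\rho(k)$ with $k\in A\cap\{1,\dots,m\}$), and put $\eta(m)=\min(\delta_m,\epsilon^*(m)/2)$. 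By Definition \ref{def:GHconv} there is $n_0(m)$ so that for $n\ge n_0(m)$ there is an $(M(m),\eta(m))$ $X_n$--$X$ quasi-correspondence $R_n$ for the $p$-sequences; choose $N'_m\uparrow\infty$ with $N'_m\ge n_0(m)$ and set $r_n=\max\{m:N'_m\le n\}$.

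The central step is, for each $n$ (writing $m:=r_n$), to manufacture $q_n^1,\dots,q_n^m$ and a $q$-quasi-correspondence out of $R_n$. Because $q^k\in I_{\epsilon^*(m)}(p^1,\dots,p^{M(m)})\subset I_{\eta(m)}(p^1,\dots,p^{M(m)})\subset(R_n)_2$, I can pick $q_n^k\in X_n$ with $(q_n^k,q^k)\in R_n$; then, using $(p_n^{a_k},p^{a_k}),(p_n^{b_k},p^{b_k})\in R_n$ and $\mathrm{dis}\,R_n<\eta(m)$, the reverse triangle inequality gives $d_n(p_n^{a_k},q_n^k),\,d_n(q_n^k,p_n^{b_k})>\epsilon^*(m)-\eta(m)>0$. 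A short computation with the reverse triangle inequality yields $I_{\delta_m}(q_n^1,\dots,q_n^m)\subset I_{\eta(m)}(p_n^1,\dots,p_n^{M(m)})\subset(R_n)_1$ and $I_{\delta_m}(q^1,\dots,q^m)\subset(R_n)_2$ (here one needs $2\eta(m)\le\epsilon^*(m)$). Viewing $R_n$ as a correspondence between $(R_n)_1$ and $(R_n)_2$ (Remark \ref{rmk:quasi-cor-alt}), Lemma \ref{lem:sub-cor-quasi-cor} applied with distinguished points $q_n^k,q^k$ and parameter $\delta_m$ produces $\tilde R_n=R_n\cap\bigl(\overline{I_R(q_n^1,\dots,q_n^m)}\times\overline{I_R(q^1,\dots,q^m)}\bigr)$, an $(m,\delta_m)$ quasi-correspondence for the $q$-sequences. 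Completing $\{q_n^k\}_{k\le m}$ to a full generating sequence of $X_n$, GH-convergence follows from Definition \ref{def:GHconv}: for fixed $(m,\delta)$ pick $m^*\ge m$ with $\delta_{m^*}\le\delta$; for $n\ge N'_{m^*}$ one has $r_n\ge m^*$, so $\tilde R_n$ is enlarged to parameter $\delta$ by Remark \ref{rmk:quasi-cor-big-eps} and reduced from order $r_n$ to order $m$ by Corollary \ref{crl:sub-quasi-corr} (identity subsequence). This proves the first assertion.

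For the ``moreover,'' I want additionally $q_n^k=p_n^{\rho(k)}$ whenever $k\in A$. This is harmless for the extraction: once $\rho(k)\le M(m)$ (already arranged), the pair $(p_n^{\rho(k)},p^{\rho(k)})=(p_n^{\rho(k)},q^k)$ lies in $R_n$ by the distinguished-point clause, so $q_n^k:=p_n^{\rho(k)}$ is admissible and every estimate above goes through verbatim, since they used only $(q_n^k,q^k)\in R_n$. The one remaining requirement is that the completed sequence still generate $X_n$ despite the imposed values on $A$, and this is where the dichotomy enters. Under (a), the set $\{q_n^k:k\in A\}=\{p_n^j:j\in\mathrm{Im}\,\rho\}$ omits only finitely many of the generating points $p_n^j$, hence is itself generating by Lemma \ref{lem:gen-set-fin-rem}, so any completion of the non-$A$ slots works. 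Under (b), $\mathbb{N}\setminus A$ is infinite, so infinitely many slots (in particular infinitely many with index beyond $r_n$, which do not enter $\tilde R_n$) are free, and I place an entire generating sequence of $X_n$ in them; the forced values on $A$ are then irrelevant to generation. (For the finitely many $n<N'_1$ one simply fixes any generating sequence respecting the $A$-constraint, which does not affect the limit.)

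The main obstacle is precisely the tension between two competing demands on $\{q_n^k\}$: the low-order entries ($k\le r_n$) must be chosen so that $\tilde R_n$ is a genuine quasi-correspondence — forcing them into $R_n$ and making them bracket the diamond through the $p_n$'s — while simultaneously every $k\in A$ is pinned to $p_n^{\rho(k)}$ and the whole sequence must generate $X_n$. Reconciling these is exactly what dictates enlarging $M(m)$ to dominate the relevant $\rho(k)$ (so that $(p_n^{\rho(k)},q^k)\in R_n$) and what makes the hypotheses (a)/(b) necessary, as they guarantee enough leftover freedom to restore the generating property. By contrast, the metric inequalities are routine consequences of the reverse triangle inequality and of $\mathrm{dis}\,R_n<\eta(m)$.
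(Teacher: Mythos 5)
Your proposal is correct and follows essentially the same route as the paper's proof: each $q^k$ is bracketed by $p^{a_k}\ll q^k\ll p^{b_k}$, the original $p$-quasi-correspondences (taken at a large enough order and small enough tolerance) are used to transport the $q^k$ to points $q_n^k$ with the distortion bound and the reverse triangle inequality giving $I_{\delta_m}(q_n^1,\dots,q_n^m)\subset (R_n)_1$, Lemma \ref{lem:sub-cor-quasi-cor} cuts the correspondence down to a $q$-quasi-correspondence, and the ``moreover'' part is handled exactly as in the paper via Lemma \ref{lem:gen-set-fin-rem} in case (a) and the infinitely many free slots in case (b). The only differences are cosmetic bookkeeping (you work directly from Definition \ref{def:GHconv} with thresholds $M(m)$, $\eta(m)$ rather than routing through Lemma \ref{lem:weakGH-alt} with $\mu(k)$ and $\delta_m=\epsilon_m/2$).
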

With option (a), the theorem allows one to remove finitely many points from the generating set, change the order, add or remove duplicates,  and add finite or countable amount of the new points. The idea behind the co-finitness assumption is that it ensures $q_n^k$ being a generating set (by Lemma \ref{lem:gen-set-fin-rem}).

The option (b) (which includes the case $A=\varnothing$, i.e.\ the main statement of the theorem) instead  allows to change the generating set drastically, keeping arbitrary small portion of the original points. Then we need sufficient points so as  to get a new generating set. This is why condition (b) appeared.
\begin{proof}
        We start from the main statement.
        For every $k\in\mathbb{N}$ let us choose $\mu(k)\in\mathbb{N}$ and $\epsilon_k>0$, in such a way that
        \[
        q^k\in I_{\epsilon_k}(p^1,\ldots,p^{\mu(k)}).
        \]
        Without loss of generality we can assume that $\epsilon_k$ is decreasing, $\epsilon_k\to 0$, and $\mu$ is  increasing, and $\mu(k)\geq k$ for every $k\in \mathbb{N}$.

Let $\delta_m=\epsilon_{m}/2$.
        Let $\{N_m\}_{m\in\mathbb{N}}$, $\{R^n\}_{n\in\mathbb{N}}$ be sequences described in Lemma \ref{lem:weakGH-alt} associated to the sequence $\delta_m$.

        Set $N'_m=N_{\mu(m)}$. As a composition of two increasing maps, $N'_m$ is also increasing. Analogously to (\ref{eqn:rn-and-Nn}) we can define
        \[
        r'_n=\max\{m\in\mathbb{N}|n\geq N_m'\},
        \]
        which implies $r'_n\le r_n$.

        Let $n$ be given and let $k\leq r'_n$. This is equivalent to $n\geq N'_k=N_{\mu(k)}$ and hence to $\mu(k)\leq r_n$, where $r_n$ is defined by (\ref{eqn:rn-and-Nn}).
        Recall that $R^n$ is a $(r_n,\delta_{r_n})$ $X_n-X$ quasi-correspondence, so the conditions above imply (note that $k\le \mu(k)\leq r_n$ so $\delta_{r_n}\le \delta_k \le \epsilon_k$)
        \[
        q^k\in I_{\epsilon_k}(p^1,\ldots,p^{\mu(k)})\subset I_{\delta_{r_n}}(p^1,\ldots,p^{r_n})\subset R^n_2.
        \] Therefore, for $k\leq r'_n$, we can choose $q_n^k$ so that $(q_n^k,q^k)\in R^n$. For all $k>r'_n$ we pick $q_n^k$ in such a way that for each $n$ the sequence $\{q_n^k\}_{k\in\mathbb{N}}$ is generating.

        Now let us build new quasi-correspondences $R'{}^n$ between the spaces  $(X_n,d_n,$ $\{q_n^k\}_{k\in \mathbb{N}})$ and  $(X,d,\{q^k\}_{k\in \mathbb{N}})$. For $k\leq r'_n$ we have
        \[
          \max_{j\in\{1,\ldots,r_n\}}d_n(q_n^k,p_n^j)>\max_{j\in\{1,\ldots,\mu(k)\}}d(q^k,p^j)-\delta_{r_n}\ge \epsilon_k-\delta_{r_n}\ge 2\delta_{k}- \delta_{r_n} \ge \delta_{r_n},
        \]
        \[
          \max_{j\in\{1,\ldots,r_n\}}d_n(p_n^j,q_n^k)>\max_{j\in\{1,\ldots,\mu(k)\}}d(p^j,q^k)-\delta_{r_n}\ge \epsilon_k-\delta_{r_n}\ge 2\delta_{k}- \delta_{r_n} \ge \delta_{r_n},
        \]
        Thus, $q_n^k\in I_{\delta_{r_n}}(p_n^1,\ldots, p_n^{r_n})$, and, by the reverse triangle inequality (or transitivity of $ I_{\delta_{r_n}}$, see Subsection \ref{ssec:notation}),
        \[
        I_{\delta_{r_n}}(q_n^1,\ldots,q_n^{r'_n})\subset I_{2\delta_{r_n}}(p_n^1,\ldots, p_n^{r_n})\subset I_{\delta_{r_n}}(p_n^1,\ldots, p_n^{r_n})\subset R^n_1.
        \]
        Similarly,
        \[
        I_{\delta_{r_n}}(q^1,\ldots,q^{r'_n})\subset  I_{2\delta_{r_n}}(p^1,\ldots, p^{r_n})\subset I_{\delta_{r_n}}(p^1,\ldots, p^{r_n})\subset R^n_2.
        \]
        These two inclusions together with the condition $(q_n^k,q^k) \in R^n$ for each $k\leq r'_n$ allows us to use Lemma \ref{lem:sub-cor-quasi-cor} to build from $R^n$ an $(r'_n,\delta_{r_n})$ quasi-correspondence
        \[
        R'{}^n=R^n\cap \big(\overline{I_R(q^1_n,\ldots,q^{r'_n}_n)}\times \overline{I_R(q^1,\ldots,q^{r'_n}_n)}\big)
        \]
        between the spaces  $(X_n,d_n,\{q_n^k\}_{k\in \mathbb{N}})$ and  $(X,d,\{q^k\}_{k\in \mathbb{N}})$. By the analogue of (\ref{eqn:rn-and-Nn}), this means that whenever $N_m'\leq n\leq N'_{m+1}$, $R'_n$ is an $(m=r'_n,\delta_{r_n})$ quasi-correspondence, hence a $(m=r'_n,\delta_{r'_n})$ quasi-correspondence (because $r'_n \le r_n$ implies $\delta_{r'_n} \ge \delta_{r_n}$),  so by Lemma \ref{lem:weakGH-alt}, $(X_n,d_n,\{q_n^k\}_{k\in \mathbb{N}})$ GH-converges to  $(X,d,\{q^k\}_{k\in \mathbb{N}})$.

        Finally, let us move to the additional statement. Assume that there is a set $A\subset \mathbb{N}$ and a function $\rho: A\to \mathbb{N}$ such that $q^{k}=p^{\rho(k)}$ for any $k\in A$. Then we can adjust the function $\mu$ so that
        \[
        \mu(m)\geq \max_{k\in \{1,\ldots,m\}\cap A} \rho(k).
        \]
        When, repeating construction of $N'_m$ and $r'_n$, we get that for every $k\in A$ such that $k\leq r'_n$ it holds $\rho(k)\le \mu(k)\leq r_n$, and thus $(p^{\rho(k)}_n,p^{\rho(k)})\in R^n$. This allows to set $q^k_n=p^{\rho(k)}_n$ without destroying the construction. Now we need to show that it is  also possible to set $q^k_n=p^{\rho(k)}_n$  for $k>r'_n$ (and $k\in A$). We recall that the only constraint for $q^k_n$ with large values of $k$ is that $\{q^k_n\}_{k\in\mathbb{N}}$ is a generating sequence. Here the proof goes in different ways for the options (a) and (b).
        \begin{enumerate}
            \item[(a)] If the image of $\rho$ is co-finite, then for every $n\in\mathbb{N}$ the set $\{q^k_n|k\in\mathbb{N}\}$ contains the set $\{p^k_n\}_{k\in\mathbb{N}}\setminus S_n$, where $S_n$ is finite. So, by Lemma \ref{lem:gen-set-fin-rem} $\{q^k_n|k\in\mathbb{N}\}$ generates $X_n$ independently of the choice made for $q^k_n$ for $k\notin A$.
            \item[(b)] If the set $B=\mathbb{N}\setminus A$ is infinite, then
            \[
            B_n=\{k\in B|k>r'_n\}
            \]
            is also infinite for any $n\in\mathbb{N}$. In the construction above the points $q_n^k$ with $k\in B_n$ can be chosen arbitrarily, so we can always ensure that $\{q_n^k|k\in B_n\}$ is a generating set of $X_n$.
         \end{enumerate}
\end{proof}

\subsubsection{An example} \label{exmp:bad-renum}
We finish by an example showing that the GH-limit in fact depends on the chosen generating sequences.
    Set $X_n=(0,+\infty)$, $d_n(x,y)=(x-y)_{+}$, $p^{2k}_n=n+k$, $p^{2k-1}_n=n-k+1$ for $k<n$ and $p^{2k-1}_n=(k-n+1)^{-1}$ for $k\geq n$. It is easy to see that for each $n\in\mathbb{N}$, $(X_n,d_n,\{p_n^k\}_{k\in\mathbb{N}})$ is a sequenced Lorentzian metric space.

    Now set $X=\mathbb{R}$, $d(x,y)=(x-y)_{+}$, $p^{2k}=k$, $p^{2k-1}=1-k$. This defines yet another Lorentzian metric space $(X,d,\{p^k\}_{k\in\mathbb{N}})$. Moreover, the sequence $(X_n,d_n,\{p_n^k\}_{k\in\mathbb{N}})$  GH-converges to  $(X,d,\{p^k\}_{k\in\mathbb{N}})$ (of course if we had chosen the same generating sequences for all $X_n$ they would have been the same space and the $GH$-limit would have been $X=(0,+\infty)$ with that same sequence, which shows that the limit depends on the sequence).

Let us prove this claim.
    For any $n\in\mathbb{N}$ there is a function $\phi_n: X_n\to X$  defined by
    \[
    \phi_n(x)=x-n.
    \]
    It is easy to see that $\phi_n(p^k_n)=p^k$ for $k<2n-1$, and $\phi_n$ is clearly distance-preserving. Thus it allows us to define an $(m,\delta)$ $X_n-X$ quasi-correspondence for any $m < 2n-1$ and any $\delta>0$.

\subsection{Relation to bounded GH-convergence}
In the  metric space theory any convergent sequence of compact metric spaces can be made into a convergent sequence of pointed spaces (with the same limit) by choosing the preferred points appropriately. This result can not be directly generalised to our setting, because bounded spaces can not be sequenced. Instead, we have the following.
\begin{proposition} \label{nner}
    Let $(X_n,d_n)$ be a sequence of  bounded Lorentzian metric spaces GH-convergent to a Lorentzian metric space $(X,d)$. Suppose that for each $n$ the subset $I(X_n)$  is dense in  $X_n$, and similarly  $I(X)$ is dense in $X$,
    and let $\{p^k\}_{k\in\mathbb{N}}$ be a generating sequence  of $I(X)$. Then there are generating sequences $\{p^k_n\}_{k\in\mathbb{N}}$ of $I(X_n)$ (for each $n\in\mathbb{N}$) such that the sequence of sequenced Lorentzian metric spaces $(I(X_n),d_n,\{p_n^k\}_{k\in\mathbb{N}})$ GH-converges to  $(I(X),d,\{p^k\}_{k\in\mathbb{N}})$.
\end{proposition}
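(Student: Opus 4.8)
The plan is to reduce the whole statement to a single application, for each $n$, of the one-step construction in Corollary \ref{crl:bound-corr-quasi-corr}, and then to verify the convergence condition of Definition \ref{def:GHconv} by a routine matching of the two parameters $(m,\epsilon)$. No diagonal argument or limit-curve machinery is needed here: all the real work is already packaged in the cited corollary.

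First I would unpack the hypothesis of bounded GH-convergence. By its very definition there exist, for each $n$, correspondences between $X$ and $X_n$ whose distortion tends to zero; passing to closures (and, including the spacelike boundaries, treating $X$ and the $X_n$ as compact) we may take compact correspondences $R_n\subset X\times X_n$ with $\mathrm{dis}\,R_n\to 0$. Fix a sequence $\epsilon_n\downarrow 0$ with $\mathrm{dis}\,R_n<\epsilon_n$. Note also that, by Theorem \ref{thm:blmc-cg}, the density hypotheses ensure that $I(X)$ and each $I(X_n)$ are genuine countably generated Lorentzian metric spaces without chronological boundary, so that the notions of generating sequence and of $(m,\epsilon)$ quasi-correspondence indeed apply to them.

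Next, for each $n$ I would apply Corollary \ref{crl:bound-corr-quasi-corr} with source space $X$ carrying the given generating sequence $\{p^k\}_{k\in\mathbb{N}}$ of $I(X)$, target space $X_n$, correspondence $R_n$, distortion bound $\epsilon_n$, and truncation level $m=n$. This yields, for each $n$, a generating sequence $\{p^k_n\}_{k\in\mathbb{N}}$ of $I(X_n)$ such that for every $l\le n$ there is an $(l,\epsilon_n)$ quasi-correspondence between $(I(X),d,\{p^k\})$ and $(I(X_n),d_n,\{p^k_n\})$; transposing it via Lemma \ref{lem:quasicor-op} produces an $(l,\epsilon_n)$ $I(X_n)$--$I(X)$ quasi-correspondence, which is precisely the direction demanded by Definition \ref{def:GHconv}. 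These sequences $\{p^k_n\}$ are the ones asserted by the proposition.

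Finally I would check GH-convergence directly. Given $m\in\mathbb{N}$ and $\delta>0$, choose $n_0\ge m$ large enough that $\epsilon_n<\delta$ for all $n\ge n_0$ (possible since $\epsilon_n\to 0$). For $n\ge n_0$ one has $m\le n$, so the construction supplies an $(m,\epsilon_n)$ $I(X_n)$--$I(X)$ quasi-correspondence, and since $\epsilon_n<\delta$, Remark \ref{rmk:quasi-cor-big-eps} upgrades it to an $(m,\delta)$ one; this is exactly the convergence criterion. The only point deserving attention is the joint control of the two parameters: one must let the truncation level $m=n$ tend to infinity while the distortion bound $\epsilon_n$ shrinks to zero, so that Corollary \ref{crl:bound-corr-quasi-corr} furnishes quasi-correspondences of arbitrarily high order and arbitrarily small distortion along the sequence. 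Everything else is bookkeeping subsumed in the cited corollary and lemma.
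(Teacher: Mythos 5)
Your proposal is correct and follows essentially the same route as the paper's own proof: extract compact correspondences $R_n$ with distortion $\to 0$ from the bounded GH-convergence, feed them into Corollary \ref{crl:bound-corr-quasi-corr} with truncation level $m=n$, and then match $(m,\delta)$ against $(n,\epsilon_n)$ for large $n$. Your explicit transposition via Lemma \ref{lem:quasicor-op} to get the $I(X_n)$--$I(X)$ direction required by Definition \ref{def:GHconv} is a small point the paper glosses over, but it is the same argument.
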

\begin{proof}
    From the assumptions it follows that we can choose for each $n\in\mathbb{N}$ a correspondence $R_n\subset X_n\times X$ in such a way that $\mathrm{dis}R_n<\delta_n$, where $\delta_n\rightarrow 0$ as $n\rightarrow \infty$. Then, by Corollary \ref{crl:bound-corr-quasi-corr}, we can choose for each $n$ a generating sequence  $\{p_n^k\}_{k\in\mathbb{N}}$ of $\mathrm{X}_n$ such that for every $l\le n$ there is an $(l,\delta_n)$ $I(X_n)-I(X)$ quasi-correspondence. Since $\delta_n$ converges to zero, for any given $m>0$  and $\epsilon>0$ we can find  for large enough $n$ (larger than $m$ and such that $\delta_n<\epsilon$) a $(m,\epsilon)$ $I(X_n)-I(X)$ quasi-correspondence.
\end{proof}

\subsection{GH-limit stability of (pre)length spaces}
\begin{theorem}\label{thm:prelength-prserved}
    Let $X_n$ be a sequence of sequenced Lorentzian (pre)length spaces GH-convergent to a Lorentzian metric space $X$. Then $X$ is a Lorentzian (pre)length space.
\end{theorem}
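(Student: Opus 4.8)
The plan is to adapt the converse direction of Theorem \ref{thm:length-local} to the Gromov--Hausdorff setting, replacing the bounded quotients $BF^n$ of a single space by the genuinely distinct spaces $X_n$ glued to $X$ through quasi-correspondences. Being a Gromov--Hausdorff limit, $X$ is a sequenced Lorentzian metric space, hence countably generated, Polish and without chronological boundary, so the limit curve theorem (Thm.\ \ref{thm:lim-curv}) and Lemma \ref{lem:curves-from-dense} are available on $X$. Fix $x\ll y$ in $X$ and set $\epsilon_0=d(x,y)>0$. Since $\{p^k\}$ generates $X$, there are $i,j$ with $p^i\ll x$ and $y\ll p^j$; then $x,y\in I(p^i,p^j)$, and choosing $\epsilon>0$ smaller than the relevant positive distances and $m\ge\max(i,j)$ we get $x,y\in I_\epsilon(p^1,\dots,p^m)$. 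By Lemma \ref{lem:weakGH-alt} fix quasi-correspondences $R_n$ of order $r_n\to\infty$ and distortion $\delta_n\to 0$; for large $n$ we have $r_n\ge m$ and $\delta_n<\epsilon/2$, so $x,y\in \pi_2(R_n)$ and we may pick $x_n,y_n\in X_n$ with $(x_n,x),(y_n,y)\in R_n$. Distortion gives $d_n(x_n,y_n)>\epsilon_0-\delta_n>0$, so $x_n\ll y_n$, and the (pre)length property of $X_n$ yields an isocausal (resp.\ maximal) curve $\sigma_n$ from $x_n$ to $y_n$.

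The next step is to transfer these curves to $X$ and take a limit. Because $\sigma_n$ is isocausal with $x_n\le\sigma_n(s)\le y_n$, the defining inequalities of the causal relation $J$ together with $d_n(p_n^i,x_n),d_n(y_n,p_n^j)>\epsilon-\delta_n$ force $\sigma_n(s)\in I_{\delta_n}(p_n^1,\dots,p_n^{r_n})\subset \pi_1(R_n)$ for every $s$; hence each $\sigma_n(s)$ admits a partner in $X$, and distortion then places every such partner in the fixed compact set $\overline{I(p^i,p^j)}$ (Prop.\ \ref{prop:chdiamonds-compact}). To control parametrisations I fix a time function $\tau$ on $X$ with $\tau(x)=0$, $\tau(y)=1$ (Lemma \ref{lem:time-func-seq}, Remark \ref{rmk:time-func-resc}) and, mimicking the construction of that lemma but feeding in points of $X_n$ pulled back through $R_n$ from a fixed countable dense subset of $X$, I build time functions $\tau_n$ on $X_n$ satisfying $\sup\{|\tau_n(u)-\tau(v)|:(u,v)\in R_n\}\to 0$; this uniform asymptotic compatibility is the exact replacement for the identity $\lim_n\tau^n([p]_{F^n})=\tau(p)$ used in Theorem \ref{thm:length-local}. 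Reparametrising $\sigma_n$ by $\tau_n$ and choosing, for each rational $t\in[0,1]$, a partner $w_n(t)\in X$ of $\sigma_n(t)$ (with the endpoints chosen as $x$ and $y$ themselves), a diagonal argument over the rationals — all $w_n(t)$ lying in the one compact set above — extracts a subsequence with $w_n(t)\to w(t)$ for every rational $t$, where $w(0)=x$ and $w(1)=y$.

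It remains to recognise $w$ as the restriction of an isocausal curve. The value identity $\tau(w(t))=t$ follows since $|\tau(w_n(t))-\tau_n(\sigma_n(t))|\to 0$ with $\tau_n(\sigma_n(t))=t$, using continuity of $\tau$. For monotonicity, fix rationals $t<t'$ and any $p\in X$; for large $n$ there is $p_n$ with $(p_n,p)\in R_n$, and since $\sigma_n(t)\le\sigma_n(t')$ the causal inequalities $d_n(p_n,\sigma_n(t'))\ge d_n(p_n,\sigma_n(t))$ and $d_n(\sigma_n(t),p_n)\ge d_n(\sigma_n(t'),p_n)$ combine with the distortion bound and continuity of $d$ to yield, in the limit, $d(p,w(t'))\ge d(p,w(t))$ and $d(w(t),p)\ge d(w(t'),p)$; as $p$ is arbitrary this is $w(t)\le w(t')$. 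Thus $w$ satisfies the hypotheses of Lemma \ref{lem:curves-from-dense} with $Q=\mathbb{Q}\cap[0,1]$ and extends to an isocausal curve from $x$ to $y$, proving the prelength statement. For the length statement, the maximality identity (\ref{eqn:maxcurv}) for $\sigma_n$ passes to $w$ on the rationals because each of the three distances converges to the corresponding distance in $X$, and the final clause of Lemma \ref{lem:curves-from-dense} upgrades maximality on a dense set to maximality of the extension.

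The main obstacle is the middle step: producing time functions $\tau_n$ on the varying spaces $X_n$ that are simultaneously genuine time functions and asymptotically compatible with the fixed $\tau$ on $X$ through the relations $R_n$. Unlike Theorem \ref{thm:length-local}, where all the quotients share the underlying distance of a single space, here compatibility is only approximate and is driven entirely by the vanishing distortion $\delta_n$; one must check that the pulled-back countable families remain rich enough to make each $\tau_n$ strictly monotone along causal curves (e.g.\ by adjoining, with a vanishing weight, a genuine dense set of $X_n$) while preserving the uniform estimate $\sup_{(u,v)\in R_n}|\tau_n(u)-\tau(v)|\to 0$. Once this compatibility is in place, the remainder is a routine transcription of the limit curve argument.
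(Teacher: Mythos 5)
Your proposal is correct and follows essentially the same route as the paper's own (direct) proof: fix $x\ll y$, pull back to $x_n\ll y_n$ via the quasi-correspondences of Lemma \ref{lem:weakGH-alt}, build a time function $\tau$ on $X$ from a countable dense set and approximate companions $\tau_n$ on $X_n$ from $R_n$-partners of those points with $|\tau_n(u)-\tau(v)|\to 0$ on $R_n$, select points $z_n(a)$ at rational $\tau_n$-levels, push them back to $X$, diagonalize, and conclude with Lemma \ref{lem:curves-from-dense}. The one ``obstacle'' you flag is dissolved in the paper by observing that $\tau_n$ need not be a genuine time function on $X_n$ at all --- it suffices that it be continuous and monotone under $\le$, so that points $z_n(a)$ with $\tau_n(z_n(a))=a$ exist by the intermediate value theorem and are automatically causally ordered along the isocausal curve $\sigma_n$; no reparametrization and no adjoined dense subset of $X_n$ are needed.
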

We give two proofs, one based on the results for bounded Lorentzian metric spaces, another direct.
\begin{proof}
    We provide details only for the prelength space case, the preservation of the maximization property being simple.
    Define the family of subsets of $X$:  $\mathscr{F}=\{I_{\epsilon}(p,q)|p,q\in X,\,\epsilon>0\}$. Clearly, $\mathscr{F}$ satisfies the requirement of Theorem \ref{thm:length-local}, so we can use an alternative formulation of the length property provided by it. Namely, we have to show that for every $n,m\in\mathbb{N}$, $\epsilon>0$ and every $r,x,y,s\in X$  such that $r\ll x\ll y \ll s$ and $r,s\in I_{\epsilon}(p^1,\ldots,p^m)$ there is an isocausal curve in $BI_{\epsilon}(p^1,\ldots,p^m)$ connecting $[x]_{I_{\epsilon}(p^1,\ldots,p^m)}$ with $[y]_{I_{\epsilon}(p^1,\ldots,p^m)}$. From now on we assume that such $r$, $x$, $y$, $s$, $\epsilon$ and $m$ are given. Set $\delta=\min(d(r,x),d(x,y),d(y,s))$

    Fix a sequence $\{\delta_m\}_{m\in\mathbb{N}}$ of real positive numbers converging to zero, and let $\{N_m\}_{m\in\mathbb{N}}$, $\{r_n\}_{n\in\mathbb{N}}$ and $\{R^n\}_{n\in\mathbb{N}}$ be the sequences provided by Lemma \ref{lem:weakGH-alt}. Define $T^{n}=R^{n}\cap \pi_2^{-1}(I_{\epsilon}(p^1,\ldots,p^m))$.   We have $T^{n}_2=I_{\epsilon}(p^1,\ldots,p^m)$, so $T^n$ is a $T^n_1-I_{\epsilon}(p^1,\ldots,p^m)$ correspondence, and $\mathrm{dis}T^n<\delta_n$. So, we have a sequence of bounded Lorentzian metric spaces $BT^n_1$ GH-convergent to $BI_{\epsilon}(p^1,\ldots,p^m)$. Choose $N\in\mathbb{N}$ such that $\delta_n<\delta$ for all $n>N$.

    For each $n>N$ we can choose $r_n,s_n,x_n,y_n\in T^n_1$ so that
    \[
    (r_n,r),(s_n,s),(x_n,x),(y_n,y)\in T^n_1.
    \]
    We have
    \[
    d(r_n,x_n)>\delta-\delta_n>0,\quad  d(x_n,y_n)>\delta-\delta_n>0, \quad d(y_n,s_n)>\delta-\delta_n>0,
    \]
    so $r_n\ll x_n\ll y_n \ll s_n$.
    By Theorem \ref{thm:length-local},
    for every $n\geq N$ there is an isocausal curve in $BT^n_1$ connecting $[x_n]$ with $[y_n]$. By construction in the proof of \cite[Theorem 5.18]{minguzzi22} it yields an isocausal curve connecting $x$ with $y$ in $BI_{\epsilon}(p^1,\ldots,p^m)$.
\end{proof}

\begin{proof}[Direct proof of Theorem \ref{thm:prelength-prserved}]
    Let us consider a sequence of sequenced Lorentzian metric spaces $(X_n,d_n,\{p_n^k\}_{k\in \mathbb{N}})$ GH-convergent to a sequenced Lorentzian metric space $(X,d,\{p^k\}_{k\in \mathbb{N}})$, and let us assume that all $X_n$ are prelength spaces.

    Let $\delta_m$ be a decreasing sequence of positive real numbers converging to zero, and let $\{N_m\}_{m\in\mathbb{N}}$, $\{r_n\}_{n\in\mathbb{N}}$, $\{R^n\}_{n\in\mathbb{N}}$ be  the sequences  provided by Lemma \ref{lem:weakGH-alt}.

    Let us find a convenient time function on $X$ to parametrize the curve  that we are going to construct,  and the approximations $\tau_n$ on the spaces $X_n$.
    Let $\mathscr{S}\subset X$ be a countable dense subset and let us define $\mathscr{S}^l=X\cap I_{\delta_l}(p^1,\ldots,p^l)$ for every $l\in\mathbb{N}$.  Let $\{s^i_l\}_{i \in \mathbb{N}}$ be a sequence whose image is $\mathscr{S}^l$ (possibly with repetitions)


    For any $n,l,i \in \mathbb{N}$ with $l\leq r_n$ let $s_{i,n}^l\in X^n$ be such that $(s_{i,n}^l,s_i^l )\in R^n$. It exists because  $s_i^l\in R^n_2$. For $l>r_n$ the points $s_{i,n}^l\in X_n$ can be chosen arbitrarily.
    We define the time function as in proof of Lemma \ref{lem:time-func}
    \[
        \tau(p):=\alpha \sum_{k=1}^{\infty}\sum_{j=1}^{\infty} \frac{1}{2^{j+k+1}} \left(\frac{d(s_j^k,p)}{1+d(s_j^k,p)}-\frac{d(p,s_j^k)}{1+d(p,s_j^k)}\right)+\beta.
    \]
    The constants $\alpha>0$ and $\beta$ are chosen so that $\tau(x)=0$, $\tau(y)=1$ and  the proof  that $\tau$ is a time function remains the same. Let us define
         \[
    \tau_{n}(p):=\alpha \sum_{k=1}^{r_n}\sum_{j=1}^{\infty} \frac{1}{2^{j+k+1}} \left(\frac{d(s_{j,n}^k,p)}{1+d(s_{j,n}^k,p)}-\frac{d(p,s_{j,n}^k)}{1+d(p,s_{j,n}^k)}\right)+\beta.
    \]
    By the same argument used for $\tau$, $\tau_n(p)\leq \tau_n(q)$ if $p\leq q$, $\tau_n$ is continuous, but $\tau_n$ does not have to be a time function, because the set $\{s_{i,n}^k|k\leq r_n\}$ does not have to distinguish points of $X_n$. Observe that for $(p',p)\in R^n$ we have\footnote{We use that  $\left|\frac{x}{1+x}-\frac{y}{1+y}\right|\leq |x-y|$ for $x,y>0$.}
    \begin{equation} \label{cnpp}
    |\tau_n(p')-\tau(p)|\leq \epsilon_n,
    \end{equation}
    where $\epsilon_n=\alpha(\delta_{r_n}+2^{-r_n})$, and $\lim_{n\rightarrow\infty}\epsilon_n=0$.

     Let us prove the prelength space property. Let $x,y\in X$ be such that $x\ll y$. We have to find an isocausal curve connecting $x$ with $y$. Let $m\in\mathbb{N}$ and $\delta>0$ be such that $x,y\in I_{\delta}(p^1,\ldots, p^m)$. By enlarging $m$ if necessary, we can assume that $\delta_l<\min(\delta/2, d(x,y))$ for all $l\geq m$. Thus, $x,y\in (R^n)_2$ for every $n\geq N_m$. This allows us to choose for every $n\ge N_m$ points $x_n,y_n\in X_n$ so that $(x_n,x)\in R^{n}$ and $(y_n,y)\in R^{n}$. Note that by the assumptions $r_n\ge m$ and
    \[
    d(x_n,y_n)>d(x,y)-\delta_{r_n}>0.
    \]
    So, $x_n$ and $y_n$ are connected by an isocausal curve  in $X_n$ for all $n\geq N_m$. Choose for each $n\geq N_m$ such a curve $\sigma_n:[0,1]\to X_n$.
    For similar reason, we have, using $\delta-\delta_{r_n}\ge \delta/2 \ge \delta_{r_n}$,
    \begin{equation}\label{eqn:thm:prelength-prserved-xnyn}
        x_n,y_n\in I_{\delta/2}(p_n^1,\ldots,p_n^m)\subset I_{\delta_{r_n}}(p_n^1,\ldots,p_n^m),\quad \forall n\geq N_m.
    \end{equation}
    We are ready to build an isocausal curve $\zeta:[0,1]\to X$ connecting $x$ with $y$. For any $n\geq N_m$ and $a \in (\epsilon_n,1-\epsilon_n)\cap \mathbb{Q}$ let $z_n(a)\in X^n$ be a point on the curve $\sigma_n$ such that
    \[
    \tau_n(z_n(a))=a.
    \]
    Note that such a point always exists because by Eq.\ (\ref{cnpp})
    \[
    \tau_n(\sigma_n(0))=\tau_n(x)\leq \epsilon_n,\qquad  \tau_n(\sigma_n(1))=\tau_n(y)\geq 1-\epsilon_n.
    \]
    For $a\in [0,\epsilon_n)\cap \mathbb{Q}$ set $z_n(a)=x_n$, and for $a\in (1-\epsilon_n,1]\cap \mathbb{Q}$ set $z_n(a)=y_n$.
    By (\ref{eqn:thm:prelength-prserved-xnyn}) we always have $z_n(a)\in I_{\delta_{r_n}}(p_n^1,\ldots,p_n^r)\subset R^n_1$, so we can choose $\zeta_n(a)\in X$ such that
    \[
    (z_n(a), \zeta_n(a))\in R^n.
    \]
    In particular, we choose $\zeta_n(0)=x$ and $\zeta_n(1)=y$.
    Note that
    \[
    z_n(a)\in  J_n(x_n,y_n)\subset I_{\delta}(p_n^1,\ldots,p_n^m),
    \]
    so for large enough $n$, $\zeta_n(a)\in  I(p^1,\ldots,p^m)$
    which is  relatively compact.
    So, for fixed $a$ we can always pass to a convergent sequence $\zeta_{n_k}(a)$. By the diagonal argument we can make this sequence convergent for  every $a\in [0,1] \in \mathbb{Q}$. Let $\zeta(a)$ denote the limits of these subsequences. In particular, $\zeta(0)=x$ and $\zeta(1)=y$.

    From
    \[
    |\tau(\zeta_n(a))-a|=|\tau(\zeta_n(a))-\tau_{n}(z_n(a))|\leq \epsilon_n,
    \]
    by taking the limit on the appropriately chosen subsequences, $\tau(\zeta(a))=a$ for $a\in [0,1]\cap \mathbb{Q}$.

    We intend to apply Lemma \ref{lem:curves-from-dense}. For that we have only to show that for any $t,t'\in [0,1]\cap \mathbb{Q}$ such that $t< t'$, $\zeta(t)\leq \zeta(t')$. Suppose that this is not so. Then there is $z\in X$ such that either $d(\zeta(t),z)<d(\zeta(t'),z)$ or $d(z,\zeta(t))>d(z,\zeta(t'))$. Define
    \[
    \Delta=\max\big(d(\zeta(t'),z)-d(\zeta(t),z), d(z,\zeta(t))-d(z,\zeta(t'))\big)>0.
    \]
    Let $m'\geq m$ be such that for every $l\geq m'$,   $z\in I_{\delta_l}(p^1,\ldots,p^l)$ and $\Delta> 2 \delta_l$.
    Let $n>N_{m'}$ so that $r_n\ge m'$ (and hence $z\in I_{\delta_{r_n}}(p^1,\ldots, p^{r_n})\subset R^n_{2}$) and such that
    \[
    t,t'\in \{0,1\}\cup (\epsilon_n,1-\epsilon_n)\cap \mathbb{Q}.
    \]
    But $z\in R^n_2$, so we can find $z'\in X_n$ such that $(z',z)\in R^n$.
    We have
    \[
    \tau_n(z_n(t'))-\tau_n(z_n(t))=t'-t>0,
    \]
    and
    \[
    \max(d_n(z_n(t'),z')-d_n(z_n(t),z'), d_n(z',z_n(t))-d_n(z',z_n(t')))\ge \Delta - 2\delta_{r_n}>0.
    \]
    Taking into account that by construction both $z_n(t')$ and $z_n(t)$ lie on an isocausal curve, this gives a contradiction. So, $\zeta(t)\leq \zeta(t')$ whenever $t<t'$, and $\zeta$ extends, by Lemma \ref{lem:curves-from-dense},  to an isocausal curve connecting $x$ to $y$.
\end{proof}


\section{Canonical quasi-uniform structure of Lorentzian metric spaces}

This section is devoted to the exploration of some global properties of a Lorentzian metric space.

\subsection{Kuratowski-like embedding}
In \cite{minguzzi22} the metrizability of bounded Lorentzian metric spaces was shown via a Kuratowski-like embedding.
We already know from Proposition \ref{prop:cg-Polish} that the countably-generated Lorentzian metric spaces are Polish, and  therefore completely metrizable. Still it may be interesting to find such an embedding in the unbounded case. As a side result, we shall show how to construct examples of metrics on countably generated Lorentzian metric spaces.

Similarly to \cite{minguzzi22}, for a Lorentzian metric space $X$ we define a map
\[
I: X\to C(X)\times C(X),
\]
\[
x\mapsto (d_x,d^x).
\]
Unlike the bounded case, in general $X$ can not be assumed to be compact, and thus $C(X)$ is not a Banach space. The natural topology for $C(X)$ is that of uniform convergence on compact subsets, i.e.\ the locally convex topology generated by the semi-norms
\[
||f||_{K}=\sup_{x\in K} |f(x)|
\]
parametrised by compact sets $K\subset X$. It makes $C(X)$ into a Fr\'echet space provided $X$ is $\sigma$-compact \cite{treves67}. In particular, if $X$ is countably generated, $C(X)$ is completely metrisable.
\begin{lemma}
    Let $(X,d)$ be a Lorentzian metric space. Then $I: X\to C(X)\times C(X)$ defined above is continuous.
\end{lemma}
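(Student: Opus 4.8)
The plan is to verify continuity at an arbitrary point $x_0\in X$ directly from the defining seminorms of $C(X)$. Since the topology of $C(X)\times C(X)$ is generated by the seminorms $\|\cdot\|_K$ indexed by compact sets $K\subset X$, and since continuity of a map can be checked at each point, it suffices to show: for every compact $K\subset X$ and every $\epsilon>0$ there is a neighborhood $U\ni x_0$ such that, for all $x\in U$,
\[
\sup_{y\in K}|d(x,y)-d(x_0,y)|<\epsilon \qquad\text{and}\qquad \sup_{y\in K}|d(y,x)-d(y,x_0)|<\epsilon .
\]
I would treat the first supremum in detail; the second follows identically after swapping the two arguments of $d$.

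First I would exploit the joint continuity of $d$ in the product topology, which is part of property (ii) of Definition \ref{cg-lms}. For each $y\in K$, continuity of $d$ at the point $(x_0,y)$ provides open neighborhoods $U_y\ni x_0$ and $V_y\ni y$ such that $|d(x',y')-d(x_0,y)|<\epsilon/2$ for every $(x',y')\in U_y\times V_y$. The family $\{V_y\}_{y\in K}$ is then an open cover of the compact set $K$, so there are finitely many points $y_1,\dots,y_n$ with $K\subset\bigcup_{i=1}^n V_{y_i}$. Setting $U=\bigcap_{i=1}^n U_{y_i}$ yields a neighborhood of $x_0$.

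To conclude, for $x\in U$ and $y\in K$ I would pick an index $i$ with $y\in V_{y_i}$. Since $x\in U\subset U_{y_i}$ and $x_0\in U_{y_i}$, both pairs $(x,y)$ and $(x_0,y)$ lie in $U_{y_i}\times V_{y_i}$, whence
\[
|d(x,y)-d(x_0,y)|\le |d(x,y)-d(x_0,y_i)|+|d(x_0,y_i)-d(x_0,y)|<\tfrac{\epsilon}{2}+\tfrac{\epsilon}{2}=\epsilon .
\]
Taking the supremum over $y\in K$ gives $\|d_x-d_{x_0}\|_K\le\epsilon$, which is precisely what is needed for the first factor, and the same estimate applied to the reversed arguments controls $\|d^x-d^{x_0}\|_K$.

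This is a routine ``tube lemma''-style passage from joint continuity to uniform control on a compact slice, so I do not anticipate any serious obstacle. The only point worth flagging is that, unlike the bounded situation of \cite{minguzzi22}, here $X$ need not be compact, so one cannot hope for uniform continuity of $d$ on all of $X\times X$; compactness enters solely through the set $K$ attached to the seminorm, which is exactly the feature the locally convex topology of uniform convergence on compacta is designed to accommodate.
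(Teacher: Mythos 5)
Your proof is correct and follows essentially the same route as the paper: both arguments fix a compact $K$ and $\epsilon>0$, use joint continuity of $d$ at each point $(x_0,y)$ with $y\in K$ to produce product neighborhoods, extract a finite subcover of $K$, intersect the corresponding neighborhoods of $x_0$, and close with a triangle-inequality estimate (you use two terms with $\epsilon/2$, the paper three terms with $\epsilon/3$ — an immaterial difference). No gaps.
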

\begin{proof}
    Fix $x\in X$, a compact set $K\subset X$ and $\epsilon>0$. We have to show that there is a neighborhood $W\subset X$ of $x$ such that whenever $y\in W$ we have $||d_x-d_y||_K<\epsilon$ and $||d^x-d^y||_K<\epsilon$.

    By continuity of $d$, for any $z\in K$ we can find neighborhoods $V_1^z$ and $V_2^z$ respectively of $x$ and $z$ such that for any $y\in V_1^z$ and $w\in V_2^z$ we have
    \[
    |d(x,z)-d(y,w)|<\frac{\epsilon}{2}.
    \]
    As $K$ is compact, we can choose $z_1,\ldots,z_n$ so that $\bigcup_{i=1}^n V_2^{z_i}\supset K$. Let $W_1=\bigcap_{i=1}^n V_1^{z_i}$. Take $y\in W_1$ and $z\in K$. For some $i$ we have $z\in V_2^{z_i}$, and thus
    \[
    |d(x,z)-d(y,z)|\leq |d(x,z)-d(x,z_i)|+ |d(x,z_i)-d(y,z)|<\epsilon.
    \]
    So, for every $y\in W_1$, $||d_x-d_y||_K<\epsilon$. Similarly, for another compact set $K'$ and constant $\epsilon'>0$ we can construct $W_2$ such that $||d^x-d^y||_{K'}<\epsilon'$ whenever $y\in W_2$, so by setting $W=W_1\cap W_2$ we get continuity of $I$.
\end{proof}
\begin{theorem} \label{cmqnn}
    Let $(X,d)$ be a Lorentzian metric space without  chronological boundary. Then $I$ is a homeomorphism onto its image.

    If, in addition to that, $X$ is countably generated, and $\{p^k\}_{k\in\mathbb{N}}$ is a generating sequence for $X$, then the function $\gamma: X\times X\to [0,+\infty)$ defined by
    \[
    \gamma (x,y):=\sum_{m=1}^{\infty}2^{-m} \frac{||d_x-d_y||_{X^m}}{1+||d_x-d_y||_{X^m}}+\sum_{m=1}^{\infty}2^{-m} \frac{||d^x-d^y||_{X^m}}{1+||d^x-d^y||_{X^m}}
    \]
    is a metric on $X$ inducing the Lorentzian metric space topology $\mathcal{T}$. Here $X^m=\overline{I_R(p^1,\ldots,p^m)}$ as in Section \ref{sec:GH}.
\end{theorem}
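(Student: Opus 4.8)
The plan is to treat the two assertions in turn: first establish, in the general boundaryless case, that $I$ is a homeomorphism onto its image, and then exploit this to recognise $\gamma$ as the pullback along $I$ of a standard metric of uniform convergence on compacts.

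For the first assertion, continuity of $I$ is already in hand from the preceding lemma, so only injectivity and openness onto the image remain. Injectivity is immediate from property (iii): if $x\neq y$, then some $z$ gives $d(x,z)\neq d(y,z)$ or $d(z,x)\neq d(z,y)$, i.e.\ $d_x\neq d_y$ or $d^x\neq d^y$, so $I(x)\neq I(y)$. For openness it suffices to check it on the subbasis of $\mathcal T$ furnished by Proposition \ref{prop:lms-hdf-lc-unique}. I would observe that, writing $d(p,q)=d^q(p)$ and $d(q,r)=d_q(r)$, the subbasic set $\{q: a<d(p,q)<b\}=\{q: a<d^q(p)<b\}$ has image $I(X)\cap\{(f,g): a<g(p)<b\}$, while $\{q: c<d(q,r)<e\}=\{q: c<d_q(r)<e\}$ maps to $I(X)\cap\{(f,g): c<f(r)<e\}$. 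Since evaluation at a point is continuous for the topology of uniform convergence on compacts (a singleton being compact), both of these ambient sets are open in $C(X)\times C(X)$. Hence $I$ sends each subbasic, and therefore each basic, open set to a relatively open subset of $I(X)$; being also injective and continuous, it is a homeomorphism onto its image.

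For the second assertion I would first record that $\{X^m\}_m$ is a compact exhaustion that is moreover \emph{cofinal} among all compact subsets. Compactness and nesting follow from Proposition \ref{prop:chdiamonds-compact} and the definition of $X^m$, while cofinality holds because the chronological diamonds $I^+(p^a)\cap I^-(p^b)$ with endpoints in the generating sequence form an open cover of $X$; a compact $K$ is covered by finitely many of them and hence lies in some $X^m$. Consequently the function
\[
\rho(f,g)=\sum_{m=1}^{\infty}2^{-m}\frac{\|f-g\|_{X^m}}{1+\|f-g\|_{X^m}}
\]
is a metric on $C(X)$ inducing exactly the topology of uniform convergence on compact sets, and $D\big((f_1,g_1),(f_2,g_2)\big):=\rho(f_1,f_2)+\rho(g_1,g_2)$ metrises the product topology of $C(X)\times C(X)$. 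By construction $\gamma(x,y)=D(I(x),I(y))$, so $\gamma$ is the pullback of $D$ along $I$.

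It then remains to verify that $\gamma$ is a genuine metric inducing $\mathcal T$. That $\gamma$ is a pseudometric is routine: for fixed $m$ the map $(x,y)\mapsto\|d_x-d_y\|_{X^m}$ is a pseudometric, $t\mapsto t/(1+t)$ is increasing and subadditive and so preserves this, and the weighted sum converges and remains a pseudometric; the $d^x$ terms are identical. For point-separation, $\gamma(x,y)=0$ forces $d_x=d_y$ and $d^x=d^y$ on every $X^m$, hence on $\bigcup_m X^m=X$, so property (iii) yields $x=y$. Finally, $I$ is simultaneously an isometry of $(X,\gamma)$ onto $\big(I(X),D\big)$ and, by the first assertion, a homeomorphism of $(X,\mathcal T)$ onto $I(X)$ with its subspace topology; since $D$ metrises that subspace topology, the $\gamma$-topology coincides with $\mathcal T$. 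The step I expect to be the main obstacle is the cofinality of $\{X^m\}$ among compact subsets: without it $\rho$ would induce only an a priori coarser topology than uniform convergence on compacts, and the identification $\gamma=D\circ(I\times I)$ would transport the wrong topology; everything else reduces to property (iii) together with the standard theory of metrics on $\sigma$-compact $C(X)$.
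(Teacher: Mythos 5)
Your proposal is correct and follows essentially the same route as the paper's: openness of $I$ is verified on the subbasis of Proposition \ref{prop:lms-hdf-lc-unique} (the paper builds the corresponding neighbourhood of $I(x)$ in $C(X)\times C(X)$ explicitly from a finite set $K$ and an $\epsilon$, while you observe that the subbasic sets are exactly preimages of evaluation functionals --- the same underlying fact), and the metric statement rests, as in the paper, on the cofinality of $\{X^m\}$ among compact subsets, so that the seminorms $\|\cdot\|_{X^m}$ generate the topology of uniform convergence on compacta, followed by the standard metrization of a countably seminormed space. The paper only sketches this second part, deferring details to Section \ref{cjor}; your write-up supplies them directly and is complete.
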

\begin{proof}
    Clearly, $I$ is injective, so we need only to show that it is an open map. Let $O\subset X$ be an open set and $x\in O$. We have to show that there is an open neighborhood $V$ of $I(x)$ such that $V\cap I(X)\subset I(O)$.

    By Proposition \ref{prop:lms-hdf-lc-unique}, there are some points $p_1,\ldots,p_n,q_1,\ldots,q_n\in X$ and numbers $a_1,\ldots,a_n,b_1,\ldots,b_n, c_1,\ldots,c_n,e_1,\ldots,e_n\in [-\infty,+\infty]$ such that
    \begin{equation}
    x\in W= \bigcap_{i=1}^n (d_{p_i})^{-1}((a_i,b_i))\cap (d^{q_i})^{-1}((c_i,e_i))\subset O.
    \label{eqn:proof:Kuratowski-basis-nbhd}
    \end{equation}
    Note that $a_i,c_i<+\infty$ and $b_i,e_i>-\infty$ because $W\ne \emptyset$.
    The case in which all $a_i,b_i,c_i,e_i$ are infinite is trivial as $W=X$ (which implies $O=X$ and so we can take $V=C(X)\times C(X)$), so it is safe to assume that at least some of them is finite.
    Define finite (and thus compact) set $K=\{p_1,\ldots,p_n,q_q,\ldots,q_n\}$ and
    \[
    \epsilon= \min_{i} \min\big(d(p_i,x)-a_i,b_i-d(p_i,x), d(x,q_i)-c_i,e_i-d(x,q_i)\big).
    \]
    Note that $0<\epsilon<\infty$ due to (\ref{eqn:proof:Kuratowski-basis-nbhd}), and assumption that $W\neq X$. Set
    \[
      V=\left\{(f,g)\in C(X)\times C(X)| \ \ ||f-d_x||_K<\epsilon, \ ||g-d^x||_K<\epsilon\right\}.
    \]
    Clearly, $V$ is an open neighborhood of $I(x)$. Moreover, if $I(y)\in V$, then $y\in W$, so $V\cap I(X)\subset I(W)\subset I(O)$, as desired.

    For the second statement (we do not insist in giving a detailed proof as it shall also follows from the independent proofs of Section \ref{cjor}) let us show that the topology of $C(X) \times C(X)$ is in fact generated by seminorms $||\cdot||_{X^m}$. Let $K$ be a compact set. Since $X$ has no chronological boundary, $K$ can be covered by chronological diamonds, and thus, by compactness of $K$, $K\subset I(A)$ for some finite set $A\subset X$. Then, taking $m$ large enough to ensure $A\subset I(p^1,\ldots,p^m)$, we have $K\subset X^m$. Therefore,
    \[
    ||f||_{K}\leq ||f||_{X^m}.
    \]

    Thus, $C(X)\times C(X)$ is a locally-convex topological vector space with topology generated by a countable set of seminorms. It is a standard result of functional analysis that such spaces are metrisable, and $\gamma$ defined in the statement is the $I$-pullback for the standard choice of the corresponding metric \cite[Chapter 10]{treves67}.
\end{proof}
%
%
\begin{remark}
    Although the metric $\gamma$ is induced by a complete metric on $C(X)\times C(X)$, it is not in general true that $\gamma$ is a complete metric of $X$, because the image of $I$ is not necessary closed. For example, consider a bounded Lorentzian metric space $X$ with a dense chronological interior and a spacelike boundary $i^0$. Consider a sequence of points $x_i\in I(X)$ converging to $i^0$. Then it is easy to see that $\{I(x_i)\}_{i\in \mathbb{N}}$ converges to $(0,0)$ in $C(X)\times C(X)$, while the sequence $x_i$ itself has no limit in $I(X)$.
\end{remark}

\subsection{Basics of quasi-uniform spaces}

In the following sections we determine a canonical quasi-uniformity associated to a Lorentzian metric space. To start with, let us recall the notion of quasi-uniformity.

 We refer to \cite{nachbin65,fletcher82}  for further details and motivation (the term {\em semi-uniformity} used by Nachbin is not standard nowadays).
\begin{definition}
    Let $X$ be a set. A \emph{quasi-uniformity} (or a \emph{quasi-uniform structure}) $\mathscr{Q}$ is a filter on the set $X\times X$ such that
    \begin{enumerate}
        \item Every element $U\in \mathscr{Q}$ contains the diagonal,
        \[
        \Delta=\{(x,x)| \ x\in X\}\subset U\]
        \item  For every $U\in \mathscr{Q}$ there is $V\in \mathscr{Q}$ such that $V\circ V\subset U$.
    \end{enumerate}
    If, in addition to that, $\mathscr{Q}$ satisfies
    \begin{enumerate}
        \item[3.] for every $U\in \mathscr{Q}$, $U^{-1}\in \mathscr{Q}$, where\footnote{We stick here to the standard notation in the theory of quasi-uniformities.} $U^{-1}:=\{(x,y)| \ (y,x)\in U\}$.
    \end{enumerate}
    we say that $\mathscr{Q}$ is an \emph{uniformity} (or a \emph{uniform structure}). A space $X$ endowed with a (quasi)uniformity is a \emph{(quasi)uniform space}.
\end{definition}
\begin{remark}\label{rmk:qu-gen}
    It is easy to see that if a collection $\mathcal{Q}$ of subset of $X\times X$ satisfies conditions 1-2 (respectively, conditions 1-3) of the definition above, then it generates a filter $\mathscr{Q}$ which is a quasi-uniformity (respectively, a uniformity) for $X$.
\end{remark}
If $\mathscr{Q}$ is a quasi-uniformity, then
\[
    \mathscr{U}=\mathscr{Q}^*:=\{U \cap V^{-1} | \ U,V\in \mathscr{Q} \}
\]
is a uniformity, and
\[
    G=\bigcap\mathscr{Q}:=\cap_{U\in\mathscr{Q}}\,U
\]
is a preorder (reflexive and transitive relation),
to which we refer respectively as \emph{the uniformity} and \emph{the preorder associated with} $\mathscr{Q}$.
So, any quasi-uniform space is a preordered uniform space.
The uniformity is Hausdorff if and only if the preorder is antisymmetric \cite{nachbin65} (hence an order).


Starting from a uniformity $\mathscr{U}$, we define the \emph{topology $\mathcal{T}({\mathscr{U}})$ associated with $\mathscr{U}$}, by setting the filter of neighborhoods of a point $x\in X$ as that generated by the sets of the form
\[
 U[x]:=\{y\in X| \ (x,y)\in \mathscr{U} \}.
\]
If $\mathscr{Q}$ is a quasi-uniformity, $\mathscr{U}$ is the associated uniformity
$\mathcal{T}(\mathscr{Q}^*)$
is the topology associated to $\mathscr{Q}$. So, any quasi-uniform space induces a topological preordered space $(X,\mathcal{T}(\mathscr{Q}^*),\bigcap\mathscr{Q})$, which turns out to be a closed preordered space as the preorder $G$ is closed in the product topology \cite[Proposition II.2.8]{nachbin65}. Every closed preordered space that arises in this way for some quasi-uniformity is said to be quasi-uniformizable.

To every metric space $(X,h)$ naturally corresponds a uniform structure, defined as a filter generated by all sets of the form
\[
\{(x,y)\in X\times X| \ \ d(x,y)\leq  t\},
\]
with $t$ running over $(0,+\infty)$. It is easy to see that the associated topology is the metric space topology.


The concept of uniformity allows one to extend the notions initially introduced for metric spaces to a larger class of topological spaces and avoid explicit choice of metric when it is not really needed. In particular, we will need the following.
\begin{definition}
    Let $(X,\mathscr{U})$, $(X',\mathscr{U}')$ be two uniform spaces. We say that a map $f:X\to X'$ is \emph{uniformly continuous} if for any $U'\in \mathscr{U}'$ there is $U\in \mathscr{U}$ such that
    \[
    (f\times f)(U)=\{(f(x),f(y))|\  \ (x,y)\in U\}\subset U'.
    \]
\end{definition}
Any uniformly continuous map is continuous with respect to the associated topologies. Moreover, if $(X,h)$ and $(X',h')$ are metric spaces, then a map $f:X\to Y$ is uniformly continuous in the usual metric geometry sense if and only if it is uniformly continuous map between the corresponding uniform spaces.

\begin{definition}
    Let $\{x_n\}_{n\in\mathbb{N}}$ be a sequence of point of a uniform space $(X,\mathscr{U})$. We say that $x_n$ is a \emph{Cauchy sequence} if for any $U\in \mathscr{U}$ there is $N\in\mathbb{N}$ such that
    \[
     (x_n,x_n')\in U \ \forall n,n'\geq N.
    \]
\end{definition}
Again, clearly, this definition coincides with the usual one if the uniform structure is induced by a metric.

We also need the following construction.
\begin{definition}\label{def:quasiu}
    Let $X$ be a set, and $\{(X_{\alpha},\mathscr{Q}_{\alpha},f_{\alpha}\}_{\alpha\in \mathscr{A}}$ be a family of quasi-uniform spaces $(X_{\alpha}, \mathscr{Q}_{\alpha})$ and maps $f_{\alpha}: X\to X_{\alpha}$. Then by \emph{initial quasi-uniform structure} we mean the filter generated by the sets of the form
    \[
      (f_{\alpha}\times f_{\alpha})^{-1}(U):=\{(x,y)\in X\times X| \ \ (f_{\alpha}(x),f_{\alpha}(y))\in U\},
    \]
    with all possible indices $\alpha\in \mathscr{A}$ and all possible $U\in \mathscr{Q}_{\alpha}$.
\end{definition}
It is easy to see (e.g.\ via Remark \ref{rmk:qu-gen}) that this construction indeed produces a quasi-uniformity. Moreover, we have the following.
\begin{lemma}\label{lem:init-qu}
    Let   $X$ be a set, and $\{(X_{\alpha},\mathscr{Q}_{\alpha},f_{\alpha})\}_{\alpha\in \mathscr{A}}$ be as in Definition \ref{def:quasiu}. Let $\mathscr{Q}$ be the initial quasi-uniformity of this family.
    Then:
    \begin{enumerate}
        \item The uniformity $\mathscr{Q}^*$ is the initial uniformity of the family $\{(X_{\alpha},\mathscr{Q}^*_{\alpha},$ $f_{\alpha})\}_{\alpha\in \mathscr{A}}$.
        \item The topology $\mathcal{T}(\mathscr{Q}^*)$ is the initial topology of the family
        $\{(X_{\alpha},$ $\mathcal{T}({\mathscr{Q}^*_\alpha}),$ $ f_{\alpha})\}_{\alpha\in \mathscr{A}}$.
    \end{enumerate}
\end{lemma}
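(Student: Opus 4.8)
The plan is to reduce statement (2) to statement (1): once $\mathscr{Q}^*$ is recognized as an initial \emph{uniformity}, the coincidence of its topology with the initial topology of the $\mathcal{T}(\mathscr{Q}^*_\alpha)$ is the standard relationship between initial uniformities and initial topologies. Hence the real work is statement (1). Writing $\mathscr{U}$ for the initial uniformity of $\{(X_\alpha,\mathscr{Q}^*_\alpha,f_\alpha)\}_{\alpha\in\mathscr{A}}$ and abbreviating $g_\alpha:=f_\alpha\times f_\alpha$, I would prove $\mathscr{Q}^*=\mathscr{U}$ by comparing generators, using the two elementary identities $g_\alpha^{-1}(U\cap V)=g_\alpha^{-1}(U)\cap g_\alpha^{-1}(V)$ and $g_\alpha^{-1}(U^{-1})=(g_\alpha^{-1}(U))^{-1}$.

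The inclusion $\mathscr{U}\subseteq\mathscr{Q}^*$ is the easy one. A generator of $\mathscr{U}$ is $g_\alpha^{-1}(W)$ with $W=U\cap V^{-1}$, $U,V\in\mathscr{Q}_\alpha$; by the two identities it equals $g_\alpha^{-1}(U)\cap(g_\alpha^{-1}(V))^{-1}$, which is of the form $A\cap B^{-1}$ with $A,B\in\mathscr{Q}$ (both being generators of $\mathscr{Q}$), hence a member of $\mathscr{Q}^*$. Since $\mathscr{Q}^*$ is a filter, it contains the filter generated by all such sets, i.e.\ $\mathscr{U}$.

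For the reverse inclusion $\mathscr{Q}^*\subseteq\mathscr{U}$, an arbitrary element is $A\cap B^{-1}$ with $A,B\in\mathscr{Q}$, so it contains a set $\bigcap_i g_{\alpha_i}^{-1}(U_i)\cap\bigcap_j(g_{\beta_j}^{-1}(V_j))^{-1}$ coming from finitely many generators of $\mathscr{Q}$. The delicate point is that the generators of $\mathscr{U}$ tie a single index $\alpha$ to one symmetric relation $U\cap V^{-1}$ on $X_\alpha$, whereas here the forward factors $U_i$ and the backward factors $V_j$ carry unrelated indices. The resolution is to pad with the top element: since $X_\alpha\times X_\alpha$ lies in every filter $\mathscr{Q}_\alpha$ and equals its own inverse, one has $U=U\cap(X_\alpha\times X_\alpha)^{-1}\in\mathscr{Q}^*_\alpha$ and $V^{-1}=(X_\alpha\times X_\alpha)\cap V^{-1}\in\mathscr{Q}^*_\alpha$ for all $U,V\in\mathscr{Q}_\alpha$. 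Thus each $g_{\alpha_i}^{-1}(U_i)$ and each $(g_{\beta_j}^{-1}(V_j))^{-1}=g_{\beta_j}^{-1}(V_j^{-1})$ is by itself a generator of $\mathscr{U}$; their finite intersection is therefore in $\mathscr{U}$, and upward closure of $\mathscr{U}$ yields $A\cap B^{-1}\in\mathscr{U}$. This index-matching step is the main obstacle; the rest is routine filter bookkeeping.

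Finally, for statement (2) I would use (1) to replace $\mathscr{Q}^*$ by $\mathscr{U}$ and argue both inclusions of topologies. On one hand each $f_\alpha$ is uniformly continuous from $(X,\mathscr{U})$ to $(X_\alpha,\mathscr{Q}^*_\alpha)$ by the very definition of the initial uniformity, hence continuous for the associated topologies, so $\mathcal{T}(\mathscr{Q}^*)$ makes all $f_\alpha$ continuous and dominates the initial topology. On the other hand a basic neighbourhood $U[x]$ with $U=\bigcap_k g_{\gamma_k}^{-1}(W_k)$ equals $\bigcap_k f_{\gamma_k}^{-1}(W_k[f_{\gamma_k}(x)])$ and hence contains the initial-topology-open set $\bigcap_k f_{\gamma_k}^{-1}(O_k)$, where $O_k$ is any $\mathcal{T}(\mathscr{Q}^*_{\gamma_k})$-open set with $f_{\gamma_k}(x)\in O_k\subseteq W_k[f_{\gamma_k}(x)]$; so every $\mathcal{T}(\mathscr{Q}^*)$-neighbourhood is an initial-topology neighbourhood. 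These two facts give the equality of topologies and complete the proof; the second statement could alternatively be cited directly from the uniform-space literature once (1) is in hand.
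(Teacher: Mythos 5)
Your proof is correct. The paper states this lemma without proof (it is presented as a routine consequence of the definitions immediately after Definition \ref{def:quasiu}), so there is no argument of the authors to compare against; your write-up simply supplies the missing verification. The one genuinely non-routine point — that a generator $g_{\alpha}^{-1}(U)$ of $\mathscr{Q}$ and its inverse are each already elements of the initial uniformity $\mathscr{U}$, obtained by padding with the top element $X_\alpha\times X_\alpha\in\mathscr{Q}_\alpha$ so that $U=U\cap(X_\alpha\times X_\alpha)^{-1}\in\mathscr{Q}^*_\alpha$ — is identified and handled correctly, and the deduction of statement (2) from statement (1) via the neighbourhood filters $U[x]=\bigcap_k f_{\gamma_k}^{-1}(W_k[f_{\gamma_k}(x)])$ is sound.
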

To finish this exposition, let us give an elementary example which will be useful in a moment. On the real line $\mathbb{R}$ we introduce the canonical quasi-uniform structure $\mathscr{Q}_{\mathbb{R}}$ as the filter generated by the sets of the form
\[
\{(x,y)\in \mathbb{R}| \ \  -t< y-x\},
\]
for all possible $t> 0$. Evidently, $(x,x)\in U_t$ for all $x\in\mathbb{R}$ and $t>0$, and $U_{t/2}\circ U_{t/2}\subset U_t$. So, $\mathscr{Q}_{\mathbb{R}}$ is a quasi-uniformity by Remark \ref{rmk:qu-gen}. Note that the associated uniformity is generated as a filter by the sets of the form
\[
\{(x,y)\in \mathbb{R}|\ \ |y-x|< t\},
\]
and thus coincides with the usual metric uniformity. On the other hand, the order, associated with $\mathscr{Q}_{\mathbb{R}}$ is the usual order $\leq$ of $\mathbb{R}$. We also need the dual quasi-uniformity
\[
\mathscr{Q}_{\mathbb{R}}^{-1}=\{U^{-1}| \ \ U\in \mathscr{Q}_{\mathbb{R}}\}.
\]
Clearly, the uniformities and topologies associated with $\mathscr{Q}_{\mathbb{R}}$ and $\mathscr{Q}_{\mathbb{R}}^{-1}$ coincide. The order associated with $\mathscr{Q}_{\mathbb{R}}^{-1}$ is $\geq$.

\subsection{Lorentzian metric space as a quasi-uniform space}

It is known that in the smooth setting globally hyperbolic spacetimes are quasi-uniformizable \cite{minguzzi12d}. This result generalizes to Lorentzian metric spaces and actually we point out that there exists an associated canonical quasi-uniformity.

\begin{theorem}\label{thm:qu-lms}
    Let $(X,d)$ be a Lorentzian metric space without chronological boundary. Let $\mathscr{Q}$ be the initial quasi-uniformity of the family of maps
    \[
    d_p: X\to (\mathbb{R},  \mathscr{Q}_{\mathbb{R}}) \quad \textrm{and} \quad
    d^p: X\to (\mathbb{R},  \mathscr{Q}_{\mathbb{R}}^{-1}).
    \]
    The topology associated with $\mathscr{Q}$, $\mathcal{T}(\mathscr{Q}^*)$, coincides with the Lorentzian metric space topology. The order associated with $\mathscr{Q}$, $\bigcap \mathscr{Q}$,  is the extended causal relation $J$.
\end{theorem}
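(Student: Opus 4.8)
The plan is to reduce both assertions to direct computations with the subbasic entourages of the initial quasi-uniformity $\mathscr{Q}$. First I would record, using Definition \ref{def:quasiu}, that $\mathscr{Q}$ is the filter generated by the sets $(d_p\times d_p)^{-1}(U_t)=\{(x,y): d(p,y)-d(p,x)>-t\}$ and $(d^p\times d^p)^{-1}(U_t^{-1})=\{(x,y): d(y,p)-d(x,p)<t\}$, for $p\in X$ and $t>0$, where $U_t=\{(a,b): -t<b-a\}$ generates $\mathscr{Q}_{\mathbb{R}}$ and $U_t^{-1}=\{(a,b): b-a<t\}$ generates $\mathscr{Q}_{\mathbb{R}}^{-1}$. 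The two inputs I rely on are that the associated uniformities $\mathscr{Q}_{\mathbb{R}}^{*}$ and $(\mathscr{Q}_{\mathbb{R}}^{-1})^{*}$ both equal the usual metric uniformity of $\mathbb{R}$ (already observed in the text above), so that their associated topology is the Euclidean one.

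For the topological claim I would invoke Lemma \ref{lem:init-qu}(2): the topology $\mathcal{T}(\mathscr{Q}^*)$ is the initial topology of the family $d_p,d^p:X\to(\mathbb{R},\text{Euclidean})$, that is, the coarsest topology making every $d_p$ and $d^p$ continuous. A subbasis of this initial topology is given by the preimages of open rays, namely $d_p^{-1}((a,b))=\{x: a<d(p,x)<b\}$ and $(d^r)^{-1}((c,e))=\{x: c<d(x,r)<e\}$. These are exactly the factors appearing in the sets \eqref{don}: each set \eqref{don} is a finite intersection of two such preimages, and conversely every such preimage is itself of the form \eqref{don} (take the complementary factor to be all of $X$, i.e.\ $c=-\infty$, $e=+\infty$). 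Hence both families generate the same topology, which by Proposition \ref{prop:lms-hdf-lc-unique} is precisely the Lorentzian metric space topology $\mathscr{T}$. This is the only place the hypothesis of no chronological boundary is used, since it is what makes Proposition \ref{prop:lms-hdf-lc-unique} available.

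For the order claim I would compute $\bigcap\mathscr{Q}$ directly, using that the intersection of all entourages equals the intersection of the subbasic generators (every $U\in\mathscr{Q}$ contains a finite intersection of generators, and each generator lies in $\mathscr{Q}$). Since preimages commute with intersections, for fixed $p$ one gets $\bigcap_{t>0}(d_p\times d_p)^{-1}(U_t)=(d_p\times d_p)^{-1}\big(\bigcap_{t>0}U_t\big)=\{(x,y): d(p,y)\ge d(p,x)\}$, and dually $\bigcap_{t>0}(d^p\times d^p)^{-1}(U_t^{-1})=\{(x,y): d(x,p)\ge d(y,p)\}$. Intersecting over all $p\in X$ yields
\[
\bigcap\mathscr{Q}=\{(x,y): d(p,y)\ge d(p,x)\ \text{and}\ d(x,p)\ge d(y,p),\ \forall p\in X\},
\]
which is verbatim the definition of $J$ in Definition \ref{def:causal}.

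The argument is essentially bookkeeping and I do not anticipate a deep obstacle. The one point demanding care is tracking the two temporal \emph{directions}: the future is encoded by the maps $d_p$ into $(\mathbb{R},\mathscr{Q}_{\mathbb{R}})$, whose associated order is $\le$, while the past is encoded by $d^p$ into the dual $(\mathbb{R},\mathscr{Q}_{\mathbb{R}}^{-1})$, whose associated order is $\ge$. Matching these two contributions against the two inequalities defining $J$, and in particular getting the sign conventions of $\mathscr{Q}_{\mathbb{R}}^{-1}$ correct, is the crux and the main thing that could go wrong.
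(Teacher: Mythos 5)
Your proposal is correct and follows essentially the same route as the paper's proof: the topological claim is reduced via Lemma \ref{lem:init-qu} and Proposition \ref{prop:lms-hdf-lc-unique} to the identification of the initial topology of the family $\{d_p,d^p\}$ with the Lorentzian metric space topology, and the order claim is obtained by intersecting the subbasic entourages, which yields exactly the defining inequalities of $J$. Your version merely spells out the bookkeeping (including the sign conventions for $\mathscr{Q}_{\mathbb{R}}^{-1}$) that the paper leaves implicit.
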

From now on we refer to $\mathscr{Q}$ constructed in the theorem above as the quasi-uniformity of the Lorentzian metric space $X$. All related notions, such as uniform convergence, are considered with respect to this quasi-uniformity unless  explicitly stated otherwise.
\begin{proof}
By Proposition \ref{prop:lms-hdf-lc-unique}
 the Lorentzian metric space topology is precisely the initial topology of functions $(d_p,d^p)$ with $p\in X$. So, by Lemma \ref{lem:init-qu}, it is the topology associated with $\mathscr{Q}$.

Now, let $G$ be the order associated with $\mathscr{Q}$. For a pair of points $x,y \in X$, $(x,y)\in G$ if and only if for any $p\in X$ and $t> 0$,
\[
-t<d_p(y)-d_p(x),
\]
and
\[
d^p(y)-d^p(x)<t.
\]
This holds if and only if $(x,y)\in J$, so $J=G$.
\end{proof}

We provide new proofs of Lemma \ref{lem:curves-from-dense} and Theorem \ref{thm:d-uni}.
\begin{lemma}\label{lem:curves-from-dense2}
     Let $(X,d)$ be a countably-generated Lorentzian metric space, and let $\tau: X\to \mathbb{R}$ be a time function. Let $a,b\in \mathbb{R}$  be such that $a<b$, and assume that $Q$ is a dense subset of $[a,b]$ such that $a,b\in Q$. Let $\zeta: Q\to X$ be such that
    \begin{itemize}
        \item $\tau(\zeta(t)))=t$, $\forall t\in Q,$
        \item  $\zeta(t)\leq \zeta(t')$ whenever $t,t'\in Q$, $a\leq t <t' \leq b$.
    \end{itemize}
    Then $\zeta$ extends to an isocausal curve $\overline{\zeta}:[a,b]\to X$ parametrised by $\tau$, $\tau(\overline{\zeta}(s))=s$. Moreover, if $\zeta$ satisfies the maximality condition (\ref{eqn:maxcurv}) for all $t,t',t''\in Q$, then $\overline{\zeta}$ is maximal.
\end{lemma}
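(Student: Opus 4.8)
The plan is to keep the overall architecture of the proof of Lemma~\ref{lem:curves-from-dense}, but to replace the \emph{ad hoc} choice of a complete metric $\gamma$ by the intrinsic uniform structure $\mathscr{Q}^*$ associated with the quasi-uniformity $\mathscr{Q}$ of Theorem~\ref{thm:qu-lms}; this is what turns the argument into a canonical one. First I would localise to a compact set. Since $\zeta$ is monotone and $\tau$ is a time function, $\tau(\zeta(t))=t$ forces, for $t<t'$, both $\zeta(t)\le\zeta(t')$ and $\zeta(t)\ne\zeta(t')$, hence the strict relation $\zeta(t)<\zeta(t')$; in particular $\zeta(a)\le\zeta(q)\le\zeta(b)$ for all $q\in Q$, so the image lies in $C:=J(\zeta(a),\zeta(b))$, which is compact by Theorem~\ref{thm:lms-via-emeralds} (apply it with $K=J$ and the compact set $\{\zeta(a),\zeta(b)\}$, noting that countable generation excludes a chronological boundary). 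On the compact Hausdorff space $C$ the restriction of $\mathscr{Q}^*$ is the unique compatible uniformity and $(C,\mathscr{Q}^*|_C)$ is complete, which is precisely the feature that earlier forced us to invoke completeness of a metric.

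Next I would show that $\lim_{Q\ni q\to r}\zeta(q)$ exists in $C$ for every $r\in[a,b]$, equivalently that $\zeta$ is Cauchy continuous into $(C,\mathscr{Q}^*|_C)$. Suppose not: there would be $r\in[a,b]$, a sequence $q_n\to r$ in $Q$, and an entourage $U\in\mathscr{Q}^*$ such that infinitely many pairs escape $U$; relabelling within each pair by the value of the parameter, one obtains indices with $q_{m_k}<q_{n_k}$, both tending to $r$, and $(\zeta(q_{m_k}),\zeta(q_{n_k}))\notin U$. By compactness of $C$ I would pass to subsequences so that $\zeta(q_{m_k})\to w$ and $\zeta(q_{n_k})\to z$. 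Since $q_{m_k}<q_{n_k}$ gives $\zeta(q_{m_k})<\zeta(q_{n_k})$ and $J$ is closed (Theorem~\ref{thm:Jproperties}), we get $w\le z$; as $\tau$ is continuous, $\tau(w)=\tau(z)=r$, and the time-function property then forces $w=z$. Choosing a symmetric $V\in\mathscr{Q}^*$ with $V\circ V\subset U$, convergence of both subsequences to the common limit $w=z$ places $(\zeta(q_{m_k}),\zeta(q_{n_k}))$ eventually in $V\circ V\subset U$, contradicting the escape property. Hence the limit exists, and since $X$ is metrizable, hence regular, the resulting map $\overline{\zeta}\colon[a,b]\to X$ is continuous by the standard dense-extension criterion.

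It then remains to check the qualitative properties of $\overline{\zeta}$, all of which follow by passing to limits. Continuity of $\tau$ gives $\tau(\overline{\zeta}(s))=\lim_{q\to s}q=s$, so $\overline{\zeta}$ is $\tau$-parametrised; for $s<s'$ one approximates by $q<q'$ in $Q$ and uses closedness of $J$ to obtain $\overline{\zeta}(s)\le\overline{\zeta}(s')$, while $\tau(\overline{\zeta}(s))=s<s'=\tau(\overline{\zeta}(s'))$ upgrades this to $\overline{\zeta}(s)<\overline{\zeta}(s')$, i.e.\ $\overline{\zeta}$ is isocausal. Finally, the maximality identity~(\ref{eqn:maxcurv}) is a closed condition in the continuous function $d$, so if it holds for all triples drawn from the dense set $Q$ it holds for all triples in $[a,b]$, yielding maximality of $\overline{\zeta}$.

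I expect the main obstacle to be the step establishing that the one-sided limits coincide, $w=z$: this is the only place where the time function is genuinely used, and where the monotonicity, the closedness of $J=\bigcap\mathscr{Q}$, and the distinguishing property (through antisymmetry of $J$) all enter at once. The compactness of $C$ and the completeness of its unique uniformity are then merely the clean bookkeeping that converts the statement ``$w=z$'' into the existence of the continuous extension, replacing the metric completeness invoked in the earlier proof.
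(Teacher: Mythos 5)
Your proof is correct and follows essentially the same strategy as the paper's: localize the image of $\zeta$ to the compact set $J(\zeta(a),\zeta(b))$, establish the Cauchy condition by a compactness/closed-order/time-function contradiction, and extend via the completeness of the unique uniformity on a compact Hausdorff subset. The only (inessential) difference is that the paper factors the Cauchy-continuity check through the coordinate functions $d_z\circ\zeta$ and $d^z\circ\zeta$, exploiting that $\mathscr{Q}$ is the initial quasi-uniformity of that family and deriving the contradiction from the parameter sequence failing to be Cauchy, whereas you argue directly with symmetric entourages of $\mathscr{Q}^*$ and the $V\circ V\subset U$ composition trick; both executions are sound.
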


\begin{proof}
   Let $p,q\in X$ be such that $p\ll \zeta(a)$ and $q\gg \zeta(b)$. Then for every $t\in Q$, $\zeta(t)\in J(\zeta(a),\zeta(b))\subset \overline{I(p,q)}$.

    Let us show that $\zeta$ is Cauchy-continuous, i.e. it maps a Cauchy sequence to a Cauchy sequence. Here the uniformity structure of $Q$ is the one associated with the metric on $Q$ induced from $[a,b]$, and for $X$ we use the quasi-uniformity defined in Theorem \ref{thm:qu-lms}.
    By construction of Theorem \ref{thm:qu-lms}, it is enough to show that for every $z\in X$ the functions $d^z\circ \zeta$ and $d_z\circ \zeta$ are Cauchy continuous. Assume that for some sequence $\{t_n\}$ of points in $Q$ and $z\in X$ the sequence $\{d_z(\zeta(t_n))\}$ is not Cauchy. Then there is $\epsilon>0$ and, for every $n\in\mathbb{N}$, integers $m_n,l_n\geq n$ such that
    \begin{equation}\label{eqn:lem:curves-from-dense:1}
        d_z(\zeta(t_{m_n}))-d_z(\zeta(t_{l_n}))>\epsilon.
    \end{equation}
    Note that this is possible only if $t_{m_n}>t_{l_n}$ for every $n\in\mathbb{N}$, because  $d_z\circ\zeta$ is non-decreasing. Thus,
    \begin{equation}\label{eqn:lem:curves-from-dense:2}
    \zeta(t_{m_n})\geq \zeta(t_{l_n}).
    \end{equation}

    Since these sequences
    take values  in the compact set $\overline{I(p,q)}$, by passing to subsequences we can assume that
    \[
    \lim_{n\to \infty}\zeta(t_{m_n})=x,\ \lim_{n\to \infty}\zeta(t_{l_n})=y.
    \]
    for some $x,y\in  \overline{I(p,q)}$. Note that this procedure does not spoil the condition $m_n,l_n\geq n$ for all $n\in\mathbb{N}$. By taking the limit of (\ref{eqn:lem:curves-from-dense:1}-\ref{eqn:lem:curves-from-dense:2}), we have $d_z(x)-d_z(y)\geq \epsilon$ and $x\geq y$. Thus, $x>y$, and $t:=\tau(x)-\tau(y)>0$. By cutting finitely many terms of the sequences $m_n$, $l_n$ we may always assume that $\tau(\zeta(t_{m_n}))-\tau(\zeta(t_{l_n}))>t/2$. In other words, for every $n\in\mathbb{N}$ there are integers $l_n,m_n\geq n$ such that $t_{m_n}-t_{l_n}\geq t/2$, i.e.\ the sequence $\{t_n\}$ can not be Cauchy. This concludes the proof of Cauchy continuity of $d_z\circ \zeta$ for every $z\in X$. Analogously, $d^z\circ \zeta$ is Cauchy continuous for all $z\in X$. So, $\zeta$ is Cauchy continuous.

     To finish the proof we use the uniqueness of the uniformity on the compact set $\overline{I(p,q)}$. In particular, the uniformity induced from Lorentzian metric space $X$ coincides with the uniformity associated with a complete continuous metric, which exists since $X$ is a Polish space (Proposition \ref{prop:cg-Polish}). So, the Cauchy continuous function $\zeta$ has a continuous extension\footnote{ Actually we could use directly the following fact. A uniformly continuous function from a dense subset of a uniform space into a complete uniform space can be extended (uniquely) into a uniformly continuous function on the whole space.}
 $\overline{\zeta}$. The claimed properties of the extension, including maximality of $\overline{\zeta}$ provided that $\zeta$ is maximal, easily follow by continuity.
\end{proof}

\begin{theorem}\label{thm:d-uni2}
    Let us consider a countably generated Lorentzian metric space $(X,d)$,
    let $K$ be a compact subset of $X$, let $\sigma_n: [a_n,b_n]\to K$ be a sequence of continuous curves, and let $\sigma: [a,b]\to K$ be yet another continuous curve. The sequence $\sigma_n$ uniformly converges to $\sigma$ iff for each $z\in X$ the sequences of functions  $d_z\circ\hat \sigma_n$ and $d^z\circ\hat \sigma_n$ converge uniformly to $d_z\circ\hat \sigma$ and $d^z\circ\hat \sigma$, respectively.
\end{theorem}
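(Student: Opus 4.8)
The plan is to read off the statement directly from the identification of the uniformity $\mathscr{Q}^*$ carried out in Theorem \ref{thm:qu-lms}, bypassing the explicit covering argument used in the first proof of Theorem \ref{thm:d-uni}. The guiding observation is that all curves take values in the compact set $K$, on which a uniformity is unique when it exists (\cite[Thm.\ 36.19]{willard70}); hence the uniform convergence appearing on the left-hand side is automatically that of the restriction of $\mathscr{Q}^*$ to $K$, and $\mathscr{Q}^*$ has already been described explicitly as an initial uniformity.

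First I would record that $\mathscr{Q}^*$ is the initial uniformity of the family of maps $d_p,d^p\colon X\to\mathbb{R}$, where $\mathbb{R}$ carries its usual metric uniformity. Indeed, by Theorem \ref{thm:qu-lms} the quasi-uniformity $\mathscr{Q}$ is the initial quasi-uniformity of the maps $d_p\colon X\to(\mathbb{R},\mathscr{Q}_{\mathbb{R}})$ and $d^p\colon X\to(\mathbb{R},\mathscr{Q}_{\mathbb{R}}^{-1})$, so part 1 of Lemma \ref{lem:init-qu} yields that $\mathscr{Q}^*$ is the initial uniformity of the maps $d_p\colon X\to(\mathbb{R},\mathscr{Q}_{\mathbb{R}}^*)$ and $d^p\colon X\to(\mathbb{R},(\mathscr{Q}_{\mathbb{R}}^{-1})^*)$; since both $\mathscr{Q}_{\mathbb{R}}^*$ and $(\mathscr{Q}_{\mathbb{R}}^{-1})^*$ coincide with the standard metric uniformity of $\mathbb{R}$, as noted after Lemma \ref{lem:init-qu}, the claim follows. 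Restricting to $K$, the subspace uniformity of $\mathscr{Q}^*$ induces the subspace (i.e.\ compact) topology and, by uniqueness of the uniformity on a compact Hausdorff space, is precisely the one with respect to which uniform convergence into $K$ is meant.

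The heart of the matter is then a general property of initial uniformities. A subbasis for the entourages of $\mathscr{Q}^*$ is furnished by the sets $(d_z\times d_z)^{-1}(V_t)$ and $(d^z\times d^z)^{-1}(V_t)$, with $z\in X$, $t>0$ and $V_t=\{(s,s')\in\mathbb{R}^2\mid |s-s'|<t\}$. By definition, $\hat\sigma_n\to\hat\sigma$ uniformly means that for every entourage $U$ there is $N$ with $(\hat\sigma_n(s),\hat\sigma(s))\in U$ for all $s$ and all $n\ge N$; since an arbitrary entourage contains a finite intersection of subbasic ones, and such a finite intersection is handled by taking the maximum of the finitely many corresponding thresholds $N$, it suffices to test the condition on the subbasis. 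For a fixed subbasic entourage the requirement $(\hat\sigma_n(s),\hat\sigma(s))\in (d_z\times d_z)^{-1}(V_t)$ for all $s$ reads $\|d_z\circ\hat\sigma_n-d_z\circ\hat\sigma\|_\infty<t$, i.e.\ it is exactly uniform convergence of $d_z\circ\hat\sigma_n$ to $d_z\circ\hat\sigma$, and analogously for $d^z$. Running this over all $z\in X$ and all $t>0$ yields the stated equivalence.

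The only genuinely delicate point is the first step: one must be sure that the ambient notion of uniform convergence on $K$ really is that of the restricted $\mathscr{Q}^*$, which is secured by the uniqueness of the uniformity on compacta together with the fact (Theorem \ref{thm:qu-lms}) that $\mathscr{Q}^*$ induces the Lorentzian metric space topology. The endpoint conditions $a_n\to a$, $b_n\to b$ belonging to the definition of convergence of curves are not touched by the argument above and are treated exactly as in Theorem \ref{thm:d-uni}.
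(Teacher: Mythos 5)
Your proposal is correct and is essentially the paper's own proof: the paper disposes of Theorem \ref{thm:d-uni2} with the single sentence ``This follows from the construction of the uniformity of a Lorentzian metric space,'' and your argument---identifying $\mathscr{Q}^*$ via Lemma \ref{lem:init-qu} as the initial uniformity of the maps $d_p,d^p$ into metric $\mathbb{R}$, invoking uniqueness of the uniformity on the compact set $K$, and testing uniform convergence on the subbasic entourages---is precisely the intended expansion of that sentence.
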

\begin{proof}
    This follows from the construction of the uniformity of a Lorentzian metric space.
\end{proof}

\subsection{The quasi-metric of a sequenced Lorentzian metric space} \label{cjor}

In the previous section we have introduced a quasi-uniformity  canonically associated to a Lorentzian metric space. Actually, there is a useful second option which we call {\em the fine quasi-uniformity} of $(X,d)$, denoting it $\mathscr{F}$. It is generated by sets of the following form
\[
\big\{(x,y)\vert \quad  \sup_K (d_y-d_x)<a, \ \  \sup_K (d^x-d^y)<b \big\}
\]
where $K$ is a compact subset of $X$ and $a,b\in (0,+\infty]$. It is easy to check that it is indeed a quasi-uniformity. It contains the quasi-uniformity of the previous section which is obtained by restricting $K$ to finite subsets.

We have still the validity of the following result
\begin{theorem}\label{thm:qu-lms2}
    Let $(X,d)$ be a Lorentzian metric space without chronological boundary. Let $\mathscr{F}$ be the fine quasi-uniformity of $(X,d)$.
    The topology associated with $\mathscr{F}$, $\mathcal{T}(\mathscr{F}^*)$, coincides with the Lorentzian metric space topology. The order associated with $\mathscr{F}$, $\bigcap \mathscr{F}$,  is the extended causal relation $J$.
\end{theorem}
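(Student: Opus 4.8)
The plan is to reduce both assertions to what is already known for the coarser quasi-uniformity $\mathscr{Q}$ of Theorem~\ref{thm:qu-lms}. As noted just before the statement, the generators of $\mathscr{Q}$ are the defining sets of $\mathscr{F}$ with $K$ restricted to finite (hence compact) subsets, so $\mathscr{Q}\subseteq\mathscr{F}$ as filters. Monotonicity of the passages $\mathscr{Q}\mapsto\mathscr{Q}^{*}$, $\mathscr{Q}^{*}\mapsto\mathcal{T}(\mathscr{Q}^{*})$ and $\mathscr{Q}\mapsto\bigcap\mathscr{Q}$ then yields for free one half of each claim: $\mathscr{T}=\mathcal{T}(\mathscr{Q}^{*})\subseteq\mathcal{T}(\mathscr{F}^{*})$ and $\bigcap\mathscr{F}\subseteq\bigcap\mathscr{Q}=J$, where $\mathscr{T}$ denotes the Lorentzian metric space topology.

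For the order it then suffices to prove $J\subseteq\bigcap\mathscr{F}$, i.e.\ that $J$ lies in every generator $U_{K,a,b}=\{(x,y):\sup_{K}(d_{y}-d_{x})<a,\ \sup_{K}(d^{x}-d^{y})<b\}$. I would argue pointwise: unwinding Definition~\ref{def:causal}, $(x,y)\in J$ means $d(y,z)\le d(x,z)$ and $d(z,x)\le d(z,y)$ for all $z\in X$, that is $d_{y}-d_{x}\le 0$ and $d^{x}-d^{y}\le 0$ everywhere, so their suprema over any compact $K$ are $\le 0<a,b$ and $(x,y)\in U_{K,a,b}$. Hence $J\subseteq\bigcap\mathscr{F}$ and, combined with the reverse inclusion above, $\bigcap\mathscr{F}=J$.

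For the topology the remaining inclusion $\mathcal{T}(\mathscr{F}^{*})\subseteq\mathscr{T}$ carries the real content. First I would compute the symmetric entourages: $U_{K,a,b}\cap U_{K,a,b}^{-1}=\{(x,y):\|d_{x}-d_{y}\|_{K}<a,\ \|d^{x}-d^{y}\|_{K}<b\}$, and these form a base of $\mathscr{F}^{*}$, so a neighbourhood base of a point $x$ in $\mathcal{T}(\mathscr{F}^{*})$ consists of the sets $\{y:\|d_{x}-d_{y}\|_{K}<a,\ \|d^{x}-d^{y}\|_{K}<b\}$ with $K$ compact and $a,b>0$. These are exactly the $I$-preimages of the basic neighbourhoods of the topology of uniform convergence on compacta for the Kuratowski map $I\colon X\to C(X)\times C(X)$, $x\mapsto(d_{x},d^{x})$. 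By the lemma asserting the continuity of $I$ that precedes Theorem~\ref{cmqnn}, each such set is a $\mathscr{T}$-neighbourhood of $x$; therefore $\mathcal{T}(\mathscr{F}^{*})\subseteq\mathscr{T}$, and with the inclusion of the first paragraph, $\mathcal{T}(\mathscr{F}^{*})=\mathscr{T}$.

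The main obstacle is conceptual rather than computational: one must verify that enlarging the quasi-uniformity from finite to arbitrary compact $K$ does \emph{not} enlarge the induced topology, even though it genuinely enlarges the quasi-uniformity. The resolution is that on $\mathscr{T}$-compact sets the continuity of $d$ supplies enough uniform control for the ``uniform-on-compacta'' neighbourhoods to be already $\mathscr{T}$-open, which is precisely the content of the continuity of $I$ invoked above. Equivalently, one may simply observe that $\mathscr{F}^{*}$ is the uniformity $X$ inherits through the topological embedding $I$ of Theorem~\ref{cmqnn}, so that $\mathcal{T}(\mathscr{F}^{*})$ is the corresponding subspace topology transported back to $X$, namely $\mathscr{T}$.
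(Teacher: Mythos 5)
Your proposal is correct. The order computation ($\bigcap\mathscr{F}\subseteq J$ by monotonicity from $\mathscr{Q}\subseteq\mathscr{F}$, and $J\subseteq\bigcap\mathscr{F}$ by unwinding Definition \ref{def:causal} pointwise over $K$) is exactly the paper's argument, as is the easy topological inclusion $\mathscr{T}=\mathcal{T}(\mathscr{Q}^*)\subseteq\mathcal{T}(\mathscr{F}^*)$. The only place the two proofs diverge is the hard inclusion $\mathcal{T}(\mathscr{F}^*)\subseteq\mathscr{T}$: the paper re-runs the finite-subcover argument inline (for each $z\in K$ choose product neighbourhoods $U^z\times V^z$ on which $|d_{y'}(z')-d_x(z')|<a$ persists, extract a finite subcover of $K$, intersect the $U^{z_i}$), whereas you identify the symmetric entourage sections $\{y:\|d_x-d_y\|_K<a,\ \|d^x-d^y\|_K<b\}$ as $I$-preimages of basic uniform-on-compacta neighbourhoods and invoke the continuity of the Kuratowski map $I$ proved before Theorem \ref{cmqnn}. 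Since the proof of that continuity lemma is literally the same compactness argument, the mathematical content is identical; your routing is slightly cleaner in that it avoids duplicating the argument and makes transparent that $\mathscr{F}^*$ is just the initial uniformity pulled back through $I$ from $C(X)\times C(X)$, while the paper's version keeps the section self-contained. One small point of care that you handle correctly but is worth stating explicitly: the identity $\sup_K|f|=\max\bigl(\sup_K f,\ \sup_K(-f)\bigr)$ is what guarantees that $U_{K,a,a}\cap U_{K,a,a}^{-1}$ really is the uniform-norm ball, so the symmetric entourages do form a base of $\mathscr{F}^*$.
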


\begin{proof}
Since $\mathscr{Q}\subset \mathscr{F}$ we have  $\bigcap \mathscr{F}
\subset \bigcap \mathscr{Q}=J$. But if $(x,y)\in J$ we have for every $z\in K$, $d_x(z)\ge d_y(z)$ thus for every $a>0$, $\sup_K (d_y-d_x)<a$ and similarly for the other inequality, which proves that $(x,y)$ belongs to every element of $\mathscr{F}$, and hence $J=\bigcap \mathscr{F}$.

As $\mathscr{Q}\subset \mathscr{F}$ the topology $\mathcal{T}(\mathscr{F}^*)$ is finer than $\mathcal{T}(\mathscr{Q}^*)$, thus we have only to prove the other direction. We need only to prove that the neighborhood of $x$, $W:=\{y\vert \  \sup_K  \vert d_y-d_x\vert<a\}$ contains a $\mathcal{T}(\mathscr{Q}^*)$ neighborhood  (the proof for the neighborhoods $\{y\vert \  \sup_K  \vert d^x-d^y\vert<b\}$ being analogous). This result uses the continuity of the function $d$. We know that  $\mathcal{T}(\mathscr{Q}^*)$ is the Lorentzian metric space topology thus for every $z\in K$ such that $\vert d_y(z)-d_x(z)\vert <a$, as $d$ is continuous in such topology we can  find $U^z,V^z\in \mathcal{T}(\mathscr{Q}^*)$, $(x,z)\in U^z\times V^z$ such that for $(y',z')\in U^z\times V^z$, the inequality  $\vert d_{y'}(z')-d_{x}(z')\vert <a$ still holds. Let $\{z_i\}$ be a finite family of points such that $\{V^{z_i}\}$ cover $K$ and set  $U=\cap_i U^{z_i}$, then for $(y,z)\in U\times K$, we have for some $i$,  $z\in V^{z_i}$, and hence $(y,z)\in U^{z_i}\times V^{z_i}$ which implies $\vert d_y(z)-d_x(z)\vert <a$, i.e.\ $x\in U\subset W$. As $U\in \mathcal{T}(\mathscr{Q}^*)$ we conclude that $\mathcal{T}(\mathscr{Q}^*)$ is finer than  $\mathcal{T}(\mathscr{F}^*)$.
\end{proof}

A {\em quasi-pseudo-metric} \cite{kelly63,patty67}  on a set $X$ is
a function $p:X\times X \to [0,+\infty)$ such that for $x,y,z\in X$
\begin{itemize}
\item[(i)] $p(x,x)= 0$,
\item[(ii)] $p(x,z)\le p(x,y)+p(y,z)$.
\end{itemize}

The quasi-pseudo-metric is called {\em pseudo-metric} if
$p(x,y)=p(y,x)$.  It is called a {\em (Albert's) quasi-metric} if $p(x,y)=p(y,x)=0 \Rightarrow x=y$.
If $p$ is a quasi-pseudo-metric then $q$, defined
by
\[q(x,y)=p(y,x),\] is a quasi-pseudo-metric called {\em conjugate} of
$p$. This structure, called {\em qua\-si-pse\-udo-me\-tric spa\-ce}, is
denoted $(X,p,q)$ and we might equivalently use the notation
$p^{-1}$ for $q$.

From a quasi-pseudo-metric space $(X,p,q)$ we can construct a
quasi-uni\-for\-mi\-ty $\mathcal{U}$, and  subsequently an associated topological
preordered space
\[
(X, \mathcal{T}(\mathcal{U}^{*}),\bigcap
\mathcal{U}).
\]
Indeed, Nachbin \cite{nachbin65} defines the quasi-uniformity $\mathcal{U}$ as the filter
generated by the countable base
\begin{equation} \label{vaz}
W_n=\{(x,y)\in X\times X: p(x,y)<1/n\}.
\end{equation}
thus the graph of the preorder is $G=\bigcap
\mathcal{U}=\{(x,y): p(x,y)=0\}$ and the topology
$\mathscr{T}(\mathcal{U}^{*})$ is that of the
pseudo-metric
$p+q$. In particular this topology is Hausdorff if and only if $p+q$
is a metric i.e.\ $p$ is a quasi-metric, which is the
case if and only if the preorder $G$ is an order.

\begin{example}

For $X=\mathbb{R}$, the canonical quasi-uniformity $\mathscr{Q}_{\mathbb{R}}$ is induced by the pseudo-metric $p(x,y)=\textrm{max}(x-y,0)$, while $\mathscr{Q}_{\mathbb{R}}^{-1}$ is induced by $p^{-1}$.
\end{example}

If a quasi-uniformity admits a quasi-pseudo-metric then it is quasi-pseudo-metrizable. Nachbin \cite[Thm.\ 8]{nachbin65} proves that they are precisely the quasi-uniformities that admit a countable base (for the filter). The quasi-pseudo-metric, in general, is not uniquely determined.

It is known that in the smooth setting globally hyperbolic spacetimes are quasi-metrizable \cite{minguzzi12d}. This result generalizes to Lorentzian metric spaces as a consequence of the following stronger theorem (we denote $\vert r \vert^+:=\max(r,0)$ and for a function $f$ and a  set $K$ on the domain of $f$, $\Vert f \Vert^+_K:=\sup_K \vert f \vert^+= \vert \sup_K f \vert^+$).
\begin{theorem} \label{vnqjp}
    Let $(X,d)$ be a countably generated Lorentzian metric space and let $\{p^k\}_{k\in\mathbb{N}}$  be generating sequence for $X$, then the function $p: X\times X\to [0,+\infty)$ defined by
    \[
    p (x,y):=\sum_{m=1}^{\infty}2^{-m} \frac{||d_y-d_x||^+_{X^m}}{1+||d_y-d_x||^+_{X^m}}+\sum_{m=1}^{\infty}2^{-m} \frac{||d^x-d^y||^+_{X^m}}{1+||d^x-d^y||^+_{X^m}}
    \]
    is a quasi-metric for the fine quasi-uniformity of $(X,d)$.
     In particular, $J=p^{-1}(0)$ and $\gamma$, the metric of Thm.\ \ref{cmqnn}, is Lipschitz equivalent to $p+p^{-1}$, in fact $\gamma\le p+p^{-1}\le 2 \gamma$. Here $X^m=\overline{I_R(p^1,\ldots,p^m)}$ as in Section \ref{sec:GH}.
\end{theorem}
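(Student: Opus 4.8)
The plan is to verify, in order, that $p$ is a quasi-pseudo-metric, that its zero set equals $J$ (which both proves $J=p^{-1}(0)$ and upgrades $p$ to a genuine quasi-metric), that the quasi-uniformity $p$ generates is exactly the fine quasi-uniformity $\mathscr{F}$, and finally the two-sided comparison with $\gamma$. Throughout I would use the elementary facts that $\phi(t):=t/(1+t)$ is increasing and subadditive on $[0,+\infty)$, and that each summand is bounded by $2^{-m}$, so both series converge and $p$ takes values in $[0,2]$.

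For the quasi-pseudo-metric axioms, fix $m$ and set $\rho_m(x,y):=\|d_y-d_x\|^+_{X^m}=|\sup_{X^m}(d_y-d_x)|^+$. Writing $d_z-d_x=(d_y-d_x)+(d_z-d_y)$ and evaluating at a point of $X^m$ gives $\sup_{X^m}(d_z-d_x)\le\rho_m(x,y)+\rho_m(y,z)$; since the right-hand side is non-negative, this yields $\rho_m(x,z)\le\rho_m(x,y)+\rho_m(y,z)$, and obviously $\rho_m(x,x)=0$. The same argument applies to $\sigma_m(x,y):=\|d^x-d^y\|^+_{X^m}$. Composing a quasi-pseudo-metric with the increasing subadditive $\phi$ again gives a quasi-pseudo-metric, and a $2^{-m}$-weighted sum of quasi-pseudo-metrics is one; hence $p$ is a quasi-pseudo-metric. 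Next, $p(x,y)=0$ holds iff every summand vanishes, i.e.\ iff $\sup_{X^m}(d_y-d_x)\le0$ and $\sup_{X^m}(d^x-d^y)\le0$ for all $m$. As $\{X^m\}$ exhausts $X$, this is equivalent to $d(y,z)\le d(x,z)$ and $d(z,x)\le d(z,y)$ for every $z\in X$, i.e.\ to $(x,y)\in J$. Thus $p^{-1}(0)=J$, and since $J$ is antisymmetric (Theorem \ref{thm:Jproperties}, using property (iii)), $p(x,y)=p(y,x)=0$ forces $x=y$, so $p$ is a quasi-metric; this is also consistent with $\bigcap\mathscr{F}=J$ from Theorem \ref{thm:qu-lms2}.

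The main work is showing that the filter generated by the base $W_n=\{(x,y):p(x,y)<1/n\}$ coincides with $\mathscr{F}$, generated by the sets $V_{K,a,b}=\{(x,y): \sup_K(d_y-d_x)<a,\ \sup_K(d^x-d^y)<b\}$. The bridge is that every compact $K$ lies in some $X^M$: the open diamonds $I(p^1,\dots,p^m)$ increase to $X$ (as $\{p^k\}$ generates $X$, which has no chronological boundary), so compactness gives $K\subset I(p^1,\dots,p^M)\subset X^M$. To see each $W_n$ contains some $V_{K,a,b}$, take $K=X^M$ with $2^{-M+1}<\tfrac1{2n}$ and $a=b=\tfrac1{4n}$; monotonicity of $\rho_m,\sigma_m$ in the set (so $\rho_m\le\rho_M<a$ for $m\le M$ on $V_{X^M,a,b}$) together with the tail estimate $\sum_{m>M}2^{-m}\le2^{-M}$ bounds $p$ below $1/n$ there. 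Conversely, given $V_{K,a,b}$ choose $M$ with $K\subset X^M$; from $2^{-M}\phi(\rho_M(x,y))\le p(x,y)$ and the analogous bound for $\sigma_M$, the hypothesis $p(x,y)<1/n$ forces $\rho_M(x,y)<a$ and $\sigma_M(x,y)<b$ for $n$ large, whence $\sup_K(d_y-d_x)\le\rho_M(x,y)<a$ and $\sup_K(d^x-d^y)\le\sigma_M(x,y)<b$, so $W_n\subset V_{K,a,b}$. This gives the equality of the two filters.

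For the comparison with $\gamma$ I would use $\|f\|_{X^m}=\max(\|f\|^+_{X^m},\|{-f}\|^+_{X^m})$. Writing $u=\rho_m(x,y)$, $v=\rho_m(y,x)$ (and $u',v'$ for the conjugate terms), the $m$-th $\gamma$-summand is $\phi(\max(u,v))$ while the corresponding $(p+p^{-1})$-summand is $\phi(u)+\phi(v)$; since $\phi$ is increasing, $\phi(\max(u,v))=\max(\phi(u),\phi(v))$, and the elementary chain $\max(\phi(u),\phi(v))\le\phi(u)+\phi(v)\le 2\max(\phi(u),\phi(v))$ gives the bound term by term, hence $\gamma\le p+p^{-1}\le2\gamma$. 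The single real obstacle is the third step: faithfully matching the countable base $\{W_n\}$ of $p$ against the generating family $\{V_{K,a,b}\}$ of $\mathscr{F}$, where the reduction of an arbitrary compact $K$ to a member $X^M$ of the exhaustion is precisely the point at which the countably-generated (no-boundary) hypothesis is essential.
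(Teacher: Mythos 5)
Your proposal is correct and follows essentially the same route as the paper's proof: termwise verification of the quasi-pseudo-metric axioms via the subadditivity of $t\mapsto t/(1+t)$, identification of the induced quasi-uniformity with $\mathscr{F}$ through the two filter inclusions hinging on the fact that every compact set lies in some $X^M$, and the termwise comparison $\gamma\le p+p^{-1}\le 2\gamma$ from $\|f\|_{X^m}=\max(\|f\|^+_{X^m},\|-f\|^+_{X^m})$. The only (inessential) difference is that you verify $p^{-1}(0)=J$ and the quasi-metric property directly from the definition and antisymmetry of $J$, whereas the paper deduces them from the general theory of quasi-pseudo-metrics once $p$ is shown to induce $\mathscr{F}$ (via $\bigcap\mathscr{F}=J$).
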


The constructed quasi-metric is not uniquely determined as it depends on the chosen sequence. However, we have clearly a canonically associated quasi-metric to every sequenced Lorentzian metric space.

\begin{proof}
For a continuous function $f$ over a compact set $K$, $\Vert f\Vert_K=\textrm{max}(\Vert f\Vert^+_K, \Vert -f\Vert_K)$ thus
\[
\frac{||f||_{K}}{1+||f||_{K}}=\textrm{max}\Big(\frac{||f||^+_{K}}{1+||f||^+_{K}}, \frac{||-f||^+_{K}}{1+||-f||^+_{K}} \Big)\le  \frac{||f||^+_{K}}{1+||f||^+_{K}}+\frac{||-f||^+_{K}}{1+||-f||^+_{K}},
\]
\[
\frac{||f||^+_{K}}{1+||f||^+_{K}}+\frac{||-f||^+_{K}}{1+||-f||^+_{K}}\le 2 \frac{||f||_{K}}{1+||f||_{K}}.
\]
From these inequalities we get $\gamma\le p+p^{-1}\le 2 \gamma$.
If we can show that $p$ induces the canonical fine quasi-uniformity $\mathscr{F}$ of the Lorentzian metric space, then, by the general theory of quasi-pseudo-metrics, $p+p^{-1}$ induces the topology $\mathcal{T}(\mathscr{F}^*)$ which, as shown previously, is the topology of the Lorentzian metric space. Note that $p+p^{-1}$ is a metric because $J$ is antisymmetric, so $p$ is a quasi-metric.
Observe that $p$ satisfies the triangle inequality $p(x,z)\le p(x,y)+p(y,z)$ because, dropping the index $X^m$ for shortness,
\begin{align*}
    \frac{||d_z-d_x||^+}{1+||d_z-d_x||^+}&\leq     \frac{||d_z-d_y||^++||d_y-d_x||^+}{1+||d_z-d_y||^++||d_y-d_x||^+}\\
    &\!\!\!\!\!\!\!\!\!\!\!\!\!\!\!\!\!\le
    \frac{||d_z-d_y||^+}{1+||d_z-d_y||^++||d_y-d_x||^+}+ \frac{||d_y-d_x||^+}{1+||d_z-d_y||^++||d_y-d_x||^+} \\
    &\leq
     \frac{||d_y-d_x||^+}{1+||d_y-d_x||^+}+ \frac{||d_z-d_y||^+}{1+||d_z-d_y||^+}.
\end{align*}
In the first inequality we used $||d_z-d_x||^+\le ||d_z-d_y||^++||d_y-d_x||^+$ and the fact that the composition of the non-decreasing maps $x \mapsto x/(1+x)$ and $x\mapsto \vert x\vert^+$, is non-decreasing. The term $\frac{||d^x-d^z||^+}{1+||d^x-d^z||^+}$ from the sum defining $p$ is treated analogously. The identity $p(x,x)=0$ is clear.

It remains to show that $p$ is  induces the fine quasi-uniformity $\mathscr{F}$.

Let us prove that the quasi-uniformity induced by $p$ is finer than that by $\mathscr{F}$.
Let $K\subset X$ be a compact set, then there is $k\in \mathbb{N}$ such that $K\subset X^k$. For any $x,y\in X$ we have
    \[
    \frac{||d_y-d_x||^+_{X^k}}{1+||d_y-d_x||^+_{X^k}}\leq 2^{k}p(x,y).
    \]
thus the set $\{(x,y)\vert \ \sup_K (d_y-d_x)<a\}$ contains the  filter element $\{p<\epsilon\}$ for sufficiently small $\epsilon>0$.
Similarly, the set $\{(x,y)\vert \  \sup_K (d^x-d^y)<b\}$ contains a filter element induced by $p$.

Let us prove that the quasi-uniformity induced by $p$ is coarser than that by $\mathscr{F}$. Let $0<\epsilon <1$ and let $k$ be so large that $2^{-(k-1)}<\epsilon/2$. For $j\le k$, $||d_y-d_x||^+_{X^j} \le ||d_y-d_x||^+_{X^k}$ thus
\begin{align*}
p(x,y)&\le \sum_{m=1}^k 2^{-m} \frac{||d_y-d_x||^+_{X^m}}{1+||d_y-d_x||^+_{X^m}} + 2\sum_{m=k+1}^\infty \frac{1}{2^m} + \sum_{m=1}^k 2^{-m} \frac{||d^x-d^y||^+_{X^m}}{1+||d^x-d^y||^+_{X^m}} \\
&\le  \frac{||d_y-d_x||^+_{X^k}}{1+||d_y-d_x||^+_{X^k}}+\frac{||d^x-d^y||^+_{X^m}}{1+||d^x-d^y||^+_{X^m}} + \frac{\epsilon}{2}
\end{align*}
Thus the $\mathscr{F}$ element given by the intersection of $\sup_{X^m}(d_y-d_x)<\frac{\epsilon}{(4-\epsilon)}$ and  $\sup_{X^m}(d^x-d^y)<\frac{\epsilon}{(4-\epsilon)}$ is contained in the  set $\{p<\epsilon\}$.
\end{proof}

\section{Conclusions}
In a previous work we introduced an abstract notion of Lorentzian metric space restricting ourselves to the bounded case. The goal there was to show the feasibility of the approach, the general strategy being that of being guided by the stability under GH-convergence for the identification of the correct definition of the Lorentzian (pre-)length space concept. Having obtained such objective, we turned in this work to the removal of the boundedness condition. This has introduced some technicalities, particularly when it comes to study GH-convergence, as one has to attach a sequence to the space, unlike the metric theory where a point is sufficient.

In other directions, we confirmed the validity of some nice results, such as the GH-stability of the  (pre-)length property, the existence of time functions, the validity of the limit curve theorem, the upper semi-continuity of the length functional, the characterization of the length space property via the length functional.

Finally, in some directions  our investigation into the non-compact case also elucidated several aspects of our original construction. For instance, we clarified that our Lorentzian metric space definition can be reformulated in terms of very desirable and minimal properties such as the reverse triangle inequality, compactness of chronological diamonds, and distinction via the Lorentzian distance, all of them reminiscent of conditions entering the property of global hyperbolicity in the smooth setting. We also realized that our  Lorentzian metric space is canonically quasi-uniformizable, a fact that leads to considerable simplification, e.g.\ in dealing with the limit curve theorem. Sequenced Lorentzian metric spaces are even canonically quasi-metrizable (and hence the topology is canonically metrizable) which could suggest new ways to attack old problems related to the precompactness of families of Lorentzian metric spaces.

\section*{Acknowledgments}
Stefan Suhr is partially supported by the Deutsche Forschungsgemeinschaft
(DFG, German Research Foundation) Project-ID 281071066 SFB/TRR
191.

This study was funded by the European Union - NextGenerationEU, in the framework of the PRIN Project (title) {\em Contemporary perspectives on geometry and gravity} (code 2022JJ8KER - CUP B53D23009340006). The views and opinions expressed in this article are solely those of the authors and do not necessarily reflect those of the European Union, nor can the European Union be held responsible for them. \\
%

\end{document}